\documentclass[12pt]{amsart}

\usepackage{a4wide}
\usepackage[latin9]{inputenc}
\usepackage{amsmath}
\usepackage{amsfonts}
\usepackage{amssymb}
\usepackage{amsthm}
\usepackage{subfigure}
\usepackage[usenames,dvipsnames]{color}
\usepackage{pstricks}
\usepackage{graphicx}
\usepackage[all]{xy}
\newtheorem{theo}{Theorem}[section]
\newtheorem{prop}[theo]{Proposition}
\newtheorem{coro}[theo]{Corollary}
\newtheorem{lemm}[theo]{Lemma}

\theoremstyle{definition}
\newtheorem{def1}[theo]{Definition}
\theoremstyle{remark}
\newtheorem{rema}[theo]{Remark}
\newcommand{\Op}{\operatorname{Op}}

\newcommand{\nwc}{\newcommand}
\nwc{\eps}{\epsilon}
\nwc{\ep}{\epsilon}
\nwc{\vareps}{\varepsilon}
\nwc{\Oph}{\operatorname{Op}_\hbar}
\nwc{\la}{\langle}
\nwc{\ra}{\rangle}

\nwc{\mf}{\mathbf} 
\nwc{\blds}{\boldsymbol} 
\nwc{\ml}{\mathcal} 

\nwc{\defeq}{\stackrel{\rm{def}}{=}}

\nwc{\cE}{\ml{E}}
\nwc{\cN}{\ml{N}}
\nwc{\cO}{\ml{O}}
\nwc{\cP}{\ml{P}}
\nwc{\cU}{\ml{U}}
\nwc{\cV}{\ml{V}}
\nwc{\cW}{\ml{W}}
\nwc{\tU}{\widetilde{U}}
\nwc{\IN}{\mathbb{N}}
\nwc{\IR}{\mathbb{R}}
\nwc{\IZ}{\mathbb{Z}}
\nwc{\IC}{\mathbb{C}}
\nwc{\IT}{\mathbb{T}}
\nwc{\IS}{\mathbb{S}}
\nwc{\tP}{\widetilde{P}}
\nwc{\tPi}{\widetilde{\Pi}}
\nwc{\tV}{\widetilde{V}}
\nwc{\supp}{\operatorname{supp}}
\nwc{\rest}{\restriction}


\begin{document}

\title[Spectral analysis of Morse-Smale gradient flows]{Spectral analysis of Morse-Smale gradient flows}

\author[Nguyen Viet Dang]{Nguyen Viet Dang}

\address{Institut Camille Jordan (U.M.R. CNRS 5208), Universit\'e Claude Bernard Lyon 1, B\^atiment Braconnier, 43, boulevard du 11 novembre 1918, 
69622 Villeurbanne Cedex }

\email{dang@math.univ-lyon1.fr}

\author[Gabriel Rivi\`ere]{Gabriel Rivi\`ere}

\address{Laboratoire Paul Painlev\'e (U.M.R. CNRS 8524), U.F.R. de Math\'ematiques, Universit\'e Lille 1, 59655 Villeneuve d'Ascq Cedex, France}

\email{gabriel.riviere@math.univ-lille1.fr}

\begin{abstract} 
On a smooth, compact and oriented manifold without boundary, we give a complete description of the correlation function of a Morse-Smale gradient flow 
satisfying a certain nonresonance assumption. This is done by analyzing precisely the spectrum of the generator of such a flow acting on certain
anisotropic spaces of currents. In particular, 
we prove that this dynamical spectrum is given by linear combinations with integer coefficients of the 
Lyapunov exponents at the critical points of the Morse function. Via this spectral analysis and in analogy with Hodge-de Rham theory, 
we give an interpretation of the Morse 
complex as the image of the de Rham complex under the spectral projector on the kernel of the generator of the flow. This allows us to 
recover classical results from differential topology such as the Morse inequalities and Poincar\'e duality.

\end{abstract}

\keywords{Hyperbolic dynamical systems, Morse-Smale flows, Pollicott-Ruelle resonances, anisotropic Sobolev spaces, microlocal analysis, Morse complex.} 
\subjclass[2010]{58J50, 37D15}


\maketitle

\section{Introduction}

Consider a smooth ($\ml{C}^{\infty}$) flow $(\varphi^t)_{t\in\IR}$ acting on a smooth, compact, oriented manifold $M$ which has no boundary 
and which is of dimension $n\geq 1$. A natural question to raise is whether the limit 
$$\lim_{t\rightarrow +\infty}\varphi^{-t*}(\psi)$$
exists for any smooth function $\psi$ defined on $M$. This is of course very unlikely to happen in general, and a natural setting 
where one may expect some convergence is the class of dynamical systems with hyperbolic behaviour and for a nice enough 
reference measure. For instance, if $\varphi^t$ is a topologically transitive 
Anosov flow~\cite{Ano67} and if we study the weak limit with respect to a so-called Gibbs measure, it is known from the works of Bowen, Ruelle and Sinai that such 
a limit exists and is equal to the average of $\psi$ with respect to the Gibbs 
measure\footnote{Recall that a well-known example is the 
Liouville measure for the geodesic flow on a negatively curved manifold.}~\cite{Si72, BowRu75}. If one is able to show that this 
equilibrium state exists, a second natural question to raise is: can one describe the fluctuations? For instance, what is the rate of convergence to this state? 

These problems are naturally related to the study of the operator generating the flow:
$$\ml{L}:\psi\in\ml{C}^{\infty}(M)\mapsto -\frac{d}{dt}\left(\varphi^{-t*}(\psi)\right)|_{t=0}\in\ml{C}^{\infty}(M).$$
Note that, by duality, this operator acts on the space of distributions $\ml{D}'(M)$. In recent years, many progresses have been made in the 
study of such operators acting on suitable Banach spaces of distributions when the flow $\varphi^t$ enjoys the Anosov property. 
In~\cite{Li04}, Liverani defined  Banach spaces of distributions with ``anisotropic 
H\"older regularity'' for which he could make a precise spectral analysis of $\ml{L}$ in the case of \emph{contact Anosov flows},
and from which he could deduce that, for every $t\geq 0$ and for every $\psi_1,\psi_2$ in $\ml{C}^{\infty}(M)$,
\begin{equation}\label{e:liverani}C_{\psi_1,\psi_2}(t):=\int_M\varphi^{-t*}(\psi_1)\psi_2d\text{vol}_g=\int_M\psi_1d\text{vol}_g\int_M\psi_2d\text{vol}_g+\ml{O}_{\psi_1,\psi_2}(e^{-\Lambda t}),\end{equation}
where $\Lambda>0$ is some fixed positive constant related to the spectral gap of $\ml{L}$ and where $\text{vol}_g$ is the Riemannian volume. 
His construction was inspired by similar results for diffeomorphisms~\cite{BlKeLi02} and by a proof of 
Dolgopyat which holds in the $2$-dimensional case~\cite{Do98}. Introducing Banach spaces inside $\ml{D}'(M)$ contrasts
with earlier approaches to these questions where symbolic coding of Anosov flows was used to describe the 
weak convergence of $\varphi^{-t*}(\psi)$. 
For more general Anosov flows, Butterley and Liverani also showed how this direct approach 
allows to make a meromorphic extension for the Laplace transform of the correlation function $C_{\psi_1,\psi_2}(t)$ to the entire 
half plane~\cite{BuLi07}. This extended earlier works of Pollicott~\cite{Po85} and Ruelle~\cite{Ru87a} which were also based on the use of symbolic dynamics. 
Such poles (and their corresponding eigenstates) describe in some sense the fine structure of the long time dynamics and are often called \emph{Pollicott-Ruelle resonances}. 
Pushing further this direct approach~\cite{GiLiPo13}, Giulietti, Liverani and Pollicott extended this spectral analysis to anisotropic spaces of currents and they 
proved that, for any smooth Anosov flow, 
the Ruelle zeta function has a meromorphic extension to $\IC$. In the case of Anosov geodesic flows satisfying certain pinching assumptions, 
they also showed that~\eqref{e:liverani} also holds for the Bowen-Margulis measure (and not only with respect to the Riemannian volume). 
In parallel to this approach via spaces of anisotropic H\"older distributions, it was observed that the spectral analysis of Anosov 
flows can in fact be understood as a semiclassical problem which fits naturally 
in the theory of semiclassical resonances~\cite{HeSj86, DyZw16}. Building on earlier works for Anosov diffeomorphisms by Baladi-Tsujii~\cite{BaTs07,BaTs08} and 
Faure-Roy-Sj\"ostrand~\cite{FaRoSj08} involving microlocal tools, this kind of approach to Pollicott-Ruelle resonances was developped for Anosov flows
by Tsujii~\cite{Ts10, Ts12}, Faure-Sj\"ostrand~\cite{FaSj11}, Faure-Tsujii~\cite{FaTs15, FaTs13} and Dyatlov-Zworski~\cite{DyZw13}. We refer to the 
survey article of Gou\"ezel~\cite{Go15} for a recent account on these progresses.

Regarding the important steps made in the Anosov case, it is natural to understand to what extent these methods can be adapted to more 
general dynamical systems satisfying weaker chaotic features. A natural extension to consider is the class of Axiom $A$ systems~\cite{Sm67, BowRu75}. 
In the case of nonsingular Axiom $A$ flows, this was analyzed by Dyatlov and Guillarmou
who showed that Pollicott-Ruelle resonances can be defined \emph{locally} on a small 
neighborhood of any basic set of a given Axiom A flow with no critical points -- see also~\cite{Ru87a, BaTs07, BaTs08, GoLi08} 
for earlier results in the case of Axiom A diffeomorphisms. Here, we are aiming at analyzing the simplest class of Axiom $A$ flows, namely 
Morse-Smale gradient flows. Yet, compared with the above references, our objective is to give a global description of the 
correlation function and not only in a neighborhood of the basic sets (here the critical points). Recall that gradient flows associated with 
a Morse function are also interesting because of their deep connections with differential topology which first appeared in the pioneering works of Thom~\cite{Th49} and Smale~\cite{Sm60, Sm61}. Our microlocal 
approach to the properties of gradient flows will allow to give new (spectral) interpretations of some results of 
Laudenbach~\cite{Lau92, Lau12} and Harvey-Lawson~\cite{HaLa00, HaLa01} and to recover some classical facts from differential topology such as the 
finiteness of the Betti numbers, the Morse inequalities and Poincar\'e duality. Recall that an alternative spectral 
approach to Morse theory (based on Hodge theory) was introduced by Witten in~\cite{Wi82}. For a more detailed exposition on these 
relations between dynamical systems, topology and spectral theory, we refer to the classical survey article of 
Bott~\cite{Bo88}.

\section{Statement of the main results}

\subsection{Dynamical framework} We fix $f$ to be a smooth ($\ml{C}^{\infty}$) Morse function, meaning that $f$ has only finitely many critical points and 
that these points are non degenerate. We denote by $\text{Crit}(f)$ the set of critical points. For simplicity, we shall always assume that $f$ is excellent in 
the sense that, given $a\neq b$ in $\text{Crit}(f)$, one has $f(a)\neq f(b)$. If we consider a smooth Riemannian metric $g$ on $M$, we can define a vector 
field $V_f$ as follows
\begin{equation}
 \forall (x,v)\in TM,\ d_xf(v)=\la V_f(x),v\ra_{g(x)}. 
\end{equation}
This vector field generates a complete flow on $M$~\cite[Ch.~6]{Lau12} that we denote by $\varphi_f^t$. Given any point $a$ in $\text{Crit}(f)$, we can define 
its stable (resp. unstable) manifold, i.e.
$$W^{s/u}(a):=\left\{x\in M:\ \lim_{t\rightarrow +/-\infty}\varphi_f^{t}(x)=a\right\}.$$
One can show that $W^{s}(a)$ (resp. $W^u(a)$) is an embedded submanifold in $M$ of dimension $0\leq r\leq n$ (resp. $n-r$) where $r$ is the index of the critical 
point~\cite{Web06}. Note also that $W^u(a)\cap W^s(a)=\{a\}$. A remarkable property of these submanifolds is that \emph{they form a partition of 
the manifold} $M$~\cite{Th49}, i.e.
$$M=\bigcup_{a\in\text{Crit}(f)} W^s(a),\ \text{and}\ \forall a\neq b,\ W^s(a)\cap W^s(b)=\emptyset.$$
The above property also holds true for the unstable manifolds. This partition in stable (and unstable) leaves will play a central role in our analysis. Among these Morse gradient flows, Smale introduced 
a particular family of flows~\cite{Sm60}. Namely, given any $a$ and $b$ in $\text{Crit}(f)$, he required that \emph{$W^s(a)$ and $W^u(b)$ intersect transversally} whenever they 
intersect. This assumption also turns out to be a crucial ingredient to make our proofs work. We will use the 
terminology \textbf{Morse-Smale} for any gradient flow enjoying the above 
properties. Finally, for any point $a$ in $\text{Crit}(f)$, we define $L_f(a)$ as the only matrix satisfying
\begin{equation}\label{e:def-Lyapunov}\forall \xi,\eta\in T_aM,\ d^2_af(\xi,\eta)=g_a(L_f(a)\xi,\eta).\end{equation}
As $a$ is a nondegenerate critical point, $L_f(a)$ is symmetric with respect to $g_a$ and invertible. Its eigenvalues are called the \textbf{Lyapunov exponents} at the point $a$ and we write them as
$$\chi_1(a)\leq\ldots\leq \chi_r(a)<0< \chi_{r+1}(a)\leq\ldots\leq\chi_n(a),$$
where $r$ is the index of the critical point $a$. All along the article, we will often make the assumption that $(f,g)$ is a \emph{smooth Morse pair} inducing a Morse-Smale gradient flow. By smooth Morse pair, we 
roughly mean that there is a smooth linearizing chart for $V_f$ near any critical point. By the Sternberg-Chen Theorem~\cite{Ch63}, this is for instance
satisfied when, for every critical point $a$, the Lyapunov exponents $(\chi_{j}(a))_{1\leq j\leq n}$ are rationally independent -- see paragraph~\ref{sss:adapted} for more details.

\begin{rema} Let us fix some conventions. We will denote by $\IN^*$ the set of positive integers $\{1,2,\ldots\}$ while $\IN$ will be the set of nonnegative integers $\{0,1,2,\ldots\}$. 
We will use $\alpha=(\alpha_1,\ldots,\alpha_n)$ for a multi-index in $\IN^n$. Given any critical point $a$ of $f$, we denote by $|\chi(a)|$ the vector $(|\chi_1(a)|,\ldots,|\chi_n(a)|)$.
For any $0\leq k\leq n$, $\Omega^k(M)$ will be the space of smooth differential forms of degree $k$ and $\ml{D}^{\prime,k}(M)$ will be the topological dual of $\Omega^{n-k}(M)$, 
i.e. the space of currents of degree $k$ (or of dimension $n-k$). For an introduction to the theory of currents, we refer to~\cite{dRh80, Sch66}.
\end{rema}

\subsection{Correlation function}

The main concern of the article will be to perform a spectral analysis of the operator $\ml{L}_{V_f}$ acting on appropriate spaces of currents. As an application of our analysis, we will prove 
the following result on the asymptotic behaviour of $\varphi_f^{-t*}(\psi)$:
\begin{theo}\label{t:fullasympt-nonresonant} Let $\varphi_f^t$ be a Morse-Smale gradient flow all of whose Lyapunov exponents are rationally independent. 
Let $0\leq k\leq n$. 

Then, for every $a$ in $\operatorname{Crit}(f)$ and for every $\alpha$ in $\IN^n$, there exists a continuous linear operator,
$$\pi_{a,k}^{(\alpha)}:\Omega^k(M)\rightarrow \ml{D}^{\prime,k}(M),$$
such that
for every $\Lambda>0$, for every $\psi_1$ in $\Omega^k(M)$, for every $\psi_2$ in $\Omega^{n-k}(M)$, and for every $t\geq 0$,
$$\int_M\varphi_f^{-t*}(\psi_1)\wedge\psi_2=\sum_{a\in\operatorname{Crit}(f)}\sum_{\alpha\in \IN^n:\alpha.|\chi(a)|<\Lambda}e^{-t \alpha.|\chi(a)| }
\int_M \pi_{a,k}^{(\alpha)} (\psi_1)\wedge\psi_2+\ml{O}_{\psi_1,\psi_2}(e^{-\Lambda t})$$
where $\alpha.\vert\chi(a)\vert=\sum_{i=1}^n\alpha_i\vert\chi_i(a)\vert$.
Moreover, for every $a$ in $\operatorname{Crit}(f)$ and for every $\alpha$ in $\IN^n$, 
one has\footnote{We will in fact give a (rather combinatorial) explicit expression of $\operatorname{rk}(\pi_{a,k}^{(\alpha)} )$ in the proof -- see Remark~\ref{r:combinatorics}.}
\begin{itemize}
\item for every $\psi_1$ in $\Omega^k(M)$, the support of $\pi_{a,k}^{(\alpha)} (\psi_1)$ is contained in $\overline{W^u(a)}$,
\item $0\leq \operatorname{rk}(\pi_{a,k}^{(\alpha)})\leq 2^n$,
\item $\operatorname{rk}(\pi_{a,k}^{(0)})=\delta_{k,r}$ where $r$ is the index of $a$,
\item $\operatorname{rk}(\pi_{a,k}^{(\alpha)})=\frac{n!}{k!(n-k)!}$ for every $\alpha\in(\IN^{*})^n$.
\end{itemize}

\end{theo}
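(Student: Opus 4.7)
The strategy is microlocal, modeled on the Faure--Sj\"ostrand approach to Anosov flows and adapted to the tame hyperbolicity of a Morse--Smale gradient flow near its critical set. For every $N\geq 1$, I would construct an anisotropic Sobolev space $\ml{H}^N_k$ of currents of degree $k$ on $M$, defined via an escape function on $T^*M$ adapted to the hyperbolic splitting above each critical point, on which $\ml{L}_{V_f}$ acts with discrete spectrum in the half-plane $\{\Re z>-cN\}$; choosing $N$ large enough depending on $\Lambda$, all resonances relevant to the $e^{-\Lambda t}$ remainder lie in this half-plane. The resonances and projectors are then identified via linearization: the rational independence of the Lyapunov exponents together with the Sternberg--Chen theorem recalled in \S\ref{sss:adapted} yields a smooth chart near each $a\in\operatorname{Crit}(f)$ in which $V_f=\sum_i\chi_i(a)\,x_i\partial_{x_i}$. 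In stable/unstable coordinates $(x_s,x_u)$ the candidate resonant $k$-currents supported in $W^u(a)=\{x_s=0\}$ are
\[
\omega_{\gamma,I} \;=\; \prod_{i\leq r}\delta^{(\gamma_i-1)}(x_i)\cdot\prod_{i>r}x_i^{\gamma_i}\cdot dx_I,
\]
indexed by $\gamma\in\IN^n$ with $\gamma_i\geq 1$ for stable $i$ (to force support in $W^u(a)$) and by increasing $I\subset\{1,\dots,n\}$ of size $k$. A direct computation gives $\ml{L}_{V_f}\omega_{\gamma,I}=\bigl(\gamma\cdot|\chi(a)|+\sum_{i\in I}\chi_i\bigr)\omega_{\gamma,I}$, so by rational independence each resonance $\alpha\cdot|\chi(a)|$ with $\alpha\in\IN^n$ is reached exactly by the pairs $(\gamma,I)$ satisfying $\gamma=\alpha+\mathbf{1}_{I_s}-\mathbf{1}_{I_u}$ (with $I_s=I\cap\{1,\dots,r\}$, $I_u=I\cap\{r+1,\dots,n\}$) subject to the admissibility constraint on $\gamma$.

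The next step is to show that the global Pollicott--Ruelle spectrum on $\ml{H}^N_k$ is exactly the union over $a$ of these local spectra, with spectral projector $\pi_{a,k}^{(\alpha)}$ attached to each pair $(a,\alpha)$. The partition $M=\bigsqcup_a W^u(a)$, together with the fact that the non-wandering set of a Morse--Smale gradient flow equals $\operatorname{Crit}(f)$, allows a propagation-of-singularities argument that reduces the wavefront analysis of any global resonant current to a neighborhood of a single critical point, where the local model applies. The Smale transversality of the $W^{s/u}$'s is essential to show (in the spirit of Harvey--Lawson~\cite{HaLa00}) that each local eigencurrent $\omega_{\gamma,I}$ extends to a globally well-defined current supported on $\overline{W^u(a)}$, giving the support property of $\pi_{a,k}^{(\alpha)}$. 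One then writes, as operators on $\ml{H}^N_k$,
\[
\varphi_f^{-t*} \;=\; \sum_{\Re z<\Lambda}\,e^{-tz}\Pi_z\;+\;\frac{1}{2i\pi}\int_{\Re z=\Lambda}e^{-tz}(z-\ml{L}_{V_f})^{-1}\,dz,\qquad t\geq 0,
\]
with $\Pi_z=\sum_{(a,\alpha):\,\alpha\cdot|\chi(a)|=z}\pi_{a,k}^{(\alpha)}$; resolvent bounds on the shifted contour produce the $\ml{O}(e^{-\Lambda t})$ remainder, and pairing with $\psi_2\in\Omega^{n-k}(M)$ yields the stated formula.

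The rank claims follow by counting admissible $(\gamma,I)$ for each $\alpha$. The bound $\operatorname{rk}(\pi_{a,k}^{(\alpha)})\leq 2^n$ is immediate since $I$ ranges over subsets of $\{1,\dots,n\}$. For $\alpha=0$ the constraints force $I_s=\{1,\dots,r\}$ and $I_u=\emptyset$, so $|I|=r$ and the only admissible state is $\omega=\delta(x_s)\,dx_s=[W^u(a)]$, giving $\operatorname{rk}(\pi_{a,k}^{(0)})=\delta_{k,r}$. For $\alpha\in(\IN^*)^n$ every $I$ of size $k$ is admissible (the constraint $\gamma_i\geq 1$ on stable $i$ holds automatically), producing exactly $\binom{n}{k}=n!/(k!(n-k)!)$ states. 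The main obstacle is the global-to-local comparison: proving rigorously that the escape-function estimates globalize so that the full spectrum of $\ml{L}_{V_f}$ on $\ml{H}^N_k$ coincides with the union of the local linear models with matching multiplicities, and that the local eigencurrents extend consistently to globally well-defined resonant currents. The Smale transversality hypothesis is the key structural input preventing spurious resonances from arising at the junctions of different unstable strata.
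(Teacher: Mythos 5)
Your proposal follows the same overall architecture as the paper: anisotropic Sobolev spaces via an escape function adapted to the hyperbolic splitting, Sternberg--Chen linearization near the critical set, explicit local eigencurrent models on $\{x_s=0\}$, support of the global resonant states on $\overline{W^u(a)}$, and a contour-integral spectral decomposition. Your parametrization $\omega_{\gamma,I}$ with $\gamma=\alpha+\mathbf{1}_{I_s}-\mathbf{1}_{I_u}$ and the admissibility constraint $\gamma_i\geq 1$ for stable $i$ reproduces the paper's $(\alpha',I,J)$ combinatorics in Propositions~\ref{p:generalized-laudenbach}--\ref{p:basis} and Remark~\ref{r:combinatorics}, and your rank counts for $\alpha=0$ and $\alpha\in(\IN^*)^n$ are correct.

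There is one genuine gap, however, which you neither close nor flag. The identity
$\varphi_f^{-t*}=\sum_{\Re z<\Lambda}e^{-tz}\Pi_z + \frac{1}{2i\pi}\int_{\Re z=\Lambda}e^{-tz}(z-\ml{L}_{V_f})^{-1}dz$
with a pure exponential $e^{-tz}$ in front of each projector presupposes that the discrete spectrum in the half-plane is \emph{semisimple}, i.e.\ has no nontrivial Jordan blocks; otherwise polynomial-in-$t$ factors appear, as in the paper's general expansion~\eqref{e:correlator} in Proposition~\ref{p:correlation}. Exhibiting a basis $\{\omega_{\gamma,I}\}$ of the geometric eigenspace $\operatorname{Ker}(\ml{L}_{V_f}^{(k)}+\lambda)$ does not by itself give $\operatorname{Ker}(\ml{L}_{V_f}^{(k)}+\lambda)=\operatorname{Ker}((\ml{L}_{V_f}^{(k)}+\lambda)^2)$, and in the resonant case Jordan blocks do in fact occur (as the paper notes, citing the height function on $S^2$). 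The paper establishes semisimplicity in Propositions~\ref{p:kernel} and~\ref{p:strong-jordan}; the $\lambda\neq0$ case crucially uses the rational independence of the Lyapunov exponents via a propagation and support argument: a would-be generalized eigenvector $u_1$ with $(\ml{L}_{V_f}+\lambda)u_1=u_0\neq0$ must, near the lowest critical point $a$ where $u_0$ is nonzero, be supported on $W^u(a)$, whence $\la\varphi_f^{\ln s*}u_1,\psi\ra$ is bounded in $s$, forcing the logarithmic term $s^\lambda(\ln s)\la u_0,\psi\ra$ in the Jordan relation to vanish. Absent this step the proof does not yield the stated theorem. A secondary point, which you do acknowledge as the main technical obstacle, is that you do not prove the local models exhaust the eigenspace; the paper does this via Schwartz's structure theorem for currents supported on a submanifold together with the wavefront bound coming from the anisotropic spaces (Lemma~\ref{l:dang}), followed by a Taylor expansion of the smooth coefficients.
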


This Theorem gives us an asymptotic expansion at any order of the \emph{correlation function}\footnote{Here we make a small abuse of 
terminology as correlation functions are usually concerned with invariant measures.}
associated with a Morse-Smale gradient flow. As we shall see, 
we will also provide a more or less explicit expression of the operator 
$\pi_{a,k}^{(\alpha)}$ near the critical point $a$. These operators of course depend on the choice of the Riemannian metric used to define the 
gradient flow as well as the Lyapunov exponents appearing in the above asymptotic expansion. 
Each term appearing in the sum looks also very much like the expansion obtained by Faure and Tsujii in the case of linear models acting on $\IR^n$~\cite[Ch.~3-4]{FaTs15}. 
Here, one of the main difficulty will be to understand how these local models can be glued together in order to obtain a result valid on the whole manifold. We also 
mention that similar expansions appear via techniques from complex analysis in the case of analytic expanding circle maps arising from finite Blaschke products~\cite{BanSlJu15,Bannaud2016}.
Even if our flows are in some sense degenerate Axiom $A$ flows, this result is also closely related to the recent results of Dyatlov 
and Guillarmou on Pollicott-Ruelle resonances for open systems~\cite{DyGu14}. Maybe the main difference with this reference is that 
Theorem~\ref{t:fullasympt-nonresonant} holds \emph{globally} on $M$ and 
not only in a neighborhood of the critical points\footnote{We also note that~\cite{DyGu14} made the assumption that 
the vector field does not vanish.}. If we consider the time $1$ map of the flow, 
the induced diffeomorphism $h=\varphi_f^{-1}$ is probably one 
of the simplest example of an Axiom A (but not Anosov) diffeomorphism~\cite{Sm67}, 
even if it is a kind of ``trivial'' example as all the basic sets are reduced to fixed points. Resonances of general Axiom A diffeomorphisms 
were studied by Ruelle in~\cite{Ru87b} via methods of symbolic dynamics while the direct functional 
approach of these dynamical systems was developped by Baladi--Tsujii~\cite{BaTs07, BaTs08} and by Gou\"ezel--Liverani~\cite{GoLi08}. 
As for the case of Axiom A flows treated in~\cite{DyGu14}, these results focused on the dynamics near a convenient basic set, i.e. we 
restrict ourselves to $\psi_1$ and $\psi_2$ supported in a neighborhood of a fixed critical point.
Here, due to the simple structure of the diffeomorphism, the asymptotic expansion of the correlation function 
can be made without restrictions on $\psi_1$ and $\psi_2$.

We made a kind of (global) nonresonance assumption in the statement of Theorem~\ref{t:fullasympt-nonresonant}. This assumption ensures that the 
generator of the flow does not have any Jordan blocks. 
As far as we know, the above Theorem gives the first example of such diffeomorphisms where \emph{all} the Jordan blocks in 
the spectrum are trivial. In~\cite{FrenkelLosevNekrasov1}, Frenkel, Losev and Nekrasov were lead to similar problems in the context of quantum field theory. In particular, as a by product of their analysis, 
they obtain the complete asymptotic for the correlation function of the flow associated with the height function on the $2$-sphere endowed with its canonical metric. In that case, the 
Lyapunov exponents are all equal to $\pm 1$ (hence resonant) and they prove that there are indeed infinitely many polynomial factors (hence non trivial Jordan blocks) 
in the asymptotic expansion. In the general case (including the case of~\cite{FrenkelLosevNekrasov1}), we still obtain 
an asymptotic expansion for the correlation function which may involve polynomial factors in $t$ except for the peripheral eigenvalue 
$\lambda=0$. We just mention here the \emph{leading term of the asymptotics} in the general case:

\begin{theo}\label{c:leadingasympt} Let $(f,g)$ be a smooth Morse pair generating a Morse-Smale gradient flow $\varphi_f^t$. Let $0\leq k\leq n$.
 
 Then, for every $a$ in $\operatorname{Crit}(f)$ of index $k$, there exist
\begin{itemize}
 \item $U_{a}$ in $\ml{D}^{\prime,k}(M)$ whose support is equal to $\overline{W^u(a)}$,
 \item $S_a$ in $\ml{D}^{\prime,n-k}(M)$ whose support is equal to $\overline{W^s(a)}$,
\end{itemize}
such that, for every 
$$0<\Lambda<\min\left\{|\chi_j(b)|:\ 1\leq j\leq n,\ b\in\operatorname{Crit}(f)\right\},$$ 
for every $(\psi_1,\psi_2)$ in $\Omega^k(M)\times\Omega^{n-k}(M)$, and for every $t\geq 0$,
$$\int_M\varphi_f^{-t*}(\psi_1)\wedge\psi_2=\sum_{a\in\operatorname{Crit}(f):\ \operatorname{ind}(a)=k}\int_M \psi_1\wedge S_a\int_M U_a\wedge\psi_2
+\ml{O}_{\psi_1,\psi_2}(e^{-\Lambda t}).$$
\end{theo}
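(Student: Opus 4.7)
The plan is to deduce Theorem~\ref{c:leadingasympt} from the general spectral analysis of $\ml{L}_{V_f}$ on the anisotropic spaces of currents of degree $k$ constructed in the body of the paper. On such a space $\ml{H}^k$, the operator $\ml{L}_{V_f}$ has discrete spectrum in any half plane $\{\Re(z)>-C\}$, and this spectrum is contained in the set of values $-\alpha\cdot|\chi(a)|$ with $a\in\operatorname{Crit}(f)$ and $\alpha\in\IN^n$ (possibly subject to constraints coming from the degree $k$ and the index of $a$). The peripheral eigenvalue is $0$, and the next eigenvalue lies in $\{\Re(z)\le -\Lambda_0\}$ with $\Lambda_0=\min\{|\chi_j(b)|:1\le j\le n,\ b\in\operatorname{Crit}(f)\}$, which is exactly the spectral gap we need to produce an error of size $\ml{O}(e^{-\Lambda t})$ for any $\Lambda<\Lambda_0$.

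Next I would identify the kernel of $\ml{L}_{V_f}$ in degree $k$. The rank computation $\operatorname{rk}(\pi_{a,k}^{(0)})=\delta_{k,\operatorname{ind}(a)}$ announced in Theorem~\ref{t:fullasympt-nonresonant} suggests, and the construction actually produces, a distinguished generator $U_a$ of this one-dimensional contribution for each critical point $a$ of index $k$: namely the current of integration $[W^u(a)]$ along the unstable manifold, extended across $\overline{W^u(a)}\setminus W^u(a)$ using the Morse-Smale transversality together with Laudenbach's analysis of the boundary strata (a fact recalled in the introduction through the references to~\cite{Lau92,Lau12} and~\cite{HaLa00,HaLa01}). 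This current is $\varphi_f^{-t*}$-invariant, closed, supported in $\overline{W^u(a)}$, and of degree exactly $k=\operatorname{ind}(a)$. Symmetrically, $S_a:=[W^s(a)]$ is a well-defined closed invariant current of degree $n-k$ supported in $\overline{W^s(a)}$, and plays the role of a left eigenvector. The key compatibility between the two families is the intersection formula $\int_M U_a\wedge S_b=\pm\delta_{ab}$, which follows from the fact that $\overline{W^u(a)}$ and $\overline{W^s(b)}$ intersect transversally at $\{a\}$ when $a=b$ and are otherwise disjoint in the appropriate stratified sense; up to sign conventions, this identifies the spectral projector onto $\ker(\ml{L}_{V_f}|_{\ml{H}^k})$ as
$$\Pi_0^{(k)}(\psi_1)=\sum_{a\in\operatorname{Crit}(f),\ \operatorname{ind}(a)=k}\Big(\int_M\psi_1\wedge S_a\Big)\,U_a.$$

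To conclude, I would write the semigroup decomposition
$$e^{-t\ml{L}_{V_f}}=\Pi_0^{(k)}+e^{-t\ml{L}_{V_f}}\bigl(I-\Pi_0^{(k)}\bigr),$$
and apply the spectral gap on the range of $I-\Pi_0^{(k)}$. Even though Jordan blocks may occur at nonzero eigenvalues, all such eigenvalues satisfy $\Re(z)\le -\Lambda_0$, so on that subspace one has $\|e^{-t\ml{L}_{V_f}}(I-\Pi_0^{(k)})\|_{\ml{H}^k\to\ml{H}^k}=\ml{O}(t^N e^{-\Lambda_0 t})=\ml{O}(e^{-\Lambda t})$ for any $\Lambda<\Lambda_0$. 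Pairing against $\psi_2\in\Omega^{n-k}(M)$ via $\int_M\varphi_f^{-t*}(\psi_1)\wedge\psi_2=\langle e^{-t\ml{L}_{V_f}}\psi_1,\psi_2\rangle$ then yields exactly the asserted formula.

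The genuinely nontrivial steps — all handled in the preceding sections rather than here — are the construction of the anisotropic spaces on which $\ml{L}_{V_f}$ acquires discrete spectrum with the claimed Lyapunov-exponent structure, and crucially the semisimplicity of the eigenvalue $0$ in the absence of any nonresonance assumption. For the latter, the most economical argument is the one sketched above: exhibit $\{U_a\}$ and $\{S_a\}$ as bi-orthogonal families whose spans realize the geometric eigenspaces on the right and left, and observe that their common cardinality (the number of critical points of index $k$) must then equal the algebraic multiplicity of $0$, forcing triviality of all Jordan blocks at the origin. This last point is really where the dynamical (Morse-Smale, transversality) and geometric (Laudenbach's extension of $[W^u(a)]$ as a current) inputs come together, and is the only step whose difficulty is specific to Theorem~\ref{c:leadingasympt} rather than to the general spectral machinery.
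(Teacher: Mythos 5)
Your outline has the right global structure — decompose $e^{-t\ml{L}_{V_f}}$ into the rank-$c_k$ spectral projector at $\lambda=0$ plus a remainder, identify the projector as $\psi_1\mapsto\sum_a\bigl(\int_M\psi_1\wedge S_a\bigr)U_a$, and use the gap to $-\Lambda_0$ to control the remainder — and this matches the skeleton of Propositions~\ref{p:ruelle-spectrum}, \ref{p:correlation}, \ref{p:kernel} and~\ref{p:basis}. But two of the inputs you feed into that skeleton are not actually available here, and a third is asserted rather than proved.

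First, you take as given that $[W^u(a)]$ extends across $\overline{W^u(a)}\setminus W^u(a)$ to a closed flow-invariant current, citing Laudenbach's boundary analysis. But Laudenbach's extension is established only for locally flat metrics adapted to the Morse coordinates, while Theorem~\ref{c:leadingasympt} is for an arbitrary smooth Morse pair. The paper insists on exactly this point: $U_a$ (and $S_a$) are to be \emph{produced} by the spectral projector on the anisotropic space together with a local cutoff germ (Proposition~\ref{p:generalized-laudenbach} and Remark~\ref{r:kernel}), so that the existence of the current becomes an output of the spectral analysis rather than an import from geometric measure theory; using Laudenbach as an input both shrinks the generality and inverts the intended logical direction.

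Second, your semisimplicity argument hinges on the claim that ``$\{U_a\}$ and $\{S_a\}$ ... spans realize the geometric eigenspaces.'' That is precisely Proposition~\ref{p:basis} for $\lambda=0$, and it is not automatic: its proof needs the microlocal structure theorem for kernel elements (Schwartz's theorem together with the wavefront constraint coming from the anisotropic space and Lemma~\ref{l:support-Lemma}), followed by an induction down the critical values. Moreover, in the paper this identification of $U_a$ with $[W^u(a)]$ near $a$ already \emph{uses} the absence of Jordan blocks at $0$ to isolate the $s^{0}$ term in the spectral expansion of $\la\varphi_f^{\ln s*}\tilde U_a^{0,\emptyset,\emptyset},\psi\ra$. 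So while the abstract ``biorthogonal eigenfamilies $\Rightarrow$ no Jordan block'' implication is sound once spanning is known, you cannot invoke it to replace Proposition~\ref{p:kernel} without an independent proof of spanning, and you have not indicated one. The paper breaks this circularity the other way: it first proves Proposition~\ref{p:kernel} by a direct propagation-plus-bounded-polyhomogeneous-expansion argument, and only then constructs and spans the kernel by the $U_a$. Finally, the duality $\int_M U_a\wedge S_b=\delta_{ab}$ (and that the wedge even makes sense between two singular currents with overlapping supports) is itself a small argument using the wavefront transversality and the dynamical fact that $\overline{W^s(a)}\cap\overline{W^u(a')}=\emptyset$ for $a\neq a'$ of the same index; the paper carries this out in paragraph~\ref{sss:correlation}, and you should too.
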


Except for the remainder term, this result was first proved by Harvey and Lawson via techniques from geometric measure theory and under 
slightly more restrictive assumptions on the gradient flows~\cite{HaLa00, HaLa01}.
This Theorem follows from Propositions~\ref{p:correlation} and~\ref{p:basis}. To our knowledge, the currents $U_a$ and $S_a$ 
appearing in the leading term of the asymptotic expansion were first constructed by Laudenbach~\cite{Lau92, Lau12} 
in the case of a ``locally flat metric adapted to the Morse coordinates'' 
-- see paragraph~\ref{sss:adapted} for the definition. In the following, 
we shall refer to them as Laudenbach's currents. The difficulty is that the submanifolds $W^u(a)$ and $W^s(a)$ are not a priori properly embedded and one has to justify that 
the currents of integration are well defined. Precisely, one can integrate on $W^u(a)$ a differential form $\psi$ whose support is included in a compact part of $W^u(a)$ but integration of a general form 
whose support may intersect the boundary needs to be justified. This can solved by analyzing the mass of the currents near the boundary 
of the unstable (resp. stable manifold) and this requires a careful description 
of the structure of the boundary of $W^u(a)$~\cite{Lau92, Lau12}. Even if it is in a different manner, similar 
difficulties involving the boundary will of course occur at some point in our analysis 
and we shall deal with this problem via dynamical techniques following the works of Smale~\cite{Sm60} -- 
see for instance Lemmas~\ref{l:compact} and~\ref{l:openneighbor}.

After properly defining the spectral framework of our problem, we will recover the existence of these currents as a consequence of our spectral analysis. They correspond 
to the kernel of the operator $\ml{L}_{V_f}$ acting on appropriate anisotropic spaces of currents. The advantage of this approach is that it allows to treat 
more general families of gradient flows and that it sheds a new (spectral) light on these natural dynamical objects. 
A difficulty may be that it relies on microlocal techniques which are maybe 
not as well-known as the geometric measure theory used by Harvey and Lawson
in~\cite{HaLa00, HaLa01} to give an interpretation of these currents as a limit of the correlation function   
under the assumption 
of finite volume -- see also~\cite{Mi15} for generalizations of this result. In any case, Theorem~\ref{t:fullasympt-nonresonant} generalizes 
this type of result in the sense that it does not only give the existence of the limit but also a rate of convergence to this equilibrium state and the full asymptotic expansion as $t\rightarrow+\infty$.

\subsection{Topological interpretation of the leading term}

One of the main application of this dynamical approach to Morse theory is that the partition of the manifold into unstable components 
has beautiful topological 
implications~\cite{Th49, Sm60, HaLa01, Lau12}. 
In section~\ref{s:topology}, we will explain how to recover some classical results from differential topology 
(e.g. finiteness of Betti numbers, Poincar\'e duality, Morse inequalities) via our spectral approach and via some
 analogies with Hodge-de Rham theory. For that purpose, we can set, for every $0\leq k\leq n$,
$$C^k(f):=\text{span}\left\{U_a: \text{ind}(a)=k\right\}.$$
In analogy with Hodge-de Rham theory where one uses the formula $\Delta=d\circ d^*+d^*\circ d$, we can write the Cartan formula
$$\ml{L}_{V_f}=d\circ i_{V_f}+i_{V_f}\circ d,$$
where $d$ is the coboundary operator and $i_{V_f}$ is the contraction by the vector field $V_f$. We will verify that $(C^*(f),d)$ induces a cohomological 
complex while $(C^*(f),i_{V_f})$ induces a homological complex. The first complex is known as the Morse complex (also sometimes called the Thom-Smale-Witten complex) 
and we will call the second one the \textbf{Morse-Koszul} complex. Using our analysis of Morse-Smale gradient flows, we will 
give a purely spectral proof of the following results:
\begin{theo}\label{t:topology} Let $(f,g)$ be a smooth Morse pair generating a Morse-Smale gradient flow $\varphi_f^t$. Then, the following holds.
\begin{enumerate}
 \item The maps
 $$\mathbb{P}^{(k)}:\psi\in \Omega^k(M)\mapsto \sum_{a\in\operatorname{Crit}(f):\ \operatorname{ind}(a)=k}\left(\int_M S_a\wedge\psi\right) U_a\in C^k(f)$$
 induce a quasi--isomorphism between the cohomology of the de Rham complex $(\Omega^*(M),d)$ and the cohomology of the Morse complex $(C^*(f),d)$.
 \item The homology in degree $k$ of the Morse-Koszul complex $(C^*(f),i_{V_f})$ is equal to $C^k(f)$.
\end{enumerate}
\end{theo}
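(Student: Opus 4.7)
The plan is to view $\mathbb{P}^{(k)}$ as the spectral projector of $\ml{L}_{V_f}$ onto its kernel on the anisotropic space of currents $\ml{H}^k$ introduced earlier in the paper, and then transport the classical Hodge-theoretic argument from elliptic theory into this non-self-adjoint setting using Cartan's formula as a substitute for the Laplacian identity.

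The first step is to observe that, by Cartan's magic formula, $\ml{L}_{V_f}=d\circ i_{V_f}+i_{V_f}\circ d$, and that $d$ commutes with any Lie derivative while $[\ml{L}_{V_f},i_{V_f}]=i_{[V_f,V_f]}=0$. Hence both $d$ and $i_{V_f}$ commute with $\ml{L}_{V_f}$, so both preserve every spectral subspace; in particular they preserve $C^*(f)=\ker\ml{L}_{V_f}$ and make $(C^*(f),d)$ and $(C^*(f),i_{V_f})$ into (co)chain complexes. Moreover, because $d$ commutes with $\ml{L}_{V_f}$ it commutes with the spectral projector onto the kernel, and Theorem~\ref{c:leadingasympt} identifies this projector with the map $\mathbb{P}^{(k)}$. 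So $\mathbb{P}^*$ is automatically a cochain map $(\Omega^*(M),d)\to(C^*(f),d)$.

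To upgrade this to a quasi-isomorphism I would write a Hodge-type homotopy on $\ml{H}^*$. Let $G$ be the partial inverse of $\ml{L}_{V_f}$ on the range of $\mathrm{Id}-\mathbb{P}$, extended by $0$ on $C^*(f)$; since $G$ is a function of $\ml{L}_{V_f}$ it again commutes with $d$ and $i_{V_f}$. Then
\begin{equation*}
\mathrm{Id}-\mathbb{P}=\ml{L}_{V_f}G=(d\,i_{V_f}+i_{V_f}\,d)\,G=d\,(i_{V_f}G)+(i_{V_f}G)\,d,
\end{equation*}
so $\mathbb{P}$ is chain-homotopic to $\mathrm{Id}$ on $\ml{H}^*$ via $H=i_{V_f}G$. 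Thus $\mathbb{P}$ induces an isomorphism $H^*(\ml{H}^*,d)\simeq H^*(C^*(f),d)$. Combined with the standard fact that the inclusion $\Omega^*(M)\hookrightarrow\ml{H}^*$ is a quasi-isomorphism (both compute de Rham cohomology, via density/regularization inside the anisotropic space), this yields part~(1).

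For part~(2) I would prove the stronger statement that $i_{V_f}\equiv 0$ on $C^*(f)$, which immediately gives $H_k(C^*(f),i_{V_f})=C^k(f)$. Fix $a\in\mathrm{Crit}(f)$ of index $k$. Then $i_{V_f}U_a\in C^{k-1}(f)$, so it can be expanded as $\sum_{\mathrm{ind}(b)=k-1}c_b\,U_b$. On the one hand $\mathrm{supp}(i_{V_f}U_a)\subset\mathrm{supp}(U_a)=\overline{W^u(a)}$, which has dimension $n-k$. On the other hand, the partition property of unstable manifolds ensures that for any $b_0$ with $c_{b_0}\ne 0$ there is a nonempty open subset of $W^u(b_0)$ disjoint from $\overline{W^u(b)}$ for every other index $k-1$ critical point $b$; on that subset $\sum c_b U_b$ coincides with $c_{b_0}U_{b_0}$, which is nonzero since $\mathrm{supp}(U_{b_0})=\overline{W^u(b_0)}$. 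This forces an $(n-k+1)$-dimensional piece of $W^u(b_0)$ to sit inside the $(n-k)$-dimensional set $\overline{W^u(a)}$, a contradiction. Hence every $c_b=0$, as required.

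The main obstacle is the analytic scaffolding of part~(1): one needs that the anisotropic space $\ml{H}^*$ carries a well-defined spectral decomposition of $\ml{L}_{V_f}$, that the Green operator $G$ is continuous on it and preserves the wavefront bounds so that $i_{V_f}G$ makes sense, and that the inclusion $\Omega^*(M)\hookrightarrow\ml{H}^*$ genuinely induces an isomorphism on cohomology. All of these are consequences of the microlocal framework set up earlier in the paper, but checking that the homotopy lives inside the functional spaces on which the complex is defined is where real care is needed. The topological support argument in part~(2) is by comparison quite soft, relying only on the support characterization of the $U_a$ given by Theorem~\ref{c:leadingasympt} and on the Thom partition.
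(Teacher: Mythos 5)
Your proposal follows essentially the same route as the paper: in part (1) you build the Cartan homotopy $dH+Hd=\mathrm{Id}-\mathbb{P}^{(k)}$ with $H=i_{V_f}\circ(\ml{L}_{V_f})^{-1}\circ(\mathrm{Id}-\mathbb{P})$, which is precisely the paper's operator $R_{\infty}^{(k)}$, and in part (2) you show $i_{V_f}$ vanishes identically on $C^*(f)$, which is what the paper does in Proposition~\ref{p:Euler}. Two remarks. First, the one step you defer to ``density/regularization'' --- that $\Omega^*(M)\hookrightarrow\ml{H}^*$ is a quasi-isomorphism --- is in fact where the real analytic content lies, and it is \emph{not} a density argument (cohomology is not continuous in that sense); the paper establishes it in Remark~\ref{r:derham} by elliptic regularity for $\Delta_g$: writing $u-\omega=dd^*A_k u$ with $A_k$ a pseudodifferential parametrix of order $-2$, which keeps the primitive in the anisotropic scale. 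Second, your part (2) is a mild streamlining: the paper first computes $i_{V_f}U_a=0$ near $a$ in adapted coordinates, then propagates the support into $\overline{W^u(a)}\setminus W^u(a)$ before comparing with the $U_b$'s, whereas you bypass the local computation entirely and argue directly from $\mathrm{supp}(i_{V_f}U_a)\subset\overline{W^u(a)}$ and a dimension count (or, even more simply, from $b_0\notin\overline{W^u(a)}$ for any $b_0$ of index $k-1$, which follows from the paper's Lemma~\ref{l:neighbor}). Both arguments work; yours is slightly more elementary.
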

The first part of the Theorem is due to Laudenbach in the case of a ``locally flat metric adapted to the Morse coordinates''~\cite{Lau92, Lau12}. It recovers the classical fact that the de Rham complex $(\Omega^*(M),d)$ 
is quasi-isomorphic to the Morse complex. From this, it is classical to deduce 
the finiteness of the Betti numbers and the so-called Morse inequalities -- see section~\ref{s:topology} for more details. 
The second part seems new.

\subsection{About the proof: spectral analysis of $\ml{L}_{V_f}$} 

One of the main difficulty one encounters when trying to describe 
this spectrum is to find good Banach spaces containing $\Omega^*(M)$ and where $\ml{L}_{V_f}$ has nice spectral properties such as discrete spectrum. 
Here, we will in fact closely follow the construction of Faure-Sj\"ostrand in~\cite{FaSj11} (see also~\cite{DyZw13, FaTs13} in the case of currents) and explain how to adapt it to our dynamical framework. One 
of the main issue we have to deal with is the 
asymptotic behaviour of the Hamiltonian lift of $\varphi_f^t$. In particular, we have to verify that the attractor and the repeller of the normalized Hamiltonian flow are compact subsets -- 
see Lemmas~\ref{l:compact} and~\ref{l:openneighbor}. This is one of the first place where we will strongly use our extra assumptions on the flow, namely 
the Smale transversality and the (smooth) linearizing property near every critical point.

After setting properly this dynamical framework and its asymptotic properties, we can closely follow the construction 
from~\cite{FaSj11} which requires minor (but necessary) modifications that will be 
described in section~\ref{s:anisotropic} -- see also appendices~\ref{a:escape} and~\ref{a:discrete}. Given any 
$\Lambda>0$ and any $0\leq k\leq n$, this procedure allows us to construct an anisotropic Sobolev space $\ml{H}_k^{m_{\Lambda}}(M)\subset\ml{D}^{\prime,k}(M)$ 
such that
$$-\ml{L}_{V_f}^{(k)}:\ml{H}_k^{m_{\Lambda}}(M)\rightarrow \ml{H}_k^{m_{\Lambda}}(M),$$
and such that the operator has only discrete spectrum with finite multiplicity in the half plane $\{\text{Re}(z)>-\Lambda\}.$ According to~\cite[Th.~1.5]{FaSj11}, these values are independent of the 
choice of our anisotropic space. These complex numbers are called the Pollicott-Ruelle resonances of $-\ml{L}_{V_f}^{(k)}$~\cite{Po85, Ru87a}, 
and they correspond to the poles of the meromorphic extension of 
$(-\ml{L}_{V_f}^{(k)}-z)^{-1}:\Omega^k(M)\rightarrow \ml{D}^{\prime,k}(M)$ to the complex plane. We denote these poles by $\ml{R}_k(f,g)$.

In the case where $(f,g)$ is a smooth Morse pair inducing a Morse-Smale gradient flow, we will obtain several results on their structure that we will now describe:
\begin{enumerate}
 \item Any element in $\ml{R}_{k}(f,g)$ is contained in $(-\infty,0]$ and is a linear combination with integer coefficients of the Lyapunov exponents at a fixed critical point $a$ (Proposition~\ref{p:ruelle-spectrum}).
  \item If all the Lyapunov exponents are rationally independent, we can determine the multiplicity of every element in $\ml{R}_{k}(f,g)$ and 
 the local expression of the eigenmodes near the associated critical point (Propositions~\ref{p:generalized-laudenbach} and~\ref{p:basis}). 
 \item The algebraic multiplicity of an eigenvalue is always equal to its geometric multiplicity (Proposition~\ref{p:strong-jordan}). 
 \item In particular, we can determine Weyl asymptotics in terms of the Lyapunov exponents (Proposition~\ref{p:weyl}) 
and we give a spectral version of the classical Lefschetz trace formula (Proposition~\ref{p:trace}).
\end{enumerate}
The combination of all these results allows to prove Theorem~\ref{t:fullasympt-nonresonant} and Theorem~\ref{c:leadingasympt}-- see Section~\ref{s:jordan}. 
The proofs of these different spectral results will heavily rely on the construction of the spaces $\ml{H}^{m_{\Lambda}}_k(M)$ that implies that our 
eigenmodes have a certain prescribed Sobolev regularity.

\subsection{Organization of the article} In section~\ref{s:dynamics}, we gather some crucial dynamical preliminaries and introduce some notations that will be used all along the article. In 
section~\ref{s:anisotropic}, we make use of our dynamical assumptions in order to construct anisotropic spaces of currents which are adapted to our problem. As our construction is 
very close to the one in~\cite{FaSj11}, we mostly focus on the differences, namely the construction of the escape function whose detailed proof is 
postponed to appendix~\ref{a:escape}. In section~\ref{s:spectrum}, we make use of the regularity properties of the eigenmodes to 
prescribe the values of the Pollicott-Ruelle eigenvalues. Section~\ref{s:jordan} gives a complete description of the spectrum 
(multiplicities of the eigenvalues, local structure of the eigenmodes). We explain in section~\ref{s:topology} how 
to deduce some classical results of differential topology from the results obtained 
in the previous sections. In appendix~\ref{a:escape}, we give the complete proof of the construction of the escape function. Appendix~\ref{a:discrete} is devoted to a brief reminder of~\cite{FaSj11} 
concerning the proof of Proposition~\ref{p:eigenvalues}. Finally, appendix~\ref{a:asymptotic} collects 
some facts on asymptotic expansions that we use at several stages of our work.

\subsection*{Acknowledgements} 
We would like to thank Fr\'ed\'eric Faure for many explanations about his recent works on transfer operators and for generously sharing his knowledge on these questions with us. 
We warmly thank Livio Flaminio, Damien Gayet, Colin Guillarmou, Patrick Popescu Pampu, Nicolas Vichery and Tobias Weich for useful discussions related to this work.
We are grateful to Serguei Barannikov, Lie Fu, Camille Laurent-Gengoux and Joan Mill\`es for explanations and comments related to 
the content of section~\ref{s:topology} and more specifically to Koszul complexes. Finally, we thank the anonymous referee for his detailed report 
and his suggestions. The second author is 
partially supported by the Agence Nationale de la Recherche through the Labex CEMPI (ANR-11-LABX-0007-01) and the ANR project GERASIC (ANR-13-BS01-0007-01).

\section{Morse-Smale gradient flows}\label{s:dynamics}

In this section, we briefly collect some facts on the dynamical properties of Morse-Smale gradient flows. The main new results of this section are 
Lemmas~\ref{l:compact} and~\ref{l:openneighbor} which are 
related to earlier works of Smale~\cite{Sm60}. These two lemmas are the crucial ingredients to develop 
the machinery of anisotropic Sobolev spaces of Faure and Sj\"ostrand~\cite{FaSj11}.
We also fix some conventions that we will use all along the article. 
For the well-known results, we follow the lines of~\cite{Lau12, Web06} and 
we refer to these references for a more detailed exposition. Recall that, in all 
the article, $M$ denotes a smooth ($\ml{C}^{\infty}$), oriented, compact manifold without boundary. 

\subsection{Gradient flows}

Let $f:M\rightarrow \IR$ be a smooth function on $M$. If we fix a Riemannian metric $g$ on $M$ (compatible with our orientation), then we can define the corresponding gradient vector field as follows:
$$\forall x\in M,\ \la \text{grad} f(x),.\ra_{g(x)}:=df(x).$$
In local coordinates, this can we written as
$$V_f(x):=\text{grad} f(x)=\sum_{i,j=1}^n g^{ij}(x)\partial_{x_j} f \partial_{x_j},$$
where $(g^{ij}(x))_{1\leq i,j\leq n}$ is the induced Riemannian metric on $T_x^*M$. Under our geometric assumptions (compactness of the manifold), one knows that the gradient vector field induces a complete flow that we denote by 
$$\varphi_f^t:M\rightarrow M.$$ 
If it does not create any particular confusion, we will sometimes use the convention $x(t)=\varphi^t_f(x_0)$ for a fixed $x_0$ in $M$. Note that, 
for any integral curve $t\mapsto x(t)$ of the gradient vector field, one has
\begin{equation}\label{e:gradient-lines}\forall t_1,t_2\in\IR,\ f(x(t_2))-f(x(t_1))=\int_{t_1}^{t_2}\|\text{grad} f(x(t))\|_{g(x(t))}^2dt.\end{equation}
In other words, $f$ can only increase  along the flow lines. Suppose now that $f$ is a \textbf{Morse function}. We denote by $\text{Crit} f$ its critical points. For the sake 
of simplicity, we will also assume our Morse function $f$ to be \textbf{excellent} which means that all critical values are distinct. Recall that 
such functions are dense in 
the topological space $\ml{C}^{\infty}(M,\IR)$. The Morse Lemma tells us
\begin{lemm}[Morse Lemma]
Let $f$ be a Morse function on a Riemannian manifold $(M,g)$.
Then, near any critical point $a$, there is a system of coordinates
$(z_i)_i$ such that the point $a$ is given by $z=0$, and such that
$$f(z)=f(a)-\frac{z_1^2}{2}-\ldots- \frac{z_r^2}{2}+\frac{z_{r+1}^2}{2}+\ldots +\frac{z_n^2}{2},$$ 
for some $0\leq r\leq n$. The integer $r$ is called the \textbf{index} of the critical point $a$. We will either denote it by $r(a)$ or 
$\operatorname{ind}(a)$.
\end{lemm}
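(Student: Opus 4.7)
The plan is to reduce the statement to a local assertion in $\mathbb{R}^n$ near the origin, and then put the quadratic expansion of $f$ into the desired diagonal form by an inductive Gram--Schmidt type procedure. First, I would pick any smooth chart near $a$ sending $a$ to $0\in\mathbb{R}^n$ and work with $\widetilde f:=f-f(a)$, which vanishes at $0$ together with its differential. Applying Hadamard's lemma (integrated Taylor remainder) twice, one writes
$$\widetilde f(x)=\sum_{i=1}^n x_i g_i(x),\qquad g_i(x)=\sum_{j=1}^n x_j h_{ij}(x),$$
with $g_i,h_{ij}\in\mathcal{C}^{\infty}$ and $g_i(0)=\partial_i f(a)=0$. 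After symmetrizing $h_{ij}\leftarrow(h_{ij}+h_{ji})/2$ one obtains $\widetilde f(x)=\sum_{i,j}h_{ij}(x)x_ix_j$ with $H(0)=(h_{ij}(0))=\tfrac12\operatorname{Hess}_a f$, which is invertible by the Morse assumption.

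Next, I would diagonalize this variable quadratic form by induction, following Milnor's classical argument. Assume that in some smooth coordinates $(y_1,\dots,y_n)$ near $0$ one has already reached the shape
$$\widetilde f=\epsilon_1 y_1^2+\dots+\epsilon_{k-1} y_{k-1}^2+\sum_{i,j\geq k}\tilde h_{ij}(y)\,y_iy_j,$$
with $\epsilon_i\in\{\pm 1\}$ and the residual symmetric matrix $(\tilde h_{ij}(0))_{i,j\geq k}$ still non-degenerate. After a linear permutation of $y_k,\dots,y_n$ one may assume $\tilde h_{kk}(0)\neq 0$; since $\tilde h_{kk}$ is then of constant sign in a neighbourhood of $0$, the function $\sqrt{|\tilde h_{kk}(y)|}$ is smooth there, and the change of variables
$$y'_k:=\sqrt{|\tilde h_{kk}(y)|}\Bigl(y_k+\sum_{j>k}\frac{\tilde h_{kj}(y)}{\tilde h_{kk}(y)}\,y_j\Bigr),\qquad y'_j:=y_j\ (j\neq k),$$
completes the square in $y_k$ and absorbs all mixed terms involving $y_k$ into a new (still non-degenerate) quadratic expression in $y_{k+1},\dots,y_n$. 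The Jacobian of this transformation at $0$ is triangular with non-zero diagonal, hence a local diffeomorphism. Iterating this step $n$ times yields coordinates $(w_1,\dots,w_n)$ in which $\widetilde f=\sum_{i=1}^n\epsilon_i w_i^2$ with $\epsilon_i=\pm 1$. Permuting coordinates to place the negative signs first, and finally rescaling $z_i:=\sqrt 2\,w_i$, produces the announced normal form.

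The integer $r$ (number of minus signs) does not depend on any of these choices, since it equals the negative index of inertia of the bilinear form $d^2_a f$, which is intrinsic by Sylvester's law. The main technical point to watch is the smoothness of each inductive change of variable: everything hinges on $\tilde h_{kk}(0)\neq 0$, which ensures that $|\tilde h_{kk}(y)|=\pm\tilde h_{kk}(y)$ in a small neighbourhood of $0$ and hence that $\sqrt{|\tilde h_{kk}(y)|}$ is genuinely $\mathcal{C}^{\infty}$. This is the only place where the Morse non-degeneracy assumption really bites; once it is secured, the remaining verifications (preservation of the quadratic shape, non-degeneracy of the residual $(n-k)\times(n-k)$ matrix after each step, invertibility of the Jacobian) are straightforward linear-algebraic computations.
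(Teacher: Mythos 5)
The paper does not prove the Morse Lemma; it is stated as a classical fact with references, so there is no in-house proof to compare against. Your outline follows Milnor's standard induction and is correct in spirit, but one step fails as you have written it.

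You assert that ``after a linear permutation of $y_k,\dots,y_n$ one may assume $\tilde h_{kk}(0)\neq 0$.'' A permutation of coordinates only permutes the diagonal entries of the residual block $(\tilde h_{ij}(0))_{i,j\geq k}$, and a symmetric non-degenerate matrix can perfectly well have \emph{all} diagonal entries equal to zero. Already the base case $k=1$ exhibits the problem: for $f(x,y)=xy$ near the origin, $\tfrac12\operatorname{Hess}_0 f=\begin{pmatrix}0&\tfrac12\\\tfrac12&0\end{pmatrix}$ is non-degenerate with vanishing diagonal, and no reordering of $(x,y)$ produces a nonzero $(1,1)$ entry; the same can recur at any later stage of the induction. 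What the argument actually needs here is a general \emph{linear} change of the coordinates $y_k,\dots,y_n$, not merely a permutation. The standard fix is exactly what Milnor does: if every $\tilde h_{ii}(0)=0$ for $i\geq k$, pick $i<j$ with $\tilde h_{ij}(0)\neq 0$ and substitute $y_i=v_i+v_j$, $y_j=v_i-v_j$ (leaving the other coordinates alone); the new $(i,i)$ coefficient is $\tilde h_{ii}(0)+2\tilde h_{ij}(0)+\tilde h_{jj}(0)=2\tilde h_{ij}(0)\neq 0$, and this substitution is smooth, invertible, and does not disturb the already-diagonalized block $\epsilon_1y_1^2+\dots+\epsilon_{k-1}y_{k-1}^2$. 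Once ``permutation'' is replaced by such a linear substitution, the rest of your argument --- smoothness of $\sqrt{|\tilde h_{kk}|}$ near $0$, completion of the square, triangular Jacobian, the final reordering and rescaling, and the identification of $r$ via Sylvester's law of inertia --- goes through exactly as you describe.
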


An important property of the dynamical system $\varphi_f^t:M\rightarrow M$ is that, 
for any given $x_0$ in $M$, there exist two points $x_-$ and $x_+$ in $\text{Crit} f$ such that
\begin{equation}\label{e:limit-points}
 \lim_{t\rightarrow\pm\infty}\varphi^t_f(x_0)=x_{\pm}.
\end{equation}

\subsection{Stable and unstable manifolds}

Let $a$ be a critical point of $f$. The stable (resp. unstable) manifold $W^s(a)$ (resp. $W^u(a)$) is defined as the set of points $x$ in $M$ satisfying 
$\varphi_f^t(x)\rightarrow a$ as $t\rightarrow +\infty$ 
(resp. $t\rightarrow -\infty$). From \cite[Th.~2.7]{Web06}, one knows that $W^s(a)$ (resp. $W^u(a)$) is a \emph{smooth submanifold} 
of dimension $r(a)$ (resp. $n-r(a)$) where $0\leq r(a)\leq n$ is the index of the critical point $a$. Note that, for more general vector fields 
with an hyperbolic point, the stable (resp. unstable) manifold 
is a priori only injectively immersed in $M$. The fact that we consider a gradient flow allows to show that it is also embedded~\cite[Th.~2.7]{Web06}, 
even if it is not a priori properly embedded.

We will say that the gradient flow $\varphi_f^t$ satisfies the \textbf{Morse-Smale assumption} if for every pair of critical points $(a,b)$, 
the submanifolds $W^s(a)$ and $W^u(b)$ are transversal. Note that, in the case where $a=b$, the intersection of the tangent spaces is in fact reduced to 
$\{0\}$. This transversality assumption ensures the following important property:
\begin{lemm}\label{l:decrease-dimension} Let $\varphi_f^t$ be a Morse-Smale gradient flow. If $a\neq b$ and if $W^s(a)\cap W^u(b)$ is non empty, then
 $$r(b)<r(a).$$
\end{lemm}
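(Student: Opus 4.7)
The plan is to combine the Morse–Smale transversality with the flow-invariance of stable and unstable manifolds. First I would invoke the standard transversality dimension formula: since $W^s(a)$ and $W^u(b)$ are transverse and their intersection is non-empty, $W^s(a) \cap W^u(b)$ is a smooth submanifold of $M$ of dimension
\[
\dim \bigl(W^s(a) \cap W^u(b)\bigr) = \dim W^s(a) + \dim W^u(b) - n = r(a) + (n - r(b)) - n = r(a) - r(b).
\]
In particular this already gives the weak inequality $r(a) \geq r(b)$, so the only issue is to rule out equality.

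To upgrade to the strict inequality, I would exploit the dynamical structure. Both $W^s(a)$ and $W^u(b)$ are invariant under the gradient flow $\varphi_f^t$, hence so is their intersection. Pick any $x \in W^s(a) \cap W^u(b)$. Since $a \neq b$, the point $x$ cannot be a fixed point of $\varphi_f^t$: if it were, one would have $x = a$ (from the $+\infty$ limit) and $x = b$ (from the $-\infty$ limit), contradicting $a \neq b$. Thus the orbit $\{\varphi_f^t(x) : t \in \mathbb{R}\}$ is a genuine one-dimensional curve contained entirely inside $W^s(a) \cap W^u(b)$, which forces
\[
r(a) - r(b) = \dim\bigl(W^s(a) \cap W^u(b)\bigr) \geq 1,
\]
i.e.\ $r(b) < r(a)$.

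The argument is quite short, and I do not expect any serious obstacle: the only subtle point is to justify that $x$ is not a fixed point, which uses the excellence (or at least the fact that critical points are isolated) only via the basic observation that $W^s(a) \cap W^u(a) = \{a\}$ together with $a \neq b$. Everything else is a direct application of transversality and the flow-invariance of the invariant manifolds, both of which are built into the Morse–Smale hypothesis and recalled in the preceding discussion.
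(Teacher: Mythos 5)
Your proof is correct, and it is the standard argument for this fact. Note that the paper itself states Lemma~\ref{l:decrease-dimension} without proof, treating it as a classical consequence of the Morse--Smale assumption; so there is no "paper's proof" to compare against, but what you have written is precisely the textbook derivation (it appears, for instance, in Weber's~\cite{Web06} and Laudenbach's~\cite{Lau12} accounts that the paper references). Your two steps are exactly right: transversality gives $\dim\bigl(W^s(a)\cap W^u(b)\bigr) = r(a)-r(b)$ at any intersection point, and then flow-invariance of the intersection together with $a\neq b$ forces any $x$ in the intersection to lie on a genuine (non-constant) flow line, so the intersection has dimension at least $1$. One small remark: you do not actually need excellence of $f$ anywhere --- the observation that $x$ is not a fixed point follows solely from $\lim_{t\to+\infty}\varphi_f^t(x)=a$, $\lim_{t\to-\infty}\varphi_f^t(x)=b$, and $a\neq b$, as you yourself note at the end.
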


Let us now fix some conventions. Given any $x_0$ in $M$, there exists a unique pair of critical points $(x_-,x_+)$ such that $x_0$ belongs to $W^u(x_-)\cap W^s(x_+)$. We define
$$E^u(x_0):=T_{x_0}W^u(x_-)\ \text{and}\ E^s(x_0):=T_{x_0}W^s(x_+).$$
Note that, whenever $x_0$ is not a critical point of $f$, the intersection of these two subspaces is not reduced to $0$ as they both contains the direction of the flow. 
From our transversality assumption, one has $T_{x_0} M=E^u(x_0)+E^s(x_0)$. We refer to paragraph~\ref{ss:Lyapunov} for a more detailed description of 
these subspaces at the critical points of $f$. We can also introduce 
the dual spaces $E_u^*(x_0)$ and $E_s^*(x_0)$ which are defined as the annihilators of these unstable and stable spaces, i.e. 
$E_{u/s}^*(x_0)(E^{u/s}(x_0))=0$. From the Morse-Smale transversality assumption, one can verify that, for any $x_0$ in $M$,
\begin{equation}\label{e:transversality}
 E_u^*(x_0)\cap E_{s}^*(x_0)=\{0\}.
\end{equation}

\subsection{Lyapunov exponents}\label{ss:Lyapunov}

Given every point $x$ in $M$, we define $L_f(x)$ as 
the unique matrix satisfying
$$\forall \xi,\eta\in T_xM,\ \la L_f(x)\xi,\eta\ra_{g(x) }=d_x^2f(\xi,\eta).$$
Let $a$ be an element in $\text{Crit} (f)$. The matrix $L_f(a)$ corresponds to the linearization of $V_f$ at 
the point $a$. It can be shown~\cite[Lemma~2.5]{Web06} that
$$\forall t\in\IR,\ d\varphi_f^t(a)=\exp(tL_f(a)).$$
Moreover, from the definition and from the Morse assumption, one can verify that $L_f(a)$ is an invertible matrix which is symmetric with 
respect to the Riemannian metric $g$. In particular, it is diagonalizable and we denote by $(\chi_j(a))_{j=1,\ldots, n}$ its eigenvalues. 
These nonzero real numbers are called the \textbf{Lyapunov exponents} of the system. They depend both on $f$ and on the metric $g$. We will always 
suppose that $\chi_i(a)<0$ for $1\leq i\leq r$ and $\chi_i(a)>0$ for $r+1\leq i\leq n$. Moreover, there 
exists a basis of eigenvectors which is orthonormal with respect to the metric $g$. According to~\cite[Th.~2.7]{Web06}, the stable (resp. unstable) 
space is in fact equal to the direct sum of eigenspaces corresponding to the negative (resp. positive) eigenvalues of $L_f(a)$.

\subsection{Lift to the cotangent space}\label{ss:Hamiltonian} We will now explain how one can lift this gradient flow to the cotangent space $T^*M$. 
We associate to the vector field $V_f$ an Hamiltonian function $H_f$ which can be written as follows:
$$\forall (x,\xi)\in T^*M,\ H_f(x,\xi):=\xi\left(V_f(x)\right).$$
This Hamiltonian function also induces an Hamiltonian flow that we denote by $\Phi_f^t:T^*M\rightarrow T^*M$. We note that, by construction, 
$$\Phi_f^t(x,\xi):=\left(\varphi_f^t(x),\left(d\varphi^t_f(x)^T\right)^{-1}\xi\right),$$
and that \emph{this flow induces a diffeomorphism between $T^*M-\{0\}$ and $T^*M-\{0\}$.} When it does not lead to any confusion, we will also write 
$\Phi_f^t(x,\xi)=(x(t),\xi(t))$. 
We note that this flow induces a smooth flow on the unit cotangent bundle $S^*M$, i.e.
$$\forall t\in\IR,\ \forall(x,\xi)\in S^*M,\ \tilde{\Phi}_f^t(x,\xi)=\left(\varphi_f^t(x),\frac{\left(d\varphi^t_f(x)^T\right)^{-1}\xi}{\left\|\left(d\varphi^t_f(x)^T\right)^{-1}\xi\right\|}\right).$$
We denote by $\tilde{X}_{H_f}$ the induced smooth vector field on $S^*M$.

\subsection{Adapted coordinates}\label{sss:adapted} 

We will say in the following that $(f,g)$ is a \textbf{smooth Morse pair} if, given any critical point $a$ of $f$, one can find an open neighborhood 
$V_a$ of $a$ and a system of \emph{smooth} (meaning $\ml{C}^{\infty}$) local coordinate charts 
$(z_j)_{j=1,\ldots,n}=(x,y)$ such that, in this coordinate chart, the vector field $V_f$ reads
\begin{equation}\label{e:good-coord}V_f:=\sum_{j=1}^n\chi_j(a)z_j\partial_{z_j}=-\sum_{j=1}^r|\chi_j(a)|x_j\partial_{x_j}+\sum_{j=r+1}^n|\chi_j(a)|y_j\partial_{y_j}.\end{equation}
The key point for us is that this change of coordinates is smooth which will allow us to take as many derivatives as we want in the 
following sections where we aim at using microlocal techniques. Requiring that there exists a smooth change of coordinates 
for which the gradient vector field can be linearized may a priori look as a strong 
assumption. We will briefly discuss below two situations where this assumption is satisfied, the second one being in some sense rather general.

When this assumption is satisfied, we shall say that we have an \textbf{adapted system of coordinates}. We note that the function $f$ may not have a nice expression in these coordinates, meaning that $f$ may a priori not have 
a Morse-type expression. In such a coordinate chart, the gradient flow reads
\begin{eqnarray*}
\varphi_f^t(z)& = &(e^{t\chi_1(a)}z_1,\ldots,e^{t\chi_n(a)}z_n)\\
 & = &(e^{-|\chi_1(a)|t}x_1,\ldots,e^{-|\chi_r(a)|t}x_r, e^{|\chi_{r+1}(a)|t}y_{r+1},\ldots, e^{|\chi_n(a)|t}y_n). 
\end{eqnarray*}
Let us fix some conventions that we will use in the following. For every critical point $a$, we denote the change of coordinates by
$$\kappa_a: w\in V_a\subset M\rightarrow (x,y)\in W_a=(-\delta_a,\delta_a)^n\subset\IR^n,$$ 
where $\delta_a>0$ is some small enough parameter. 
\begin{rema}
Let $(U,\kappa)=(u^i)$ be local coordinates on $M$. Whenever the chart is of class $\ml{C}^1$, one can lift in a canonical way these coordinates into coordinates $(u^i,v_j)$ on the cotangent space $T^*M$ 
by using $(\ml{K},T^*U)$, where $\ml{K}(x,\xi)=(\kappa(x),(d\kappa(x)^T)^{-1}\xi)$. When we make a change of coordinates $(\tilde{u}^i,\tilde{v}_j)$, one can verify that $\tilde{v}$ is the image of $v$ 
under a linear transformation (depending only on the coordinate charts $(u^i)$ and $(\tilde{u}^i)$). 
\end{rema}
We can also write the expression of the Hamiltonian flow in the corresponding 
adapted coordinate chart near a critical point $a$. In such a chart, one can write
$$H_{f}(z,\zeta)=\sum_{j=1}^n\chi_j(a)z_j\zeta_j=-\sum_{j=1}^r|\chi_j(a)|x_j\xi_j+\sum_{j=r+1}^n|\chi_j(a)|y_j\eta_j.$$
In particular, the map $\Phi_f^t$ can be written in this local coordinate chart as
$$\Phi_f^t(z,\zeta)=(e^{\chi_1(a)t} z_1,\ldots e^{\chi_n(a)t} z_n; 
e^{-\chi_1(a)t} \zeta_1,\ldots e^{-\chi_n(a)t} \zeta_n).$$

\subsubsection{Locally flat metrics}

The vector field $V_f$ is a priori not linear in the chart of the Morse Lemma. In fact, there might be no Morse chart in which 
$g_{ij}(0)$ is diagonal and the vector field linear. We will call the metric $g$ \textbf{locally flat with respect} to $f$ if, 
for any critical point of $f$, there exists a \emph{smooth Morse chart} $(z_i)$ in which the vector field $V_f$ has the linear form
$$V_f(z)= \sum_{j=1}^n\chi_j(a) z_j\partial_{z_j}. $$ 
Such flows are also sometimes refered as tame flows. This type of locally flat metrics appears for instance 
in~\cite{HaLa00, Lau12}. It is shown in~\cite{HaLa00} that, given any Morse function, one can find an adapted metric $g$ such that the Morse-Smale transversality assumption is satisfied. 
Moreover, they proved that this property is more or less generic among such metrics.

\subsubsection{Sternberg-Chen Theorem}
We would like to justify that asking for a smooth change of coordinates which linarizes the gradient flow is in some sense generic. 
For that purpose, we just recall Sternberg-Chen's Theorem on the linearization of vector fields near hyperbolic critical points~\cite{Ch63} (see also~\cite[Th.~9, p.50]{Ne69}):
\begin{theo}[Sternberg-Chen]\label{t:Sternberg-Chen} Let $X(x)=\sum_ja_j(x)\partial_{x_j}$ be a smooth vector field defined in a neighborhood of $0$ in $\IR^n$. Suppose that $X(x)=0$. Denote by $(\chi_j)$ the 
eigenvalues of $L:=(\partial_{x_k}a_j(0))_{k,j}$. Suppose that the eigenvalues satisfy the \textbf{non resonant assumption},
$$\forall\ k_1,\ldots,k_n\in\IZ\ \text{s.t}\ k_1,\ldots k_n\geq 2,\ \forall\ 1\leq i\leq n,\ \chi_i\neq\sum_{j=1}^nk_j\chi_j.$$
Then, there exists a smooth diffeomorphism $h$ which is defined in a neighborhood of $0$ such that
$$X\circ h(x)=dh\circ (Lx.\partial_x).$$
\end{theo}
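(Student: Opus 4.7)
The plan is to proceed in two stages: first produce a \emph{formal} power series that conjugates $X$ to its linear part $Lx\cdot\partial_x$, then upgrade this formal conjugacy to a genuine smooth diffeomorphism on a neighbourhood of the origin. Write $X(x)=Lx\cdot\partial_x+N(x)$ where $N$ vanishes to order at least $2$ at $0$.

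For the formal step, I would build $\hat h$ as an infinite composition of polynomial diffeomorphisms $h_k(x)=x+p_k(x)$, with $p_k$ a vector-valued homogeneous polynomial of degree $k\geq 2$, chosen so that after conjugating by $h_k$ the terms of degree $k$ of the nonlinear part disappear. Writing out the conjugacy equation and collecting terms of degree $k$, one finds that $p_k$ must satisfy a homological equation of the shape
\begin{equation*}
(\operatorname{ad}_L\, p_k)(x) \;=\; Q_k(x),
\end{equation*}
where $\operatorname{ad}_L$ is the linear endomorphism $p\mapsto [Lx\cdot\partial_x,p]$ on the finite-dimensional space of degree-$k$ vector-valued homogeneous polynomials, and $Q_k$ is built from $N$ and from the previously chosen $p_2,\ldots,p_{k-1}$. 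In the monomial basis $x^\alpha e_i$ with $|\alpha|=k$, $\operatorname{ad}_L$ is diagonal with eigenvalues $\alpha\cdot\chi-\chi_i$. The non-resonance hypothesis ensures that these eigenvalues are all nonzero, so $\operatorname{ad}_L$ is invertible and $p_k$ is uniquely determined. Iterating over $k$ produces a formal power series $\hat h$ with $\hat h(0)=0$ and $d\hat h(0)=\mathrm{Id}$ that formally conjugates $X$ to $Lx\cdot\partial_x$.

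Next I would invoke Borel's theorem to realise $\hat h$ as a smooth germ $h_0$ at the origin whose Taylor series equals $\hat h$; since $d h_0(0)=\mathrm{Id}$, $h_0$ is a local diffeomorphism. Conjugating by $h_0$ transforms $X$ into a smooth vector field
\begin{equation*}
X_1 \;=\; Lx\cdot\partial_x + R(x),
\end{equation*}
where $R$ is \emph{flat} at $0$, i.e.\ every derivative of $R$ vanishes at the origin. The remaining task is to produce a second smooth germ $h_1$, fixing $0$ and tangent to the identity, whose conjugation removes $R$. I would seek $h_1$ as the time-one map of a flat smooth vector field $Y$ satisfying a cohomological equation $\mathcal{L}_{Lx\cdot\partial_x}Y=R$ modulo higher-order corrections, and solve this equation by a contraction (or Newton) scheme on a Banach space of flat smooth vector fields on a small ball around $0$. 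The hyperbolicity of $L$, which is guaranteed here by $\chi_i\neq 0$ for all $i$, provides exponentially decaying estimates on the flow of the linear part, while the flatness of $R$ absorbs the loss of derivatives at each iteration and forces the scheme to converge in $\mathcal{C}^\infty$. The composition $h=h_0\circ h_1$ is then the desired smooth linearising diffeomorphism.

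The main obstacle is unquestionably the last step. The formal stage is essentially algebraic and follows almost verbatim from the inversion of $\operatorname{ad}_L$ under the non-resonance condition. Promoting the formal conjugacy to a genuine smooth one without analyticity assumptions is delicate: the $\operatorname{ad}$-eigenvalues $\alpha\cdot\chi-\chi_i$ can accumulate to $0$ as $|\alpha|\to\infty$, so one cannot hope to control the whole series $\hat h$ uniformly, which is why the argument has to be split into a formal preparation followed by a hyperbolic fixed-point argument on flat remainders. This functional-analytic part is the content of Chen's original work~\cite{Ch63} and of the exposition in Nelson's lectures, and I would follow those references closely rather than attempt to reprove it from scratch.
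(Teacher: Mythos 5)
The paper does not prove this theorem; it is stated and used purely as a citation to Chen~\cite{Ch63} and to Nelson's lecture notes~\cite{Ne69}. Your sketch reproduces the standard two-stage argument from those references --- formal linearization by inverting $\operatorname{ad}_L$ monomial-by-monomial under the non-resonance assumption, followed by a Borel realization and a hyperbolic fixed-point argument on the flat remainder --- and you correctly identify the second stage as the genuinely analytic content that one delegates to the cited sources, exactly as the paper does. One remark worth making: the non-resonance condition as printed in the statement (each $k_j\geq 2$) is presumably a typo for the standard condition $k_j\geq 0$ with $\sum_j k_j\geq 2$; your homological computation, with $\operatorname{ad}_L$-eigenvalues $\alpha\cdot\chi-\chi_i$ ranging over all $\alpha\in\IN^n$ with $|\alpha|\geq 2$, implicitly and correctly uses the latter form, which is what the proof actually requires.
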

The classical Grobman-Hartman Theorem~\cite{Hart60} ensures the existence of a conjugating homeomorphism. The crucial point of the 
Sternberg-Chen Theorem is that the conjugating map is smooth provided some non resonance assumption is made. Applying this Theorem locally near the 
critical points of $f$ allows to show the existence of a smooth and adapted system of coordinates. Note that this non-resonant assumption on the 
eigenvalues is for instance satisfied if, for every $a$ in $\text{Crit} (f)$, the Lyapunov exponents $(\chi_j(a))_{j=1,\ldots, n}$ 
are rationally independent. In section~\ref{s:jordan}, we will in fact make the assumption that \emph{all} the 
Lyapunov exponents are rationally independent.

\subsection{Attractor and repeller of the Hamiltonian flow}

We now introduce the following subsets of $T^*M$:
$$\Gamma_+=\bigcup_{x\in M} E_s^*(x),\ \Gamma_-=\bigcup_{x\in M} E_u^*(x),\ \text{and}\ \Gamma=M\times\{0\}.$$
We have then
\begin{lemm}\label{l:trappedset} Suppose that $(f,g)$ is a smooth Morse pair which generates a Morse-Smale gradient flow $\varphi_f^t$. One has, for every 
$(x,\xi)$ in $T^*M$ with $\xi\neq 0$,
$$ (x,\xi)\in \Gamma_{\pm}\ \Longrightarrow\  \lim_{t\rightarrow\pm\infty}\|\xi(t)\|_{x(t)}=0,$$
 and
 $$(x,\xi)\notin\Gamma_{\pm}\ \Longrightarrow\  \lim_{t\rightarrow\pm\infty}\|\xi(t)\|_{x(t)}=+\infty,$$
 where $(x(t),\xi(t))=\Phi_f^t(x,\xi)$.
\end{lemm}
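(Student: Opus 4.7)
The plan is to prove the claim for $\Gamma_+$ and $t\to+\infty$; the case of $\Gamma_-$ and $t\to-\infty$ follows by replacing $f$ with $-f$. My first observation is that $\Gamma_+$ is $\Phi_f^t$-invariant: if $x_+\in\text{Crit}(f)$ denotes the forward limit of $x$, it is also the forward limit of $\varphi_f^t(x)$, and since $W^s(x_+)$ is $\varphi_f^t$-invariant one has $E^s(x(t))=d\varphi_f^t(x)E^s(x)$; dualizing, $(d\varphi_f^t(x)^T)^{-1}$ sends $E_s^*(x)$ onto $E_s^*(x(t))$.

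Set $a=x_+$. Formula~\eqref{e:limit-points} lets me choose $T$ so that $\varphi_f^t(x)\in V_a$ for all $t\geq T$. In the adapted chart $\kappa_a$ the flow is linear and the $y$-components are amplified by $e^{|\chi_j(a)|t}$; since $x(t)$ must stay inside the bounded set $W_a$ for every $t\geq T$, this forces $\kappa_a(x(t))$ to lie on the chart's $x$-axis. Locally near such a point the stable manifold $W^s(a)$ must then coincide with $\kappa_a^{-1}(\{y=0\})$, both being smooth $r$-dimensional submanifolds one containing the other, so $T_{x(t)}W^s(a)$ equals the $x$-plane of the chart. After the canonical cotangent lift $\ml{K}_a$ this identifies $E_s^*(x(t))$ with the $\eta$-plane $\{\xi=0\}$ in the lifted coordinates, for every $t\geq T$.

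I would then invoke the explicit Hamiltonian flow in the adapted chart recalled in paragraph~\ref{sss:adapted}, namely $\Phi_f^{t-T}(z,\zeta)=(e^{\chi_j(a)(t-T)}z_j,\, e^{-\chi_j(a)(t-T)}\zeta_j)$: the dual components $\zeta_j$ with $j\leq r$ are amplified by $e^{|\chi_j(a)|(t-T)}$ while those with $j>r$ are damped by $e^{-|\chi_j(a)|(t-T)}$. Combined with the invariance of $\Gamma_+$ and the identification above, $(x,\xi)\in\Gamma_+$ is equivalent to the vanishing of the first $r$ components of the lifted $\xi(T)$; only damped components then remain and the chart-norm of $\xi(t)$ tends to $0$. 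If $(x,\xi)\notin\Gamma_+$ instead, at least one of these components is nonzero and produces exponential growth. Since $x(t)\to a$, the metric $g(x(t))$ converges to $g(a)$, so the chart-norm and $\|\xi(t)\|_{g(x(t))}$ remain uniformly comparable for $t\geq T$, and the limit statement transfers to the manifold.

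The delicate point I expect is the identification of $E_s^*(x(T))$ with the $\eta$-plane: off the critical point the coordinate change is nonlinear, so this is not automatic from the fact that $d\kappa_a(a)E^s(a)$ is the $x$-plane at the origin. The crucial input is the smoothness of the linearizing chart provided by the Morse-pair hypothesis of paragraph~\ref{sss:adapted}, which forces $W^s(a)\cap V_a$ to be exactly the $x$-axis locally near $x(T)$, rather than merely a $C^0$ graph tangent to $E^s(a)$ at $a$ as in Hartman-Grobman.
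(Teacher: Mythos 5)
Your proof is correct and takes essentially the same route as the paper's: translate time so that the forward orbit lies in the smooth adapted chart near $x_+$, use the linear form of $\Phi_f^t$ there, and note that $(x,\xi)\in\Gamma_+$ is precisely the vanishing of the amplified $\xi$-components. You spell out two identifications the paper leaves implicit (that $x(t)$ lies on the $x$-axis $\{y=0\}$ of the chart, and that $E_s^*(x(t))$ is the $\eta$-plane), but the underlying argument is the same.
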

This Lemma tells us that the trapped set of the Hamiltonian flow is reduced to the zero section of $T^*M$. The proof of this 
Lemma will crucially use the fact that we made the Morse-Smale assumption and that we have a smooth (at least $\ml{C}^1$) change of coordinates 
which linearizes the vector field.

\begin{proof} We only consider the case where $t\rightarrow +\infty$ (the other case can be obtained by replacing $f$ by $-f$). Let $(x,\xi)$ be an element 
in $T^*M$ with $\xi\neq 0$. There exists a critical point $x_+$ of $f$ such that $\lim_{t\rightarrow+\infty}\varphi_f^t(x)=x_+$. In particular, for 
$t>0$ large enough, $\varphi_f^t(x)$ belongs to the adapted chart around $x_+$ which was defined in paragraph~\ref{sss:adapted}. Up to a 
translation of time, one can write that, in this system of adapted coordinates and for every $t\geq 0$ 
$$\Phi_f^t(x,\xi)=(e^{-|\chi_1|t}x_1,\ldots, e^{-|\chi_{r}|t}x_r,0,\ldots, 0,e^{|\chi_1|t}\xi_1,\ldots, e^{|\chi_{r}|t}\xi_r,
e^{-|\chi_{r+1}|t}\eta_{r+1},\ldots, e^{-|\chi_{n}|t}\eta_n).$$
Hence, as all the norms can be made uniformly equivalent to the Euclidean norm in a small neighborhood of $x_+$, one can find two positive constants 
$0<C_1<C_2$ such that
$$C_1 \left(\sum_{j=1}^re^{2|\chi_j|t}\xi_j^2+ \sum_{j=r+1}^ne^{-2|\chi_j|t}\eta_j^2\right)\leq\|\xi(t)\|_{x(t)}^2\leq 
C_2 \left(\sum_{j=1}^re^{2|\chi_j|t}\xi_j^2+ \sum_{j=r+1}^ne^{-2|\chi_j|t}\eta_j^2\right).$$
The fact that $(x,\xi)$ belongs to $\Gamma_+$ is exactly equivalent to the fact that $\xi_1=\ldots =\xi_r=0$ from which one can easily conclude the 
expected property.
\end{proof}

Introduce now the two following disjoint subsets of $S^*M$:
$$\Sigma_u:=S^*M\cap\Gamma_+,\ \text{and}\ \Sigma_s:=S^*M\cap\Gamma_-.$$
Then, one has:
\begin{lemm}\label{l:attract}Suppose that $(f,g)$ is a smooth Morse pair which generates a Morse-Smale gradient flow $\varphi_f^t$. One has
\begin{equation}\label{e:repeller}
 \forall (x,\xi)\in S^*M-\Sigma_s,\ \lim_{t\rightarrow-\infty} d_{S^*M}\left(\tilde{\Phi}_f^t(x,\xi),\Sigma_u\right)=0,
\end{equation}
and
\begin{equation}\label{e:attractor}
 \forall (x,\xi)\in S^*M-\Sigma_u,\ \lim_{t\rightarrow+\infty} d_{S^*M}\left(\tilde{\Phi}_f^t(x,\xi),\Sigma_s\right)=0.
\end{equation}
\end{lemm}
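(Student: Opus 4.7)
I would prove the two statements independently and by symmetric arguments: since reversing time amounts to replacing $f$ by $-f$ (which swaps the roles of $W^s$ and $W^u$, and of $\Gamma_+$ and $\Gamma_-$), it is enough to establish~\eqref{e:attractor}. So let $(x,\xi)\in S^*M\setminus \Sigma_u$ and set $(x(t),\xi(t))=\Phi_f^t(x,\xi)$. First, I would observe that $\xi\neq 0$ and $(x,\xi)\notin\Gamma_+$, so Lemma~\ref{l:trappedset} yields $\|\xi(t)\|_{x(t)}\to +\infty$ as $t\to+\infty$. In particular there exists a critical point $x_+\in\operatorname{Crit}(f)$ with $\varphi_f^t(x)\to x_+$ as $t\to+\infty$; let $r=\operatorname{ind}(x_+)$.

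Next I would transfer the analysis to the adapted chart $(V_{x_+},\kappa_{x_+})$ introduced in paragraph~\ref{sss:adapted}. By~\eqref{e:limit-points} there exists $T\geq 0$ such that $\varphi_f^t(x)\in V_{x_+}$ for all $t\geq T$, and on this time interval the lifted flow is explicit:
$$\Phi_f^t(z,\zeta)=\bigl(e^{\chi_j(x_+)(t-T)}z_j(T),\,e^{-\chi_j(x_+)(t-T)}\zeta_j(T)\bigr)_{j=1,\ldots,n}.$$
Writing the cotangent coordinates as $(\xi_1,\ldots,\xi_r,\eta_{r+1},\ldots,\eta_n)$ with $\chi_j(x_+)<0$ for $j\leq r$ and $\chi_j(x_+)>0$ for $j>r$, the stable-direction components $\xi_j(t)$ (those with $j\leq r$) decay as $e^{-|\chi_j(x_+)|(t-T)}$ as $t\to+\infty$, while the unstable-direction components $\eta_j(t)$ grow as $e^{|\chi_j(x_+)|(t-T)}$ if the corresponding $\eta_j(T)$ is nonzero.

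The key step is to exploit the blow-up $\|\xi(t)\|_{x(t)}\to+\infty$: since all Riemannian norms on the chart are uniformly equivalent to the Euclidean one (as in the proof of Lemma~\ref{l:trappedset}), this forces at least one stable-direction component to be nonzero: there is $j_0\leq r$ with $\xi_{j_0}(T)\neq 0$. Setting $\chi_*=\max\{|\chi_j(x_+)|:j\leq r,\ \xi_j(T)\neq 0\}>0$, the norm $\|\xi(t)\|_{x(t)}$ grows like $e^{\chi_*(t-T)}$, whereas the $\eta_j(t)$ components satisfy
$$\frac{|\eta_j(t)|}{\|\xi(t)\|_{x(t)}}=\ml{O}\bigl(e^{-(\chi_*+|\chi_j(x_+)|)(t-T)}\bigr)\longrightarrow 0.$$
Therefore the normalized covector $\tilde\xi(t)=\xi(t)/\|\xi(t)\|_{x(t)}$ has all its unstable-direction components tending to $0$ in the adapted chart, i.e.\ it converges to the annihilator of the unstable tangent space at $x_+$, which is precisely $E_u^*(x_+)$. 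Since $\varphi_f^t(x)\to x_+$ simultaneously, $\tilde\Phi_f^t(x,\xi)$ approaches the fiber $\Sigma_s\cap S^*_{x_+}M$ in $S^*M$, proving $d_{S^*M}(\tilde\Phi_f^t(x,\xi),\Sigma_s)\to 0$.

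The main (mild) obstacle is a bookkeeping one: the distance on $S^*M$ is between points whose base points vary with $t$, so one has to combine the convergence $\varphi_f^t(x)\to x_+$ with the convergence of $\tilde\xi(t)$ to $E_u^*(x_+)$. This is handled by working in the trivialization provided by the adapted chart, in which the Sasaki-type distance is uniformly comparable to the Euclidean distance on $\IR^n\times\IR^n$, so the two estimates combine to give the desired convergence. Everything else is an explicit computation enabled by the linearized form~\eqref{e:good-coord} of $V_f$, which is exactly where the smooth Morse pair assumption is used.
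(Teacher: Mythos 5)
Your proof is correct and follows essentially the same approach as the paper's: transfer to the adapted linearizing chart near $x_+$, observe that the stable-direction covector components dominate exponentially while the unstable ones decay, and conclude that the normalized covector converges into $E_u^*(x_+)\subset\Sigma_s$. The only cosmetic difference is that you detour through Lemma~\ref{l:trappedset} (blow-up of $\|\xi(t)\|$) to conclude that some stable-direction component $\xi_{j_0}(T)$ is nonzero, whereas the paper reads this directly off the characterization of $\Gamma_+$ in the adapted chart; both routes are equivalent.
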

This lemma tells us that $\Sigma_u$ and $\Sigma_s$ are in a certain weak sense repeller and attractor of the flow $\tilde{\Phi}_f^t$. A 
stronger version of this fact will be given in Lemma~\ref{l:openneighbor}.

\begin{proof} We proceed as in the proof of Lemma~\ref{l:trappedset}, and we just treat the case where $t\rightarrow+\infty$. Let $(x,\xi)$ be an element 
in $S^*M-\Sigma_u$. In other words, $(x,\xi)$ does not belong $\Gamma_+$. Using the notations of the proof of Lemma~\ref{l:trappedset}, it means that
$$\Phi_f^t(x,\xi)=(e^{-|\chi_1|t}x_1,\ldots, e^{-|\chi_{r}|t}x_r,0,\ldots, 0,e^{|\chi_1|t}\xi_1,\ldots, e^{|\chi_{r}|t}\xi_r,
e^{-|\chi_{r+1}|t}\eta_{r+1},\ldots, e^{-|\chi_{n}|t}\eta_n),$$
 with $\xi_j\neq 0$ for some $1\leq j\leq r$. By letting $t\rightarrow+\infty$, we find that that any accumulation point of $\tilde{\Phi}_f^t(x,\xi)$ is of 
 the form $(0,\ldots, 0,\tilde{\xi}_1,\ldots,\tilde{\xi}_r,0,\ldots,0)$. Equivalently, every accumulation point belongs to $\Sigma_s$.
\end{proof}

\subsection{Compactness}

In order to make the machinery of anisotropic Sobolev space work, it will first be important for us that $\Sigma_u$ and $\Sigma_s$ are compact subsets of $S^*M$. 
This assumption is verified for gradient flow satisfying the Morse-Smale assumption: 
\begin{lemm}\label{l:compact} Suppose that $(f,g)$ is a smooth Morse pair which induces a gradient flow with the Morse-Smale property. 
Then, the subsets $\Sigma_u$ and $\Sigma_s$ are compact in $S^*M$.
\end{lemm}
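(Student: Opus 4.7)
My strategy is to prove that $\Sigma_u$ is closed in the compact manifold $S^*M$; by the symmetry $f\mapsto -f$ (which exchanges $W^s$ and $W^u$, and hence $\Sigma_u$ and $\Sigma_s$), the same argument will yield the statement for $\Sigma_s$. Using the partition $M=\bigsqcup_{a\in\operatorname{Crit}(f)}W^s(a)$, I would write
$$\Sigma_u\ =\ \bigsqcup_{a\in\operatorname{Crit}(f)}\Sigma_u(a),\qquad \Sigma_u(a):=\{(x,\xi)\in S^*M:x\in W^s(a),\ \xi\in N^*_xW^s(a)\};$$
since this is a finite union, closedness of $\Sigma_u$ reduces to showing $\overline{\Sigma_u(a)}\subset\Sigma_u$ for every critical point $a$.

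Fix $a$, let $(x_n,\xi_n)\in\Sigma_u(a)$ converge to $(x_\infty,\xi_\infty)\in S^*M$, and denote by $b$ the unique critical point with $x_\infty\in W^s(b)$. When $b=a$ the conormal bundle of the embedded submanifold $W^s(a)$ is a smooth vector bundle near $x_\infty$, so continuity directly gives $\xi_\infty\in E_s^*(x_\infty)$. Assume then that $b\neq a$; by Smale's description of $\overline{W^s(a)}$ in Morse--Smale systems as a union of strata $W^s(c)$ linked to $a$ by ascending broken gradient trajectories, together with iterated use of Lemma~\ref{l:decrease-dimension}, one obtains $r(b)<r(a)$. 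Using invariance of $\Sigma_u$ under $\tilde{\Phi}_f^t$, I would replace the sequence by its image at some time $T>0$ chosen large enough so that $\varphi_f^T(x_\infty)$ sits inside the smooth adapted chart $V_b$ of $b$ provided by the smooth Morse pair hypothesis; one may then assume $x_\infty\in V_b$, and $x_n\in V_b$ for $n$ large.

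In the adapted coordinates $(x^s,y^u)$ on $V_b$ the gradient flow is linear, with $W^s(b)=\{y^u=0\}$ and $W^u(b)=\{x^s=0\}$. Morse--Smale transversality of $W^s(a)$ with $W^u(b)$ yields that $H:=W^s(a)\cap W^u(b)\cap V_b$ is a smooth $(r(a)-r(b))$-dimensional submanifold of the $y^u$-space, and inspecting the linear flow shows that a point $(x^s,y^u)\in V_b$ belongs to $W^s(a)$ exactly when the direction $y^u/\|y^u\|$ lies in a prescribed subset of the unit sphere, independently of $x^s$. Hence $W^s(a)\cap V_b$ is locally the product of the full stable subspace $\mathbb{R}^{r(b)}_{x^s}$ of $b$ with $H$, and its tangent space at $x_n=(x_n^s,y_n^u)$ decomposes as $\mathbb{R}^{r(b)}_{x^s}\oplus T_{y_n^u}H$. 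Since $x_n\to x_\infty$ and $y_n^u\to 0$, any Grassmannian limit of these $r(a)$-planes contains the full factor $\mathbb{R}^{r(b)}_{x^s}=T_{x_\infty}W^s(b)$. Passing to annihilators, $\xi_\infty$ annihilates $T_{x_\infty}W^s(b)$, whence $\xi_\infty\in E_s^*(x_\infty)$ and $(x_\infty,\xi_\infty)\in\Sigma_u(b)\subset\Sigma_u$.

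The hard part is precisely this tangential inclusion at a stratum boundary: it is the step that crucially combines Morse--Smale transversality (to identify the heteroclinic manifold $H$ and its dimension) with the smoothness of the linearizing chart (to transport $H$ rigidly along the flow near $b$), and it cannot be derived from the pointwise splitting $E^s(x)+E^u(x)=T_xM$ alone. When $b$ and $a$ are joined only by a broken trajectory $b\to c_1\to\cdots\to c_k=a$ through intermediate critical points, I would iterate the local analysis in each adapted chart $V_{c_i}$; at every stage the limiting tangent spaces still absorb $T_{x_\infty}W^s(b)$, so the conclusion is unchanged.
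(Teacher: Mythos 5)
Your overall skeleton is sound (finite decomposition, only the boundary strata $b\neq a$ need work, passing to the adapted chart, reduction of $r(b)<r(a)$ via Weber/Smale compactness-up-to-broken-trajectories plus Lemma~\ref{l:decrease-dimension}), but the crucial step has a genuine gap. You assert that in the linearizing chart near $b$ the manifold $W^s(a)\cap V_b$ has the product structure $\mathbb{R}^{r(b)}_{x^s}\times H$, so that $T_{x_n}W^s(a)=\mathbb{R}^{r(b)}_{x^s}\oplus T_{y^u_n}H$. This does \emph{not} follow from flow-invariance, Morse--Smale transversality along $H$, and the smooth linearization, and it is false in general. For a concrete obstruction, take a linear model in $\mathbb{R}^3$ with nonresonant exponents $\chi_1<0<\chi_2<\chi_3$, flow $(x,y_1,y_2)\mapsto(xe^{\chi_1 t},y_1e^{\chi_2 t},y_2e^{\chi_3 t})$: every flow-invariant surface transverse to $W^u(b)=\{x=0\}$ along a heteroclinic curve $H$ and given as a graph $y_2=G(x,y_1)$ has the form
$$G(x,y_1)=y_1^{\chi_3/\chi_2}\,\phi\bigl(x\,y_1^{-\chi_1/\chi_2}\bigr)$$
for some $\phi$, and this equals the cylinder $\mathbb{R}_x\times H=\{y_2=y_1^{\chi_3/\chi_2}\phi(0)\}$ only when $\phi$ is constant. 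For non-constant $\phi$ the tangent plane at $(x,y_1)$ is spanned by $(1,0,G_x)$ and $(0,1,G_{y_1})$, which is not $\mathbb{R}_x\oplus T_{y^u}H$. Your intermediate assertion that membership in $W^s(a)$ near $b$ depends only on the direction $y^u/\|y^u\|$ is likewise incorrect: when the positive Lyapunov exponents are distinct, the induced flow on the sphere of $y^u$-directions is not the identity, so the directional set of $H$ is not invariant in the way the claim needs. The true statement about the Grassmannian limit of $T_{x_n}W^s(a)$ is an inclination ($\lambda$-)lemma type estimate, which requires a genuinely separate argument; it is not a formal consequence of the product picture. In addition, the case where $a$ and $b$ are joined only through a broken chain (so that $H=W^s(a)\cap W^u(b)=\emptyset$ and there is literally nothing to take a product with) is left as a one-line sketch.

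The paper's proof sidesteps the tangent-plane analysis entirely. Rather than trying to describe $\lim T_{x_n}W^s(a)$ in the Grassmannian, it flows the convergent sequence \emph{backward} by a diverging sequence of times $T_m\to\infty$, chosen so that the escaping $x^s$-component is pinned at a fixed small scale; the extracted limit of this new sequence lies on the conormal of $W^s(b_1)$ (hence is far from $\Sigma_s$ by transversality) and, crucially, its base point sits in $W^u(b_2)$ with $r(b_2)<r(b_1)$. Iterating strictly decreases the index until the base point returns to the original critical point $a$, where the coordinate computation settles the matter directly. This bypass is exactly what makes their argument work in the presence of broken trajectories and without any version of the $\lambda$-lemma.
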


This property combined with Lemma~\ref{l:openneighbor} will be crucial in our construction of anisotropic Sobolev spaces. In particular, 
they are necessary to prove Lemma~2.1 from~\cite{FaSj11} which is at the heart of this construction. We note that the proof of Lemma~\ref{l:compact} 
requires both the Morse-Smale assumption and the existence of a (at least $\ml{C}^1$) linearizing chart for the flow.

\begin{rema}\label{r:weber} Even if this Lemma sounds natural, the proof is a little bit subtle and it is related to the so-called Whitney regularity 
condition~\cite[Ch.~7]{Ni10} -- see also the appendix of~\cite{Lau12} for related results in the case of locally flat metrics. Here, we are aiming 
at weaker results than in these references and we shall give a proof of our Lemma which is based on purely ``dynamical arguments''. 
Our argument is in fact very close to the proof of 
the compactness of the space of connecting orbits of Weber in~\cite[Th.~3.8]{Web06} -- see also~\cite{Sm60} for earlier related results of Smale. 
In this reference, it was proved that the space of connecting orbits between two critical points $a$ and $b$ is ``compact up to broken orbits''. 
It means that, for a fixed sequence $(x_m)_{m\geq 1}$ in $W^u(a)\cap W^s(b)$, there exists (up to extraction) a sequence of critical points 
$a=b_l,b_{l-1},\ldots, b_1=b$ and a finite sequence of points $z^{(p)}$ in $W^{u}(b_{p+1})\cap W^s(b_{p})$ such 
that
$$\forall\epsilon_1>0,\exists m_0,\forall m\geq m_0,\ d\left(\ml{O}(x_m),\cup_{1\leq p\leq l-1}\ml{O}(z^{(p)})\right)<\epsilon_1,$$
where $\ml{O}(x)$ denotes the orbit of $x$ under the flow $\varphi_f^t$. The key ``dynamical argument'' in the proof of Weber was to use the Grobman-Hartman 
linearization Theorem around the critical points of $f$. Here, we want to prove something slightly stronger in the sense that we will have to keep track 
of the behaviour of the cotangent vectors in the phase space $S^*M$ and not only of the points in the position space $M$. For that purpose, we will crucially 
make use of the fact that we have a smooth (at least $\ml{C}^1$) chart where the vector field can be linearized. In some sense, our compactness statement 
on $S^*M$ requires the 
Sternberg-Chen's Theorem while Weber's compactness statement on $M$ only required the Grobman-Hartman's Theorem.
 
\end{rema}

\begin{proof} We only treat the case of $\Sigma_s$ as the case of $\Sigma_u$ can be obtained by replacing $f$ by $-f$. In order to prove compactness, we 
will just prove that $\Sigma_s$ is closed (as $S^*M$ is compact). Before starting the proof, we note that, for a given critical point $b$ of $f$, the 
conormal to the unstable manifold $W^u(b)$ can be written in local coordinates as
\begin{equation}\label{e:coord-unstable-conormal}\left\{(z,\zeta)=(0,y,\xi,0):\ y\in\IR^{n-r(b)},\ \xi\in\IR^{r(b)}-\{0\}\right\}.\end{equation}
We will now show that $\Sigma_s$ is closed via a contradiction argument. We fix $(z_m,\zeta_m)$ a sequence in $\Sigma_s$ which converges to $(z_{\infty},\zeta_{\infty})\in S^*M$ and we assume that the limit $(z_{\infty},\zeta_{\infty})$
does not belong to $\Sigma_s$. We know that there exists $b_1$ in $\text{Crit}(f)$ such that $z_{\infty}$ belongs 
to $W^u(b_1)$. Without loss of generality, by extracting a subsequence, we can suppose that there exists a single point $a$ in $\text{Crit}(f)$ such that $z_m$ belongs to $W^u(a)$ 
for every $m\geq 1$.

Let us first suppose that $a=b_1$. In that case, for $T>0$ large enough, $\tilde{\Phi}^{-T}(z_{\infty},\zeta_{\infty})$ 
will belong to the linearizing chart near $a$. Hence, for $m\geq 1$ large enough, $\tilde{\Phi}^{-T}(z_{m},\zeta_{m})$ also belongs to this 
linearizing chart by continuity of $\tilde{\Phi}^{-T}$. Up to applying the flow in backward time, we thus know from~\eqref{e:coord-unstable-conormal} that $(z_m,\zeta_m)$ and $(z_{\infty},\zeta_{\infty})$ are of the form
$$(z_m,\zeta_m)=(0,y_m,\xi_m,0)\quad\text{and}\quad(z_{\infty},\zeta_{\infty})=(0,y_{\infty},\xi_{\infty},\eta_{\infty}),$$
in the local coordinates near $a$. As we supposed that the limit point does not belong to $\Sigma_s$, we note that 
$\eta_{\infty}\neq 0$ and it gives us the expected contradiction as $(z_m,\zeta_m)\rightarrow(z_{\infty},\zeta_{\infty})$ as $m$ tends to $+\infty$.

Suppose now that $a\neq b_1$ and let us explain how we can get a contradiction too.
Note that this implies $r(b_1)>0$ as $S^*V_{b_1}\cap\Sigma_s=\emptyset$ when $r(b_1)=0$. 
For that purpose, we will verify that we can construct a new sequence 
$((z_m^{(1)},\zeta_m^{(1)}))_{m\geq 1}$ in $\Sigma_s$ 
such that the following holds~:
\begin{itemize}
 \item for every $m\geq 1$, $z_m^{(1)}$ belongs to $W^u(a)$,
 \item $(z_m^{(1)},\zeta_m^{(1)})\rightarrow (z_{\infty}^{(1)},\zeta_{\infty}^{(1)})$ as $m\rightarrow+\infty$,
 \item $z_{\infty}^{(1)}\in W^s(b_1)-\{b_1\}$ and $(z_{\infty}^{(1)},\zeta_{\infty}^{(1)})\notin\Sigma_s$.
\end{itemize}
Then, we know that $z_{\infty}^{(1)}\in W^u(b_2)$ for some critical point $b_2$ 
verifying $r(b_2)<r(b_1)$ -- see Lemma~\ref{l:decrease-dimension}. If $b_2=a$, we are in the first situation for the
sequence $(z_m^{(1)},\zeta_m^{(1)})_{m\in \mathbb{N}}$ and we get the expected contradiction. 
If not, we can reproduce the same construction. This yields a new sequence $(z_m^{(2)},\zeta_m^{(2)})\in\Sigma_s$ converging to some point 
$(z_{\infty}^{(2)},\zeta_{\infty}^{(2)})\notin\Sigma_s$ and such that $z_m^{(2)}$ belongs to $W^u(a)$ for every $m\geq 1$. 
Again, we can ensure that $z_{\infty}^{(2)}\in W^u(b_3)$ with $r(b_3)<r(b_2)$. In the end, this 
gives a sequence of critical points $b_1,b_2,\ldots, b_l$ with $r(b_{p+1})<r(b_{p})$ for every $i\geq 1$. As $r(b_p)\geq 0$ for every $p\geq 1$, there will necessarily 
be some $l\geq 1$ such that $b_l=a$ and in that case, we already saw how to get the contradiction.

Hence, all that remains to be proved is the existence of a new sequence $((z_m^{(1)},\zeta_m^{(1)}))_{m\geq 1}$ with the above requirements when $a\neq b_1$. As above, we 
can suppose without loss of generality that both $(z_m,\zeta_m)$ and $(z_{\infty},\zeta_{\infty})$ belong to the linearizing chart near $b_1$. In the local system of coordinates near $b_1$, 
these two points read
$$(z_m,\zeta_m)=(x_m,y_m,\xi_m,\eta_m)\quad\text{and}\quad(z_{\infty},\zeta_{\infty})=(0,y_{\infty},\xi_{\infty},\eta_{\infty}),$$
with $\|x_m\|\rightarrow 0$ as $m\rightarrow+\infty$ as $(z_m,\zeta_m)$ converges to $(z_{\infty},\zeta_{\infty})$. As the limit point does not belong to $\Sigma_s$, we also know that 
$\eta_{\infty}\neq 0$. Hence, there exists some $\delta_1>0$ such that, for $m$ large enough, $\|\eta_m\|\geq\delta_1$. Let us now apply the backward flow to $(z_m,\zeta_m)$, i.e.
$$\Phi^{-T}(z_m,\zeta_m):=(z_m(-T),\zeta_m(-T)),$$
where
$$z_m(-T)=(e^{|\chi_1|T}x_{m,1},\ldots,e^{|\chi_r|T}x_{m,r},e^{-|\chi_{r+1}|T}y_{m,r+1}
,\ldots, e^{-|\chi_n|T}y_{m,n})=(x_m(-T),y_m(-T)),$$
and
$$\zeta_m(-T)=(e^{-|\chi_1|T}\xi_{m,1},\ldots,e^{-|\chi_r|T}\xi_{m,r},e^{|\chi_{r+1}|T}\eta_{m,r+1}
,\ldots, e^{|\chi_n|T}\eta_{m,n})=(\xi_m(-T),\eta_m(-T)).$$
Note that this expression is only valid when $z_m(-T)$ belongs to the linearizing chart near $b_1$. For every $m$ large enough, we now pick $T_m$ large enough to ensure that there exists $1\leq j\leq r$ such 
that $0<\delta_2<|e^{|\chi_j|T_m}x_{m,j}|<2\delta_2$ 
for some fixed $\delta_2>0$ smaller than the size of the linearizing chart. Note that, as 
$\|x_m\|\rightarrow0$, $T_m\sim|\log\|x_m\||$ tends to $+\infty$ as $m\rightarrow+\infty$. We now set
$$(z_m^{(1)},\zeta_m^{(1)}):=\tilde{\Phi}^{-T_m}(z_m,\zeta_m)=\left(z_m(-T_m),\frac{\zeta_m(-T_m)}{\|\zeta_m(-T_m)\|}\right)\in\Sigma_s,$$
and we will verify that, up to extraction, it has the expected properties. First of all, up to extraction, we can suppose that the sequence converges to a limit point 
$(z_{\infty}^{(1)},\zeta_{\infty}^{(1)})$ belonging to $S^*M$ which is our second requirement. 
By construction, the points $z_m^{(1)}=\varphi^{-T_m}(z_m)$ belong to $W^u(a)$ which is our first 
requirement. It remains to check the last properties. From the expression of $z_m(-T_m)$ in local coordinates and as $T_m\rightarrow+\infty$, we can verify that 
$z_{\infty}^{(1)}$ is of the form $(x_{\infty},0)$ with $\|x_{\infty}^{(1)}\|\gtrsim \delta_2.$ Hence, $z_{\infty}^{(1)}\in W^s(b_1)-\{b_1\}$ as expected. It remains to consider 
the cotangent component. As $\|\eta_m\|\geq\delta_1$ and as $T_m\rightarrow+\infty$, we know that $\|\eta_m(-T_m)\|\rightarrow +\infty$ and that $\|\xi_m(-T_m)\|\rightarrow 0$ as 
$m$ tends to $+\infty$. This implies that $\zeta_{\infty}^{(1)}$ is of the form $(0,\eta_{\infty}^{(1)})\neq 0$. Hence, from~\eqref{e:coord-unstable-conormal}, $(z_{\infty}^{(1)},\zeta_{\infty}^{(1)})$ belongs 
to the conormal of $W^s(b_1)$ which is a subset of $\Sigma_u$. From the transversality assumption, it cannot belong to $\Sigma_s$ which was our last requirement on the limit point. 
This concludes the proof of the Lemma.

\end{proof}

\subsection{Invariant neighborhoods}

We conclude this dynamical section with the following Lemma which states that $\Sigma_u$ and $\Sigma_s$ are repeller and attractor in a slightly 
stronger sense than in Lemma~\ref{l:attract}.

\begin{lemm}\label{l:openneighbor} Let $(f,g)$ be a smooth Morse pair that induces a Morse-Smale gradient flow. Let $\eps>0$. Then, there exists 
an open neighborhood $V^s$ of $\Sigma_s$ which is of size $\leq\eps$ and such that
$$\forall t\geq 0,\ \tilde{\Phi}_f^t(V^s)\subset V^s.$$
The same property holds true for $\Sigma_u$ in backward times. 
\end{lemm}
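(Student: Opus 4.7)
The plan is to reduce the statement to the following uniform forward-stability sublemma: \emph{for every $\delta>0$ small enough, there exists an open neighborhood $W$ of $\Sigma_s$ such that $\tilde{\Phi}_f^t(W)\subset U_\delta(\Sigma_s)$ for every $t\geq 0$}, where $U_\delta(\Sigma_s)$ denotes the open $\delta$-neighborhood of $\Sigma_s$ in $S^*M$. Granting this, the set
$$V^s:=\bigcup_{t\geq 0}\tilde{\Phi}_f^t(W)$$
is open (each $\tilde{\Phi}_f^t$ is a diffeomorphism of $S^*M$), contains $\Sigma_s$, and is of size at most $\delta$. Moreover, for every $s\geq 0$, $\tilde{\Phi}_f^s(V^s)=\bigcup_{t\geq s}\tilde{\Phi}_f^t(W)\subset V^s$, so $V^s$ is forward invariant. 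Taking $\delta\leq\varepsilon$ yields the claim, and the corresponding statement for $\Sigma_u$ follows by running the whole argument with $-f$ in place of $f$.

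For the sublemma, I would argue by contradiction. Since $\Sigma_s$ and $\Sigma_u$ are disjoint (by~\eqref{e:transversality}) compact subsets of $S^*M$ (by Lemma~\ref{l:compact}), one has $d_0:=d_{S^*M}(\Sigma_s,\Sigma_u)>0$; fix once and for all $0<\delta<d_0/2$. If the sublemma fails for this $\delta$, then there exist sequences $(p_n)$ with $d_{S^*M}(p_n,\Sigma_s)\to 0$ and first exit times
$$t_n:=\inf\bigl\{t\geq 0:d_{S^*M}(\tilde{\Phi}_f^t(p_n),\Sigma_s)\geq\delta\bigr\}\in(0,+\infty),$$
so that, by continuity of the flow and of the distance function, $d_{S^*M}(\tilde{\Phi}_f^{t_n}(p_n),\Sigma_s)=\delta$ while $d_{S^*M}(\tilde{\Phi}_f^t(p_n),\Sigma_s)\leq\delta$ for every $0\leq t\leq t_n$. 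If $(t_n)$ is bounded, extract $p_n\to p\in\Sigma_s$ and $t_n\to t_\ast$; by continuity $\tilde{\Phi}_f^{t_n}(p_n)\to\tilde{\Phi}_f^{t_\ast}(p)\in\Sigma_s$, contradicting $d_{S^*M}(\tilde{\Phi}_f^{t_n}(p_n),\Sigma_s)=\delta$. Otherwise $t_n\to+\infty$; set $q_n:=\tilde{\Phi}_f^{t_n}(p_n)$ and extract $q_n\to q^\ast$, so that $d_{S^*M}(q^\ast,\Sigma_s)=\delta$ and in particular $q^\ast\notin\Sigma_s$. For every fixed $s\geq 0$ and $n$ large enough that $s\leq t_n$, we have $\tilde{\Phi}_f^{-s}(q_n)=\tilde{\Phi}_f^{t_n-s}(p_n)$, which lies in the closed $\delta$-neighborhood of $\Sigma_s$ by definition of $t_n$. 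Passing to the limit yields $d_{S^*M}(\tilde{\Phi}_f^{-s}(q^\ast),\Sigma_s)\leq\delta$ for every $s\geq 0$. But Lemma~\ref{l:attract} applied to $q^\ast\notin\Sigma_s$ gives $d_{S^*M}(\tilde{\Phi}_f^{-s}(q^\ast),\Sigma_u)\to 0$ as $s\to+\infty$, which would place $\Sigma_s$ and $\Sigma_u$ at distance $\leq\delta<d_0/2$, contradicting the definition of $d_0$.

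The main obstacle I anticipate is precisely uniformly controlling the intermediate time range in the sublemma: short-time control of $\tilde{\Phi}_f^t$ near $\Sigma_s$ is immediate from continuity on compact sets, and the long-time behaviour is governed by the attractor property of Lemma~\ref{l:attract}, but patching these two regimes uniformly in the initial point is exactly what the first-exit-time selection above accomplishes, using crucially that $\Sigma_s$ and $\Sigma_u$ are compact (Lemma~\ref{l:compact}) and disjoint.
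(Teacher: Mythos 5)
Your proof is correct, and it takes a genuinely different route from the one in the paper. You reduce the lemma to a first-exit-time/compactness argument: assuming no uniformly small forward-stable neighborhood exists, you select points escaping arbitrarily close to $\Sigma_s$, pass to the limit of the exit points, and derive a contradiction in both the bounded-time case (using invariance of $\Sigma_s$ under $\tilde{\Phi}_f^t$, which follows from the invariance of the conormals $E_u^*$ and is worth stating explicitly) and the unbounded-time case (using Lemma~\ref{l:attract} to drive the backward orbit of the limit point onto $\Sigma_u$, which lies at positive distance from $\Sigma_s$ by~\eqref{e:transversality} and Lemma~\ref{l:compact}). This is a soft, standard argument for trapping neighborhoods of a compact attractor, and it isolates cleanly the only dynamical inputs needed: compactness, disjointness, invariance, and the attraction property. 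The paper's proof is by contrast entirely constructive: it orders the critical points by their $f$-values, builds explicit box neighborhoods $R(a_j,\eps_j,\eps_j')$ in the adapted linearizing coordinates around each critical point of positive index, chooses the parameters inductively from the maximum downward so that any trajectory exiting a box enters a higher box in uniformly bounded time, then saturates by the forward flow and lifts to $S^*M$ by an analogous construction on covectors. The constructive route is heavier but yields a concrete picture of the neighborhood in the linearizing charts; your compactness route is shorter and more modular, pushing all the hard dynamics into Lemmas~\ref{l:compact} and~\ref{l:attract}.
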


One more time, the proof makes use of the existence of a (at least $\ml{C}^1$) linearizing chart for the flow. We 
also make use of the fact that the critical values of $f$ are distinct.

\begin{proof} Again, we only treat the case of $\Sigma_s$ (the case of $\Sigma_u$ can be obtained by replacing $f$ by $-f$). Recall that
we assumed our Morse function $f$ to be excellent, meaning that all its critical values are distinct. Thus, we can define the following 
\textbf{total order} relation between critical points. We say that $a< b$ if $f(a)<f(b)$. This relation allows to order the critical points
as $a_1< a_2<\dots <a_K$.

The proof of this lemma requires one more time a delicate analysis of the flow. We construct the neighborhood in a progressive manner. First, we 
build a small neighborhood of the projection of $\Sigma_s$ on $M$ which is equal to $\cup_{1\leq j\leq K:r(a_j)\neq 0}W^u(a_j)$. Then, we adjust the construction to be able to lift this open set into a small open 
neighborhood of $\Sigma_s$ inside $S^*M$. 
In order to construct the neighborhood in $M$, 
we fix, for every $1\leq j\leq K$ for which $a_j$ has positive Morse index (i.e. $r(a_j)>0$), 
the following open neighborhood of
$a_j$ inside $M$:
$$R(a_j,\eps_j,\eps_j'):=\{(x,y): \forall j,\ |x_j|<\eps_j',\ |y_j|<\eps_j\},$$
where $\eps_j,\eps_j'>0$ are small enough parameters to ensure that this defines an $\eps$ neighborhood of $a_j$. We will now adjust the values of these 
parameters to construct a neighborhood of the projection of $\Sigma_s$ which is invariant by $\varphi_f^t$ for every $t\geq 0$. 
For that purpose, we proceed 
by induction starting from the largest values of $f$.

First, we observe that every point whose trajectory enters $R(a_j,\eps_j,\eps_j')$ will either stay in this open set for every 
$t\geq 0$, or escape this open set (in a maybe arbitrarly large time) by crossing the following subset of $M$:
$$F(a_j,\eps_j,\eps_j'):=\{(x,y): \forall j,\ |x_j|<\eps_j',\ \exists j,\ |y_j|=\eps_j\}$$
which is one of the face of the boundary of $R(a_j,\eps_j,\eps_j')$.
We will inductively construct from the maximum of $f$ a system of open neighborhoods
$R(a_j,\eps_j,\eps_j')_j,r(a_j)>0$
such that for every face $F(a_j,\eps_j,\eps_j')$ of $R(a_j,\eps_j,\eps_j')$, 
there exists a finite time $T_j>0$, such that for every $x\in F(a_j,\eps_j,\eps_j')$, the trajectory 
$t\mapsto\varphi_f^t(x)$ meets $\cup_{j<i}R(a_i,\eps_i,\eps_i')$ for some $t\in (0,T_j)$.
For $j=K$, one can verify that the neighborhood is invariant by the flow in positive time 
provided that we pick $\eps_K=\eps_K'>0$ small enough to ensure that we are in the neighborhood of adapted coordinates 
defined in paragraph~\ref{sss:adapted}. 
Suppose now that we have fixed the values of $\eps_i$ and $\eps_i'$ for every $i>j$ with $r(a_i)\neq 0$ and that $r(a_j)\neq 0$. 
We will explain how to fix the value of 
$\eps_j$ and $\eps_j'$. We claim that the forward trajectory of every point inside $F(a_j,\eps_j,\eps_j')$ will reach 
$$\bigcup_{i>j:r(a_i)\neq 0}R(a_i,\eps_i,\eps_i')$$ in a finite 
time $0<t<T_j$ where $T_j$ depends only on $\eps_i,\eps_i'$ with $i>j$ satisfying $r(a_i)\neq 0$ and on $\eps_j$. In particular, this time can be made 
uniform in $\eps_j'$.
Assume by contradiction that, for every $m>0$ and for every $T>0$, there exists $x_{m,T}$ in $F(a_j,\eps_j,1/m)$ such that the orbit 
$t\in [0,T]\mapsto \varphi_f^t(x_{m,T})$ does not meet the subset 
$\bigcup_{i>j:r(a_i)\neq 0}R(a_i,\eps_i,\eps_i')$. We fix $T>0$, and, by compactness, one can extract a subsequence such that 
$x_{m,T}\rightarrow x_{\infty,T}$ as $m\rightarrow+\infty$ where $x_{\infty,T}$ belongs to $W^{u}(a_j)$ is at distance $>\ml{O}(\eps_j)$ of 
$a_j$. We now extract another subsequence (as $T\rightarrow+\infty$) and we obtain a point $x_{\infty}\neq a_j$ in $W^u(a_j)$ that would not 
reach $\bigcup_{i>j:r(a_i)\neq 0}R(a_i,\eps_i,\eps_i')$ in finite time. This contradicts the fact that $\lim_{t\rightarrow+\infty}
\varphi_f^{t}(x_{\infty})$ is equal to $a_i$ for some $i>j$ satisfying $r(a_i)\neq 0$.

Recall now that the distance between two 
trajectories can grow at most exponentially under the flow~\cite[Lemma~11.11]{Zw12}. 
Hence, if we choose $\eps_j'>0$ small enough, we can ensure that, the forward trajectory of every 
point inside $F(a_j,\eps_j,\eps_j')$ will remain $\eps$ close to $W^u(a_j)$ up the finite time $t\leq T_j$ where it will enter one 
of the neighborhood $R(a_i,\eps_i,\eps^\prime_i)$ with $i>j$ and $r(a_i)>0$.
This construction defines a family of open neighborhood of the critical points $a_j$ of index $>0$ whose forward trajectory under the flow 
will remain within a distance $\eps>0$ of $\cup_{1\leq j\leq K:r(a_j)\neq 0}W^u(a_j)$ which is exactly the projection of $\Sigma_s$ on $M$. 
Then, we set
$$\ml{N}:=\bigcup_{t\geq 0}\bigcup_{1\leq j\leq K:r(a_j)\neq 0}\varphi_f^t(R(a_j,\eps_j,\eps_j')).$$
By construction, this set is invariant by $\varphi_f^t$. Moreover, it defines a neighborhood of $\cup_{1\leq j\leq K:r(a_j)\neq 0}W^u(a_j)$ which is 
of size $\leq\eps$.

It now remains to verify that we can lift this neighborhood into a neighborhood of size $\eps$ of $\Sigma_s$. 
For that purpose, we rely on the fact that, our 
smooth system of coordinate chart allows to linearize also the Hamiltonian flow $\Phi_f^t$. Hence, we fix another positive 
parameter $\eps_j''>0$ and we consider above each neighborhood $R(a_j,\eps_j,\eps_j')$ an open neighborhood $\tilde{R}(a_j,\eps_j,\eps_j',\eps_j'')$ 
in $S^*M$ made of unit covectors which 
are within a distance $<\eps_j''$ of $\xi_1=\ldots=\xi_r=0$. For every fixed choice of $\eps_j>0$ and $\eps_j''>0$, we can use the 
compactness of $\Sigma_s$ to fix $\eps_j'>0$ small enough to ensure that this defines indeed a neighborhood of size $<\eps$ of 
$\Sigma_s\cap S^*R(a_j,\eps_j,\eps_j')$. Using the fact that the distance between two trajectories can grow at most exponentially under the 
flow $\tilde{\Phi}_f^t$, we can argue by induction as in the case of $M$. More precisely, at each step of the induction, 
we can fix $\eps_j''>0$ small enough in a way that depends only on the values of $\eps_j$ and of $\eps_i^{(*)}$ with $i>j$ and $r(a_i)>0$ and such that
$$\tilde{\ml{N}}:=\bigcup_{t\geq 0}\bigcup_{1\leq j\leq K:r(a_j)\neq 0}\tilde{\Phi}_f^t(\tilde{R}(a_j,\eps_j,\eps_j',\eps_j''))
$$
defines a forward invariant open neighborhood of $\Sigma_s$ of size $<\eps$.
\end{proof}

\section{Spectral properties of the transfer operator acting on currents}\label{s:anisotropic}

This section is organized as follows. First, we state the existence of a nice escape function enjoying the dynamical features of~\cite{FaSj11, DyZw13}. 
This allows us to define some Sobolev spaces of anisotropic currents following these references. Finally, we recall the 
spectral properties of $-\ml{L}_{V_f}^{(k)}$ acting on 
these spaces. The main difference with the above references is the construction of the escape function which 
requires modifications compared with the setting from~\cite[Lemma~2.1]{FaSj11} where the authors made use of the 
Anosov property. Lemmas~\ref{l:compact} and~\ref{l:openneighbor} will in fact ensure that the construction of Faure and Sj\"ostrand can be 
extended to our framework. From this point on, we will always assume that $(f,g)$ is a \textbf{smooth Morse pair generating a Morse-Smale gradient flow}.

\subsection{Construction of anisotropic Sobolev spaces}
\label{s:anisotropicsobo}
\subsubsection{Escape function} The key ingredient in the construction of~\cite{FaSj11} is the following Lemma which will allow us to define 
appropriate Sobolev spaces where the operator $\ml{L}_{V_f}$ has nice spectral properties.

\begin{lemm}[Escape function]\label{l:escape-function} Let $N_0,N_1>4\|f\|_{\ml{C}^0}$ be two elements in $\IR$. Then, there exist 
$c_0>0$ (depending on $(M,g)$ but not on $N_0$ and $N_1$) and a smooth function $m(x,\xi)$ in $\ml{C}^{\infty}(T^*M)$ with bounded derivatives and which 
\begin{itemize}
 \item takes values in $[-2N_0,2N_1]$,
 \item is $0$ homogeneous for $\|\xi\|_x\geq 1$,
 \item is $\leq -\frac{N_0}{2}$ on a conic neighborhood of $\Gamma_-$ (for $\|\xi\|_x\geq 1$),
 \item is $\geq \frac{N_1}{2}$ on a conic neighborhood of $\Gamma_+$ (for $\|\xi\|_x\geq 1$),
 \item is $\geq \frac{N_1}{4}-2N_0$ outside a conic neighborhood of $\Gamma_-$ (for $\|\xi\|_x\geq 1$),
\end{itemize}
and such that there exists $R_0>0$ for which the escape function
$$G_{m}(x,\xi):=m(x,\xi)\log(1+\|\xi\|_x^2)$$
verifies, for every $(x,\xi)$ in $T^*M$ with $\|\xi\|_x\geq R_0$,
$$X_{H_f}.(G_m)(x,\xi)\leq -C_N:=-c_0\min\{N_0,N_1\}.$$
\end{lemm}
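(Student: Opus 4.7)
The plan is to adapt the construction of Faure--Sj\"ostrand~\cite[Lemma~2.1]{FaSj11} to our Morse-Smale setting, using as dynamical input Lemmas~\ref{l:trappedset}--\ref{l:openneighbor}. The guiding principle is that $G_m$ should decrease along $\Phi_f^t$ because (a) where $m$ is constant equal to its extremal value, the growth/decay of $\|\xi\|_{x}$ dictated by Lemma~\ref{l:trappedset} already produces the correct sign, and (b) where $m$ is transitioning, one arranges $X_{H_f}(m)<0$ so that the factor $\log(1+\|\xi\|_x^2)$ produces a large negative contribution.

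First I would build a $0$-homogeneous weight $m_0$ on $S^*M$. Fix a small $\eps>0$. Using Lemma~\ref{l:openneighbor}, choose a forward-invariant open neighborhood $V^s_0 \Subset V^s_1$ of $\Sigma_s$ of size $<\eps$, and analogously a backward-invariant pair $V^u_0 \Subset V^u_1$ around $\Sigma_u$; by Lemma~\ref{l:compact} these can be taken disjoint. Pick smooth cutoffs $\chi_s,\chi_u:S^*M\to[0,1]$ supported in $V^s_1,V^u_1$ and equal to $1$ on $V^s_0,V^u_0$. A convenient device to produce a weight with monotonic behaviour along the flow is a Birkhoff-type average
$$m_0(x,\xi):=-N_0\,\frac{1}{T}\int_0^T\chi_s\bigl(\tilde\Phi_f^{t}(x,\xi)\bigr)\,dt+N_1\,\frac{1}{T}\int_0^T\chi_u\bigl(\tilde\Phi_f^{-t}(x,\xi)\bigr)\,dt,$$
for $T>0$ sufficiently large. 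Forward invariance of $V^s_0$ and the attraction/repulsion given by Lemma~\ref{l:attract} imply $m_0\equiv -N_0$ on $V^s_0$ and $m_0\equiv N_1$ on $V^u_0$, while by the fundamental theorem of calculus $\tilde X_{H_f}(m_0)\leq 0$ everywhere, with $\tilde X_{H_f}(m_0)\leq -c<0$ off a small neighborhood of $\Sigma_s\cup\Sigma_u$. I would extend $m_0$ to $m\in\ml{C}^\infty(T^*M)$ by declaring it $0$-homogeneous in $\xi$ for $\|\xi\|_x\geq 1$ and smoothly interpolating to $0$ near the zero section, producing a function with bounded derivatives valued in $[-2N_0,2N_1]$.

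Next I would verify the escape inequality by expanding
$$X_{H_f}(G_m)=X_{H_f}(m)\,\log(1+\|\xi\|_x^2)+m\cdot X_{H_f}\bigl(\log(1+\|\xi\|_x^2)\bigr).$$
Since $H_f$ is fibrewise linear, the Lie derivative $\alpha(x,\xi):=X_{H_f}\log(1+\|\xi\|_x^2)$ is uniformly bounded on $T^*M$. Using the smooth linearization near critical points as in the proof of Lemma~\ref{l:trappedset}, for $\|\xi\|_x$ large enough one obtains $\alpha\geq \chi_{\min}>0$ on a conic neighborhood of $\Gamma_-$ (covector grows) and $\alpha\leq -\chi_{\min}<0$ on a conic neighborhood of $\Gamma_+$ (covector decays), where $\chi_{\min}>0$ is the smallest modulus of a Lyapunov exponent. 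Inside the neighborhood of $\Gamma_-$ where $m\equiv-N_0$, the first term vanishes and the second gives $X_{H_f}(G_m)\leq -\chi_{\min} N_0$; similarly near $\Gamma_+$ one gets $\leq -\chi_{\min} N_1$. On the transition region, $X_{H_f}(m)\leq -c<0$, so taking $\|\xi\|_x\geq R_0$ large makes $\log(1+\|\xi\|_x^2)$ dominate the bounded second term, yielding $X_{H_f}(G_m)\leq -c_0\min(N_0,N_1)$ throughout.

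The main obstacle will be controlling the rate $\alpha$ and the monotonicity of $m_0$ uniformly away from the linearizing charts, where the flow is not explicitly diagonalized. One must exploit \eqref{e:limit-points}: every orbit in $S^*M\setminus(\Sigma_s\cup\Sigma_u)$ eventually enters a linearizing chart around some critical point, so by compactness of $S^*M$ the entry time is uniformly bounded. This is exactly where the compactness of $\Sigma_s,\Sigma_u$ in Lemma~\ref{l:compact} and the invariant neighborhoods of Lemma~\ref{l:openneighbor} play their essential role: they let one calibrate the cutoffs $\chi_s,\chi_u$ and the averaging time $T$ so that the favourable sign of $\alpha$ or of $\tilde X_{H_f}(m_0)$ holds everywhere for $\|\xi\|_x\geq R_0$, bridging the gap between the pointwise linearized analysis and the required global estimate on $T^*M$.
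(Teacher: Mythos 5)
Your construction of the $0$-homogeneous part via a Birkhoff-type average of cutoffs near $\Sigma_s$ and $\Sigma_u$, using the compactness and invariance from Lemmas~\ref{l:compact} and~\ref{l:openneighbor}, is the same mechanism as in the paper (which transplants Lemma~2.1 of \cite{FaSj11}). However the verification of the escape inequality has a genuine gap, precisely at the point you flagged as the ``main obstacle.'' Your claim that $\alpha(x,\xi):=X_{H_f}\log(1+\|\xi\|_x^2)$ is $\geq\chi_{\min}>0$ on a conic neighborhood of $\Gamma_-$ (and $\leq-\chi_{\min}$ near $\Gamma_+$) is valid only over points $x$ in a small ball around a critical point, where the flow is linearized. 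Over a regular point $x$ there is no hyperbolicity at all, and the instantaneous rate of change of $\|\xi(t)\|_{x(t)}$ has no definite sign on $E_u^*(x)$ or $E_s^*(x)$. Since $\Sigma_s$ and $\Sigma_u$ (and hence the plateaus where your $m$ is constant $\equiv\mp N_*$ and $X_{H_f}(m)=0$) fiber over essentially all of $M$ and not just over a neighborhood of $\operatorname{Crit}(f)$, the second term $m\cdot\alpha$ in your expansion has no sign over $M_{\mathrm{reg}}$, and the estimate fails there. The ``uniform entry time to a linearizing chart'' observation cannot repair this: the escape inequality is a pointwise differential inequality, not a statement about time averages, so tuning $T$ and the cutoffs only shapes the transition region where $X_{H_f}(m)<0$; it never gives instantaneous control of $\alpha$ over the plateaus.

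The missing step --- and the one real adaptation needed relative to the Anosov setting --- is to incorporate the Lyapunov function $f$ itself into the order function. The paper takes $m(x,\xi)=-f(x)+m_1(x,\xi)$, and this is also what the hypothesis $N_0,N_1>4\|f\|_{\ml{C}^0}$ is for. Then over $T^*M_{\mathrm{reg}}$ one has $X_{H_f}.f(x)=\|\operatorname{grad} f(x)\|_{g(x)}^2\geq c(r)>0$ uniformly on the complement of $r$-balls about $\operatorname{Crit}(f)$, so the contribution $-(X_{H_f}.f)\log(1+\|\xi\|_x^2)\leq -c(r)\log(1+R_0^2)$ can be made as negative as needed by increasing $R_0$, dominating the uniformly bounded term $m\cdot\alpha$ with no sign information on $\alpha$ required. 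The covector-growth argument you sketch then only has to be carried out on $T^*B(a,r)$ for $a\in\operatorname{Crit}(f)$, where the smooth linearizing chart is available --- and even there the cone inequalities $X_{H_f}.(\|\xi\|_x^2)> c_-\|\xi\|_x^2$ and $X_{H_f}.(\|\xi\|_x^2)< -c_+\|\xi\|_x^2$ on conic neighborhoods of $\Gamma_\mp$ must be established by a compactness argument propagating outward from the exact diagonal form in the fiber over $a$, not merely asserted.
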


Now that we have settled the dynamical framework precisely in section~\ref{s:dynamics}, the construction of the function $G_m$ closely follows the one from~\cite{FaSj11}. 
For the sake of exposition, we postpone the detailed proof of this result to appendix~\ref{a:escape}, and we just mention the key ingredients: (1) $f$ is strictly decreasing along the flow, 
(2) there exists a $\ml{C}^1$ chart of adapted coordinates (see paragraph~\ref{sss:adapted}), (3) the attractor and 
repeller of the Hamiltonian flow (Lemmas~\ref{l:compact} and~\ref{l:openneighbor}) are compact. Lemma~\ref{l:escape-function} is in fact the only step in the construction of the anisotropic Sobolev space 
where one uses the dynamical properties of the flow under consideration.

\subsubsection{Anisotropic Sobolev spaces}
Let us now define the corresponding anisotropic Sobolev spaces. We fix $N_0,N_1>4\|f\|_{\ml{C}^0}$ large and we set
\begin{equation}\label{A_mweightsobo}
A_m(x,\xi):=\exp G_m(x,\xi),
\end{equation}
where $G_m(x,\xi)$ is given by Lemma~\ref{l:escape-function}. Following paragraph~1.1.2 in~\cite{FaSj11}, one can 
define the following anisotropic Sobolev 
space
$$H^{m}(M):=\Op(A_m)^{-1}(L^2(M)),$$
where $\Op(A_m)$ is an essentially selfadjoint pseudodifferential operator\footnote{Note that this requires to deal with symbols of variable orders 
whose symbolic calculus was described in Appendix~A of~\cite{FaRoSj08}. This can be done as the symbol $m(x,\xi)$ 
belongs to the standard class of symbols $S^0(T^*M)$.} with principal symbol $A_m$. 

We now briefly collect some facts concerning these spaces and we refer to~\cite[Sect.~3.2]{FaRoSj08} for more properties of these spaces. The 
space $H^m(M)$ is endowed with a Hilbert structure induced by the Hilbert structure on $L^2(M)$. The space $$H^{-m}(M)=\Op(A_m)L^2(M)$$ 
is the topological dual of $H^m(M)$. The anisotropic Sobolev space $H^m(M)$ is a reflexive space. Finally, one has
$$C^\infty(M) \subset H^m(M) \subset \mathcal{D}^\prime(M),$$ 
where the injections are continuous.

\subsubsection{Anisotropic Sobolev spaces of currents}

Let $0\leq k\leq n$. We consider the vector bundle $\Lambda^k(T^*M)$ of exterior $k$ forms. 
We define $\mathbf{A}_m^{(k)}(x,\xi):=A_m(x,\xi)\textbf{Id}$ belonging to $\text{Hom}(\Lambda^k(T^*M))$. 
We fix the inner product $\la,\ra_{g^*}^{(k)}$ on $\Lambda^k(T^*M)$ which is induced by the metric $g$ on $M$.

This allows to define the Hilbert space $L^2(M,\Lambda^k(T^*M))$ and to introduce an 
anisotropic Sobolev space of currents by setting
$$\ml{H}^m_k(M)=\Op(\mathbf{A}_m^{(k)})^{-1}L^2(M,\Lambda^k(T^*M)),$$
where $\Op(\mathbf{A}_m^{(k)})$ is a pseudodifferential operator with principal symbol $\mathbf{A}_m^{(k)}$. We refer to~\cite[App.~C.1]{DyZw13} for a brief reminder of pseudodifferential operators 
with values in vector bundles. In particular, adapting the proof of~\cite[Cor.~4]{FaRoSj08} to the vector bundle valued framework, one can verify that $\mathbf{A}_m^{(k)}$ is an elliptic symbol, and 
thus $\Op(\mathbf{A}_m^{(k)})$ can be chosen to be invertible on $\Omega^k(M)$. Mimicking the proofs of~\cite{FaRoSj08}, we can deduce some properties of these spaces of currents. First of all, they are endowed with a Hilbert 
structure inherited from the $L^2$-structure on $M$. The space $$\ml{H}^{m}_k(M)^{\prime}=\Op(\mathbf{A}_m^{(k)})L^2(M,\Lambda^k(T^*M))$$
is the topological dual of $\ml{H}^m_k(M)$ which is in fact reflexive. We also note that the space $\ml{H}_k^{m}(M)$ can be identified with $H^m(M)\otimes_{\ml{C}^{\infty}(M)}\Omega^k(M)$. Finally, one has
$$\Omega^k(M) \subset \ml{H}_k^{m}(M) \subset \mathcal{D}^{\prime,k}(M),$$ 
where the injections are continuous.

\subsection{Identifying the dual} \label{l:hodegstar}
Recall that the Hodge star operator is the unique isomorphism 
$\star_k :\Lambda^k(T^*M)\rightarrow \Lambda^{n-k}(T^*M)$ such that, for every $\psi_1$ in $\Omega^k(M)$ and $\psi_2$ in $\Omega^{n-k}(M)$, 
$$\int_M\psi_1\wedge\psi_2=\int_M\la \psi_1,\star_k^{-1}\psi_2\ra_{g^*(x)}^{(k)}\omega_g(x),$$
where $\la .,.\ra_{g^*(x)}^{(k)}$ is the induced Riemannian metric on $\Lambda^k(T^*M)$. In particular, 
$\star_k$ induces an isomorphism from $\ml{H}^{m}_k(M)^{\prime}$ to $\ml{H}^{-m}_{n-k}(M)$, 
whose Hilbert structure is given by the scalar product
$$(\psi_1,\psi_2)\in\ml{H}^{-m}_{n-k}(M)^2\mapsto \la \star_k^{-1}\psi_1,\star_k^{-1}\psi_2\ra_{\ml{H}_k^m(M)'}.$$
Thus, the topological dual of $\ml{H}_k^m(M)$ can be identified with $\ml{H}^{-m}_{n-k}(M)$, where, for every $\psi_1$ in $\Omega^k(M)$ 
and $\psi_2$ in $\Omega^{n-k}(M)$, one has the following duality relation:
$$\la\psi_1,\psi_2\ra_{\ml{H}_k^m\times\ml{H}_{n-k}^{-m}}=\int_{M}\psi_1\wedge\psi_2
=\la \Op(\mathbf{A}_m^{(k)})\psi_1,\Op(\mathbf{A}_m^{(k)})^{-1}\star_k^{-1}\overline{\psi_2}\ra_{L^2}=\la\psi_1,\star_k^{-1}\psi_2
\ra_{\ml{H}_k^m\times(\ml{H}_{k}^m)^{\prime}}.$$

\subsection{Discrete spectrum}

The main result on the spectral properties of $-\ml{L}_{V_f}^{(k)}$ acting on these anisotropic spaces is the following Proposition:
\begin{prop}[Discrete spectrum]\label{p:eigenvalues} The operator
$-\ml{L}_{V_f}^{(k)}$ defines a maximal closed unbounded operator on $\ml{H}^m_k(M)$, 
$$-\ml{L}_{V_f}^{(k)}:\ml{H}^m_k(M)\rightarrow\ml{H}^m_k(M),$$
with domain given by $\ml{D}(-\ml{L}_{V_f}^{(k)}):=\{u\in\ml{H}^m_k(M):-\ml{L}_{V_f}^{(k)} u\in\ml{H}^m_k(M)\}.$ It coincides with the 
closure of $-\ml{L}_{V_f}^{(k)}:\Omega^k(M)\rightarrow\Omega^k(M)$ in the graph norm for operators. Moreover, there exists a constant 
$C_0$ in $\IR$ (that depends on the choice of the order function 
$m(x,\xi)$) such that $-\ml{L}_{V_f}^{(k)}$ has empty spectrum for $\operatorname{Re}(z)>C_0$. Finally,
the operator
 $$-\ml{L}_{V_f}^{(k)}:\ml{H}_k^m(M)\rightarrow \ml{H}_k^m(M),$$
 has a discrete spectrum with finite multiplicity in the domain
 $$\operatorname{Re} (z)>-C_N+C,$$
 where $C>0$ depends only the choice of the metric (and the local coordinate charts) and $C_N>0$ is the constant from Lemma~\ref{l:escape-function}.
\end{prop}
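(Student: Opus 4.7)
The strategy, developed by Faure--Sj\"ostrand~\cite{FaSj11} for Anosov flows (with extensions to currents in~\cite{DyZw13,FaTs13}), is to conjugate $-\ml{L}_{V_f}^{(k)}$ by the elliptic pseudodifferential operator $\Op(\mathbf{A}_m^{(k)})$ in order to reduce everything to an operator acting on the standard Hilbert space $L^2(M,\Lambda^k(T^*M))$. Since $\Op(\mathbf{A}_m^{(k)}):\ml{H}_k^m(M)\to L^2(M,\Lambda^k(T^*M))$ is a continuous isomorphism by construction, the spectral properties of $-\ml{L}_{V_f}^{(k)}$ on $\ml{H}_k^m(M)$ transfer directly to those of
$$\widehat{\ml{L}}^{(k)}:=\Op(\mathbf{A}_m^{(k)})\,(-\ml{L}_{V_f}^{(k)})\,\Op(\mathbf{A}_m^{(k)})^{-1}$$
acting on $L^2$. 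The first statements (maximal closed extension, coincidence with the graph closure of the operator initially defined on $\Omega^k(M)$) then follow from the density of $\Omega^k(M)$ in $\ml{H}_k^m(M)$ and the existence of at least one point in the resolvent set, established below.

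The key symbolic computation is that, combining the variable-order symbolic calculus of~\cite[App.~A]{FaRoSj08} with the vector-valued framework of~\cite[App.~C.1]{DyZw13}, and crucially using that $\mathbf{A}_m^{(k)}=A_m\cdot\mathbf{Id}$ is a scalar multiple of the identity, the conjugated operator $\widehat{\ml{L}}^{(k)}$ is a pseudodifferential operator whose principal symbol reads
$$\sigma_{\mathrm{pr}}(\widehat{\ml{L}}^{(k)})(x,\xi)=\bigl(-iH_f(x,\xi)+X_{H_f}(G_m)(x,\xi)\bigr)\mathbf{Id}+r(x,\xi),$$
with a matrix-valued remainder $r(x,\xi)$ of anisotropic order $-1+\delta$ for some small $\delta>0$. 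Lemma~\ref{l:escape-function} ensures that the real part $X_{H_f}(G_m)$ of the scalar leading term is $\leq -C_N$ for $\|\xi\|_x\geq R_0$. Consequently, for every $z\in\IC$ with $\operatorname{Re}(z)>-C_N+C$, where $C>0$ absorbs the lower-order matrix corrections and depends only on $(M,g)$ and the coordinate charts, the real part of $\sigma_{\mathrm{pr}}(\widehat{\ml{L}}^{(k)}-z)$ is uniformly bounded above by a strictly negative constant outside a compact subset of $T^*M$.

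A parametrix construction via a sharp G\aa rding-type inequality, performed exactly as in~\cite[Sect.~3]{FaSj11} (see appendix~\ref{a:discrete}), then produces a holomorphic family of properly supported pseudodifferential operators $E(z)$ and $K(z)$ on $\{\operatorname{Re}(z)>-C_N+C\}$, with $K(z)$ compact on $L^2(M,\Lambda^k(T^*M))$, such that $(\widehat{\ml{L}}^{(k)}-z)E(z)=\mathbf{Id}+K(z)$. For $\operatorname{Re}(z)>C_0$ large enough, the boundedness of the pullback semigroup $e^{-t\ml{L}_{V_f}^{(k)}}=\varphi_f^{-t*}$ on $\ml{H}_k^m(M)$ (which, thanks to the uniform bound on $m$, is controlled by an exponential of the form $e^{tC_0}$) combined with the Laplace transform representation of the resolvent shows directly that $(\widehat{\ml{L}}^{(k)}-z)^{-1}$ exists and is bounded, which provides at least one point in the resolvent set inside the half-plane. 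Analytic Fredholm theory applied to $z\mapsto E(z)(\mathbf{Id}+K(z))^{-1}$ then yields the claimed meromorphic extension of the resolvent with poles of finite rank, i.e.\ discrete spectrum with finite multiplicity on $\{\operatorname{Re}(z)>-C_N+C\}$, and the half-plane $\{\operatorname{Re}(z)>C_0\}$ is contained in the resolvent set.

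The main technical obstacle lies in the simultaneous combination of two generalizations of the scalar constant-order setting of~\cite{FaSj11}. On the one hand, the logarithmic variable-order symbol $G_m=m\log(1+\|\xi\|_x^2)$ forces one to work inside the calculus of~\cite{FaRoSj08} and produces lower-order remainders of size $O(\log\langle\xi\rangle/\langle\xi\rangle)$; it is precisely the domination of these remainders by $X_{H_f}(G_m)$ at infinity that dictates the constant $C$ in the threshold $-C_N+C$. On the other hand, the passage from scalar functions to $k$-forms requires vector-bundle valued pseudodifferential calculus; however, the deliberate choice of $\mathbf{A}_m^{(k)}$ as a scalar multiple of the identity ensures that the leading conjugation computation factors through the scalar case, so that only the subprincipal matrix-valued contributions coming from the Lie derivative on $k$-forms need extra bookkeeping, and they merely shift the spectral parameter by a bounded amount which can be absorbed into $C$.
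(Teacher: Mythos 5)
Your proposal follows the same strategy as the paper's appendix~\ref{a:discrete}: conjugation by $\Op(\mathbf{A}_m^{(k)})$ to reduce to $L^2(M,\Lambda^k(T^*M))$, the symbolic computation of Lemma~\ref{l:pseudo} exploiting that $\mathbf{A}_m^{(k)}$ is a scalar multiple of the identity, the escape-function decay from Lemma~\ref{l:escape-function}, a sharp G\aa{}rding inequality producing a compact correction, and analytic Fredholm theory. One caveat: to seed the Fredholm argument you invoke the boundedness of the semigroup $\varphi_f^{-t*}$ on $\ml{H}^m_k(M)$ with growth $e^{tC_0}$, but the paper derives that bound~\eqref{e:semigroup} \emph{from} the resolvent estimate~\eqref{e:norm-resolvent} and Hille--Yosida, i.e.\ as a consequence of what is being proved; the paper instead gets a first point in the resolvent set directly from the accretivity/numerical-range argument of~\cite[Lemma~3.3]{FaSj11}, though your route can also be made noncircular by establishing the semigroup bound independently via an Egorov-type argument as in~\cite{FaRoSj08}.
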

Up to some adaptations to deal with the case of currents, we can in fact follow the proof of~\cite{FaSj11}
which only requires the existence of an escape function as in Lemma~\ref{l:escape-function} -- 
see appendix~\ref{a:discrete} for a brief account on the proof of Faure and Sj\"ostrand.

We now list some properties of this spectrum. As in~\cite[Th.~1.5]{FaSj11}, we can show that the eigenvalues (counted with their algebraic multiplicity) 
and the eigenspaces 
of $-\ml{L}_{V_f}^{(k)}:\ml{H}_k^m(M)\rightarrow \ml{H}_k^m(M)$ are in fact independent of the choice of escape function. 
For every $0\leq k\leq n$, we call the eigenvalues the \textbf{Pollicott-Ruelle resonances of index $k$}. For later use, we will write
$$\ml{R}_k(f,g):=\left\{\text{Pollicott-Ruelle resonances of index}\ k\right\}\subset\IC.$$ 
In other words, these 
complex numbers are the poles of the meromorphic extension of the resolvent
$$\left(-\ml{L}_{V_f}^{(k)}-z\right)^{-1}:\Omega^k(M)\rightarrow\ml{D}^{\prime,k}(M).$$
Finally, we note that, by duality, the same spectral properties holds for the dual operator
\begin{equation}\label{e:dual}(-\ml{L}_{V_f}^{(k)})^{\dagger}=-\ml{L}_{V_{-f}}^{(n-k)}:\ml{H}^{-m}_{n-k}(M)\rightarrow\ml{H}^{-m}_{n-k}(M).\end{equation}

\section{Upper bound on the set of eigenvalues}\label{s:spectrum}

We aim at describing completely the eigenvalues and the eigenmodes in great detail. 
To begin with, we shall first show an upper bound on the spectrum~:
\begin{prop}\label{p:ruelle-spectrum} Suppose that $(f,g)$ is a smooth Morse pair inducing a Morse-Smale gradient flow. Then, one has, for every $0\leq k\leq n$,
$$\ml{R}_k(f,g)\subset\ml{I}_k:=\bigcup_{a\in\operatorname{Crit}(f)}\ml{I}_k(a),$$
where, for every $a$ in $\operatorname{Crit}(f)$ of index $0\leq r\leq n$,
\begin{equation}\label{e:spectrum-indexk}\ml{I}_k(a):=\bigcup_{(*)}\left\{-\sum_{j=1}^n\alpha_j|\chi_j(a)|:\ \forall j\in I\cup J,\ \alpha_j\geq 1\right\}.\end{equation}
where $(*)$ means $\alpha\in\IZ_+^n$ and\footnote{Note that $I$ and $J$ may be empty.} $I\subset\{1,\ldots, r\},\ J\subset\{r+1,\ldots,n\}$ with $|J|-|I|=k-r$.
\end{prop}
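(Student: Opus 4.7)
The plan is to start from a generalized eigenmode $u\in\ml{H}^m_k(M)$ satisfying $(-\ml{L}_{V_f}^{(k)}-z_0)^Nu=0$ for some integer $N\geq 1$ and some $z_0$ in the discrete spectrum region $\Re(z)>-C_N+C$, and to show $z_0\in\ml{I}_k(a)$ for at least one $a\in\operatorname{Crit}(f)$. The whole strategy is local: one exploits the existence of a \emph{smooth} adapted chart at each critical point (available because $(f,g)$ is a smooth Morse pair) to put $V_f$ into the linear normal form of paragraph~\ref{sss:adapted}, and then computes the distributional spectrum of $-\ml{L}_{V_f}^{(k)}$ explicitly in this model.

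Before the local computation, one needs to reduce to a neighborhood of $\operatorname{Crit}(f)$. By elliptic regularity for pseudodifferential operators off the characteristic variety of $-\ml{L}_{V_f}^{(k)}$, together with propagation of singularities along the Hamiltonian flow $\tilde{\Phi}_f^t$ driven by the escape function $G_m$ of Lemma~\ref{l:escape-function}, any generalized eigenmode $u\in\ml{H}^m_k(M)$ has wavefront set contained in the attractor $\Gamma_+=\bigcup_{x}E_s^*(x)$, exactly as in the Anosov case of~\cite{FaSj11,DyZw13}. Combined with Lemmas~\ref{l:attract} and~\ref{l:openneighbor}, this forces $u$ to be smooth away from an arbitrarily small invariant neighborhood of $\operatorname{Crit}(f)$, so that the only obstruction to smoothness sits at the critical points. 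It therefore suffices to work in one fixed adapted chart $\kappa_a$ near each $a\in\operatorname{Crit}(f)$.

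In coordinates $(x,y)\in\IR^r\times\IR^{n-r}$ centered at $a$ of index $r$, the vector field reads
$$V_f=-\sum_{i=1}^r|\chi_i(a)|\,x_i\partial_{x_i}+\sum_{j=r+1}^n|\chi_j(a)|\,y_j\partial_{y_j}.$$
Decomposing $u=\sum_{|I|=k}u_I\,dz_I$ and using $\ml{L}_{V_f}(dz_i)=\chi_i(a)\,dz_i$, the equation $(-\ml{L}_{V_f}^{(k)}-z_0)^Nu=0$ splits into scalar equations
$$\bigl(E_a-\tilde{z}_I\bigr)^N u_I=0,\quad E_a=\sum_{i=1}^r|\chi_i(a)|x_i\partial_{x_i}-\sum_{j=r+1}^n|\chi_j(a)|y_j\partial_{y_j},\quad \tilde{z}_I=z_0+\sum_{i\in I}\chi_i(a).$$
The key observation is that $E_a$ admits the explicit family of distributional eigenfunctions $\partial_x^{\alpha}\delta(x)\,y^{\beta}$, $(\alpha,\beta)\in\IN^r\times\IN^{n-r}$, with eigenvalue $-\sum_{i=1}^r(\alpha_i+1)|\chi_i(a)|-\sum_{j=r+1}^n\beta_j|\chi_j(a)|$, thanks to the classical identities $x\partial_x(\partial_x^k\delta)=-(k+1)\partial_x^k\delta$ and $y\partial_y(y^k)=k\,y^k$. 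The anisotropic regularity $u\in\ml{H}^m_k(M)$ together with $\operatorname{WF}(u)\subset\Gamma_+$ forces each $u_I$ to lie in the finite-dimensional span of those eigenfunctions whose index $(\alpha,\beta)$ is bounded in terms of $\Re(z_0)$ and the order function $m$. Reading off the eigenvalue then gives
$$z_0=-\sum_{i=1}^r\bigl(\alpha_i+1-\mf{1}_{i\in I}\bigr)|\chi_i(a)|-\sum_{j=r+1}^n\bigl(\beta_j+\mf{1}_{j\in I}\bigr)|\chi_j(a)|,$$
and setting $I_{\mathrm{new}}=\{1,\dots,r\}\setminus I$ and $J_{\mathrm{new}}=I\cap\{r+1,\dots,n\}$, one has $|J_{\mathrm{new}}|-|I_{\mathrm{new}}|=|I|-r=k-r$ and the coefficients in front of $|\chi_\ell(a)|$ are $\geq 1$ exactly for $\ell\in I_{\mathrm{new}}\cup J_{\mathrm{new}}$, so $z_0\in\ml{I}_k(a)$.

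The main obstacle is turning the heuristic ``$u_I$ is a finite sum of $\partial_x^\alpha\delta(x)\,y^\beta$'' into a rigorous finiteness statement. This requires the \emph{simultaneous} use of three ingredients: (i) $\operatorname{WF}(u)\subset\Gamma_+$, which forces $u_I$ to be smooth in the unstable variable $y$ and to be a transverse distribution along $\{x=0\}=W^u(a)\cap V_a$ of finite order; (ii) the anisotropic Sobolev membership, which caps the transverse order in $x$; and (iii) the lower bound $\Re(z_0)>-C_N+C$, which bounds $|\alpha|+|\beta|$ via the explicit formula for the eigenvalue. A secondary difficulty is that the localization $\chi_a u$ does not solve the eigenvalue equation exactly but only modulo commutator terms supported in $V_a\setminus\{a\}$; these errors have to be shown harmless by inverting $-\ml{L}_{V_f}^{(k)}-z_0$ microlocally along the flow on that region, in the spirit of the propagation step in~\cite{FaSj11}. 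Once all three ingredients are combined and the critical points are summed over, the union of the admissible values gives exactly $\ml{I}_k$, completing the proof.
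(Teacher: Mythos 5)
Your overall plan—reduce to a local computation near a critical point $a$ in an adapted chart, solve the linear ODE explicitly, and read off the eigenvalue—is the right idea and is indeed what the paper does (in Section 5.3.1) for the sharper Proposition~\ref{p:generalized-laudenbach}, once Proposition~\ref{p:ruelle-spectrum} is established. The algebraic bookkeeping at the end (matching $\alpha,\beta,I$ to the data $(I_{\mathrm{new}},J_{\mathrm{new}})$ with $|J_{\mathrm{new}}|-|I_{\mathrm{new}}|=k-r$) is also correct. However, the proposal as written has a genuine gap in the localization step and a secondary error in the wavefront set.

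The central difficulty you wave at but do not solve is establishing the \emph{support} condition $\operatorname{supp}(u)\cap V_a\subset W^u(a)$. You claim that ``$\operatorname{WF}(u)\subset\Gamma_+$ forces $u_I$ to be a transverse distribution along $\{x=0\}$,'' but this is not a consequence of a wavefront-set bound alone: a condition of the form $\operatorname{WF}(u)\subset\Gamma_-$ (or $\Gamma_+$) constrains the \emph{directions} of singularity, not the \emph{location}. Near $a$, the set $\Gamma_-$ fibers over all of $V_a$, not merely over $W^u(a)$; the covector fibers over a point $x\notin W^u(a)$ are the conormals to $W^u(b)$ for other critical points $b$, and they do not exclude distributions supported off $\{x=0\}$. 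In the paper, this support localization is precisely the content of Lemma~\ref{l:support-Lemma}, whose proof requires the Propagation Lemma (flow-invariance of $\operatorname{supp}(u)$), the ordering of critical points by $f$-value (using that $f$ is excellent), and the Thom partition $M=\bigcup_a W^s(a)$. One first selects the ``smallest'' critical point $a$ near which $u$ does not vanish, shows $u\equiv0$ on $f^{-1}(<f(a))$, and only then deduces the local support on $W^u(a)$. Without this step the local model computation is not applicable, and in particular your claim that ``$u$ is smooth away from an arbitrarily small invariant neighborhood of $\operatorname{Crit}(f)$'' is false: the Laudenbach currents $U_a$ are singular along all of $\overline{W^u(a)}$ (a set of positive dimension), not just at critical points.

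A secondary problem: you assert $\operatorname{WF}(u)\subset\Gamma_+=\bigcup_x E_s^*(x)$, but the regularity forced by the escape function of Lemma~\ref{l:escape-function} (where $m\geq N_1/2$ near $\Gamma_+$, $m\leq -N_0/2$ near $\Gamma_-$) gives $\operatorname{WF}(u)\subset\Gamma_-=\bigcup_x E_u^*(x)$ — i.e., in the conormals to the unstable manifolds, consistent with eigencurrents being carried by $\overline{W^u(a)}$. You appear to use the correct set implicitly in your local ansatz $\partial_x^\alpha\delta(x)y^\beta$, but the stated wavefront claim, the invocation of Lemma~\ref{l:attract}, and the label ``attractor'' are all off. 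Finally, it is worth comparing the route: instead of classifying local distributional solutions by hand, the paper pairs $u$ against a test form, writes $\langle(\varphi_f^{\ln s})^*u,\psi\rangle=s^{z}\langle u,\psi\rangle$, and shows via H\"ormander's pullback and pushforward theorems that the left-hand side is polyhomogeneous in $s$ with index set $\ml{I}_k(a)$; uniqueness of the Mellin transform (Lemma~\ref{l:mellin-transform-constraint}) then forces $z\in\ml{I}_k(a)$. This avoids the delicate finiteness bookkeeping in your step (iii) and is more robust (it does not require the nonresonance assumption).
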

Building on the construction of section~\ref{s:anisotropic}, we shall now give the proof of this Proposition by making use of 
our a priori information on the regularity of the resonant states. 
Among other things, we will use the following property of transport equations:
\begin{lemm}[Propagation Lemma]
Let $0\leq k\leq n$, let $z$ in $\IC$ and let $u\in \mathcal{D}^{\prime,k}(M)$ be a solution of $\ml{L}_{V_f}^{(k)}u=zu$. If $u|_U=0$ where $U\subset M$ is some open subset then $u$ vanishes on the larger
open subset $\bigcup_{t\in\mathbb{R}} \varphi^t_f(U)$. 
\end{lemm}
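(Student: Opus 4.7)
The plan is to promote the infinitesimal eigenequation $\ml{L}_{V_f}^{(k)}u=zu$ to the global flow-invariance identity
$$\varphi_f^{t*}u=e^{zt}u\quad\text{in }\ml{D}^{\prime,k}(M),\ \forall t\in\IR,$$
and then to transfer the vanishing through a simple pullback of test forms.

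For the upgrade I would work distributionally via the pairing $\la\varphi_f^{t*}u,\psi\ra:=\la u,\varphi_f^{-t*}\psi\ra$ defined for every smooth $(n-k)$-form $\psi$; this extends the classical pullback on smooth currents (where $\la u,\psi\ra=\int_M u\wedge\psi$). Since $t\mapsto\varphi_f^{-t*}\psi$ is a smooth curve in $\Omega^{n-k}(M)$, the scalar function $t\mapsto\la\varphi_f^{t*}u,\psi\ra$ is smooth, with derivative
$$\frac{d}{dt}\la\varphi_f^{t*}u,\psi\ra=-\la u,\varphi_f^{-t*}\ml{L}_{V_f}\psi\ra=-\la\varphi_f^{t*}u,\ml{L}_{V_f}\psi\ra=\la\ml{L}_{V_f}\varphi_f^{t*}u,\psi\ra,$$
the last equality being the standard duality $\la\ml{L}_{V_f}T,\psi\ra=-\la T,\ml{L}_{V_f}\psi\ra$ (a consequence of Cartan's formula and Stokes, since $\int_M\ml{L}_{V_f}(T\wedge\psi)=0$). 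Because $\ml{L}_{V_f}$ commutes with $\varphi_f^{t*}$, this equals $z\la\varphi_f^{t*}u,\psi\ra$, and integrating the resulting scalar ODE from $0$ to $t$ yields the claimed identity.

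To conclude, fix $t_0\in\IR$ and a smooth $(n-k)$-form $\psi$ with $\supp\psi\subset\varphi_f^{t_0}(U)$. Then $\supp(\varphi_f^{t_0*}\psi)=\varphi_f^{-t_0}(\supp\psi)\subset U$, so $\la u,\varphi_f^{t_0*}\psi\ra=0$ by hypothesis. Applying the identity at time $-t_0$ gives
$$\la u,\psi\ra=e^{zt_0}\la\varphi_f^{-t_0*}u,\psi\ra=e^{zt_0}\la u,\varphi_f^{t_0*}\psi\ra=0.$$
Since $\psi$ and $t_0$ were arbitrary, $u$ vanishes on $\bigcup_{t\in\IR}\varphi_f^t(U)$. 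The only subtle point is interpreting $\varphi_f^{t*}u$ and its $t$-derivative at the distributional level, but this reduces entirely to smoothness of the classical curve $t\mapsto\varphi_f^{-t*}\psi$ in $\Omega^{n-k}(M)$; no hyperbolicity or Morse-Smale structure enters, and the lemma in fact holds for an arbitrary complete smooth flow.
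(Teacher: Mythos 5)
Your proof is correct and is the same approach the paper uses: the paper's entire proof is the one-line remark that it ``follows from the fact that $\varphi^{t*}_fu=e^{tz}u$,'' and you have simply supplied the (routine but worth spelling out) details of why this identity holds distributionally and how it transfers the vanishing along orbits.
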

The proof of this Lemma follows from the fact that $\varphi^{t*}_fu=e^{tz}u$.

\subsection{Proving $\ml{R}_k(f,g)\subset\ml{I}_{k}$}\label{sss:general} We let $0\leq k\leq n$. Let $z=\lambda+i\gamma$ be an element in $\ml{R}_k(f,g)$. We will prove 
that $z$ belongs to $\ml{I}_{k}$ by ordering $a_1<a_2<\ldots<a_K$ by increasing order
as in the proof of Lemma~\ref{l:openneighbor} and we will treat our problem using the pull-back Theorem of H\"ormander~\cite{Ho90}.
Assume that, for all $i\leqslant j$, the eigenfunction $u$ vanishes near $a_i$ and assume the germ of $u$ near $a_{j+1}$ is non vanishing (see Lemma~\ref{l:support-Lemma} below). In 
that case, 
we would like to prove that  $z$ is of the form:
$$z=-\sum_{l=1}^n\alpha_l|\chi_l(a_{j+1})|,$$
with some restrictions on the coefficients $\alpha_l$. For that purpose, we start with the following central observation:
\begin{lemm}\label{l:support-Lemma}
Let $u\in \mathcal{D}^{\prime,k}(M)$ be some eigencurrent of $-\ml{L}_{V_f}^{(k)}$ acting on $\ml{H}_k^m(M)$. 

If $u$ vanishes in some neighborhood
of all $a_i$ for $i\leqslant j$, then $u$ restricted to the level $f^{-1}(<f(a_{j+1}))$ vanishes. Moreover, if the germ 
$u|_{V_{a_{j+1}}}\neq 0$ (for the adapted chart 
$\kappa_{a_{j+1}}:V_{a_{j+1}}\rightarrow W_{a_{j+1}}$ defined in paragraph~\ref{sss:adapted}), then the germ $u|_{V_{a_{j+1}}}$ 
is supported in the germ of unstable manifold $W^u(a_{j+1})\cap V_{a_{j+1}}$.
\end{lemm}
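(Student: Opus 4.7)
The plan is to derive both assertions directly from the propagation lemma, using the two elementary dynamical facts available for gradient flows: (i) every trajectory admits a backward limit in $\operatorname{Crit}(f)$ (equation~\eqref{e:limit-points}), and (ii) $f$ is nondecreasing along trajectories (equation~\eqref{e:gradient-lines}); the excellence assumption on $f$ will be used to separate the critical values. For the first assertion, I would pick any $x \in f^{-1}((-\infty,f(a_{j+1})))$ and set $x_- := \lim_{t\to-\infty}\varphi_f^t(x) \in \operatorname{Crit}(f)$. Monotonicity forces $f(x_-) \leq f(x) < f(a_{j+1})$, so $x_- = a_i$ for some $i \leq j$; by hypothesis $u$ vanishes on an open neighborhood $N_i$ of $a_i$. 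Since $\varphi_f^{-T}(x) \in N_i$ for $T$ large, one has $x \in \varphi_f^T(N_i) \subset \bigcup_{s\in\IR}\varphi_f^s(N_i)$, and the propagation lemma yields a neighborhood of $x$ on which $u$ vanishes. Ranging $x$ over the open level set gives $u|_{f^{-1}(<f(a_{j+1}))} = 0$.

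For the second assertion, the goal is to show that any point of $V_{a_{j+1}} \setminus W^u(a_{j+1})$ has backward limit in some $a_i$ with $i \leq j$, after which the propagation argument applies verbatim. Concretely, I would shrink $V_{a_{j+1}}$ so that $f(V_{a_{j+1}}) \subset (f(a_{j+1}) - \delta, f(a_{j+1}) + \delta)$ with $2\delta < \min_{i\neq j+1}|f(a_i) - f(a_{j+1})|$; this minimum is strictly positive because $f$ is excellent. For $x \in V_{a_{j+1}} \setminus W^u(a_{j+1})$, the backward limit $x_-$ is a critical point distinct from $a_{j+1}$ (otherwise $x$ would lie in $W^u(a_{j+1})$ by definition), and monotonicity gives $f(x_-) \leq f(x) < f(a_{j+1}) + \delta$. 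The choice of $\delta$ excludes the window $(f(a_{j+1}) - 2\delta, f(a_{j+1}) + 2\delta) \setminus \{f(a_{j+1})\}$ as possible values of $f(x_-)$, so $f(x_-) \leq f(a_{j+1}) - 2\delta$, forcing $x_- = a_i$ with $i \leq j$. The propagation lemma once more provides a neighborhood of $x$ on which $u$ vanishes, and the union of these neighborhoods covers $V_{a_{j+1}} \setminus W^u(a_{j+1})$, which gives the desired support statement.

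I do not expect a serious obstacle here, as all the analytic content has been packaged into the propagation lemma: what remains is purely dynamical bookkeeping. The one delicate point is the choice of the size of $V_{a_{j+1}}$ in the second part, which must be small enough that no other critical value of $f$ intrudes into the window near $f(a_{j+1})$; excellence together with the finiteness of $\operatorname{Crit}(f)$ makes this cost-free. Note that nothing in the argument uses the Morse--Smale transversality or the linearizing chart beyond the bare existence of $V_{a_{j+1}}$ as an open neighborhood of $a_{j+1}$: the statement is really a consequence of the partial order on critical points induced by $f$.
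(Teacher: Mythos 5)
Your argument is correct and follows the paper's proof closely: both reduce the lemma to the propagation Lemma applied through the backward limits $x_-=\lim_{t\to-\infty}\varphi_f^t(x)$, using only that $f$ is nondecreasing along orbits, that every orbit converges backward to a critical point, and that $f$ is excellent. Your explicit $\delta$-window around $f(a_{j+1})$ makes precise a step the paper states more briefly (the paper also lists embeddedness of $W^u(a_{j+1})$ among the facts used, but, as you observe, your formulation does not actually need it for the support inclusion, since it suffices to find, for each $x\in V_{a_{j+1}}\setminus W^u(a_{j+1})$, an open neighborhood of $x$ on which $u$ vanishes).
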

\begin{rema}
A first consequence of this Lemma is that there is necessarily a critical point $a$ in a neighborhood of which $u$ does not vanish.
\end{rema}
\begin{proof} Assume without loss of generality that $f(a_{j+1})=0$.
The level $f^{-1}(<0)$ contains only the critical points $\{a_1,\dots,a_j\}$. Moreover, since the value of the potential $f$ is monotonic along the flow 
it follows that the level $f^{-1}(<0)$ is contained in the union of unstable manifolds $\bigcup_{i\leq j} W^u(a_i)$. Hence, by the propagation Lemma, 
$u|_{\{f^{-1}(<0)\}}=0$. Now consider some open set $V$ in $V_{a_{j+1}}$ which does not interset $W^u(a_{j+1})$. Using the facts that $f$ is excellent
that $W^u(a_{j+1})$ is an embedded submanifold and that $f$ must increase along the flow, one knows that, for every $x$ in $V$, $x_-=\lim_{t\rightarrow -\infty}\varphi_f^t(x)$ belongs to $\{a_1,\ldots, a_j\}$. Using the propagation Lemma one more time, one can then deduce that 
$u_{|V}=0$. This is valid for any open set $V\subset V_{a_{j+1}}$ which does not intersect $W^u(a_{j+1})$. In other words, $u_{|U}$ is supported 
in the germ of unstable manifold $W^u(a_{j+1})\cap V_{a_{j+1}}$, which concludes the proof of the Lemma.
\end{proof}
We now let $\psi$ be a smooth test form in $\Omega^{n-k}(M)$ 
which is compactly supported in $V_{a_{j+1}}$. We can then write, for every $s$ in $(0,1]$,
$$\left\la (\varphi_f^{\ln s})^*u,\psi\right\ra=s^{-(\lambda+i\gamma)}\la u,\psi\ra.$$
As we made the assumption that $u$ is not identically vanishing in the neighborhood of $a_{j+1}$, we can choose $\psi$ 
compactly supported near $a_{j+1}$ such that $\la u,\psi\ra\neq 0$. Then, we would like to prove that the left-hand side of the 
equality admits a polyhomogeneous expansion in $s$ which is indexed by the set $\ml{I}_k(a_{j+1})$. Combining this to Lemma~\ref{l:mellin-transform-constraint}, 
we would then deduce that $z$ is of the expected form. Thus, our last task is to prove that $\left\la (\varphi_f^{\ln s})^*u,\psi\right\ra$ admits a 
polyhomogeneous expansion indexed by $\ml{I}_k(a_{j+1})$. For that purpose, we shall work using the local coordinates $(x,y)$ defined in 
paragraph~\ref{sss:adapted}. We denote by $\tilde{u}$ the image of $u$ in this chart. From Lemma~\ref{l:support-Lemma}, this defines 
a current which is carried in $(-\delta_a,\delta_a)^n\cap\{x=0\}$. This can be extended into a current defined on 
$\tilde{W}_a:=\IR^{r(a)}\times(-\delta_a,\delta_a)^{n-r(a)}$ by setting $\tilde{u}=0$ outside $(-\delta_a,\delta_a)^n$. Then, we introduce the 
following map
$$\Phi:(s_1,\dots,s_n;x,y) \in (-1,1)^n\times \tilde{W}_a \mapsto (s_1^{-1}x_1,\ldots, s_r^{-1}x_r,s_{r+1}y_{r+1},\ldots, s_ny_n)\in\tilde{W}_a . $$
Note that this is well defined except if $s_i=0$ for some $1\leq i\leq r(a)$. We also define the partial maps:
$$\Phi^1:(s_1,\dots,s_n;x,y) \in (-1,1)^n\times \tilde{W}_a \mapsto (x_j,s_jy_j)\in\tilde{W}_a , $$
and
$$\Phi^2:(s_1,\dots,s_n;x,y) \in (-1,1)^n\times \tilde{W}_a \mapsto (s_jx_j,y_j)\in\tilde{W}_a. $$
Contrary to $\Phi$, these two maps are well defined for $\mathbf{s}$ belonging to the whole set $(-1,1)^n$. Let $\mathbf{s}$ be a 
point in $(-1,1)^n$ with all entries which are non vanishing. In that case, we can write
\begin{equation}\label{e:pullback-multivariable}\la \Phi(\mathbf{s})^*\tilde{u},\tilde{\psi}\ra= 
\la \Phi^2(\mathbf{s})_*\Phi^1(\mathbf{s})^*\tilde{u},\tilde{\psi}\ra=\la\Phi^1(\mathbf{s})^*\tilde{u},\Phi^2(\mathbf{s})^*\tilde{\psi}\ra.
\end{equation} 
This is valid as long as $s_j\neq 0$ for every $1\leq j\leq n$. Our next step is to show that this extends as a 
smooth function on $(-1,1)^n$. From the previous expression, one can observe that the main concern is to be able to
study the smoothness of $\Phi^1(\mathbf{s})^*\tilde{u}$ in the variable $(s_j)_j\in (-1,1)^{n}$. Recall that $u$ is an eigenvector of $-\ml{L}_{V_f}^{(k)}$ 
acting on a certain anisotropic Sobolev space $\ml{H}^m_k(M)$. According to~\cite[Th.~1.5]{FaSj11}, the eigenmodes are independent of the choice of the order 
function $m(x,\xi)$ satisfying the assumptions of Lemma~\ref{l:escape-function}. Letting the parameter $N_1\rightarrow+\infty$ in 
Lemma~\ref{l:escape-function} and using 
Lemma~\ref{l:support-Lemma}, one finds that the wave front set $WF(\tilde{u})$ of $\tilde{u}$ satisfies the following
\begin{equation}\label{e:wavefront}
WF(\tilde{u})\subset\left\{(0,y,\xi,0)\in T^*\tilde{W}_a:\xi\neq 0\right\}.
\end{equation}
We would now like to define the pull-back of $\tilde{u}$ under the map $\Phi^1$, and, for that purpose, we shall apply 
H\"ormander's pullback Theorem~\cite[Th.~8.2.4]{Ho90} -- see also~\cite{BrDaHe14}. Hence, we have to compute the normal $N^*_{\Phi^1}\subset T^*\tilde{W}_a$ of the map $\Phi^1$, 
\begin{eqnarray*}
N^*_{\Phi^1}&=&\{(x,s_j y_j;\xi,\eta)\text{ such that } (0,0,0)= (\xi,\eta)\circ d_{(\mathbf{s},x,y)}\Phi^1, (\xi,\eta)\neq (0,0)\}\\
&=&\{(x,s_j y_j;\xi,\eta)\text{ such that } (0,0,0)= (\xi,\eta)\circ 
\left(\begin{array}{ccc} 
0&1&0\\
y&0&(s_j)_j
\end{array}
\right),(\xi,\eta)\neq (0,0)\}\\
&=& \left\{(x,s_j y_j;\xi,\eta)\text{ such that } \left(\sum_j y_j\eta^j,\xi,\sum_j s_j\eta^j\right)=(0,0,0) ,(\xi,\eta)\neq (0,0)\right\}\\
&=&\left\{(x,s_j y_j;0,\eta)\text{ such that }\eta\neq 0, \sum_js_j\eta^j=\sum_jy_j\eta^j=0 \right\}.
\end{eqnarray*}
In particular, from~\eqref{e:wavefront}, $N^*_{\Phi^1}\cap WF(\tilde{u})$ is empty. Hence, we can apply H\"ormander's pullback Theorem, i.e.
$(\Phi^1)^*\tilde{u}$ is well defined and its wave front set
is contained in 
\begin{eqnarray*}
(\Phi^1)^*WF(\tilde{u})&=&\left\{\left(\mathbf{s},x,y;\sum_jy_j\eta^j,\xi,\sum_js_j\eta^j\right)\text{ such that }\left(x,\sum_js_j y_j;\xi,\eta\right)\in WF(\tilde{u})\right\}\\
&\subset&\{(\mathbf{s},0,y;0,\xi,0)\text{ such that }\xi\neq 0 \}.
\end{eqnarray*}
As $\tilde{\psi}$ is a smooth test form, we note that $(\Phi^1)^*\tilde{u}\wedge(\Phi^2)^*\tilde{\psi}$ is a current of degree $n$ 
on $(-1,1)^n\times \tilde{W}_a$ whose wave front set is included in $(\Phi^1)^*WF(\tilde{u})$. 
Consider now the pushforward of this current under the map~:
$$\mathbf{p}:(x,y,\mathbf{s})\in\tilde{W}_a\times(-1,1)^n \mapsto \mathbf{s} \in (-1,1)^n.$$
By the push-forward Theorem~\cite{Ho90, BrDaHe14}, the wave front set of the pushforward distribution is included in
$$\mathbf{p}_*\left((\Phi^1)^*WF(\tilde{u})\right)=
\left\{(\mathbf{s};\mathbf{\sigma})\text{ such that }(\mathbf{s},x,y;\mathbf{\sigma},0,0)\in (\Phi^1)^*WF(\tilde{u}),\ \sigma\neq 0\right\}=\emptyset.$$
In other words, the pushforward distribution is a smooth function in the variable $\mathbf{s}\in(-1,1)^n.$ In particular, 
according to~\eqref{e:pullback-multivariable}, $\la \Phi(\mathbf{s})^*\tilde{u},\tilde{\psi}\ra$ 
has a well--defined Taylor expansion in $\mathbf{s}$ around $0$. Then, we can combine 
Lemma~\ref{l:compo-smooth-becomes-polyhomogeneous} to the fact that, in our system of adapted coordinates, the reparametrized flow 
$\varphi^{\ln s}_f$ can be written $(s^{\chi_j(a)}x_j,s^{\chi_j(a)}y_j).$ From that, we deduce the expected property, i.e. 
$\left\la (\varphi_f^{\ln s})^*u,\psi\right\ra$ admits a polyhomogeneous expansion indexed by $(\alpha.|\chi(a)|)_{\alpha\in\IN^n}$. In order to conclude the proof, we should observe 
that $u(x,y,dx,dy)$ is of degree $k$ and $\psi(x,y,dx,dy)$ of degree $n-k$. This forces that some of the $\alpha_j$ do not vanish when we express $z$ as a combination of the Lyapunov exponents, 
i.e. $z$ must in fact belong to the set $\ml{I}_k(a_{j+1})$. This concludes the proof of the inclusion $\ml{R}_k(f,g)\subset\ml{I}_k$.

\begin{rema}\label{r:smoothness-pullback}
We shall use this kind of arguments several times in the following. We observe that we have just been able to prove that $\la \varphi_f^{\ln s *} u,\psi\ra$ 
has a polyhomogeneous expansion indexed by the set $\ml{I}_k(a_{j+1})$ and that our proof only made use of the facts that the support of $u$ near $a_{j+1}$ 
was included in $W^u(a_{j+1})$ and that its wave front is included in $E_u^*$ near $a_{j+1}$.
\end{rema}

In particular, we have implicitely shown the following useful statement:
\begin{prop}\label{p:indexeigencurrentversusdegree-general}
Suppose that $(f,g)$ is a smooth Morse pair generating a Morse-Smale gradient flow. Let $u\neq 0$ be an element of $\ml{H}^m_k(M)$ satisfying $-\mathcal{L}_{V_f}^{(k)}u=\lambda u$. 
Let $a$ be the critical point of $f$ satisfying the following properties:
\begin{itemize}
\item $u$ does not vanish in any neighborhood of $a$,
\item for every $a'$ in $\operatorname{Crit} (f)$ satisfying $f(a')<f(a)$, $u$ identically vanishes near $a'$.
\end{itemize}
Then, $\lambda$ belongs to the index set $\ml{I}_k(a)$. Moreover, if $\lambda=0$, then the index of $a$ is equal to $k$.
\end{prop}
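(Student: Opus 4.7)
The plan is to observe that this proposition is essentially a distillation of the argument carried out in paragraph~\ref{sss:general} together with Remark~\ref{r:smoothness-pullback}, and to extract the precise combinatorial consequence at $\lambda=0$. First I would identify the distinguished critical point $a$ in the statement with the first element $a_{j+1}$ of the ordered list $a_1<\ldots<a_K$ (by increasing values of $f$) near which the eigencurrent $u$ does not vanish, so that by construction $u$ vanishes near every $a_i$ with $i\leq j$. Under this identification, Lemma~\ref{l:support-Lemma} applies directly and tells us that the germ of $u$ at $a$ is supported in the local unstable manifold $W^u(a)\cap V_a$.

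Next, I would invoke the independence of the eigenmodes on the choice of escape function (\cite[Th.~1.5]{FaSj11}): by letting $N_1\to+\infty$ in Lemma~\ref{l:escape-function}, the wave front set of $u$ near $a$ is constrained to lie in the unstable conormal direction $E_u^*$. Combining this with the support property, the pullback/pushforward chain through H\"ormander's theorems used in paragraph~\ref{sss:general} applies verbatim: for any compactly supported test form $\psi\in\Omega^{n-k}(V_a)$ with $\la u,\psi\ra\neq 0$ (such a $\psi$ exists by the non-vanishing assumption on $u$), the quantity $\la(\varphi_f^{\ln s})^*u,\psi\ra = s^{-\lambda}\la u,\psi\ra$ admits a polyhomogeneous expansion in $s$ whose exponents lie in $\ml{I}_k(a)$. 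Lemma~\ref{l:mellin-transform-constraint} then forces $\lambda\in\ml{I}_k(a)$, which establishes the first assertion (and as a by-product shows that $\lambda$ is real).

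For the second assertion, the task is purely combinatorial: if $\lambda=0$ and $\lambda=-\sum_{j=1}^n\alpha_j|\chi_j(a)|$ with $\alpha\in\IN^n$, then every $\alpha_j$ must vanish since each $|\chi_j(a)|>0$. Returning to the definition~\eqref{e:spectrum-indexk} of $\ml{I}_k(a)$, the constraint that $\alpha_j\geq 1$ for every $j\in I\cup J$ forces $I=J=\emptyset$, and then the relation $|J|-|I|=k-r$ collapses to $k=r=\operatorname{ind}(a)$, as desired. I do not foresee any genuine obstacle: the only delicate point is to verify that the wave front localization and the local support of $u$ near $a$ are strong enough to feed the pullback theorem, and this is exactly what was already checked in paragraph~\ref{sss:general}.
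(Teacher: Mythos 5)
Your proof is correct and follows the paper's route exactly: the paper itself states this proposition as an implicit by-product of the argument in paragraph~\ref{sss:general}, and you correctly identify $a$ as the smallest critical point near which $u$ does not vanish, apply Lemma~\ref{l:support-Lemma} together with the wave front/pullback/Mellin argument of that paragraph to conclude $\lambda\in\ml{I}_k(a)$, and then spell out the combinatorial step (left implicit in the paper) that $\lambda=0$ forces $\alpha=0$, hence $I=J=\emptyset$ and $k=\operatorname{ind}(a)$ from the constraint $|J|-|I|=k-r$ in~\eqref{e:spectrum-indexk}. The only small phrasing slip is that the polyhomogeneous expansion of $s\mapsto\la(\varphi_f^{\ln s})^*u,\psi\ra$ is indexed by the nonnegative quantities $\alpha\cdot|\chi(a)|$ rather than by $\ml{I}_k(a)$ itself; it is the location of the Mellin pole at $z=\lambda$ that then places $\lambda$ in the nonpositive set $\ml{I}_k(a)$, so the conclusion stands.
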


\subsection{Asymptotic expansion of the correlation function}\label{ss:correlation}

Let us now draw some consequences of the fact that $\ml{R}_k(f,g)\subset\ml{I}_{k}$ following the lines 
of~\cite{FaRoSj08}. From~\eqref{e:semigroup} in the appendix, we also know that $(\varphi_f^{-t})^*$
generates  
a strongly continuous semi-group from 
$\ml{H}^m_k(M)$ to $\ml{H}^m_k(M)$ for every $1\leq k\leq n$ whose norm is bounded by $e^{tC_0}.$ Fix now $\Lambda>0$. Suppose without loss of generality that $-\Lambda$ does not belong to 
$\ml{I}_{k}$. From Proposition~\ref{p:eigenvalues}, we now observe that, 
for every $\Lambda>0$, one can find a weight function $m(x,\xi)$ such
that the operator
$$-\ml{L}_{V_f}^{(k)}:\ml{H}^m_k(M)\rightarrow \ml{H}^m_k(M)$$
has only discrete spectrum with finite multiplicity in the half plane $\text{Re}(\lambda)>-\Lambda$. Moreover, from the fact that 
$\ml{R}_k(f,g)\subset\ml{I}_{k}$, the operator \emph{has only finitely many eigenvalues in this region} which are real and nonpositive. 
We denote by $-\lambda_i^{(k)}$ the eigenvalues of $-\ml{L}_{V_f}^{(k)}$ 
(counted with their algebraic multiplicities). Note that each eigenvalue may a 
priori be associated with a Jordan block of size $d_i^{(k)}\geq 1$. Following~\cite[App.~A]{HeSj86}, we fix a Jordan path in $\mathbb{C}$ which 
separates the eigenvalues in the half plane $\text{Re}(\lambda)>-\Lambda$ from the rest of the spectrum. Then, according to this reference, the 
\emph{spectral projector} associated with this finite part of the spectrum can be written as
$$\Pi_{\Lambda}^{(k)}:=\frac{1}{2i\pi}\int_{\gamma}(-\ml{L}_{V_f}^{(k)}-z)^{-1}dz.$$
We can then split the operator $-\ml{L}_{V_f}^{(k)}$ as follows:
$$-\ml{L}_{V_f}^{(k)}:=\Pi_{\Lambda}^{(k)}\circ(-\ml{L}_{V_f}^{(k)})\circ\Pi_{\Lambda}^{(k)}+(\text{Id}-\Pi_{\Lambda}^{(k)})\circ(-\ml{L}_{V_f}^{(k)})
\circ(\text{Id}-\Pi_{\Lambda}^{(k)}).$$
According to~\cite[p.~244-246]{EnNa00}, the spectrum of the operator $(\text{Id}-\Pi_{\Lambda}^{(k)})\circ(-\ml{L}_{V_f}^{(k)})\circ(\text{Id}-\Pi_{\Lambda}^{(k)})$ is 
contained in the half plane $\text{Re}(\lambda)<-\Lambda$ while the finite rank part can be written as
\begin{equation}\label{e:jordan-decomposition}
 \Pi_{\Lambda}^{(k)}\circ(-\ml{L}_{V_f}^{(k)})\circ\Pi_{\Lambda}^{(k)}=\sum_{i:\lambda_i^{(k)}\leq\Lambda}
 \left(\sum_{l=1}^{d_i^{(k)}}-\lambda_i^{(k)}|u_{i,l}^{(k)}\ra\la v_{i,l}^{(k)}|+
 \sum_{l=1}^{d_i^{(k)}-1}|u_{i,l}^{(k)}\ra\la v_{i,l+1}^{(k)}|\right)
\end{equation}
where
\begin{itemize}
 \item $(u_{i,l}^{(k)})_{\lambda_i^{(k)}\leq\Lambda, l=1,\ldots d_i^{(k)}}$ belongs to $\ml{H}^m_k(M)\subset\ml{D}^{\prime,k}(M)$, 
 \item $(v_{i,l}^{(k)})_{\lambda_i^{(k)}\leq\Lambda, l=1,\ldots d_i^{(k)}}$ belongs to $\ml{H}^{-m}_{n-k}(M)\subset\ml{D}^{\prime,n-k}(M)$, 
 \item $|u\ra \la v|: \psi\in \ml{H}^m_k(M)\mapsto \la v,\psi\ra u\in \ml{H}^m_k(M)$
\end{itemize}
Recall from~\cite[Th.~1.5]{FaSj11} that these ``generalized eigendistributions'' are intrinsic and that they do not depend on the choice of the 
order function $m$. We also note that the vectors $v_*^{(k)}$ give rise to a Jordan basis for the spectral decomposition of the dual operator acting 
on $\ml{H}^{-m}_{n-k}(M)$. We now want to relate this spectral decomposition 
to the correlation function from the introduction~:

\begin{prop}\label{p:correlation} Let $0\leq k\leq n$ and $\mathcal{I}_k$ be the subset defined in Proposition~\ref{p:ruelle-spectrum}. 
Then for every $i\geq 0$, there is an integer
$d_{i}^{(k)}\geq 1$ and $\lambda_i^{(k)}\in \mathcal{I}_k$ s.t. for any $\Lambda>0$, for every $(\psi_1,\psi_2)\in\Omega^k(M)\times\Omega^{n-k}(M)$ and for every $t\geq 0$,
\begin{equation}\label{e:correlator}
\la(\varphi_f^{-t})^*\psi_1,\psi_2\ra =\sum_{i:\lambda_{i}^{(k)}< \Lambda}e^{-\lambda_i^{(k)} t}\sum_{l=0}^{d_i^{(k)}-1} \frac{t^l}{l!}
\sum_{j=l+1}^{d^{(k)}_i}\la u_{i,j}^{(k)},\psi_2\ra
\la v_{i,j+l}^{(k)},\psi_1\ra
+\ml{O}_{\psi_1,\psi_2,\Lambda}(e^{-\Lambda t}).
\end{equation}
In fact, the result also holds for any $\psi_1$ in $\ml{H}^{m}_k(M)$ provided the parameters $(N_0,N_1)$ involved in the definition of $m$ are large enough. 
\end{prop}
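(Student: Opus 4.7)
The plan is to view the correlation function as a matrix coefficient of the strongly continuous semigroup $(\varphi_f^{-t})^*$ acting on the anisotropic Hilbert space $\ml{H}^m_k(M)$, and to combine the spectral projector formalism from~\eqref{e:jordan-decomposition} with a contour-deformation argument to extract the finite list of resonant contributions and an exponentially small remainder.

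First I would fix $\Lambda>0$ with $-\Lambda\notin\ml{I}_k$ and, using Lemma~\ref{l:escape-function} and Proposition~\ref{p:eigenvalues}, choose the parameters $(N_0,N_1)$ defining the order function $m$ so that $-\ml{L}_{V_f}^{(k)}$ has only discrete spectrum with finite multiplicity in the half-plane $\{\Re(z)>-\Lambda\}$. By Proposition~\ref{p:ruelle-spectrum} this spectrum consists of finitely many real nonpositive eigenvalues $-\lambda_i^{(k)}\in\ml{I}_k$. I would then select a Jordan contour $\gamma$ encircling exactly these finitely many eigenvalues and define the spectral projector $\Pi_\Lambda^{(k)}=\frac{1}{2i\pi}\int_\gamma(-\ml{L}_{V_f}^{(k)}-z)^{-1}\,dz$ as in the discussion preceding~\eqref{e:jordan-decomposition}. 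The semigroup splits accordingly as
\begin{equation*}
(\varphi_f^{-t})^*=e^{-t\ml{L}_{V_f}^{(k)}}\Pi_\Lambda^{(k)}+e^{-t\ml{L}_{V_f}^{(k)}}(\operatorname{Id}-\Pi_\Lambda^{(k)}).
\end{equation*}

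Second, I would analyze the finite rank piece. On $\operatorname{Range}(\Pi_\Lambda^{(k)})$ the generator is conjugate to its Jordan normal form~\eqref{e:jordan-decomposition}, and the exponential of a single Jordan block $-\lambda_i^{(k)}\operatorname{Id}+N_i$ equals $e^{-\lambda_i^{(k)}t}\sum_{l=0}^{d_i^{(k)}-1}\frac{t^l}{l!}N_i^l$. Writing out the action of the nilpotent part on the dual Jordan basis $(u_{i,j}^{(k)}, v_{i,j}^{(k)})$ and testing against $\psi_2$ on the left and $\psi_1$ on the right (using the duality of Subsection~\ref{l:hodegstar} to realize $v_{i,j}^{(k)}\in\ml{H}^{-m}_{n-k}(M)$ as elements paired against $k$-forms by $\int_M\,\cdot\wedge\,\cdot$) yields exactly the double sum appearing in~\eqref{e:correlator}.

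The main obstacle, and the step I expect to be delicate, is the decay estimate
\begin{equation*}
\left\|e^{-t\ml{L}_{V_f}^{(k)}}(\operatorname{Id}-\Pi_\Lambda^{(k)})\right\|_{\ml{H}^m_k\to\ml{H}^m_k}=\ml{O}(e^{-\Lambda t}).
\end{equation*}
The spectral bound of the restricted generator lies in $\{\Re(z)<-\Lambda\}$, but on a general Banach space the spectral bound need not coincide with the exponential growth bound. Fortunately $\ml{H}^m_k(M)$ is a Hilbert space, so I would apply the Gearhart--Prüss theorem: it suffices to establish a uniform bound $\sup_{\Re(z)=-\Lambda+\eps}\|(-\ml{L}_{V_f}^{(k)}-z)^{-1}(\operatorname{Id}-\Pi_\Lambda^{(k)})\|<\infty$ on vertical lines. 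Such bounds are a built-in by-product of the Faure--Sjöstrand construction (the escape function yields elliptic estimates that control the resolvent away from the finite set of resonances, see appendix~\ref{a:discrete}), and one may equivalently derive the estimate by representing the restricted semigroup as a contour integral $\frac{1}{2i\pi}\int_{\Re z=-\Lambda}e^{-zt}(-\ml{L}_{V_f}^{(k)}-z)^{-1}(\operatorname{Id}-\Pi_\Lambda^{(k)})\,dz$ and absorbing the oscillatory integral into the resolvent bound.

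Finally, to obtain the identity~\eqref{e:correlator}, I would pair the decomposition against $\psi_2\in\Omega^{n-k}(M)$ via the duality pairing $\la\cdot,\cdot\ra_{\ml{H}^m_k\times\ml{H}^{-m}_{n-k}}=\int_M\cdot\wedge\cdot$ from Subsection~\ref{l:hodegstar}. For the extension to $\psi_1\in\ml{H}^m_k(M)$, all the maps involved (the finite rank part and the remainder operator) are continuous on $\ml{H}^m_k(M)$ once $(N_0,N_1)$ are large enough to absorb $\Lambda$, so the estimate is stable under $\ml{H}^m_k$-limits from the dense subspace $\Omega^k(M)$.
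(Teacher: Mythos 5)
Your plan takes a genuinely different route from the paper. You work directly with the continuous-time generator, split the semigroup via the spectral projector $\Pi_\Lambda^{(k)}$, and propose to control the remainder via the Gearhart--Pr\"uss theorem. The paper instead reduces to the discrete-time propagator $\varphi_q^* = \varphi_f^{-1/q\,*}$ and applies the Faure--Roy--Sj\"ostrand decomposition $\varphi_q^*=\hat r_{m,q}+\hat c_{m,q}$ (compact plus a part of small operator norm $\leq e^{(C-C_N/q)/3}$); the exponential decay of the remainder $(\operatorname{Id}-\Pi_\Lambda^{(k)})$ then follows from iterating the small-norm bound, with the non-trivial extra step of proving that the discrete-time and continuous-time spectral projectors agree. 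Your approach avoids that compatibility argument, which is a real simplification if it works.

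However, there is a genuine gap in your remainder estimate. Gearhart--Pr\"uss requires a \emph{uniform} bound
$\sup_{\tau\in\IR}\|(-\ml{L}_{V_f}^{(k)}-(-\Lambda+\eps)-i\tau)^{-1}(\operatorname{Id}-\Pi_\Lambda^{(k)})\|<\infty$
on the entire vertical line, including $|\tau|\to\infty$. The Faure--Sj\"ostrand construction as quoted in the paper gives a resolvent bound only for $\operatorname{Re}(z)>C_0$ (see~\eqref{e:norm-resolvent} in appendix~\ref{a:discrete}); in the strip $\{-C_N+C<\operatorname{Re}(z)<C_0\}$ the resolvent is only shown to be \emph{meromorphic with finite-rank poles} via analytic Fredholm theory, which by itself gives no control on its norm as $|\operatorname{Im}(z)|\to\infty$ at fixed $\operatorname{Re}(z)=-\Lambda+\eps$. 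Asserting that such bounds are a ``built-in by-product'' is not justified by what is actually proved in section~\ref{s:anisotropic} or appendix~\ref{a:discrete}; one would have to extract a quantitative semiclassical resolvent estimate (e.g.\ of the type Dyatlov--Zworski use in the Anosov case) that the present paper does not establish. The paper's detour through the discrete propagator is precisely designed to sidestep this issue: for a bounded operator written as compact plus small-norm, the decay of the tail semigroup is immediate from the small-norm part, with no need for resolvent control on vertical lines.

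A secondary point: your final sentence about extending to $\psi_1\in\ml{H}^m_k(M)$ is fine in spirit, but it should be phrased as continuity of the finite-rank and remainder operators on $\ml{H}^m_k(M)$ (which is how the paper justifies it), not as a density argument; the expansion~\eqref{e:correlator} is an identity of bounded operators, not merely of matrix coefficients on a dense subspace.
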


Note that the sum is finite and that all the quantities involved in the sum are independent of the choice of the order function $m$. This 
expression gives us an asymptotic 
expansion for the correlation function at any order of precision. As was already explained, all the 
$\lambda_i^{(k)}$ appearing in the sum belong to the set $-\ml{I}_{k}\subset\IR_+$. The rest of the article is devoted to a more precise understanding of the terms appearing 
in this asymptotic expansion. Yet, before that, let us prove this Proposition.

\begin{proof} Fix $q\geq 1$. We first 
follow the arguments of~\cite[Th.~1]{FaRoSj08} applied to the hyperbolic diffeomorphism $\varphi_q:=\varphi_f^{-\frac{1}{q}}$ rather than to the generator $-\ml{L}_{V_f}^{(k)}$. Precisely, 
following this reference, we can verify that
the order function $m$ from Lemma~\ref{l:faure-sjostrand} 
satisfies the assumptions of~\cite[Lemma 2]{FaRoSj08}. Then, following almost verbatim~\cite[section 3.2]{FaRoSj08}, 
we can deduce that the transfer operator
$$\varphi_q^*: \psi\in \ml{H}^{m}_k(M)\rightarrow \varphi_f^{-\frac{1}{q}*}\psi\in \ml{H}^{m}_k(M)$$
defines a bounded operator on the anisotropic space $\ml{H}^{m}_k(M)$ 
which can decomposed as
\begin{equation}\label{e:decompositionFRS}
\varphi_q^*=\hat{r}_{m,q}+\hat{c}_{m,q},
\end{equation}
where $\hat{c}_{m,q}$ 
is a compact operator and the remainder $\hat{r}_{m,q}$ has small 
operator norm bounded as~: $\|\hat{r}_{m,q}\|\leq e^{\frac{C-\frac{C_N}{q}}{3}}$ 
(for some uniform $C$ that may be slightly larger than the one from Proposition~\ref{p:eigenvalues}). 
Note that, for every $q\in \mathbb{N}$, we can make $\|\hat{r}_{m,q}\|$ arbitrarily small 
by choosing $N$ large enough. The proof follows similar lines as for the definition of the 
spectrum of $-\ml{L}_{V_f}^{(k)}$ except that we deal with the 
propagator at discrete times instead of the generator. Again, we can verify that the spectrum is intrinsic, i.e. 
independent of the choice of order function. 
This is because the eigenvalues
and associated spectral projectors
correspond to the poles and residues
of 
a \emph{discrete resolvent}
defined as an operator from $\Omega^k(M)$ to $\ml{D}^{\prime, k}(M)$ as follows. Consider the series
$\sum_{l=0}^{+\infty}e^{-lz}\varphi_q^{l*}$. Then, by the direct bound 
$$ \Vert  \sum_{l=0}^{+\infty}e^{-lz}\varphi_q^{l*}\psi \Vert_{\ml{H}^{m}_k(M)}\leqslant \sum_{l=0}^{+\infty}e^{-l\text{Re}(z)}\Vert\varphi_q^{*}\Vert^l\Vert\psi \Vert_{\ml{H}^{m}_k(M)}, $$
we deduce that, for $\text{Re}(z)$ large enough, the series
$\sum_{l=0}^{+\infty}e^{-lz}\varphi_q^{l*}\psi $ converges absolutely in $\ml{H}^{m}_k(M)$ for every test form $\psi\in \Omega^k(M)$. Hence, 
by the continuous injections $\Omega^k(M)\hookrightarrow \ml{H}^{m_{N_0,N_1}}_k(M)\hookrightarrow  \mathcal{D}^{\prime,k}(M)$, the identity
$$ (\text{Id}-e^{-z}\varphi_q^{*})^{-1}:=\sum_{l=0}^{+\infty}e^{-lz}\varphi_q^{l*}:\Omega^k(M)\rightarrow \ml{D}^{\prime, k}(M) $$
holds true for $\text{Re}(z)$ large enough. 
A consequence of the decomposition~(\ref{e:decompositionFRS}) is that the resolvent of $\varphi^*_q$
$$(\lambda-\varphi_q^{*})^{-1}:\Omega^k(M)\rightarrow \ml{D}^{\prime, k}(M)$$
has a meromorphic extension from $|\lambda|>e^{C_0}$ to $\lambda\in \IC$ with poles of finite multiplicity
 which correspond to the eigenvalues of the operator $\varphi_q^*$~\cite[Corollary 1]{FaRoSj08}. In other words, 
 $(\text{Id}-e^{-z}\varphi_q^{*})^{-1}:\Omega^k(M)\rightarrow \ml{D}^{\prime, k}(M)$
 has a meromorphic extension from $\text{Re}(z)>C_0$ (with $C_0>0$ large enough) to $z\in \IC$ with poles of finite multiplicity. Denote by $\tilde{\pi}_{\lambda,q}^{(k)}$ the spectral projector
 of $\varphi_q^*$ associated to the eigenvalue $\lambda$ which is obtained from
 the contour integral formula~:
 $$\tilde{\pi}_{\lambda,q}^{(k)}=\frac{1}{2i\pi}\int_\gamma \left(\mu-\varphi_q^* \right)^{-1}d\mu$$  
  where $\gamma$ is a small circle around $\lambda$. This corresponds to the residues of the discrete resolvent at $e^z=\lambda$. As $\varphi_q^*$ commutes with $-\ml{L}_{V_f}^{(k)}$, we can deduce 
  that the range of $\tilde{\pi}_{\lambda,q}^{(k)}$ is preserved by $-\ml{L}_{V_f}^{(k)}$. In particular, any eigenvalue $z_0$ of $-\ml{L}_{V_f}^{(k)}$ on that space must verify $e^{\frac{z_0}{q}}=\lambda$. 
  As we know that any resonance of $-\ml{L}_{V_f}^{(k)}$ is real, we can deduce that the poles of $(\text{Id}-e^{-z/q}\varphi_q^{*})^{-1}$ 
  belong to $\ml{R}_k(f,g)\subset\IR$ modulo $2i\pi\IZ$. 
  Take now $z_0$ in $\ml{R}_k(f,g)$. We want to show that \begin{equation}\label{e:equality-projector}\tilde{\pi}_{e^{z_0},1}^{(k)}=\pi_{z_0}^{(k)},\end{equation} 
  where $\pi_{z_0}^{(k)}$ is the spectral projector of $-\ml{L}_{V_f}^{(k)}$ associated to the eigenvalue $z_0$. Equivalently, the spectral projectors are the same for both problems. 
  Once it will be done, the proposition is just a consequence of decomposition~\eqref{e:decompositionFRS} for $q=1$ when $t$ is an integer as a consequence of~\cite[Corollary 1]{FaRoSj08}. When $t$ is a positive real number, we can 
  conclude by writing $\varphi^{-t}=\varphi^{-[t]}\varphi^{-t+[t]}$ and by using~\eqref{e:semigroup} from the appendix.
  
  In order to show~\eqref{e:equality-projector}, we first observe that 
  $\tilde{\pi}_{e^{z_0/q},q}^{(k)}=\tilde{\pi}_{e^{z_0},1}^{(k)}$ and we decompose the resolvent $(z+\ml{L}_{V_f}^{(k)})^{-1}$ as follows:
  $$(z+\ml{L}_{V_f}^{(k)})^{-1}=\sum_{l=0}^{+\infty}e^{-\frac{z}{q}}\varphi_q^*\int_0^{\frac{1}{q}}e^{-zt}\varphi_f^{-t*}dt=(\text{Id}-e^{-\frac{z}{q}}\varphi_q^{*})^{-1}\int_0^{\frac{1}{q}}e^{-zt}\varphi_f^{-t*}dt.$$
  For $\text{Re}(z)$ large enough, this expression makes sense viewed as an operator from $\Omega^k(M)$ to $\ml{D}^{\prime,k}(M)$. We have seen that it can be meromorphically continued 
  to $\IC$ by using the fact that we have built 
  a proper spectral framework and that we may pick $N_0$ and $N_1$ arbitrarily large in the definition of $m$. Consider now a small contour $\gamma$ around $z_0$ containing no 
  other elements of $\ml{R}_k(f,g)$. Integrating over this contour tells us that, for every $q\geq 1$~:
  $$\pi_{z_0}^{(k)}=\tilde{\pi}_{e^{z_0/q},q}^{(k)}q\int_0^{\frac{1}{q}}e^{-z_0t}\varphi_f^{-t*}dt=\tilde{\pi}_{e^{z_0},1}^{(k)}\int_0^1e^{-t\frac{z_0}{q}}\varphi_f^{-\frac{t}{q}*}dt.$$
  As an operator on $\Omega^k(M)$, we can observe that $\int_0^1e^{-t\frac{z_0}{q}}\varphi_f^{-\frac{t}{q}*}dt$ converges to the identity as $q\rightarrow+\infty$. Hence, $\pi_{z_0}^{(k)}=\tilde{\pi}_{e^{z_0},1}^{(k)}$ 
  as expected.
\end{proof}

\section{Proof of Theorems~\ref{t:fullasympt-nonresonant} and~\ref{c:leadingasympt}}\label{s:jordan}

We now turn to the proofs of our main results. Thanks to Proposition~\ref{p:correlation}, it amounts to compute explicitely the eigenvalues and the eigenmodes of the operator 
$-\ml{L}_{V_f}^{(k)}$ on our families of anisotropic Sobolev spaces. We proceed in several steps. First, we show how to eliminate the Jordan blocks 
(Propositions~\ref{p:kernel} and~\ref{p:strong-jordan}). Then, we define a canonical basis of our generalized eigenspaces 
(Proposition~\ref{p:generalized-laudenbach}) and conclude the proof of 
Theorems~\ref{t:fullasympt-nonresonant} and~\ref{c:leadingasympt} (Proposition~\ref{p:basis}). Finally, we end this section with some extra comments 
related to Weyl's asymptotics and trace formulae.

\subsection{Jordan blocks} We distinguish the cases $\lambda=0$ and $\lambda\neq 0$ as we can eliminate Jordan blocks for reasons of different nature in both cases.
\subsubsection{The case $\lambda=0$}
Let us first show the absence of Jordan blocks in the kernel:
\begin{prop}\label{p:kernel} Suppose that $(f,g)$ is a smooth Morse pair which induces a Morse-Smale gradient flow. 
Let $0\leq k\leq n$. Then, when acting on a convenient\footnote{It means that there is a discrete 
spectrum for $\text{Re}(\lambda)<C$ if $C>0$.} anisotropic space $\ml{H}^m_k(M)$, one has
$$\operatorname{Ker}(\ml{L}_{V_f}^{(k)})=\operatorname{Ker}((\ml{L}_{V_f}^{(k)})^2).$$
\end{prop}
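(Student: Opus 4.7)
The plan is to argue via a duality pairing between $\operatorname{Ker}(\ml{L}_{V_f}^{(k)})$ on $\ml{H}^m_k(M)$ and the adjoint kernel $\operatorname{Ker}(\ml{L}_{V_{-f}}^{(n-k)})$ on $\ml{H}^{-m}_{n-k}(M)$, identified via equation~\eqref{e:dual}. By Proposition~\ref{p:indexeigencurrentversusdegree-general}, every $v\in\operatorname{Ker}(\ml{L}_{V_f}^{(k)})$ is supported on $\bigcup_{a:\,\operatorname{ind}(a)=k}\overline{W^u(a)}$. In the adapted coordinates near an index-$k$ critical point $a$, a general current of degree $k$ supported on $\{x=0\}$ has the form $T=\sum_{I,J,\alpha}\phi_{I,J,\alpha}(y)\,\partial_x^\alpha\delta(x)\,dx^I\wedge dy^J$ with $I\subset\{1,\dots,k\}$, $J\subset\{k+1,\dots,n\}$, $|I|+|J|=k$. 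Computing $\varphi_f^{t*}T=T$ term by term produces, in front of $\phi_{I,J,\alpha}(e^{|\chi|t}y)$, an exponential factor $e^{tA_{I,J,\alpha}}$ with $A_{I,J,\alpha}=\sum_{i\in\{1,\dots,k\}\setminus I}|\chi_i|+\sum_i|\chi_i|\alpha_i+\sum_{j\in J}|\chi_j|$; since each summand is non-negative, invariance forces $\alpha=0$, $I=\{1,\dots,k\}$ and $J=\emptyset$, after which the coefficient $\phi$ (smooth, by the wavefront bound $WF(v)\subset\Gamma_+$ imposed by $v\in\ml{H}^m_k(M)$) satisfies the Euler-type equation $\sum_i|\chi_i|y_i\partial_{y_i}\phi=0$, hence is constant. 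This yields $F:=\operatorname{Ker}(\ml{L}_{V_f}^{(k)})=\operatorname{span}\{U_a:\operatorname{ind}(a)=k\}$, and the same local analysis applied to the adjoint operator gives $F^*:=\operatorname{Ker}(\ml{L}_{V_{-f}}^{(n-k)})=\operatorname{span}\{S_a:\operatorname{ind}(a)=k\}$ with $S_a=[\overline{W^s(a)}]$, on noting that $a$ has index $n-k$ for $-f$ and that $W^u_{-f}(a)=W^s_f(a)$.

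I would then compute the pairing $\langle U_a,S_b\rangle=\int_M U_a\wedge S_b$, which equals the signed intersection number of $W^u(a)$ with $W^s(b)$. The dimensional count $(n-k)+k=n$ combined with Morse-Smale transversality makes this intersection transverse and $0$-dimensional. Any point $p\in W^u(a)\cap W^s(b)$ lies on an entire flow orbit contained in the intersection, which forces that orbit to reduce to $\{p\}$ and $p$ to be a critical point; hence $p=a=b$. So the intersection is empty for $a\neq b$ and equals $\{a\}$ with signed multiplicity $\pm 1$ for $a=b$. Therefore $\langle U_a,S_b\rangle=\pm\,\delta_{ab}$ and the bilinear pairing $F\times F^*\to\IC$ is non-degenerate.

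The conclusion is then purely algebraic. Suppose $u\in\ml{H}^m_k(M)$ satisfies $(\ml{L}_{V_f}^{(k)})^2u=0$ and set $v:=\ml{L}_{V_f}^{(k)}u\in F$. For any $w\in F^*$, the adjoint identity~\eqref{e:dual} gives
$$\int_M v\wedge w \;=\; \langle \ml{L}_{V_f}^{(k)}u,w\rangle \;=\; \langle u,\ml{L}_{V_{-f}}^{(n-k)}w\rangle \;=\; 0.$$
Non-degeneracy of the pairing on $F\times F^*$ then forces $v=0$, so $u\in\operatorname{Ker}(\ml{L}_{V_f}^{(k)})$.

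The main obstacle I anticipate is the local uniqueness in the identification of $F$: whereas the support constraint from Proposition~\ref{p:indexeigencurrentversusdegree-general} is immediate, ruling out more exotic invariant currents (higher-order normal derivatives of $\delta(x)$ along $W^u(a)$, or invariant currents concentrated entirely at a critical point) requires a careful interplay between the formal scaling analysis above and the anisotropic Sobolev regularity encoded in $\ml{H}^m_k(M)$ via the order function $m$. Once this local one-dimensionality is established critical point by critical point, the intersection-theoretic computation of the pairing and the final algebraic step are routine.
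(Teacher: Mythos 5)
Your proposal takes a genuinely different route: you reduce the absence of Jordan blocks at $\lambda=0$ to the non-degeneracy of the intersection pairing between $\operatorname{Ker}(\ml{L}_{V_f}^{(k)})$ and $\operatorname{Ker}(\ml{L}_{V_{-f}}^{(n-k)})$, whereas the paper argues directly on the Jordan pair $u_0,u_1$: after locating the smallest critical point $a$ where $u_0$ is nonzero (necessarily of index $k$), it shows that $u_1$ is also locally supported on $W^u(a)$, so $\la\varphi_f^{\ln s*}u_1,\psi\ra$ admits a \emph{bounded} polyhomogeneous expansion in $s$, which is incompatible with the forced $(\ln s)\la u_0,\psi\ra$ term from the Jordan relation. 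Your final algebraic step (from non-degeneracy to $\operatorname{Ker}=\operatorname{Ker}^2$) is correct, as is the abstract fact that non-degeneracy of this pairing on $\operatorname{Ker}\times\operatorname{Ker}^{*}$ is \emph{equivalent} to the absence of Jordan blocks at $0$.

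That equivalence, however, is exactly where the proposal becomes circular in the paper's framework. To establish non-degeneracy you invoke $F=\operatorname{span}\{U_a:\operatorname{ind}(a)=k\}$ and $F^*=\operatorname{span}\{S_a\}$ and compute $\la U_a,S_b\ra=\pm\delta_{ab}$. But the currents $U_a,S_a$ are constructed in Proposition~\ref{p:generalized-laudenbach} \emph{via} the spectral projector onto the $\lambda=0$ eigenspace, and the verification there that the projection of the cutoff germ $\theta[W^u(a)]$ lands in the kernel (rather than in a larger generalized eigenspace) and agrees with $[W^u(a)]$ near $a$ explicitly uses the fact that there are no Jordan blocks --- that is, Proposition~\ref{p:kernel} itself. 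The local scaling analysis you sketch correctly shows that any $v\in F$ is \emph{locally} a multiple of $\delta(x)\,dx^{1\dots k}$ near each index-$k$ critical point, hence $\dim F\leq c_k(f)$; it does \emph{not} by itself produce a family of $c_k(f)$ elements of $F$ whose germs realize all coefficient vectors. One could repair this by importing Laudenbach's or Harvey--Lawson's independent construction of the integration currents over $\overline{W^u(a)}$ (with control of mass at the boundary), but that is precisely the technically heavy piece of geometric measure theory that the spectral approach is designed to avoid, and the paper does not grant it for free. Without such an independent construction (for both $f$ and $-f$), your intersection pairing is computed on sets that you cannot yet show are the full kernels, and non-degeneracy does not follow. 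The gap you flag at the end is real, but it is not primarily a Sobolev-regularity issue --- the wavefront bound from $\ml{H}^m_k(M)$ together with Schwartz's theorem and Lemma~\ref{l:dang} does pin down the local germ --- it is the circular dependence on the global extension of those germs.
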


We start with the following Lemma:
\begin{lemm}\label{l:neighbor} Let $1\leq j\leq K$. Then, there exists an open neighborhood $V_{a_{j}}$ of $a_{j}$ such that, for every $i< j$ with $r(a_i)\geq r(a_{j})$, one has
$$W^u(a_i)\cap V_{a_{j}}=\emptyset.$$
\end{lemm}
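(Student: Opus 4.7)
The plan is to argue by contradiction, borrowing the dynamical techniques of Lemma~\ref{l:compact} and Remark~\ref{r:weber}. Suppose that no such neighborhood $V_{a_j}$ exists. By finiteness of $\operatorname{Crit}(f)$, we may extract some $i<j$ with $r(a_i)\geq r(a_j)$ and a sequence $x_m\in W^u(a_i)$ with $x_m\to a_j$; in particular $a_j\in\overline{W^u(a_i)}$. The goal is to show that this forces a strict index increase $r(a_j)>r(a_i)$, contradicting the hypothesis.

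In the linearizing chart $\kappa_{a_j}$ of paragraph~\ref{sss:adapted}, write $x_m=(x^m,y^m)$. Since the unstable manifolds partition $M$ and $a_i\neq a_j$, the set $W^u(a_i)$ avoids the coordinate plane $\{x=0\}=W^u(a_j)\cap V_{a_j}$, so $x^m\neq 0$ for $m$ large. Choose $T_m>0$ so that $\varphi_f^{-T_m}(x_m)$ first reaches a fixed sphere of radius $\delta_0$ inside the chart; since $x^m\to 0$, one has $T_m\to+\infty$, and since the $y$-coordinate contracts exponentially under the backward flow while $y^m\to 0$, the $y$-component of $\varphi_f^{-T_m}(x_m)$ also tends to $0$. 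Up to extraction, $\varphi_f^{-T_m}(x_m)\to z_\infty\in W^s(a_j)\setminus\{a_j\}$, and $z_\infty\in\overline{W^u(a_i)}$ by flow invariance of this closure. Letting $c_1\in\operatorname{Crit}(f)$ satisfy $z_\infty\in W^u(c_1)$, invariance of $\overline{W^u(a_i)}$ gives $c_1\in\overline{W^u(a_i)}$, and the absence of homoclinic orbits for gradient flows ensures $c_1\neq a_j$; Lemma~\ref{l:decrease-dimension} applied to $z_\infty\in W^u(c_1)\cap W^s(a_j)$ then yields $r(c_1)<r(a_j)$.

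Iterating the same construction with $(a_j,x_m)$ replaced by $(c_1,z_m^{(1)})$ for any sequence $z_m^{(1)}\in W^u(a_i)$ converging to $c_1$, and continuing as long as $c_k\neq a_i$, one produces a chain of pairwise distinct critical points $c_0=a_j,c_1,\ldots$ in $\overline{W^u(a_i)}$ with strictly decreasing Morse indices. Since indices lie in the finite set $\{0,1,\ldots,n\}$, the iteration terminates at some step $l$, which can only happen when $c_l=a_i$. Combining all the strict inequalities yields
$$r(a_i)=r(c_l)<r(c_{l-1})<\cdots<r(c_0)=r(a_j),$$
contradicting $r(a_i)\geq r(a_j)$. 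The main delicate point is ensuring that each step of the iteration genuinely produces a new critical point strictly below the previous one in Morse index; this is precisely what the linearizing chart together with the no-homoclinic property provides. One could alternatively invoke directly the broken-orbit structure theorem for $\overline{W^u(a_i)}$ under the Morse-Smale assumption, as in the compactness result of Smale~\cite{Sm60} and Weber~\cite[Th.~3.8]{Web06}, which essentially packages the whole iteration in a single statement.
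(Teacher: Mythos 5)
Your proof is correct and follows essentially the same strategy as the paper: argue by contradiction that $a_j\in\overline{W^u(a_i)}$, produce a chain of critical points in $\overline{W^u(a_i)}$ with strictly decreasing Morse indices (via Lemma~\ref{l:decrease-dimension}), and reach the contradiction $r(a_i)<r(a_j)$. The only difference is one of packaging: the paper invokes Remark~\ref{r:weber} (Weber's compactness-up-to-broken-orbits theorem) as a black box to obtain the chain $a_i=b_l,\ldots,b_1$ with $a_j=b_p$ for some $p<l$, whereas you rebuild that chain by hand, using the linearizing chart at each $c_k$ and a backward-flow rescaling to push the limit point from $W^s(c_k)\setminus\{c_k\}$ to a new critical point of strictly smaller index. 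This is a legitimate and self-contained re-derivation of the relevant piece of the broken-orbit structure; indeed it is essentially the same iteration as in the proof of Lemma~\ref{l:compact}, projected to $M$. Your explicit construction makes clearer why the termination must occur at $a_i$ (the step is applicable whenever $c_k\neq a_i$, and strictly decreasing indices bound the length), while the paper's version is shorter at the cost of citing Weber's theorem. You correctly flag this equivalence yourself in the final sentence, so the two proofs coincide in substance.
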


\begin{proof} Let $1\leq j\leq K$. Let $i<j$ satisfying $r(a_i)\geq r(a_j)$. The closure of $W^u(a_i)$ is a compact subset. In order to prove this 
Lemma, we suppose by contradiction that $a_j$ 
belongs to $\overline{W^u(a_i)}$. It means that there exists a sequence $(x_m)_{m\geq 1}$ in $W^u(a_i)$ such that $x_m$ converges to $a_j$ as 
$p\rightarrow+\infty$. Without loss of generality, we can suppose that there exists a unique $b$ in $\text{Crit} (f)$ such that, for every $m\geq 1$,
$x_m\in W^s(b)$. Using the conventions of Remark~\ref{r:weber} for our fixed sequence $(x_m)_{m\geq 1}$, there exists a sequence of critical 
points $a_i=b_{l},b_{l-1},\ldots, b_1=b$ and a finite sequence of points $z^{(p)}$ in $W^{u}(b_{p+1})\cap W^s(b_{p})$ such 
that
$$\forall\epsilon_1>0,\exists m_0,\forall m\geq m_0,\ d\left(\ml{O}(x_m),\cup_{1\leq p\leq l-1}\ml{O}(z^{(p)})\right)<\epsilon_1,$$
where $\ml{O}(x)$ denotes the orbit of $x$ under the flow $\varphi_f^t$. Note from Lemma~\ref{l:decrease-dimension} that $r(b_{p+1})<r(b_{p})$ for every 
$1\leq p\leq l-1$. On the one hand, $x_m$ converges to a point belonging to $\cup_{1\leq p\leq l-1}\ml{O}(z^{(p)})$, and, on the 
other one, $x_m$ converges to $a_j$. Hence, there exists $0\leq p\leq l-1$ such that $a_j=b_p$, and one has $r(a_j)=r(b_p)>r(b_l)=r(a_i)$ 
which gives the contradiction.
\end{proof}

We can now give the proof of Proposition~\ref{p:kernel}. Suppose by contradiction that there exists a Jordan block associated to the eigenvalue $0$ for a certain degree of currents $k$. 
Then, it means that there exists $u_0\neq 0$ and 
$u_1\neq 0$ in our anisotropic Sobolev space of currents $\ml{H}^m_k(M)$ such that
$$\ml{L}_{V_f}^{(k)}u_0=0\ \text{ and } \ml{L}_{V_f}^{(k)}u_1= u_0.$$
Integrating these expressions, we find that, for all $t$ in $\IR$,
$$(\varphi_f^t)^*u_0=u_0\ \text{and}\ (\varphi_f^t)^*u_1=u_1+tu_0.$$
As in our computation of the spectrum, we let $t=\ln s$ with $0<s\leq 1$, 
\begin{equation}\label{e:jordan-block}(\varphi_f^{\ln s})^*u_1-u_1=(\ln s) u_0.\end{equation}
As above, we order our critical points $a_1<a_2<\ldots< a_K$ using the fact that the critical values of $f$ are distinct. 

We now use this Lemma to get the expected contradiction. We fix $j\geq0$. We suppose that 
$u_0$ is vanishing in a neighborhood of any critical point $(a_i)_{i\leq j}$ and that it does not 
vanish in a neighborhood of $a_{j+1}$. According to Lemma~\ref{l:support-Lemma}, we can deduce that 
$\text{supp}(u_0)\cap V_{a_{j+1}}$ is included in $W^u(a_{j+1})$. Arguing as in paragraph~\ref{sss:general} 
(i.e. via the pull-back Theorem of H\"ormander), we can verify that $\la(\varphi_f^{\ln s})^*u_0,\psi\ra=\la u_0,\psi\ra$ has a \emph{bounded} 
asymptotic expansion in $s$ for $\psi$ a smooth test form compactly supported in $V_{a_{j+1}}$. Moreover, we can choose $\psi$ such 
that the right hand side of the equality does not vanish. Hence, the leading order of this expansion must be of degree $0$. This implies that $a_{j+1}$ 
is a critical point of index $r(a_{j+1})=k$.

We would now like to prove that, near $a_{j+1}$, $u_1$ is also supported in $W^u(a_{j+1})$. We fix $V$ an 
open subset of $V_{a_{j+1}}$ which does not intersect $W^u(a_{j+1})$. From Lemma~\ref{l:neighbor}, one knows 
that, for every $x$ in $V$, there exists $i\leq j$ such that $a_i=\lim_{t\rightarrow-\infty}\varphi_f^t(x)$ and $r(a_i)<r(a_{j+1})$. 
Hence, we would conclude that $\text{supp}(u_1)\cap V_{a_{j+1}}$ is included in $W^u(a_{j+1})$ if we could show that, 
for every $i\leq j$ with $r(a_i)<k$, $u_1$ identically vanishes in an open neighborhood of $a_i$. 

Let $i_0\leq j$ be an index with $r(a_{i_0})<k$. Then, either $\text{supp}(u_1)\cap V_{a_{i_0}}$ is included in $W^u(a_{i_0})$, or, as $\ml{L}_{V_{f}}u_1=u_0=0$ 
on $f^{-1}(<f(a_{j+1}))$, we can deduce by propagation that there exists a critical point $a$ of smaller index such that $u_1$ 
does not vanish in a neighborhood of $a$. Hence, without loss of generality, we can suppose that $\text{supp}(u_1)\cap V_{a_{i_0}}$ is included in $W^u(a_{i_0})$. 
As $\ml{L}_{V_f}u_1=0$ in $V_{a_{i_0}}$, we can argue one more time as in paragraph~\ref{sss:general}. From that, we deduce that
$$\la(\varphi_f^{\ln s})^*u_1,\psi\ra=\la u_1,\psi\ra$$
has a bounded asymptotic expansion in $s$ for every choice of $\psi$ compactly supported in $V_{a_{i_0}}$. Using the fact that $r(a_{i_0})<k$, 
we conclude that the left hand side must go to $0$ as $s\rightarrow 0^+$. Thus, one has $\la u_1,\psi\ra =0$ as expected from which we deduce that 
 $\text{supp}(u_1)\cap V_{a_{j+1}}$ is included in $W^u(a_{j+1})$.

Thanks to the fact that $\text{supp}(u_1)\cap V_{a_{j+1}}$ is included in $W^u(a_{j+1})$ and to the fact that $u_1$ belongs to our family of 
anisotropic spaces, we can argue one more time as in paragraph~\ref{sss:general}. We find then that
$\la (\varphi_f^{\ln s})^*u_1,\psi\ra$ has a bounded asymptotic expansion as $s\rightarrow 0^+$ for any 
smooth test function $\psi$ supported near $a_{j+1}$. 
Using then that $u_1$ verifies equation~\eqref{e:jordan-block}, we can finally conclude that $\la u_0,\psi\ra=0$ for every $\psi$ supported near $a_{j+1}$ which 
gives the contradiction to the fact that there exists a nontrivial Jordan block in the kernel.
\subsubsection{The case $\lambda\neq 0$} When we make a slightly more restrictive assumption on the Lyapunov exponents, we can also eliminate Jordan blocks when $\lambda\neq 0$:

\begin{prop}\label{p:strong-jordan}  Let $\varphi_f^t$ be a Morse-Smale gradient flow all of whose Lyapunov exponents are rationally independent.
Let $0\leq k\leq n$. Then, when acting on a convenient anisotropic space $\ml{H}^m_k(M)$, one has, for every $\lambda\in\ml{R}_{k}(f,g)$,
$$\operatorname{Ker}((\ml{L}_{V_f}^{(k)}+\lambda))=\operatorname{Ker}((\ml{L}_{V_f}^{(k)}+\lambda)^2).$$
\end{prop}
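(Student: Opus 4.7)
The plan is to argue by contradiction, mirroring the structure of the proof of Proposition~\ref{p:kernel} and exploiting the global rational independence to localize the generalized eigenmode near a single critical point. Since the case $\lambda=0$ is already treated, suppose $\lambda\in\ml{R}_k(f,g)$ with $\lambda\neq 0$ and pick nonzero $u_0,u_1\in\ml{H}^m_k(M)$ satisfying $(\ml{L}_{V_f}^{(k)}+\lambda)u_0=0$ and $(\ml{L}_{V_f}^{(k)}+\lambda)u_1=u_0$. Integrating along the flow gives $\varphi_f^{t*}u_0=e^{-\lambda t}u_0$ and $\varphi_f^{t*}u_1=e^{-\lambda t}(u_1+t\,u_0)$, whence, setting $t=\ln s$ with $0<s\leq 1$,
\[ \varphi_f^{(\ln s)*}u_1 \;=\; s^{-\lambda}\bigl(u_1+(\ln s)\,u_0\bigr). \]

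Order the critical points $a_1<\dots<a_K$ by the values of $f$ and let $a=a_{j+1}$ be the smallest one in whose neighborhood $u_0$ does not vanish identically. By Proposition~\ref{p:indexeigencurrentversusdegree-general}, $\lambda\in\ml{I}_k(a)$, so $\lambda=-\sum_l\alpha_l|\chi_l(a)|$ with at least one $\alpha_l\geq 1$. I next claim that $u_1$ must also vanish in a neighborhood of every $a_i\neq a$. On $\{f<f(a)\}$ one has $u_0\equiv 0$, so $(\ml{L}_{V_f}^{(k)}+\lambda)u_1=0$ there, and $u_1$ is locally a genuine eigenmode with eigenvalue $-\lambda$. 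Were $u_1$ not to vanish near the smallest such $a_{i_0}$, applying the proof of Proposition~\ref{p:indexeigencurrentversusdegree-general} locally around $a_{i_0}$ would force $\lambda\in\ml{I}_k(a_{i_0})$; together with $\lambda\in\ml{I}_k(a)$ and $\lambda\neq 0$, this produces a nontrivial integer relation between the Lyapunov exponents at $a$ and at $a_{i_0}$, contradicting the global rational independence hypothesis. One iterates this rung by rung for each $a_i\neq a$: at every step the fact that $u_0$ already vanishes near all previously treated critical points reduces $(\ml{L}_{V_f}^{(k)}+\lambda)u_1=u_0$ to a homogeneous equation there, so the same argument applies.

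Once $u_1$ is localized near $a$, the propagation argument of Proposition~\ref{p:kernel} (using Lemma~\ref{l:neighbor} and the fact that the orbit of any point in $V_a\setminus W^u(a)$ goes backward to a strictly lower critical point where $u_1$ vanishes) shows that the germ of $u_1$ at $a$ is supported in $W^u(a)\cap V_a$; and its wave front near $a$ lies in $E_u^*$ since $u_1\in\ml{H}^m_k(M)$ for $N_1$ arbitrarily large. We are thus in the setting of Remark~\ref{r:smoothness-pullback}: for every test form $\psi\in\Omega^{n-k}(M)$ compactly supported in $V_a$, the function $s\mapsto \la\varphi_f^{(\ln s)*}u_1,\psi\ra$ admits as $s\to 0^+$ a polyhomogeneous expansion with exponents in $\{\beta\cdot|\chi(a)|:\beta\in\IN^n\}$, obtained by restricting to $\mathbf{s}=(s^{|\chi_1(a)|},\dots,s^{|\chi_n(a)|})$ the Taylor expansion of a smooth function, as in paragraph~\ref{sss:general} and Lemma~\ref{l:compo-smooth-becomes-polyhomogeneous}. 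Crucially, this expansion contains \emph{no} logarithmic factors. Comparing with the right-hand side $s^{-\lambda}\la u_1,\psi\ra+s^{-\lambda}(\ln s)\la u_0,\psi\ra$ of the key identity, and using that the rational independence of $(|\chi_l(a)|)_l$ makes the exponents $\beta\cdot|\chi(a)|$ pairwise distinct (so that the polyhomogeneous expansion is unique in the sense of Lemma~\ref{l:mellin-transform-constraint}), one identifies the coefficient of $s^{-\lambda}\ln s$ and obtains $\la u_0,\psi\ra=0$ for every such $\psi$, contradicting the choice of $a$. The hardest step is the localization of $u_1$ in the previous paragraph: this is where the \emph{global} rational independence is genuinely needed, since without it $\lambda$ could belong to several $\ml{I}_k(a_i)$ simultaneously, and $u_1$ might be a nontrivial generalized eigenmode at each of them, obstructing the single-point asymptotic analysis.
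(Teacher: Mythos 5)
Your proof is correct and follows essentially the same route as the paper: contradiction via a Jordan pair $(u_0,u_1)$, localization of both currents at the lowest critical point $a$ where $u_0$ survives (with the global rational independence ruling out $u_1$ being a genuine eigenmode at any critical point below $a$), propagation to confine the germ of $u_1$ to $W^u(a)$, and the wave-front / polyhomogeneous-expansion argument of Remark~\ref{r:smoothness-pullback} applied to the key identity to kill the $\ln s$ coefficient $\la u_0,\psi\ra$. One harmless overstatement: your claim that $u_1$ vanishes near \emph{every} $a_i\neq a$ is only established (and indeed only holds in general) for critical points with $f(a_i)<f(a)$, where $u_0\equiv 0$ makes the equation for $u_1$ homogeneous; the paper only proves vanishing below $a$, and since the subsequent propagation step only invokes that, nothing is lost.
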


\begin{proof} We only treat the case $\lambda\neq 0$. 
Suppose by contradiction that there exists a Jordan block associated to the eigenvalue $\lambda> 0$ 
for a certain degree $k$. Once again, it means that there exists $u_0\neq 0$ and 
$u_1\neq 0$ in our anisotropic Sobolev space of currents $\ml{H}^m_k(M)$ such that
$$\ml{L}_{V_f}^{(k)}u_0=\lambda u_0\ \text{ and } \ml{L}_{V_f}^{(k)}u_1=\lambda u_1+ u_0.$$
Integrating these expressions, we find that, for all $t$ in $\IR_-$,
$$(\varphi_f^{t})^*u_0=e^{\lambda t} u_0\ \text{and}\ (\varphi_f^{t})^*u_1=e^{\lambda t}\left(u_1+tu_0\right).$$
As in our computation of the spectrum, we let $t=\ln s$ with $0<s\leq 1$, 
\begin{equation}\label{e:jordan-block-nonzero}(\varphi_f^{\ln s})^*u_1-s^{\lambda}u_1=s^{\lambda}(\ln s) u_0.\end{equation}
Following the proof of paragraph~\ref{sss:general}, we denote by $j+1$ the index point such that, for every $i\leq j$, $u_0$ vanishes in a neighborhood of 
$a_i$ and such that $u_0$ does not vanish near $a_{j+1}$. This implies that $\text{supp} (u_0)\cap V_{a_{j+1}}$ is included in $W^u(a_{j+1})$ and that 
$\lambda$ is of the following form:
$$\lambda=\sum_{i=1}^n\alpha_i|\chi_i(a_{j+1})|.$$
As $\ml{L}_{V_f}^{(k)}u_1=\lambda u_1+ u_0$, we know that $\ml{L}_{V_f}^{(k)}u_1=\lambda u_1$ on the open set $f^{-1}(<f(a_{j+1}))$. Suppose now that there 
exists $i_0\leq j$ such that $u_1$ does not identically vanish near $a_{i_0}$. Without loss of generality, we may suppose that $i_0$ is minimal. 
In such a neighborhood, one has $\ml{L}_{V_f}^{(k)}u_1=\lambda u_1$ as $u_0$ 
vanishes near $a_{i_0}$. Arguing as in paragraph~\ref{sss:general} one more time, we would find that
$$\lambda=\sum_{i=1}^n\alpha_i|\chi_i(a_{i_0})|.$$
As we supposed that $\lambda\neq 0$ and that the Lyapunov exponents are rationally independent, this would lead to a contradiction. Hence, $u_1$ 
vanishes near any critical point $a_i$ with $i\leq j$. As $u_0$ is locally supported on $W^u(a_{j+1})$, we know that $u_1$ still 
satisfies the eigenvalue equation $\ml{L}_{V_f}^{(k)}u_1=\lambda u_1$ near $a_{j+1}$ and outside $W^u(a_{j+1})$. By propagation, we deduce 
that $u_1$ is locally supported on $W^u(a_{j+1})$. According to Remark~\ref{r:smoothness-pullback}, we are then able to infer that 
$\la (\varphi_f^{\ln s})^*u_1,\psi\ra$ has a (bounded) polyhomogeneous expansion in $s$ as $s\rightarrow 0^+$ for every smooth test form 
$\psi$ supported near $a_{j+1}$. From our assumption on $j$, one can find $\psi$ such that $\la u_0,\psi\ra\neq 0$ which gives the expected 
contradiction when we write~\eqref{e:jordan-block-nonzero}.
\end{proof}

\subsection{Background material on currents}\label{s:background-current}

In order to describe the eigenmodes, we start with some background material on the theory of currents. 
By a Theorem of Schwartz~\cite[Th.~37 p.~102]{Sch66}
whose adaptation to the case of currents is straightforward, we first recall that~:
\begin{theo}[Schwartz]
Let $u$ be a current of degree $k$ supported by a smooth submanifold $X$ embedded in $M$. 
Suppose that in a small neighborhood of $x\in X$, one has a system of coordinate functions
$(x_i,y_j)_{1\leqslant i\leqslant r,r+1\leqslant j\leqslant n}$ 
where the coordinates $(x_i)_{1\leqslant i\leqslant r}$ are transversal coordinates of $X$,
i.e. the submanifold $X$ is given by the equations $\{x_i=0,1\leqslant i\leqslant r\}$. 
Then the current $u$ reads locally as a finite sum~:
\begin{equation}
u(x,y)=\sum_{\alpha,|I|+|J|=k} u_{\alpha,I,J}(y)\partial^\alpha_x\delta^{\mathbb{R}^r}_{\{0\}}(x)dx^I\wedge dy^J 
\end{equation}
where $(\alpha,I,J)$ are multi--indices, the $u_{\alpha,I,J}$ 
are distributions in $\mathcal{D}^\prime(\mathbb{R}^{n-r})$.
\end{theo}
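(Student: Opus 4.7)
The plan is to reduce to the classical Schwartz structure theorem for scalar distributions supported on a linear subspace, applied componentwise once the current is decomposed in a basis of local forms. First, using a partition of unity subordinate to a cover by coordinate charts adapted to $X$, it suffices to prove the statement in a single chart $(x,y)\in\IR^r\times\IR^{n-r}$ where $X=\{x=0\}$. In such a chart, every smooth $(n-k)$-form $\omega$ can be uniquely written as $\omega=\sum_{|I|+|J|=k}\omega_{I,J}\,\iota_{I,J}$ where $\iota_{I,J}$ denotes the unique $(n-k)$-form such that $dx^I\wedge dy^J\wedge \iota_{I,J}$ equals the volume form up to a fixed sign. The assignment $\varphi\in\ml{C}^{\infty}_c\mapsto\la u,\varphi\,\iota_{I,J}\ra$ defines a scalar distribution $u_{I,J}\in\ml{D}^{\prime}(\IR^n)$, so that
\[
u=\sum_{|I|+|J|=k}u_{I,J}(x,y)\,dx^I\wedge dy^J
\]
as currents, with each $u_{I,J}$ supported in $\{x=0\}$.

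Next, I would invoke the classical Schwartz representation theorem for distributions carried by a linear subspace (see for instance~\cite[Ch.~III, Th.~XXXVII]{Sch66}): any $v\in\ml{D}^{\prime}(\IR^n)$ with $\supp v\subset\{x=0\}$ is, on every relatively compact open set, a finite sum
\[
v(x,y)=\sum_{\alpha\in\IN^r,\,|\alpha|\leq N}v_{\alpha}(y)\,\partial_x^{\alpha}\delta^{\IR^r}_{\{0\}}(x),
\]
with $v_{\alpha}\in\ml{D}^{\prime}(\IR^{n-r})$, the finiteness of $N$ being a consequence of the fact that distributions are of finite order on compact sets. Applying this to each $u_{I,J}$ and relabelling the coefficients as $u_{\alpha,I,J}$, one obtains the announced local expression.

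The only step that requires a small verification is the first one, namely that the components $u_{I,J}$ are genuine distributions and that the sum representing $u$ converges in $\ml{D}^{\prime,k}(M)$. This is routine once one fixes a smooth volume form in the chart and uses the $\ml{C}^{\infty}(M)$--linearity of $u$ in the $\Omega^{n-k}(M)$ slot; no genuine difficulty arises because the decomposition of an $(n-k)$-form in the basis $(dx^I\wedge dy^J)_{|I|+|J|=k}$ has smooth coefficients depending continuously on $\omega$ in the $\ml{C}^{\infty}$ topology. The scalar Schwartz theorem then does all the heavy lifting, so the only ``obstacle'' is really bookkeeping for the multi-indices $(\alpha,I,J)$.
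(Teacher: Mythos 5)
The paper does not actually prove this statement; it cites Schwartz's structure theorem for scalar distributions supported on a linear subspace and merely asserts that the adaptation to currents is straightforward. Your sketch is exactly that routine adaptation: decompose the current componentwise in the local coframe basis $(dx^I\wedge dy^J)_{|I|+|J|=k}$, observe that each scalar coefficient distribution inherits the support condition $\{x=0\}$, and then invoke the classical Schwartz theorem on each coefficient. This is correct and is precisely what the paper intends by ``straightforward.''
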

If we denote by $N^*X$ the conormal bundle of $X$, we also have the following property~\cite[Lemma~9.2]{Da16}:
\begin{lemm}\label{l:dang}
Suppose that the assumptions of the previous Theorem hold and use the same notations. If $WF(u)\subset N^*(X)$, then
the current $u$ reads 
\begin{equation}
u(x,y)=\sum_{\alpha,|I|+|J|=k} u_{\alpha,I,J}(y)\partial^\alpha_x\delta^{\mathbb{R}^r}_{\{0\}}(x)dx^I\wedge dy^J 
\end{equation}
where the $u_{\alpha,I,J}$ are smooth functions in $C^\infty(\mathbb{R}^{n-r})$.
\end{lemm}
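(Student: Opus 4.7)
My plan is to extract each coefficient $u_{\beta,I,J}$ as a pushforward of $u$ along the fiber projection $\pi:(x,y)\mapsto y$, and then to apply H\"ormander's wavefront estimate for pushforwards to conclude that $u_{\beta,I,J}$ is smooth.

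First I would start from the Schwartz representation recalled just above and fix $\beta\in\IN^r$ together with a pair $(I,J)$ satisfying $|I|+|J|=k$. Let $(I^c,J^c)$ denote the complementary multi-indices in $\{1,\dots,r\}$ and $\{r+1,\dots,n\}$ respectively, and choose a bump $\chi\in \ml{C}^{\infty}_c(\IR^r)$ which is identically $1$ near $0$. A direct Leibniz computation combined with the identity $\partial_x^\alpha\!\left(\chi(x)x^\beta/\beta!\right)\big|_{x=0}=\delta_{\alpha,\beta}$ shows that for every $\varphi\in \ml{C}^{\infty}_c(\IR^{n-r})$,
\[
\left\la u,\,\tfrac{\chi(x)x^\beta}{\beta!}\,\varphi(y)\,dx^{I^c}\wedge dy^{J^c}\right\ra=\pm(-1)^{|\beta|}\la u_{\beta,I,J},\varphi\ra.
\]
In other words, up to a sign, $u_{\beta,I,J}$ is the pushforward $\pi_*T_\beta$ of the top-degree current $T_\beta:=(\chi(x)x^\beta/\beta!)\,u\wedge dx^{I^c}\wedge dy^{J^c}$, and the cutoff $\chi$ ensures that $\pi$ is proper on $\supp(T_\beta)$.

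Next I would invoke H\"ormander's wavefront estimate for pushforwards~\cite[Th.~8.2.4]{Ho90}:
\[
WF(\pi_*T_\beta)\subset\{(y,\eta):\exists x,\ ((x,y),(0,\eta))\in WF(T_\beta)\}.
\]
Because multiplication by a smooth function and wedging with smooth forms cannot enlarge the wavefront set, $WF(T_\beta)\subset WF(u)\subset N^*X$. But requiring $((x,y),(0,\eta))\in N^*X$ forces $x=0$ \emph{and} $\eta=0$, since in these coordinates $N^*X$ consists precisely of covectors of the form $((0,y),(\xi,0))$ with $\xi\neq 0$; the resulting covector would then be zero, which is excluded from the wavefront set by definition. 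Hence $WF(\pi_*T_\beta)=\emptyset$, so $\pi_*T_\beta\in \ml{C}^{\infty}(\IR^{n-r})$ and consequently $u_{\beta,I,J}\in \ml{C}^{\infty}(\IR^{n-r})$.

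The whole argument reduces to microlocal bookkeeping. The only slightly delicate step I foresee is making the pushforward calculus rigorous for currents rather than for scalar distributions, which in this local chart amounts to identifying top-degree currents with distribution densities and then quoting the scalar statement~\cite[Th.~8.2.4]{Ho90}; the sign conventions attached to the complementary multi-indices $(I^c,J^c)$ and the Hodge-type pairing are purely combinatorial and carry no analytic content.
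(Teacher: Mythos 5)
The paper does not supply its own proof of this lemma; it simply cites~\cite[Lemma~9.2]{Da16}, so there is no paper argument to compare against. Your argument is nonetheless correct and self-contained. The extraction formula for $u_{\beta,I,J}$ is an elementary consequence of the Schwartz representation together with $\partial_x^\alpha(\chi x^\beta/\beta!)\big|_{x=0}=\delta_{\alpha\beta}$, and realizing this pairing as the pushforward $\pi_*T_\beta$ along $\pi(x,y)=y$ is exactly the right move: multiplication by $\chi x^\beta/\beta!$ and wedging with the smooth form $dx^{I^c}\wedge dy^{J^c}$ do not enlarge the wave front set, so $WF(T_\beta)\subset N^*(X)=\{((0,y),(\xi,0)):\xi\neq 0\}$, and no such covector has vanishing $\xi$-component; the pushforward estimate then empties $WF(u_{\beta,I,J})$ and gives smoothness.

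Two small corrections. First, the reference~\cite[Th.~8.2.4]{Ho90} is H\"ormander's \emph{pullback} theorem, not the pushforward estimate; the wavefront bound for pushforward along a submersion that you need is the one the paper itself quotes from~\cite{BrDaHe14} in Section~\ref{sss:general}. Second, properness of $\pi$ on $\supp T_\beta$ is already forced by $\supp u\subset X=\{x=0\}$ (on which $\pi$ restricts to a homeomorphism); the role of the cutoff $\chi$ is rather to make $\chi(x)x^\beta\varphi(y)\,dx^{I^c}\wedge dy^{J^c}$ a legitimate compactly supported test form for the pairing, not to ensure properness.
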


\subsection{Description of the eigenmodes}

Let $0\leq k\leq n$. In this paragraph, we will construct a ``canonical'' basis for every eigenspace of the operator $\ml{L}_{V_f}^{(k)}$ acting on the anisotropic 
space $\ml{H}^m_k(M)$. We proceed in three steps. First, we determine the local shape of an eigenmode $u$ near the ``smallest'' critical point where it does 
not vanish -- recall that such a point exists from Lemma~\ref{l:support-Lemma}. Then, we prove that the germ of current defined near
the critical point $a$ can be 
extended into a current carried by $\overline{W^u(a)}$. Finally, we show that these currents form indeed a basis 
of the kernel.

Before starting the proof which is a little bit combinatorial, recall from Proposition~\ref{p:ruelle-spectrum} that any eigenvalue $\lambda$ of the operator
$\ml{L}_{V_f}^{(k)}:\ml{H}_k^m(M)\rightarrow\ml{H}_k^m(M)$ must be of the form
$$\lambda= \sum_{j\in I\cup J}(\alpha_j+1)|\chi_j(a)|+\sum_{j\in(I\cup J)^c}\alpha_j|\chi_j(a)|,$$
where
\begin{itemize}
 \item $a$ is a critical point of index $r$,
 \item for every $1\leq j\leq n$, $\alpha_j$ is a nonnegative integer,
 \item $I\subset\{1,\ldots, r\}$ and $J\subset\{r+1,\ldots,n\}$ such that $|J|-|I|=k-r$.
\end{itemize}

\subsubsection{Local form near the ``smallest'' critical point}\label{sss:local-irrational}

Let $u\neq 0$ be an element in $\ml{H}^m_k(M)$ such that $\ml{L}_{V_f}^{(k)}u=\lambda u$. As before, we 
denote by $j$ the index such that, for every $i<j$, $u$ vanishes in a neighborhood of $a_i$ and such that $u$ does 
not vanish near $a_j$. Recall that $\supp(u)\cap V_{a_j}$ is included in $W^u(a_j)$ for 
some small enough neighborhood $V_{a_j}$ of $a_j$. Thanks to Proposition~\ref{p:indexeigencurrentversusdegree-general}, 
$-\lambda$ belongs to $\ml{I}_k(a_j)$. In order to alleviate notations, we will write $a_j=a$ in the following.

Using Schwartz's Theorem and Lemma~\ref{l:dang}, we deduce that, in the adapted coordinates of paragraph~\ref{sss:adapted}, the current $u$ reads 
as a finite sum~:
\begin{equation}\label{e:sum-Dirac-irrational}
u(x,y,dx,dy)=\sum_{\alpha',|I'|+|J'|=k} u_{\alpha',I',J'}(y)\partial^{\alpha'}_x\delta^{\mathbb{R}^r}_{\{0\}}(x)dx^{I'}\wedge dy^{J'} 
\end{equation}
where the $u_{\alpha',I',J'}$ are smooth functions in $C^\infty(\mathbb{R}^{n-r})$. A direct calculation shows us that, in a small enough 
neighborhood of $a$, one has, 
for every $0<s\leq 1$,
$$(\varphi_f^{\ln s *}u)(x,y,dx,dy)=\sum_{\alpha',|I'|+|J'|=k} u_{\alpha',I',J'}((s^{\chi_j(a)}y_j)_j)
\partial^{\alpha'}_x\delta^{\mathbb{R}^r}_{\{0\}}(x) s^{\tilde{\lambda}_{I',J',\alpha'}}dx^{I'}\wedge dy^{J'},$$
where
$$\tilde{\lambda}_{I',J',\alpha'}:=\sum_{j=1}^r(\alpha_j'+1)|\chi_j(a)|-\sum_{j\in I'}|\chi_j(a)|+\sum_{j\in J'}|\chi_j(a)|.$$
On the other hand, as $u$ satisfies $\ml{L}_{V_f}^{(k)}u=\lambda u$, we know that, 
for every smooth test form $\psi$ of degree $n-k$ and for every $0<s\leq 1$,
$$\la \varphi_f^{\ln s*} u,\psi\ra=s^{\lambda}\la u,\psi\ra.$$
Combining this equality to the local form of $u$, we find
$$s^{\lambda}\la u,\psi\ra=\sum_{\alpha',|I'|+|J'|=k} s^{\tilde{\lambda}_{I',J',\alpha'}} \left\la 
\partial^{\alpha'}_x\delta^{\mathbb{R}^r}_{\{0\}}(x) ,u_{\alpha',I',J'}((s^{\chi_j(a)}y_j)_j)dx^{I'}\wedge dy^{J'}\wedge\psi(x,y,dx,dy)\right\ra.$$
Write now the Taylor expansion of $u_{\alpha',I',J'}$ (which is $\ml{C}^{\infty}$). From that, we find that 
$$u_{\alpha',I',J'}(y)=c_{\alpha',I',J'}y_{r+1}^{\alpha'_{r+1}}\ldots y_{n}^{\alpha'_{n}},$$
where $c_{\alpha',I',J'}$  is some fixed constant, $\alpha_j'$ belongs to $\IN$ for every $r+1\leq j\leq n$ and  
$$\tilde{\lambda}_{I',J',\alpha'}+\sum_{j=r+1}^n\alpha_j'|\chi_j(a)|=\lambda.$$
Equivalently, one has
$$\lambda=\sum_{j=1}^r(\alpha_j'+1)|\chi_j(a)|+\sum_{j=r+1}^n\alpha_j'|\chi_j(a)|-\sum_{j\in I'}|\chi_j(a)|+\sum_{j\in J'}|\chi_j(a)|.$$
To summarize, this shows that the current $u$ reads in the adapted cooordinates near $a$:
\begin{equation}\label{e:sum-Dirac-irrational-2}
u(x,y,dx,dy)=\sum_{\alpha,I,J:(*)} c_{\alpha,I,J}\left(y\partial_x\right)^{\alpha}\delta^{\mathbb{R}^r}_{\{0\}}(x)\left(\wedge_{j\notin I}dx_j\right)\wedge
\left(\wedge_{j\in J} dy_j\right), 
\end{equation}
where $c_{\alpha,I,J}$ are some fixed constant and where $(*)$ means that $(\alpha, I,J)$ satisfies 
\begin{itemize}
\item for every $1\leq j\leq n$, $\alpha_j\in\IN$,
\item $I\subset \{1,\ldots, r\}$, $J\subset \{r+1,\ldots, n\}$,
\item $|J|-|I|=k-r$,
\item $\lambda= \sum_{j\in I\cup J}(\alpha_j+1)|\chi_j(a)|+\sum_{j\in(I\cup J)^c}\alpha_j|\chi_j(a)|,$
\end{itemize}

\subsubsection{Extension of the local form to $M$} We will now explain how the local form obtained 
in~\eqref{e:sum-Dirac-irrational-2} can be extended into a natural eigencurrent carried by the closure of $W^u(a)$. For a fixed triple $(\alpha, I, J)$ 
satisfying the conditions $(*)$, we define
\begin{equation}\label{e:local-germ}
 \tilde{U}_a^{\alpha, I,J}(x,y,dx,dy):=\theta(x,y)\left(y\partial_x\right)^{\alpha}\delta^{\mathbb{R}^r}_{\{0\}}(x)\left(\wedge_{j\notin I}dx_j\right)\wedge
\left(\wedge_{j\in J} dy_j\right),
\end{equation}
where $\theta$ is a smooth cutoff function supported near $a$ (in particular, it is equal to $1$ near $a$). By construction, one can verify that 
$$\ml{L}_{V_f}^{(k)}\tilde{U}_a^{\alpha, I,J}=\lambda \tilde{U}_a^{\alpha, I,J}$$
on the open neighborhood $(-\delta_a/4,\delta_a/4)^n$. Moreover, this current belong to the anisotropic space $\ml{H}^m_k(M)$ provided that we pick $N_0$ 
large enough (compared with $|\alpha|$) in the definition of the order function $m$. Using the conventions of~\eqref{e:jordan-decomposition}, we then set
$$U_a^{\alpha, I,J}=\sum_{i:\lambda^{(k)}_i=\lambda}\la \tilde{U}_a^{\alpha, I,J}, v_{i,1}^{(k)}\ra u_{i,1}^{(k)},$$
which obviously satisfies the eigenvalue equation:
$$\ml{L}_{V_f}^{(k)}U_a^{\alpha, I,J}=\lambda U_a^{\alpha, I,J}$$
Let us now describe some properties of this current. First, we let $\psi$ be a smooth $n-k$ form carried outside $\overline{W^u(a)}$. For such a form 
and for every $0<s\leq 1$, one has $\la \varphi_f^{-\ln s*}\tilde{U}_a^{\alpha, I,J},\psi\ra =0$. Hence, every term in the asymptotic expansion~\eqref{e:correlator} must vanish. 
In particular, one has $\la U_a^{\alpha, I,J},\psi\ra=0$ for every smooth test form 
supported outside $\overline{W^u(a)}$. Equivalently, one has 
$$\text{supp}\left(U_a^{\alpha, I,J}\right)\subset\overline{W^u(a)}.$$
By invariance under the gradient flow, the support is in fact equal to $\overline{W^u(a)}$. Like in the case of the kernel, we would like to verify 
that $\tilde{U}_a^{\alpha, I,J}$ and $U_a^{\alpha, I,J}$ coincide 
in a neighborhood of the critical point $a$. For that purpose, we let $\psi(x,y,dx,dy)$ 
be a some smooth test form carried in the neighborhood with adapted coordinates. Then, one finds that, for every $0<s\leq 1$,
$$ \la \varphi_f^{\ln s*}\tilde{U}_a^{\alpha, I,J},\psi\ra  =  s^{\lambda}(1+o(1))
   \left\la \left(y\partial_x\right)^{\alpha}\delta^{\mathbb{R}^r}_{\{0\}}(x)\left(\wedge_{j\notin I}dx_j\right)\wedge
\left(\wedge_{j\in J} dy_j\right),\psi\right\ra.$$
Using one more time the spectral expansion of the correlation function~\eqref{e:correlator} and using the fact that there is \emph{no Jordan blocks}, 
one can identify the term of order $s^{\lambda}$ in the asymptotic. In particular, this implies that
$$\la U_a^{\alpha, I,J},\psi\ra=\sum_{i:\lambda_i^{(k)}=\lambda} 
   \la \tilde{U}_a^{\alpha, I,J}, v_{i,1}^{(k)}\ra \la u_{i,1}^{(k)},\psi\ra=\left\la \left(y\partial_x\right)^{\alpha}\delta^{\mathbb{R}^r}_{\{0\}}(x)\left(\wedge_{j\notin I}dx_j\right)\wedge
\left(\wedge_{j\in J} dy_j\right),\psi\right\ra,$$
for every smooth test form $\psi$ compactly supported in a small enough neighborhood of $a$. To summarize, we have shown the following:

\begin{prop}\label{p:generalized-laudenbach} Let $\varphi_f^t$ be a Morse-Smale gradient flow all of whose Lyapunov exponents are rationally independent. 
Let $a$ be a critical point of index $r$, let $0\leq k\leq n$ and let $0\leq\theta\leq 1$ 
be a smooth cutoff function which is compactly supported in a small enough neighborhood $V_a$ of $a$, and equal to $1$ in an open neighborhood of $a$. Let 
$I$ be a subset of $\{1,\ldots, r\}$ and $J$ be a subset of $\{r+1,\ldots, n\}$ satisfying $|J|-|I|=k-r$. Let $\alpha$ be an element in $\IN^n$. 
Set $[W^u(a)]_{\alpha,I,J}$ to be the image in the adapted coordinate chart of
$$\left(y\partial_x\right)^{\alpha}\delta^{\mathbb{R}^r}_{\{0\}}(x)\left(\wedge_{j\notin I}dx_j\right)\wedge
\left(\wedge_{j\in J} dy_j\right).$$

Then, there exists an open neighborhood $\tilde{V}_a\subset V_a$ of $a$ such that the current
$$U_a^{\alpha,I,J}:=\sum_{i:\lambda_i^{(r)}=\lambda}\la \theta [W^u(a)]_{\alpha,I,J}, v_{i,1}^{(r)}\ra u_{i,1}^{(r)}$$
satisfies
\begin{itemize}
 \item $U_a^{\alpha,I,J}|_{\tilde{V}_a}=[W^u(a)]_{\alpha, I,J}|_{\tilde{V}_a}$,
 \item $\operatorname{supp}(U_a^{\alpha,I,J})= \overline{W^u(a)}$,
 \item $\ml{L}_{V_f}^{(k)}(U_a^{\alpha,I,J})=\lambda U_a^{\alpha,I,J} $ with
 $$\lambda=\sum_{j\in I\cup J}(\alpha_j+1)|\chi_j(a)|+\sum_{j\in(I\cup J)^c}\alpha_j|\chi_j(a)|.$$
 
\end{itemize}

\end{prop}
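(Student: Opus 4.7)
The plan is to formalize the construction sketched in the three paragraphs preceding the statement. First I would show that the local germ
\[
\tilde{U}_a^{\alpha,I,J} := \theta(x,y)\,(y\partial_x)^\alpha \delta_{\{0\}}^{\mathbb{R}^r}(x)\Bigl(\bigwedge_{j\notin I}dx_j\Bigr)\wedge\Bigl(\bigwedge_{j\in J}dy_j\Bigr)
\]
defines (after pulling back through $\kappa_a^{-1}$ and extending by $0$) a compactly supported current in $\mathcal{D}^{\prime,k}(M)$ whose wave front is contained in the conormal $N^*W^u(a)\subset\Gamma_+$, hence lies in $\mathcal{H}^m_k(M)$ as soon as the parameter $N_0$ in the order function $m$ is chosen large enough compared to $|\alpha|$ (this is where I use the refined microlocal regularity of forms carried by $W^u(a)$ with polynomial transverse derivatives). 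A direct computation in the adapted coordinates, using $\varphi_f^t(x,y)=(e^{-|\chi_j|t}x_j,e^{|\chi_j|t}y_j)$, shows that on the open set $\{\theta\equiv 1\}$,
\[
\mathcal{L}_{V_f}^{(k)}\tilde{U}_a^{\alpha,I,J}=\lambda\,\tilde{U}_a^{\alpha,I,J},\qquad \lambda=\sum_{j\in I\cup J}(\alpha_j+1)|\chi_j(a)|+\sum_{j\in(I\cup J)^c}\alpha_j|\chi_j(a)|.
\]

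Next I would define $U_a^{\alpha,I,J}$ by the spectral projection formula of the statement, that is, by applying the spectral projector $\Pi_\lambda^{(k)}$ onto the generalized eigenspace at $\lambda$ to $\tilde{U}_a^{\alpha,I,J}$. Because Proposition~\ref{p:strong-jordan} rules out Jordan blocks under our rational-independence hypothesis (and Proposition~\ref{p:kernel} handles $\lambda=0$), this projector lands in the true eigenspace, so $\mathcal{L}_{V_f}^{(k)}U_a^{\alpha,I,J}=\lambda U_a^{\alpha,I,J}$ automatically, which gives the third bullet.

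For the support property, I would test $U_a^{\alpha,I,J}$ against any $\psi\in\Omega^{n-k}(M)$ with $\operatorname{supp}\psi\cap\overline{W^u(a)}=\emptyset$. Since $\operatorname{supp}\tilde{U}_a^{\alpha,I,J}\subset W^u(a)\cap V_a$ and this support is forward-invariant under $\varphi_f^t$ (relative to $\overline{W^u(a)}$), one has $\langle\varphi_f^{-t*}\tilde{U}_a^{\alpha,I,J},\psi\rangle=0$ for all $t\geq 0$. Feeding this into the asymptotic expansion~\eqref{e:correlator} of Proposition~\ref{p:correlation} and identifying coefficients of the polynomial-exponential factors (licit because the exponents are distinct by rational independence) forces every spectral contribution, and in particular $\langle U_a^{\alpha,I,J},\psi\rangle$, to vanish. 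Thus $\operatorname{supp}(U_a^{\alpha,I,J})\subset\overline{W^u(a)}$; the reverse inclusion then follows because the support is closed and $\varphi^t$-invariant, and the local identification below shows it contains a neighborhood of $a$ in $W^u(a)$, whose flow-saturation is dense in $\overline{W^u(a)}$.

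The most delicate step, and the main obstacle, is the local identification $U_a^{\alpha,I,J}=\tilde{U}_a^{\alpha,I,J}$ near $a$. For $\psi$ a smooth test form supported in a small neighborhood of $a$ inside the adapted chart, I would expand $\langle\varphi_f^{\ln s *}\tilde{U}_a^{\alpha,I,J},\psi\rangle$ as $s\to 0^+$: the explicit formula~\eqref{e:sum-Dirac-irrational} shows that this admits a polyhomogeneous expansion in $s$ whose leading term is $s^\lambda\langle[W^u(a)]_{\alpha,I,J},\psi\rangle$, with strictly larger exponents for all other contributions (again by rational independence of the Lyapunov exponents at $a$). On the other hand, the Proposition~\ref{p:correlation} expansion applied to the same pairing reads, after using the absence of Jordan blocks, $s^\lambda\langle U_a^{\alpha,I,J},\psi\rangle$ plus strictly higher-order terms. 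Matching the $s^\lambda$-coefficients yields $\langle U_a^{\alpha,I,J},\psi\rangle=\langle[W^u(a)]_{\alpha,I,J},\psi\rangle$ for all such $\psi$, which is precisely the first bullet on an appropriate $\tilde{V}_a\subset V_a$. The subtlety here lies in ensuring that no other Pollicott-Ruelle resonance contributes to the $s^\lambda$ term, which is guaranteed by the global nonresonance assumption combined with Proposition~\ref{p:ruelle-spectrum}.
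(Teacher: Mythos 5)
Your proposal is correct and follows essentially the same approach as the paper: define the compactly supported germ $\tilde{U}_a^{\alpha,I,J}$, check it lies in $\ml{H}^m_k(M)$ for $N_0$ large, apply the spectral projector onto the eigenspace at $\lambda$ (collapsing to the displayed sum because Propositions~\ref{p:kernel} and~\ref{p:strong-jordan} rule out Jordan blocks), and identify the result with $[W^u(a)]_{\alpha,I,J}$ near $a$ and obtain the support statement by matching coefficients in the expansion of Proposition~\ref{p:correlation} against the explicit computation of $\la\varphi_f^{\ln s*}\tilde{U}_a^{\alpha,I,J},\psi\ra$. The only cosmetic difference is that you phrase the coefficient extraction in terms of strict ordering of exponents via rational independence, whereas the paper writes the leading term as $s^\lambda(1+o(1))$; both amount to the same identification of the $s^\lambda$ coefficient.
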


\begin{rema}\label{r:kernel}
 Note that, in the case $\lambda=0$, everything is well defined as soon as 
$(f,g)$ is a smooth Morse pair generating a Morse-Smale gradient flow. Note that the eigenvalue $0$ can only occur if the index $r$ 
of the point $a$ is equal to $k$. In that case, the current can be easily interpreted. In fact, 
using the result of Corollary D.4 in~\cite{DaRi15}, we recognize that
$\delta^{\mathbb{R}^k}_{\{0\}}(x)dx_1\wedge\ldots\wedge dx_k$ is the integral formula for the current of integration on
the germ of submanifold $W^u(a)=\{x_i=0,1\leqslant i\leqslant k\}$. This proposition shows how this germ of current can be extended to a current $U_a:=U_a^{0,\emptyset,\emptyset}$
defined on $M$. We call these currents \textbf{Laudenbach's currents of degree $k$}. Recall that the extension of this current 
is a delicate task which was first achieved by Laudenbach in~\cite{Lau92} 
in the case of a locally flat metric. 
\end{rema}

In any case, up to the linearization chart, the expression of the eigenmodes is more or less explicit. 
For every $\lambda$ in $\ml{I}_{k}(a)$, we define the ``multiplicity'' of $\lambda$ as
\begin{equation}\label{e:multiplicity}
 m_k(\lambda):=\left|\left\{(\alpha,I,J)\ \text{satisfying}\ (*)\right\}\right|,
\end{equation}
where $(*)$ means that $(\alpha, I,J)$ satisfies 
\begin{itemize} 
\item for every $1\leq j\leq n$, $\alpha_j\in\IN$,
\item $I\subset \{1,\ldots, r\}$, $J\subset \{r+1,\ldots, n\}$,
\item $|J|-|I|=k-r$,
\item $\lambda= \sum_{j\in I\cup J}(\alpha_j+1)|\chi_j(a)|+\sum_{j\in(I\cup J)^c}\alpha_j|\chi_j(a)|.$
\end{itemize}

\begin{rema}\label{r:combinatorics} In order to compute the Weyl's law for our eigenvalues, it will be convenient to rewrite things in 
a slightly different manner. More precisely, for any 
given $\alpha$ in $\IN^n$, we set
$$m_{k,a}(\alpha):=\left|\left\{(I\times J\subset\{1,\ldots ,r\}\times\{r+1,\ldots ,n\}:\ |J|-|I|=k-r,\ \text{and}\ \forall j\in I\cup J,\ \alpha_j\geq 1 \right\}\right|,$$
where $r$ is the index of $a$. With these conventions and thanks to the rational independence, any $\alpha.|\chi(a)|$ appears with 
multiplicity $m_{k,a}(\alpha)$ in $\ml{R}_k(f,g)$.
\end{rema}

\subsubsection{The generation Theorem} We conclude this section by showing that the currents we have just constructed generate a basis of 
$\text{Ker}(\ml{L}_{V_f}^{(k)}+\lambda)$, i.e.
\begin{prop}\label{p:basis} Let $\varphi_f^t$ be a Morse-Smale gradient flow all of whose Lyapunov exponents are rationally independent. Let $0\leq k\leq n$ and let $\lambda\neq 0$ be an element in $\ml{R}_k(f,g)$. 
The family of currents
$$\left\{U_a^{\alpha,I,J}: \sum_{j\in I\cup J}(\alpha_j+1)|\chi_j(a)|+\sum_{j\in(I\cup J)^c}\alpha_j|\chi_j(a)|=-\lambda \right\}$$
forms a basis of the kernel of the operator
$$\ml{L}_{V_f}^{(k)}+\lambda:\ml{H}_k^m(M)\rightarrow \ml{H}_k^m(M).$$
In particular, the kernel of this operator is of dimension $m_{k}(\lambda)$.
\end{prop}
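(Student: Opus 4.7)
The plan is to establish both spanning and linear independence via a peeling argument along the total order $a_1 < a_2 < \dots < a_K$ on $\operatorname{Crit}(f)$ induced by $f$-values. A crucial geometric fact underlies everything: since $f$ is non-decreasing along the gradient flow and every $x \in W^u(a)$ satisfies $\lim_{t \to -\infty} \varphi_f^t(x) = a$, one has $f \geq f(a)$ on $W^u(a)$, hence on $\overline{W^u(a)}$ by continuity. In particular, for every critical point $b$ with $f(b) < f(a)$, a sufficiently small neighborhood $V_b$ of $b$ satisfies $\overline{W^u(a)} \cap V_b = \emptyset$.

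For the spanning statement, fix $u \in \operatorname{Ker}(\ml{L}_{V_f}^{(k)} + \lambda)$ and choose the order function $m$ with $N_0$ large enough that every $U_a^{\alpha, I, J}$ under consideration lies in $\ml{H}_k^m(M)$. Inductively, suppose $u$ vanishes near $a_1, \ldots, a_{j-1}$. If $u$ already vanishes near $a_j$, proceed to $a_{j+1}$. Otherwise, by the local analysis of paragraph~\ref{sss:local-irrational} (valid precisely because $u$ vanishes at all smaller critical points), the germ of $u$ near $a_j$ takes the form $\sum c_{\alpha, I, J} [W^u(a_j)]_{\alpha, I, J}$ indexed over the finite set of triples satisfying $(*)$ with total weight $-\lambda$. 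Set
\[
u' := u - \sum_{\alpha, I, J} c_{\alpha, I, J} U_{a_j}^{\alpha, I, J}.
\]
By Proposition~\ref{p:generalized-laudenbach}, $u'$ vanishes on a neighborhood $\tilde{V}_{a_j} \subset V_{a_j}$ of $a_j$; by the geometric fact above, $u'$ still vanishes near each $a_i$ with $i < j$ (since each $U_{a_j}^{\alpha,I,J}$ has support $\overline{W^u(a_j)}$, disjoint from small $V_{a_i}$); and clearly $u' \in \operatorname{Ker}(\ml{L}_{V_f}^{(k)} + \lambda)$. Iterating up to $j = K$, the residual eigencurrent $u'$ vanishes near every critical point. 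Since every point of $M$ has its backward orbit converging to some critical point, the propagation Lemma then forces $u' \equiv 0$, so $u$ is a finite linear combination of the $U_a^{\alpha, I, J}$'s.

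Linear independence is the dual peeling argument. Given a vanishing combination $\sum_{a, \alpha, I, J} c_{a, \alpha, I, J} U_a^{\alpha, I, J} = 0$, let $a_*$ be the critical point of smallest $f$-value among those appearing nontrivially. For every other $b$ contributing to the sum one has $f(b) > f(a_*)$, hence $\overline{W^u(b)} \cap \tilde{V}_{a_*} = \emptyset$ for a sufficiently small neighborhood $\tilde{V}_{a_*}$ of $a_*$. Restricting to $\tilde{V}_{a_*}$ and invoking Proposition~\ref{p:generalized-laudenbach} reduces the relation to
\[
\sum_{\alpha, I, J} c_{a_*, \alpha, I, J} [W^u(a_*)]_{\alpha, I, J} = 0
\]
in the adapted linearizing chart around $a_*$. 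There, the germs $[W^u(a_*)]_{\alpha, I, J}$ are distinct tensors, each a product of a monomial $y^{(\alpha_{r+1},\ldots,\alpha_n)}$, a derivative $\partial_x^{(\alpha_1,\ldots,\alpha_r)} \delta_0(x)$, and a wedge factor $(\bigwedge_{j \notin I} dx_j) \wedge (\bigwedge_{j \in J} dy_j)$, and are manifestly linearly independent. Thus all $c_{a_*, \alpha, I, J}$ vanish, contradicting the choice of $a_*$. The dimension count $m_k(\lambda)$ then follows from the definition~\eqref{e:multiplicity}. The main technical obstacle is book-keeping: ensuring that subtracting $U_{a_j}^{\alpha,I,J}$ at stage $j$ does not reintroduce mass near a previously processed critical point $a_i$ with $i < j$, and this is precisely controlled by the geometric fact that $\overline{W^u(a_j)}$ is confined to $\{f \geq f(a_j)\}$.
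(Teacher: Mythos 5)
Your proof is correct and proceeds along essentially the same lines as the paper's; the only presentational difference is in the spanning step, where you frame it as an iterative peeling over all critical points, whereas the paper performs a single subtraction at the smallest critical point where $u$ does not vanish and then invokes the rational independence of the Lyapunov exponents (via Proposition~\ref{p:indexeigencurrentversusdegree-general}) to conclude the residual is zero outright, since for $\lambda\neq 0$ at most one critical point $a$ can satisfy $\lambda\in\ml{I}_k(a)$. Your iteration terminates after at most one nontrivial subtraction for exactly this reason, so the two formulations are logically equivalent.
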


Note that the proof of Theorem~\ref{t:fullasympt-nonresonant} is then just a combination of Proposition~\ref{p:correlation} with this statement and with the fact that there is 
no Jordan blocks. In the case $\lambda=0$, this statement is true without the rational independence assumption. Hence, except for the properties on the support of the dual basis 
$(S_a)_{a\in\text{Crit}(f)}$ (see paragraph~\ref{sss:correlation} below), Theorem~\ref{c:leadingasympt} is a consequence of Proposition~\ref{p:ruelle-spectrum}, Proposition~\ref{p:correlation}, Proposition~\ref{p:basis} (for $\lambda=0$) and
Proposition~\ref{p:kernel}.

\begin{proof} Again, we need to distinguish the case $\lambda=0$ and the case $\lambda\neq 0$. Let us start with the case $\lambda=0$. 
First, we show that this family of currents is linearly independent. For that purpose, we suppose that
$$\sum_{a\in\operatorname{Crit}(f):\operatorname{ind}(a)=k}\alpha_aU_a=0.$$
Let $a$ be the ``smallest'' point of index $k$, in the sense that, for every other point $a'\neq a$ of index $k$, $f(a')>f(a)$. We pick 
$\psi$ a smooth form which is compactly supported near $a$ and such that $\la [W^u(a)],\psi\ra \neq 0$. As the support of $U_b$ is 
contained in $\overline{W^u(b)}$ for any critical point $b$ of index $k$, we can deduce (provided that the support of $\psi$ is small enough) that
$$0=\sum_{b\in\operatorname{crit}(f):\operatorname{ind}(b)=k}\alpha_b\la U_b,\psi\ra=\alpha_a\la U_a,\psi\ra=\alpha_a\la [W^u(a)],\psi\ra.$$
From this, we deduce that $\alpha_a=0$. By induction, we can conclude that the family contains only linearly independent currents. It remains to verify that this family generates all the kernel. Let $u\neq 0$ be an element in $\ml{H}^m_k(M)$ in the kernel of $\ml{L}_{V_f}^{(k)}$. 
From Remark~\ref{r:kernel}, we know that $u$ must be equal to $c_{a}[W^u(a)]$ in a neighborhood of $a$ where $a$ is the 
``smallest'' critical point where $u$ does not identically vanish and where $c_{a}$ is a fixed constant. Set now 
$$u_1=u-c_{a}U_{a}.$$
We know that $u_1$ belongs to $\ml{H}^m_k(M)$ and that it satisfies $\ml{L}_{V_f}^{(k)}u_1=0$. Morever, by construction, we know that $u_1$ vanishes identically near 
every critical point $a'$ with $a'\leq a$. Repeating the process a finite number of times, we finally get that
$$u=\sum_{a\in\operatorname{Crit}(f):\operatorname{ind}(a)=k}c_aU_a,$$
for some $c_a$ in $\IR$.

Suppose now that $\lambda\neq 0$. Let us first show that it generates the kernel of $\ml{L}_{V_f}^{(k)}+\lambda$. This follows from the discussion from paragraph~\ref{sss:local-irrational}. 
If we take $u$ in $\ml{H}^m_k(M)$ 
satisfying $\ml{L}_{V_f}^{(k)}u=-\lambda u$, then~\eqref{e:sum-Dirac-irrational-2} gives us a family of constants $c_{\alpha,I,J}$. We then set $\tilde{u}=
u-\sum_{\alpha,I,J}c_{\alpha,I,J}U_a^{\alpha,I,J}$ where $a$ is the smallest critical point where $u$ does vanish in a neighborhood. Note that $\lambda$ belongs 
to $\ml{I}_{k}(a)$ from Proposition~\ref{p:indexeigencurrentversusdegree-general}. One still has that $\ml{L}_{V_f}^{(k)}\tilde{u}=-\lambda\tilde{u}$. From 
paragraph~\ref{sss:local-irrational}, we also know that $\tilde{u}$ vanishes near any critical point $b$ satisfying $f(b)\leq f(a)$. Then, combing the 
rational independence of the Lyapunov exponents with Proposition~\ref{p:indexeigencurrentversusdegree-general}, we conclude that $\tilde{u}=0$.
Let us now briefly verify that these elements are independent. Suppose that one can write
$$\sum_{(\alpha,I,J)\text{satisfying}\ (*)}\gamma_{\alpha,I,J}U_a^{\alpha,I,J}=0$$
We write this relation near the critical point $a$ and we use that the germs of current are (from proposition~\ref{p:generalized-laudenbach}) 
linearly independent near this critical point. This implies that $\gamma_{\alpha,I,J}=0$ for every $(\alpha,I,J)$ associated with $a$.

\end{proof}

\subsubsection{Support of the dual basis}\label{sss:correlation}

In order to conclude the proof of Theorem~\ref{c:leadingasympt}, it remains to say a word on the basis dual to $(U_a)_{a\in\text{Crit}(f)}$. Note that the 
same discussion would hold for other eigenvalues.
We have shown that there is no Jordan blocks for 
the eigenvalue $0$ (Prop.~\ref{p:kernel}), and that we can choose a basis of eigenmodes $(U_a)_{a}$ indexed by the critical points of index $k$. 
Moreover, all the elements in this basis can be chosen in such a way that the support of $U_a$ is equal to $\overline{W^u(a)}$. We denote by 
$S_a$ the corresponding dual basis. Following paragraph~\ref{ss:correlation}, we have then
$$\forall\psi_1\in\ml{H}^m_k(M),\ \forall\psi_2\in\ml{H}^{-m}_{n-k}(M),\ \la\varphi_f^{-t*}\psi_1,\psi_2\ra=\sum_{a:\text{ind}(a)=k}
\la U_a,\psi_2\ra\la \psi_1,S_a\ra+\ml{O}_{\psi_1,\psi_2}(e^{-\Lambda t}),$$
for every $t>0$. Applying the arguments of the previous paragraphs to the operator $\ml{L}_{V_{-f}}^{(n-k)}$ acting on the anisotropic 
space $\ml{H}^{-m}_{n-k}(M)$, we can construct a basis of the kernel that we denote by $(\overline{S}_a)_a$ indexed by the critical points of index $k$.
Mimicking the above procedure, we can impose that $\overline{S}_a$ has support contained in $\overline{W^s(a)}$ and that $\overline{S}_a$ coincides with $[W^s(a)]$ in 
a neighborhood of the critical point $a$. In particular, as $\overline{W^s(a)}\cap \overline{W^u(a)}=\{a\}$, we can use our local adapted coordinates near $a$ to find that 
$\la U_a,\overline{S}_a\ra=1$. Consider now $a'\neq a$ of index $k$. If we are able to show that $\la \overline{S}_a,U_{a'}\ra =0$ for every such $a'$, then we will have that 
$\overline{S}_a=S_a$ which would conclude the proof of Theorem~\ref{c:leadingasympt}. To prove this, we just need to observe that $\overline{W^s(a)}\cap\overline{W^u(a')}=\emptyset.$ In fact, according to 
Remark~\ref{r:weber} applied to $f$ and $-f$, we find that, if $x$ belongs to $\overline{W^s(a)}\cap\overline{W^u(a')}$, then $\text{ind}(x_-)\geq k$ and 
$\text{ind}(x_+)\leq k$, where $x\in W^u(x_-)\cap W^s(x_+)$. In other words, from the Morse-Smale assumption, $x_-=x_+$. From Lemma~\ref{l:neighbor}, we 
would then have $a=a'$ which gives the contradiction.

\subsection{Asymptotic formulas}

In order to conclude this section, we will give some nice asymptotic formulas that can be easily derived from our description of the spectrum.

\subsubsection{Weyl asymptotics}\label{sss:weyl}

Due to the fact that we obtained an explicit expression for the spectrum of the transfer operator, we can easily obtain some Weyl's formula. More precisely, 

\begin{prop}[Weyl Law]\label{p:weyl} Let $0\leq k\leq n$ and let $\varphi_f^t$ be a Morse-Smale gradient flow all of whose Lyapunov exponents are rationally independent.. Then, one has
$$\left|\left\{\lambda\in\ml{R}_k(f,g):|\lambda|\leq \Lambda\right\}\right|= \frac{\Lambda^n}{k!(n-k)!}\sum_{a\in \operatorname{Crit}(f)} \frac{1}{\prod_{j=1}^n\vert\chi_j(a)\vert}+\ml{O}(\Lambda^{n-1}),\ \text{as}\ \Lambda\rightarrow+\infty,$$
 where the elements in $\ml{R}_k(f,g)$ are counted with their algebraic multiplicities.
\end{prop}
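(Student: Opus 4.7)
The plan is to reduce the Weyl count to a lattice-point counting problem on simplices associated to each critical point, and then to extract the leading term from the volume of these simplices.

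By Proposition~\ref{p:basis} and the rational independence of the Lyapunov exponents, distinct data $(a,\alpha,I,J)$ produce distinct eigenvalues, so counting resonances with multiplicity reduces (via Remark~\ref{r:combinatorics}) to
\[
\left|\left\{\lambda\in\ml{R}_k(f,g):|\lambda|\leq \Lambda\right\}\right|
=\sum_{a\in\operatorname{Crit}(f)}\ \sum_{\alpha\in\IN^n:\,\alpha\cdot|\chi(a)|\leq\Lambda} m_{k,a}(\alpha),
\]
where $m_{k,a}(\alpha)$ counts the admissible pairs $(I,J)$ with $I\subset\{1,\dots,r\}$, $J\subset\{r+1,\dots,n\}$, $|J|-|I|=k-r$, and $\alpha_j\geq 1$ for every $j\in I\cup J$ (here $r=\operatorname{ind}(a)$).

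The key combinatorial observation is that when $\alpha\in(\IN^*)^n$ the constraint on $\alpha$ is automatic, so $m_{k,a}(\alpha)$ is just the number of pairs $(I,J)$ with $|J|-|I|=k-r$. Setting $|I|=i$, Vandermonde's identity gives
\[
\sum_{i\geq 0}\binom{r}{i}\binom{n-r}{k-r+i}=\binom{n}{k}=\frac{n!}{k!(n-k)!},
\]
independent of $a$. The plan is therefore to split the sum over $\alpha$ into the contribution from $\alpha\in(\IN^*)^n$ and the contribution from $\alpha$'s lying on a coordinate hyperplane, and to treat the latter as a remainder.

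For the main term, I will use standard lattice-point counting: for a fixed critical point $a$, the number of $\alpha\in(\IN^*)^n$ with $\alpha\cdot|\chi(a)|\leq\Lambda$ equals the Euclidean volume of the simplex $\{t\in\IR_+^n:t\cdot|\chi(a)|\leq\Lambda\}$ up to boundary terms of size $O(\Lambda^{n-1})$; the volume equals $\Lambda^n/(n!\prod_j|\chi_j(a)|)$. Multiplying by $\binom{n}{k}$ and summing over $a$ yields the claimed leading term $\Lambda^n/(k!(n-k)!)\cdot\sum_a 1/\prod_j|\chi_j(a)|$. For the remainder, the $\alpha$'s with at least one vanishing coordinate lie in a finite union of $(n-1)$-dimensional simplices, so their contribution is $O(\Lambda^{n-1})$, and the crude bound $m_{k,a}(\alpha)\leq 2^n$ from Theorem~\ref{t:fullasympt-nonresonant} makes this uniform. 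The main (and only nontrivial) step is the Vandermonde identification of the multiplicity; everything else is standard asymptotic counting, so I expect no serious obstacle.
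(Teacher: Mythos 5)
Your proposal is correct and follows essentially the same approach as the paper: both reduce the count to the lattice-point sum $\sum_{a}\sum_{\alpha\cdot|\chi(a)|\leq\Lambda}m_{k,a}(\alpha)$, replace the constrained lattice count by the volume of the simplex $\{t\in\IR_+^n:\ t\cdot|\chi(a)|\leq\Lambda\}$ up to $\ml{O}(\Lambda^{n-1})$, and identify the combinatorial factor $\binom{n}{k}$. The only cosmetic difference is the order of summation (you fix $\alpha$ and count $(I,J)$ via Vandermonde, whereas the paper fixes $(I,J)$ and counts $\alpha$, obtaining $\binom{n}{k}$ from the bijection $I\mapsto\{1,\dots,r\}\setminus I$) — a Fubini swap, not a different route.
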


\begin{proof} From Remark~\ref{r:combinatorics} and Propositions~\ref{p:basis} and~\ref{p:strong-jordan}, one knows that
$$\left|\left\{\lambda\in\ml{R}_k(f,g):|\lambda|\leq \Lambda\right\}\right|=\sum_{a\in \operatorname{Crit}(f)}\sum_{\alpha\in\IN^n:\alpha.|\chi(a)|\leq\Lambda}m_{k,a}(\alpha).$$
Hence, we can fix a critical point $a$ and compute $\sum_{\alpha\in\IN^n:\alpha.|\chi(a)|\leq\Lambda}m_{k,a}(\alpha)$. We write
$$\sum_{\alpha\in\IN^n:\alpha.|\chi(a)|\leq\Lambda}m_{k,a}(\alpha)=\left|\left\{\alpha\in\IN^n;I\subset\{1,\ldots,r\},J\subset\{r+1,\ldots,n\}:\alpha.|\chi(a)|\leq\Lambda\ \text{and}\ (**)\right\}\right|,$$
where $r$ is the index of $a$ and where $(**)$ means that $|J|-|I|=k-r$ and $\forall j\in I\cup J,\ \alpha_j\geq 1.$ We start by fixing a pair $(I,J)$ 
where $I\subset \{1,\dots,r\}$ and $J\subset \{r+1,\dots,n\}$ subject to the condition
$\vert J\vert-\vert I\vert=k-r$. We then want to compute $$\left|\left\{\alpha\in\IN^n:\forall j\in I\cup J,\ \alpha_j\geq 1\ \text{and}\ \alpha.|\chi(a)|\leq\Lambda\right\}\right|.$$
One can verify that
$$\left|\left\{\alpha\in\IN^n:\forall j\in I\cup J,\ \alpha_j\geq 1\ \text{and}\ \alpha.|\chi(a)|\leq\Lambda\right\}\right|=\left|\left\{\alpha\in\IN^n:\alpha.|\chi(a)|\leq\Lambda\right\}\right|+\ml{O}(\Lambda^{n-1}).$$
Then, one has
$$\left|\left\{\alpha\in\IN^n:\alpha.|\chi(a)|\leq\Lambda\right\}\right|=\text{Vol}\left(\left\{x\in(\IR_+)^n:|\chi(a)|.x\leq\Lambda\right\}\right)+\ml{O}(\Lambda^{n-1}),$$
which is the volume of a simplical domain. Hence, one has
$$\left|\left\{\alpha\in\IN^n:\forall j\in I\cup J,\ \alpha_j\geq 1\ \text{and}\ \alpha.|\chi(a)|\leq\Lambda\right\}\right|=\frac{\Lambda^n}{n!\vert\prod_{j=1}^n\chi_j(a)\vert}
+\ml{O}(\Lambda^{n-1}).$$
This is valid for any $I\subset\{1,\ldots, r\},J\subset\{r+1,\ldots,n\}$ subject to the condition $|J|-|I|=k-r$. 
One can remark that the number 
of such $I\times J$ is equal to the number of $I'\times J\subset\{1,\ldots, r\}\times\{r+1,\ldots,n\}$ subject to the 
condition $|J|+|I'|=k$. This is exactly equal to $\left(\begin{array}{c}
n\\k
\end{array} \right)$. This concludes the proof of the Proposition.
\end{proof}

\subsubsection{Trace formulas}
\label{ss:traceformulas}
In this paragraph, we discuss briefly some trace formulas related to our problem. For every $0\leq k\leq n$ and every $\lambda\geq 0$, we set
$$C^k(f,\lambda):=\text{Ker}(\ml{L}_{V_f}^{(k)}-\lambda),$$
where we mean the kernel of the operator in an appropriate anisotropic Sobolev space as above. We define then the spaces of even (bosonic) and 
odd (fermionic) eigenstates:
$$C^{\text{even}}(f,\lambda):=\bigoplus_{k\equiv 0(\text{mod} 2)}C^k(f,\lambda),\ \text{and}\ C^{\text{odd}}(f,\lambda):=\bigoplus_{k\equiv 1(\text{mod} 2)}C^k(f,\lambda)$$
The \textbf{fermion number} operator $(-1)^F$ acts on 
$$C(f,\lambda)=C^{\text{even}}(f,\lambda)\oplus C^{\text{odd}}(f,\lambda)$$ 
with eigenvalue $\pm 1$ depending on the parity of the state. Let now $\theta:\IR\rightarrow \IC$. We define the \textbf{super\footnote{One more time, the 
prefix super just emphasizes the fact that we are considering functions of odd $(dz_i)$ and even $(z_i)$ variables.}-trace} 
as follows:
\begin{eqnarray*}
\text{Str}\left(\theta\left(\mathcal{L}_{V_f}\right)\right)&=&\text{Tr}\left((-1)^F \theta\left(\mathcal{L}_{V_f}\right)\right)\\
&=&\text{Tr}\left(\theta\left(\mathcal{L}_{V_f}\right)\rceil_{C^{\text{even}}(f,\lambda)}\right)
-\text{Tr}\left(\theta\left(\mathcal{L}_{V_f}\right)\rceil_{C^{\text{odd}}(f,\lambda)}\right)\\
&:=&\sum_{\lambda\in\cup_{k=0}^n\ml{R}_k(f,g)} \theta(\lambda)\left(\dim C^{\text{even}}(f,\lambda)-\dim C^{\text{odd}}(f,\lambda)\right).
\end{eqnarray*}
This allows to define a notion of super-trace as soon as the last quantity is well-defined. In order to avoid too many complications that would be 
beyond the scope of this article, we take this as a definition of the trace in our framework.

The operators $d$ and $i_{V_f}$ both commute with $\mathcal{L}_{V_f}$. Hence, $Q=(d+i_{V_f})$ defines an operator
\begin{eqnarray*}
Q_{\lambda} :C^{\text{even}}(f,\lambda)\oplus C^{\text{odd}}(f,\lambda) \mapsto C^{\text{odd}}(f,\lambda)\oplus C^{\text{even}}(f,\lambda).
\end{eqnarray*}
which exchanges chiralities. We observe that, for every $\lambda>0$, $Q_{\lambda}$ is an isomorphism
since $Q_{\lambda}^2=\ml{L}_{V_f}=\lambda\text{Id}$. In particular, for every $\lambda>0$, one has
$$\dim C^{\text{even}}(f,\lambda)=\dim C^{\text{odd}}(f,\lambda).$$
Combined with Proposition~\ref{p:basis} (when $\lambda=0$), this implies
$$\text{Str}\left(\theta\left(\mathcal{L}_{V_f}\right)\right)=\theta(0)\sum_{k=0}^n(-1)^k|\{a\in\text{Crit}(f):\text{ind}(a)=k\}|=\theta(0)
\sum_{a\in\operatorname{Crit}(f)}(-1)^{\operatorname{ind}(a)}.$$
By the classical Morse inequalities, the right-hand side of this equality is equal to $\theta(0)\chi(M)$, where $\chi(M)$ is the 
Euler characteristic of $M$. We shall prove this property in section~\ref{s:topology}.

Let us now specialize this result when we take $\theta(\lambda)=e^{-\lambda t}\textbf{1}_{[0,\Lambda]}(\lambda)$ some fixed $\Lambda>0$. In that
case, we get the following spectral version of the Atiyah--Bott--Lefschetz fixed point Theorem~\cite{AtBo67}:
\begin{prop}\label{p:trace} Let $\varphi_f^t$ be a Morse-Smale gradient flow all of whose Lyapunov exponents are rationally independent. Then, one has, 
for every $\Lambda>0$, and for every $t> 0$,
\begin{equation}\label{e:lefschetz} 
\sum_{k=0}^n(-1)^{k}\operatorname{Tr}\left(\Pi_{\Lambda}^{(k)}\varphi_f^{-t*}\Pi_{\Lambda}^{(k)}\right)=\sum_{x=\varphi_f^{-t}(x)}
\frac{\operatorname{det}(\operatorname{Id}-d_x\varphi_f^{-t})}{|\operatorname{det}(\operatorname{Id}-d_x\varphi_f^{-t})|}, 
\end{equation}
where $\Pi_{\Lambda}^{(k)}$ is the spectral projector defined in paragraph~\ref{ss:correlation} and $\operatorname{Tr}$ is the standard trace.
\end{prop}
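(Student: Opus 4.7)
The plan is to compute both sides of \eqref{e:lefschetz} separately and verify that each side is equal to the Euler-characteristic-like sum $\sum_{a\in\operatorname{Crit}(f)}(-1)^{\operatorname{ind}(a)}$, in fact independently of $\Lambda>0$ and $t>0$.

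For the right-hand side, the first step is to identify the fixed points of $\varphi_f^{-t}$. Since $f$ is strictly increasing along every non-constant flow line by \eqref{e:gradient-lines}, the gradient flow admits no non-trivial periodic orbit, so the fixed points of $\varphi_f^{-t}$ are exactly the critical points of $f$. At such a critical point $a$ of index $r$, one has $d_a\varphi_f^{-t}=\exp(-tL_f(a))$, and $L_f(a)$ is diagonalizable with eigenvalues $\chi_1(a),\dots,\chi_n(a)$. The eigenvalues of $\operatorname{Id}-d_a\varphi_f^{-t}$ are therefore $1-e^{-t\chi_j(a)}$, which are negative exactly for the $r$ indices with $\chi_j(a)<0$ and positive for the other $n-r$. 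Hence $\det(\operatorname{Id}-d_a\varphi_f^{-t})/|\det(\operatorname{Id}-d_a\varphi_f^{-t})|=(-1)^{r}=(-1)^{\operatorname{ind}(a)}$, and the right-hand side equals $\sum_{a\in\operatorname{Crit}(f)}(-1)^{\operatorname{ind}(a)}$.

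For the left-hand side, I would first use the rational independence hypothesis, which guarantees via Propositions~\ref{p:kernel} and~\ref{p:strong-jordan} that there are no Jordan blocks. Combined with Proposition~\ref{p:basis}, this allows one to decompose
\[\Pi_{\Lambda}^{(k)}=\sum_{\lambda\in\ml{R}_k(f,g),\,\lambda\leq\Lambda}\pi_{\lambda}^{(k)},\]
where $\pi_{\lambda}^{(k)}$ is the (finite-rank) spectral projector onto $C^k(f,\lambda)=\operatorname{Ker}(\ml{L}_{V_f}^{(k)}-\lambda)$, and the operator $\varphi_f^{-t*}$ acts by the scalar $e^{-\lambda t}$ on $C^k(f,\lambda)$. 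Consequently
\[\sum_{k=0}^n(-1)^k\operatorname{Tr}\!\left(\Pi_{\Lambda}^{(k)}\varphi_f^{-t*}\Pi_{\Lambda}^{(k)}\right)=\sum_{\lambda\leq\Lambda}e^{-\lambda t}\sum_{k=0}^n(-1)^k\dim C^k(f,\lambda).\]

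The heart of the argument is then to show that the super-dimension $\sum_{k}(-1)^k\dim C^k(f,\lambda)$ vanishes for every $\lambda>0$, as already anticipated in paragraph~\ref{ss:traceformulas}. Since $d$ and $i_{V_f}$ both commute with $\ml{L}_{V_f}$ (by Cartan's formula $\ml{L}_{V_f}=d\,i_{V_f}+i_{V_f}\,d$), the operator $Q=d+i_{V_f}$ preserves $C(f,\lambda)=\bigoplus_k C^k(f,\lambda)$, exchanges the even and odd chirality components, and satisfies $Q^2=\ml{L}_{V_f}=\lambda\,\operatorname{Id}$ on $C(f,\lambda)$. For $\lambda>0$, $Q$ is thus an isomorphism between $C^{\operatorname{even}}(f,\lambda)$ and $C^{\operatorname{odd}}(f,\lambda)$, which forces these two spaces to have equal dimensions and hence $\sum_k(-1)^k\dim C^k(f,\lambda)=0$. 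For $\lambda=0$, Proposition~\ref{p:basis} gives $\dim C^k(f,0)=|\{a\in\operatorname{Crit}(f):\operatorname{ind}(a)=k\}|$, so only the $\lambda=0$ term survives and contributes $\sum_{a\in\operatorname{Crit}(f)}(-1)^{\operatorname{ind}(a)}$, matching the right-hand side.

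The argument is essentially bookkeeping once the structural results (absence of Jordan blocks, explicit dimension of the kernel, commutation of $d$ and $i_{V_f}$ with $\ml{L}_{V_f}$ on the anisotropic space) are in hand. The only mildly delicate point is verifying that $d$ and $i_{V_f}$ genuinely act on our anisotropic Sobolev spaces $\ml{H}^{m_\Lambda}_k(M)$ in a way compatible with the spectral decomposition; however, this follows from the fact that they are differential operators of order at most one commuting with $\ml{L}_{V_f}$, so they map generalized eigenspaces to themselves, which is all that is needed here.
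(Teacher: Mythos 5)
Your proof is correct and takes essentially the same route as the paper's, which is given implicitly in the discussion of paragraph~\ref{ss:traceformulas}: compute the super-trace via the chirality-exchanging operator $Q=d+i_{V_f}$, which is an isomorphism between $C^{\mathrm{even}}(f,\lambda)$ and $C^{\mathrm{odd}}(f,\lambda)$ for $\lambda>0$ since $Q^2=\lambda\,\mathrm{Id}$, and then specialize to $\theta(\lambda)=e^{-\lambda t}\mathbf{1}_{[0,\Lambda]}(\lambda)$. You additionally carry out the right-hand-side fixed-point computation explicitly, showing it equals $\sum_{a}(-1)^{\operatorname{ind}(a)}$, a step the paper defers, and that computation is also correct.
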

In the terminology of~\cite{AtBo67}, the left-hand side of~\eqref{e:lefschetz} is called the \textbf{Lefschetz number} of $\varphi_f^{-t*}$ (more precisely 
of $\Pi_{\Lambda}^{(k)}\varphi_f^{-t*}\Pi_{\Lambda}^{(k)}$). As was already mentionned, we will verify in the next section that 
the right-hand side is equal to the Euler characteristic $ \chi(M)$ of $M$. Note that, after integrating the previous equality against $t^{s-1}e^{-zt}$ between $0$ and $+\infty$, 
one can write the following expression for the spectral (super-)zeta function of $\ml{L}_{V_f}+z$:
$$\zeta(s,z):=\frac{1}{\Gamma(s)}\sum_{k=0}^n(-1)^{k}\int_0^{+\infty}t^se^{-zt}\operatorname{Tr}\left(\Pi_{\Lambda}^{(k)}\varphi_f^{-t*}\Pi_{\Lambda}^{(k)}\right)
\frac{dt}{t}=\frac{\chi(M)}{z^s}.$$
At $s=0$, this is formally equal to $\chi(M)$. If we differentiate this expression 
with respect to $s$ and evaluate it at $0$, we find that $e^{-\partial_s\zeta(0,z)}=z^{\chi(M)}.$ Equivalently, the 
super-determinant of $(\ml{L}_{V_f}+z)$ verifies:
\begin{coro}\label{c:determinant} Let $\varphi_f^t$ be a Morse-Smale gradient flow all of whose Lyapunov exponents are rationally independent. Then, one has, 
for every $\Lambda>0$ and for every $z$ in $\IC^*$,
\begin{equation}\label{e:superdeterminant}\prod_{k=0}^n\operatorname{det}\left(\Pi_{\Lambda}^{(k)}(\ml{L}_{V_f}^{(k)}+z)\Pi_{\Lambda}^{(k)}\right)^{(-1)^k}=z^{\chi(M)}.\end{equation}
where $\Pi_{\Lambda}^{(k)}$ is the spectral projector defined in paragraph~\ref{ss:correlation} and $\operatorname{det}$ is the standard determinant.
\end{coro}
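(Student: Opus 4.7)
The plan is to differentiate the proposed identity~\eqref{e:superdeterminant} logarithmically in $z$ and use Proposition~\ref{p:trace} to identify the derivative. Since $\Pi_{\Lambda}^{(k)}$ has finite rank, the endomorphism $A_k(z) := \Pi_{\Lambda}^{(k)}(\ml{L}_{V_f}^{(k)}+z)\Pi_{\Lambda}^{(k)}$ acts on the finite-dimensional range of $\Pi_{\Lambda}^{(k)}$, and by Propositions~\ref{p:ruelle-spectrum} and~\ref{p:basis} its spectrum consists of the numbers $\lambda_i^{(k)} + z$ with $-\lambda_i^{(k)} \in \ml{R}_k(f,g)\cap[-\Lambda,0]$, counted with algebraic multiplicity. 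The classical finite-dimensional determinant is therefore $\det A_k(z) = \prod_i (\lambda_i^{(k)} + z)$, a polynomial in $z$ of degree $N_k := \operatorname{rank}\Pi_{\Lambda}^{(k)}$ vanishing only on a finite subset of $(-\infty,0]$.

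For $\operatorname{Re}(z) > 0$ every $A_k(z)$ is invertible and $A_k(z)^{-1} = \int_0^{+\infty} e^{-tz}\, e^{-t\Pi_{\Lambda}^{(k)}\ml{L}_{V_f}^{(k)}\Pi_{\Lambda}^{(k)}}\,dt$. Taking traces and noting that $e^{-t\Pi_{\Lambda}^{(k)}\ml{L}_{V_f}^{(k)}\Pi_{\Lambda}^{(k)}}$ coincides with $\Pi_{\Lambda}^{(k)}\varphi_f^{-t*}\Pi_{\Lambda}^{(k)}$ on the range of $\Pi_{\Lambda}^{(k)}$, one obtains
$$
\sum_{k=0}^n (-1)^k \operatorname{Tr}(A_k(z)^{-1}) = \int_0^{+\infty} e^{-tz}\sum_{k=0}^n (-1)^k \operatorname{Tr}\!\left(\Pi_{\Lambda}^{(k)}\varphi_f^{-t*}\Pi_{\Lambda}^{(k)}\right)dt = \frac{\chi(M)}{z}.
$$
The last equality uses Proposition~\ref{p:trace}, together with the observation that at a critical point $a$ of index $r$ the matrix $\operatorname{Id}-d_a\varphi_f^{-t} = \operatorname{Id}-e^{-tL_f(a)}$ has sign $(-1)^r$ (since $\chi_j(a)<0$ exactly for $j \leq r$), so the right-hand side of~\eqref{e:lefschetz} equals $\sum_{a\in\operatorname{Crit}(f)}(-1)^{\operatorname{ind}(a)} = \chi(M)$. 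On the other hand the left-hand side coincides with $\partial_z\log\prod_k \det(A_k(z))^{(-1)^k}$, so integration yields $\prod_k \det(A_k(z))^{(-1)^k} = C\, z^{\chi(M)}$ on $\operatorname{Re}(z)>0$, and then on $\IC^*$ by analytic continuation of the meromorphic function on the right.

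It remains to identify the multiplicative constant $C$. As $|z|\to+\infty$ one has $\det A_k(z) \sim z^{N_k}$, hence $\prod_k \det(A_k(z))^{(-1)^k} \sim z^{\sum_k(-1)^kN_k}$. Matching leading powers forces $\sum_k(-1)^kN_k = \chi(M)$, which is exactly the content of the $Q_\lambda$-argument from paragraph~\ref{ss:traceformulas}: the isomorphism $Q_\lambda = d+i_{V_f} : C^{\text{even}}(f,\lambda) \to C^{\text{odd}}(f,\lambda)$ (whose square is multiplication by $\lambda$) forces every nonzero eigenvalue of $\ml{L}_{V_f}$ to contribute $0$ to the super-trace, while Proposition~\ref{p:basis} identifies $C^k(f,0)$ with the span of Laudenbach's currents, whose dimension equals $|\{a:\operatorname{ind}(a)=k\}|$; the classical Morse identity then gives $\chi(M)$, so $C = 1$. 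The only mild obstacle is bookkeeping: the identity must first be established on $\operatorname{Re}(z)>0$ where all objects are absolutely convergent and where the interchanges of sum, trace and integral are justified by finite-dimensionality, and then extended past the finitely many zeros of both sides on $\IR_-$ by meromorphic continuation.
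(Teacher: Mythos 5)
Your proof is correct, but it takes a genuinely different route from the paper's. The paper forms the spectral super-zeta function $\zeta(s,z)=\frac{1}{\Gamma(s)}\sum_k(-1)^k\int_0^\infty t^{s}e^{-zt}\operatorname{Tr}\bigl(\Pi_\Lambda^{(k)}\varphi_f^{-t*}\Pi_\Lambda^{(k)}\bigr)\frac{dt}{t}$, uses Proposition~\ref{p:trace} to identify it with $\chi(M)/z^s$, and reads off the super-determinant as $e^{-\partial_s\zeta(0,z)}=z^{\chi(M)}$; the normalization constant is absorbed automatically into the zeta-regularization convention. You instead Laplace-transform the super-trace directly, identify $\sum_k(-1)^k\operatorname{Tr}\bigl(A_k(z)^{-1}\bigr)$ with $\partial_z\log\prod_k\det(A_k(z))^{(-1)^k}$, integrate in $z$, and then must pin down the integration constant $C$ by asymptotics as $|z|\to\infty$ together with the $Q_\lambda$-argument that gives $\sum_k(-1)^k\operatorname{rk}\Pi_\Lambda^{(k)}=\chi(M)$. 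What each buys: the paper's zeta-function manipulation is shorter (no constant to chase) but formal; your version stays entirely within finite-dimensional linear algebra with the standard determinant, makes the interchange of trace and integral visibly legitimate, and, as a bonus, records the sign computation for the Lefschetz numerator $\det(\operatorname{Id}-d_a\varphi_f^{-t})=(-1)^{\operatorname{ind}(a)}|\cdot|$ that the paper leaves implicit. One remark worth adding: given the explicit spectrum from Propositions~\ref{p:basis} and~\ref{p:strong-jordan}, the identity also follows by inspection — for each eigenvalue $\lambda>0$ the factors $(\lambda+z)$ cancel between even and odd degrees because $Q_\lambda$ is an isomorphism, leaving only the $z^{c_k(f)}$ factors from $\lambda=0$, whose alternating product is $z^{\chi(M)}$; your Laplace-transform detour and this direct cancellation encode exactly the same information.
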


\section{Topological considerations}\label{s:topology}

Studying Morse functions has deep connections with the topology of the manifold, and we will now describe 
some topological consequences of our spectral analysis of the operator $\ml{L}_{V_f}^{(*)}$. In all this section, we still suppose that $(f,g)$ is a smooth 
Morse pair inducing a Morse-Smale gradient flow but we do not suppose a priori that the Lyapunov exponents are rationally independent. 
The results presented here 
are in fact related to the description of the Morse complex given by Laudenbach in~\cite{Lau92, Lau12} and to the interpretation of Morse 
theory given by Harvey and Lawson in~\cite{HaLa01}. The main novelty here is the 
spectral interpretation of these results in analogy with Hodge-de Rham theory.

\subsection{De Rham cohomology}\label{ss:cohomology}

We start with a brief reminder on de Rham cohomology~\cite{dRh80, Sch66}. Recall that, for every $k\geq 0$, 
the coboundary operator $d$ sends any element in $\Omega^k(M)$ to an element in $\Omega^{k+1}(M)$, and that it satisfies $d\circ d=0$. 
In particular, one can define a cohomological complex $(\Omega^*(M),d)$ associated with $d$:
$$0\rightarrow \Omega^0(M)\rightarrow \Omega^{1}(M)\rightarrow \ldots\rightarrow \Omega^n(M)\rightarrow 0.$$
This complex is also called the de Rham complex. An element $\omega$ in $\Omega^*(M)$ such that $d\omega=0$ is called a \textbf{cocycle} while an 
element $\omega$ which is equal to $d\alpha$ for some $\alpha\in \Omega^*(M)$ is called a \textbf{coboundary}. We define then
$$Z^k(M)=\text{Ker}(d) \cap\Omega^k(M),\ \text{and}\ B^k(M)=\text{Im}(d) \cap\Omega^k(M).$$
Obviously, $B^k(M)\subset Z^k(M)$, and the quotient space $\mathbb{H}^k(M)= Z^k(M)/B^k(M)$ is called the \textbf{$k$-th de Rham cohomology}. 

According to~\cite[p.~344-345]{Sch66}, the coboundary operator $d$ can be extended into a map acting 
on the space of currents. This allows to define another 
cohomological complex $(\ml{D}^{\prime,*}(M),d)$:
$$0\rightarrow \ml{D}^{\prime,0}(M)\rightarrow \ml{D}^{\prime,1}(M)\rightarrow\ldots\rightarrow \ml{D}^{\prime,n}(M)\rightarrow 0,$$
where we recall that $\ml{D}^{\prime,k}(M)$ is the topological dual of $\Omega^{n-k}(M)$. One can similarly define the $k$-th cohomology  
of that complex. A remarkable result 
of de Rham is that these two cohomologies 
coincide~\cite[Ch.~4]{dRh80} -- see also~\cite[p.355]{Sch66} for a generalization of this result.
\begin{theo}[de Rham]\label{t:deRham} Let $u$ be an element in $\ml{D}^{\prime,k}(M)$ satisfying $du=0$. 
\begin{enumerate}
 \item There exists $\omega$ in $\Omega^k(M)$ such that $u-\omega$ belongs to $\operatorname{Im}(d)\cap\ml{D}^{\prime,k}(M)$.
 \item If $u=dv$ with $(u,v)$ in $\Omega^k(M)\times \mathcal{D}^{\prime,k-1}(M)$, then there exists $\omega$ in $\Omega^{k-1}(M)$ such that $u=d\omega$. 
\end{enumerate}
\end{theo}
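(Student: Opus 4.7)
This is a classical result of de Rham. Since the paper's spectral machinery, based on the Cartan homotopy $\operatorname{Id} - \pi_0^{(k)} = d \circ (i_{V_f} K) + (i_{V_f} K) \circ d$ with $K := (\ml{L}_{V_f}^{(k)})^{-1}(\operatorname{Id} - \pi_0^{(k)})$, only identifies an arbitrary current with a combination of the (non--smooth) Laudenbach currents $U_a$, which is the wrong direction for the statement here, I would give a proof along the classical lines of~\cite[Ch.~III]{dRh80}, based on mollification.

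The key step is to construct, for each $\varepsilon > 0$, a regularization operator $R_\varepsilon:\ml{D}^{\prime,*}(M)\to\Omega^*(M)$ together with a homotopy operator $A_\varepsilon:\ml{D}^{\prime,*}(M)\to\ml{D}^{\prime,*-1}(M)$ satisfying
\begin{equation*}
\operatorname{Id} - R_\varepsilon \;=\; d \circ A_\varepsilon \;+\; A_\varepsilon \circ d, \qquad A_\varepsilon\bigl(\Omega^k(M)\bigr)\subset\Omega^{k-1}(M).
\end{equation*}
This is built from a finite atlas of $M$, a subordinate partition of unity, and convolution with a Euclidean mollifier of scale $\varepsilon$ in each chart; the locally defined operators are patched together inductively so that the homotopy identity and the smoothness--preservation of $A_\varepsilon$ persist globally on $M$.

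Granted these two operators, Part~(1) is immediate: if $du=0$, then $u - R_\varepsilon u = d(A_\varepsilon u)$, so $\omega := R_\varepsilon u\in\Omega^k(M)$ represents the same cohomology class as $u$. For Part~(2), I would apply the homotopy to $v$:
\begin{equation*}
v \;=\; R_\varepsilon v \;+\; d(A_\varepsilon v) \;+\; A_\varepsilon(dv) \;=\; R_\varepsilon v \;+\; d(A_\varepsilon v) \;+\; A_\varepsilon u,
\end{equation*}
and then differentiate to obtain $u = dv = d\bigl(R_\varepsilon v + A_\varepsilon u\bigr)$. Both $R_\varepsilon v$ (smooth by construction of $R_\varepsilon$) and $A_\varepsilon u$ (smooth by the smoothness--preservation property of $A_\varepsilon$, since $u\in\Omega^k(M)$) lie in $\Omega^{k-1}(M)$, so $\omega := R_\varepsilon v + A_\varepsilon u$ is the desired smooth primitive. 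The main technical burden of the argument is therefore the global construction of the pair $(R_\varepsilon, A_\varepsilon)$ from local chart data while maintaining both the homotopy identity and the smoothness--preservation of $A_\varepsilon$; this is standard and is carried out by induction over the partition of unity.
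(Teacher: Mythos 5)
The paper does not actually prove this theorem: it cites it directly from de Rham's book~\cite[Ch.~4]{dRh80} (with a pointer to~\cite[p.~355]{Sch66}) as a classical input, so there is no internal proof to compare against. Your proposal is a correct proof along exactly the classical lines that the citation points to, and your opening observation is accurate --- the paper's spectral homotopy $\operatorname{Id} - \mathbb{P}^{(k)} = d\circ R_\infty^{(k)} + R_\infty^{(k+1)}\circ d$ from Section~\ref{s:topology} regularizes in the wrong direction (it maps onto singular Laudenbach currents, not onto smooth forms), so it cannot be used to prove de Rham's theorem and in fact \emph{relies} on it in the proof of Proposition~\ref{p:isomorphism}.

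Both parts of your argument check out once the pair $(R_\varepsilon, A_\varepsilon)$ with $\operatorname{Id} - R_\varepsilon = d\circ A_\varepsilon + A_\varepsilon\circ d$ and the smoothness-preservation property $A_\varepsilon(\Omega^k(M))\subset\Omega^{k-1}(M)$ is granted. In Part~(2), the chain $v = R_\varepsilon v + d(A_\varepsilon v) + A_\varepsilon u$, followed by applying $d$, gives $u = d(R_\varepsilon v + A_\varepsilon u)$ with $R_\varepsilon v$ smooth because $R_\varepsilon$ regularizes arbitrary currents and $A_\varepsilon u$ smooth because $u\in\Omega^k(M)$; this is precisely de Rham's homotopy argument. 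The one thing you defer --- the construction of $(R_\varepsilon, A_\varepsilon)$ by local mollification and inductive patching over a finite atlas --- is indeed the entire technical content of de Rham's theorem, but you are explicit that you are deferring it, and it is standard. No gap.
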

\begin{rema}\label{r:derham} In the following, we will need something slightly more precise than (i). Namely 
suppose that $u$ is a cocycle in $\ml{H}^{m+k}_k(M)$. We denote by $\Delta_g^{(k)}$ the Laplace-Beltrami operator 
acting on $L^{2}(M,\Lambda^k(T^*M))$. We can find a pseudodifferential operator $A_k$ of order $-2$ such that
$u-\Delta_g^{(k)}A_ku$ belongs to $\Omega^k(M)$. As $u$ is a cocycle, one can deduce that $d\Delta_g^{(k)}A_ku\in\Omega^{k+1}(M)$. From the 
ellipticity of $\Delta_g$, we find that $dA_ku\in\Omega^{k+1}(M)$. This implies that $d^*dA_ku\in\Omega^{k}(M)$ and thus $u-dd^*A_k(u)$ 
belongs to $\Omega^k(M)$. As $u$ belongs to $\ml{H}^{m+k}_k(M)$, we get a refinement for point (i) in the sense that 
$u-\omega=dd^*A_k(u)$ belongs to $d(\ml{H}^{m+k-1}_{k-1}(M))$. 
\end{rema}

\subsection{Finite dimensional complexes}\label{ss:homology}

Proving that the $k$-th cohomology is finite dimensional requires more work -- see e.g.~\cite[Ch.~4]{dRh80}. 
Before deducing that result from our spectral analysis of $\ml{L}_{V_f}$, 
we start with some general considerations on finite dimensional 
cohomological complexes. Consider a cohomological complex 
$(C^*,d)$ associated with the coboundary operator:
$$0\rightarrow C^0\rightarrow C^1\rightarrow\ldots\rightarrow C^n\rightarrow 0,$$
where for every $0\leq k\leq n$, $C^k$ is a finite dimensional subspace of $\ml{D}^{\prime,k}(M)$. 
Consider now the complexes induced by the operator $\ml{L}_{V_f}$. For that purpose, we pick $N_0$ and $N_1$ large enough in the definition of the order function 
$m(x,\xi)$, and we define
$$C^{k}(f):=\text{Ker}\left(\ml{L}_{V_f}^{(k)}\right)$$
which is a finite dimensional space. Recall one more time from~\cite[Th.~1.5]{FaSj11} that these spaces are intrinsic in the sense that they do not depend on 
the choice of the order function $m$. As $d$ commutes with the Lie derivative $\ml{L}_{V_f}$, one can verify that, if $\ml{L}_{V_f}^{(k)}u=0$ with $u$ in $\ml{H}_k^m(M)$, 
then $\ml{L}_{V_f}^{(k+1)}(d u)=0$ with $d u$ 
belonging to $\ml{H}^{m+1}_{k-1}(M)$. Hence, the coboundary operator $d$ induces a \emph{finite dimensional cohomological complex} $(C^*(f),d)$
$$ 0\rightarrow C^0(f)\rightarrow C^1(f)\rightarrow \ldots\rightarrow C^n(f)\rightarrow 0.$$
We can now apply Propositions~\ref{p:generalized-laudenbach} and~\ref{p:basis} (for $\lambda=0$) and we find that $\text{dim}(C^k(f))$ 
is in fact equal to the number $c_k(f)$ of critical point of $f$ which are 
of index $k$.

\subsection{Morse type inequalities}\label{ss:morse}
Consider a finite dimensional complex $(C^*,d)$. We briefly recall how to obtain Morse type inequalities in that abstract framework arguing as in~\cite[Ch.~6]{Lau12}. For that purpose, we define
$$Z^k(C^*)=\text{Ker}(d) \cap C^k,\ \text{and}\ B^k(C^*)=\text{Im}(d) \cap C^k.$$
As above, we define the quotient space (or the $k$-th cohomology of the complex):
$$\mathbb{H}^k(C^*):=Z^k(C^*)/B^k(C^*).$$
We denote by $\beta_k(C^*)<\infty$ the dimension of that quotient space. We also introduce 
$$b_{k}(C^*)=\text{dim}\ B^k(C^*),\ c_{k}(C^*)=\text{dim}\ C^k,\ \text{and}\ z_{k}(C^*)=\text{dim}\ Z^k(C^*).$$
We observe that 
$$\beta_{k}(C^*)=z_k(C^*)-b_k(C^*)\ \text{and}\ c_k(C^*)=b_{k+1}(C^*)+z_k(C^*).$$
We now write that, for every $k\geq 0$,
$$0\leq b_{k+1}(C^*)=(c_{k}(C^*)-\beta_k(C^*))-(c_{k-1}(C^*)-\beta_{k-1}(C^*))+\ldots$$
From this expression, we can deduce the following \textbf{Morse type inequalities} associated with the complex $(C^*,d)$:
\begin{equation}\label{e:Morse-k}
 \forall 0\leq k\leq n,\ \sum_{j=0}^k(-1)^{k-j}c_{j}(C^*)\geq\sum_{j=0}^k(-1)^{k-j}\beta_{j}(C^*), 
\end{equation}
and using $b_{n+1}(C^*)=0$~:
\begin{equation}\label{e:Morse-n}
\sum_{j=0}^n(-1)^{n-j}c_{j}(C^*)=\sum_{j=0}^n(-1)^{n-j}\beta_{j}(C^*).
\end{equation}
In the case where we pick $C^*=C^*(f)$, inequalities~\eqref{e:Morse-k} and~\eqref{e:Morse-n} are exactly the Morse inequalities for the complex $C^*(f)$ which is nothing 
else but the Morse complex (also called Thom-Smale-Witten complex). 

\subsection{The Morse complex is isomorphic to the de Rham complex} Let $0\leq k\leq n$. 
We would like now to give \emph{a spectral proof} of the fact that the $k$-th cohomology of the Morse complex is isomorphic to the de Rham
cohomology of degree $k$. A proof of this result based on the theory of currents can be found in~\cite[Ch.~6]{Lau12} in the case of locally flat 
metrics -- see also~\cite{HaLa00, HaLa01}. 
Here, we give an alternative proof of that result based on our spectral analysis of the operator $\ml{L}_{V_f}$.

\subsubsection{Spectral decomposition}\label{sss:spectral-decomp} Let $m(x,\xi)$ be an order function with $N_0$ and $N_1$ sufficiently large to ensure that $0$ is an isolated eigenvalue with finite algebraic 
multiplicity (eventually equal to $0$). Introduce the \emph{spectral projector} associated with the eigenvalue $0$:
$$\mathbb{P}^{(k)}:=\int_{\gamma}\frac{dz}{(z-\ml{L}_{V_f}^{(k)})}:\ml{H}^m_k(M)\rightarrow C_k(f),$$
where $\gamma$ is a small Jordan path which separates $0$ from the rest of the spectrum of $\ml{L}_{V_f}^{(l)}$ acting on $\ml{H}^m_l(M)$ for every $0\leq l\leq n$ -- see~\cite[App.~A]{HeSj86}. 
This operator commutes with $\ml{L}_{V_f}^{(k)}$. According to~\cite[p.244-246]{EnNa00}, one knows that
$$\ml{L}_{V_f}^{(k)}:\left(\text{Id}_{\ml{H}_k^m}-\mathbb{P}^{(k)}\right)\ml{H}^m_k(M)\rightarrow \left(\text{Id}_{\ml{H}_k^m}-\mathbb{P}^{(k)}\right)\ml{H}^m_k(M)$$
does not contain $0$ in its spectrum. In particular, we can write the following decomposition:
$$\text{Id}_{\ml{H}_k^m}=\mathbb{P}^{(k)}+\ml{L}_{V_f}^{(k)}\circ\left((\ml{L}_{V_f}^{(k)})^{-1}\circ\left(\text{Id}_{\ml{H}_k^m}-\mathbb{P}^{(k)}\right)\right).$$ 
By Cartan's formula~\cite[p.~351]{Sch66}, one knows that $\ml{L}_{V_f}^{(k)}=i_{V_f}\circ d+d\circ i_{V_f}$. Hence,
\begin{equation}\label{e:resolution-id}\text{Id}_{\ml{H}_k^m}=\mathbb{P}^{(k)}+(d\circ i_{V_f}+i_{V_f}\circ d)\circ\left((\ml{L}_{V_f}^{(k)})^{-1}\circ\left(\text{Id}_{\ml{H}_k^m}-\mathbb{P}^{(k)}\right)\right).\end{equation} 
Moreover, $d$ commutes 
with $\ml{L}_{V_f}$ hence with $\mathbb{P}^{(k)}$ from the expression of the spectral projector. Hence, for every $u$ in 
$(\text{Id}-\mathbb{P}^{(k)})\ml{H}^m_k(M)$, $du\in(\text{Id}-\mathbb{P}^{(k+1)})\ml{H}_{k+1}^{m+1}(M)$, and 
one has $\ml{L}_{V_f}^{-1}\circ d u=\ml{L}_{V_f}^{-1}\circ d\circ\ml{L}_{V_f}\circ\ml{L}_{V_f}^{-1}(u)=
d\circ\ml{L}_{V_f}^{-1}(u)$. From that, we infer that $d$ also commutes with $\ml{L}_{V_f}^{-1}\circ\left(\text{Id}_{\ml{H}_k^m}-\mathbb{P}^{(k)}\right)$. 
Combining this last observation with the fact that $d$ commutes with $\mathbb{P}^{(k)}$ and with~\eqref{e:resolution-id}, we finally find that, 
for every $u$ in $\ml{H}_k^m(M)$, one has
\begin{equation}\label{e:hodge-type}
 u=\mathbb{P}^{(k)}(u)+d\circ R_{\infty}^{(k)} (u)+R_{\infty}^{(k+1)}\circ d(u),
\end{equation}
where $R_{\infty}^{(n+1)}=0$ and, for every $0\leq l\leq n$,
$$R_{\infty}^{(l)}:=i_{V_f}\circ (\ml{L}_{V_f}^{(l)})^{-1}\circ\left(\text{Id}_{\ml{H}_l^m}-\mathbb{P}^{(l)}\right).$$

\begin{rema}
Recall that, from our complete description of the spectrum of $\ml{L}_{V_f}$, one has, for every $u$ in $\ml{H}^m_k(M)$,
$$\mathbb{P}^{(k)}(u)=\lim_{t\rightarrow+\infty}\varphi_f^{-t*}u
=\sum_{a\in\text{Crit}(f):\ \text{ind}(a)=k}\la u,S_a\ra U_a\in C^k(f).$$
Up to its spectral interpretation, this type of  
``limit homotopy equation'' already appears in the works of Harvey and Lawson~\cite[Th.~2.3]{HaLa01}. 
\end{rema}


\subsubsection{Cohomological consequences}\label{sss:isomorphism} As the coboundary operator $d$ commutes with $\ml{L}_{V_f}$, it also commutes with $\mathbb{P}^{(k)}$. In particular, 
the map $\mathbb{P}^{(k)}$ induces a map from $Z^k(M)$ to $Z^k(C^*(f))$. We will now show (using our spectral approach) that it induces an isomorphism between the quotient spaces:
\begin{prop}\label{p:isomorphism} Let $0\leq k\leq n$. The map
$$\mathbb{P}^{(k)}:\Omega^k(M)\rightarrow C^k(f)$$
induces an isomorphism between the vector spaces $\mathbb{H}^k(C^*(f),d)$ and $\mathbb{H}^k(M)$.
\end{prop}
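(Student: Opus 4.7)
The plan is to exploit the Hodge-type decomposition (\ref{e:hodge-type}) together with de Rham's theorem (Theorem~\ref{t:deRham} and Remark~\ref{r:derham}). First, I would observe that $\mathbb{P}^{(k)}$ commutes with $d$, since it is a contour integral of the resolvent of $\ml{L}_{V_f}^{(k)}$ and $d$ commutes with $\ml{L}_{V_f}$. Consequently, $\mathbb{P}^{(k)}$ sends $Z^k(M)$ into $Z^k(C^*(f))$ and $B^k(M)$ into $B^k(C^*(f))$, so it descends to a well-defined linear map $[\mathbb{P}^{(k)}]:\mathbb{H}^k(M)\rightarrow \mathbb{H}^k(C^*(f),d)$. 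The task is then to prove injectivity and surjectivity at the level of cohomology.

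For injectivity, I would take $\omega\in Z^k(M)$ such that $\mathbb{P}^{(k)}(\omega)=d c$ for some $c\in C^{k-1}(f)$. Plugging $\omega$ into the decomposition (\ref{e:hodge-type}) and using $d\omega=0$ gives
$$\omega=\mathbb{P}^{(k)}(\omega)+d\,R_\infty^{(k)}(\omega)=d\bigl(c+R_\infty^{(k)}(\omega)\bigr).$$
Thus $\omega$ is a smooth $k$-form which equals the exterior derivative of a current in $\ml{D}^{\prime,k-1}(M)$. The second part of de Rham's Theorem~\ref{t:deRham} then provides $\eta\in\Omega^{k-1}(M)$ with $\omega=d\eta$, so $[\omega]=0$ in $\mathbb{H}^k(M)$.

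For surjectivity, I would start from a cocycle $c\in Z^k(C^*(f))$, so $c$ is a closed current lying in $C^k(f)\subset\ml{H}^m_k(M)$. By the refinement of de Rham's theorem recalled in Remark~\ref{r:derham} (applied after choosing $N_0,N_1$ large enough in the definition of $m$ so that $c\in\ml{H}^{m+k}_k(M)$), there exist $\omega\in\Omega^k(M)$ and $\beta\in\ml{H}^{m+k-1}_{k-1}(M)$ with $c-\omega=d\beta$. Taking $d$ of both sides yields $d\omega=0$, so $\omega\in Z^k(M)$. Applying $\mathbb{P}^{(k)}$, using that $\mathbb{P}^{(k)}(c)=c$ since $c\in\ker\ml{L}_{V_f}^{(k)}$, and that $\mathbb{P}^{(*)}$ commutes with $d$, we obtain
$$c-\mathbb{P}^{(k)}(\omega)=d\,\mathbb{P}^{(k-1)}(\beta)\in B^k(C^*(f)),$$
so $[c]=[\mathbb{P}^{(k)}(\omega)]$ in $\mathbb{H}^k(C^*(f),d)$, proving surjectivity.

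The main obstacle I expect is the regularity bookkeeping in the surjectivity step: one has to make sure that the primitive $\beta$ produced by de Rham's theorem lies in an anisotropic Sobolev space on which the spectral projector $\mathbb{P}^{(k-1)}$ is defined, so that $\mathbb{P}^{(k-1)}(\beta)$ genuinely belongs to $C^{k-1}(f)$. This is handled precisely by the sharpening of de Rham in Remark~\ref{r:derham} combined with the freedom in the choice of the order function $m$ and the fact (recalled from~\cite{FaSj11}) that the kernel and the spectral projector are intrinsic, i.e.\ independent of this choice.
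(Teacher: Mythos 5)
Your proof is correct and follows essentially the same route as the paper: deduce the quasi-isomorphism from the homotopy identity~\eqref{e:hodge-type} (injectivity) and the sharpened de Rham statement of Remark~\ref{r:derham} together with the commutation of $d$ with the spectral projectors (surjectivity). In fact your injectivity argument is slightly more careful than the paper's: the paper only treats the case $\mathbb{P}^{(k)}(u)=0$, whereas the kernel of the induced map on cohomology consists of classes with $\mathbb{P}^{(k)}(\omega)\in B^{k}(C^{*}(f))$, i.e.\ $\mathbb{P}^{(k)}(\omega)=dc$ with $c\in C^{k-1}(f)$ — precisely the situation you handle directly by writing $\omega=d\bigl(c+R_{\infty}^{(k)}(\omega)\bigr)$ and invoking part~(ii) of Theorem~\ref{t:deRham}. (One can also reduce the general case to the paper's special case by lifting $c$ through the surjection $\mathbb{P}^{(k-1)}:\Omega^{k-1}(M)\to C^{k-1}(f)$, but your argument avoids that extra step.)
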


In particular, $\mathbb{H}^k(M)$ is \emph{a finite dimensional space} for every $0\leq k\leq n$, and its dimension is called the 
$k$-th Betti number that we will denote by $b_k(M)$. With the notations of paragraph~\ref{ss:homology}, we have $b_k(M)=\beta_k(C^*(f))$ for every 
$0\leq k\leq n$. In particular, if we apply~\eqref{e:Morse-k} and~\eqref{e:Morse-n} in the case of the complex $(C^*(f),d)$, we recover the classical Morse inequalities:
\begin{coro}[Morse inequalities]\label{c:Morse} Let 
$$c_k(f)=\vert\{a\in \operatorname{Crit}(f) \text{ s.t. } \operatorname{ind}(a)=k   \}\vert.$$
Then, for all $k\in \{0,\dots,n\}$, we have: 
\begin{eqnarray*}
\sum_{j=0}^k(-1)^{k-j}c_{j}(f)\geq\sum_{j=0}^k(-1)^{k-j}b_{j}(M),
\end{eqnarray*}
 with equality in the case\footnote{Recall that in that case, the sum is the Euler characteristic $\chi(M)$ of $M$.} $k=n$.
\end{coro}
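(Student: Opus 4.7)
The plan is to apply the abstract Morse-type inequalities derived in paragraph~\ref{ss:morse} to the specific finite-dimensional cohomological complex $(C^*(f),d)$ built from the kernels of $\mathcal{L}_{V_f}^{(k)}$, and then to translate the two sides of the resulting inequalities into the quantities appearing in the statement using (i) the explicit description of $C^k(f)$ in terms of critical points and (ii) the isomorphism with de~Rham cohomology furnished by Proposition~\ref{p:isomorphism}.

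More precisely, I would first recall that, by Propositions~\ref{p:generalized-laudenbach} and~\ref{p:basis} (applied with $\lambda=0$), the space
$$C^k(f)=\operatorname{Ker}\bigl(\mathcal{L}_{V_f}^{(k)}:\mathcal{H}_k^m(M)\to\mathcal{H}_k^m(M)\bigr)$$
admits a basis indexed by critical points of $f$ of index $k$, and in particular
$$\dim C^k(f)=c_k(f)=|\{a\in\operatorname{Crit}(f):\operatorname{ind}(a)=k\}|.$$
Since $d$ commutes with $\mathcal{L}_{V_f}$, it restricts to $d:C^k(f)\to C^{k+1}(f)$, yielding the finite-dimensional complex $(C^*(f),d)$ to which the elementary bookkeeping of paragraph~\ref{ss:morse} applies. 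Concretely, with the notations there, $c_k(C^*(f))=c_k(f)$ and $\beta_k(C^*(f))=\dim\mathbb{H}^k(C^*(f),d)$, and the abstract inequalities~\eqref{e:Morse-k}--\eqref{e:Morse-n} give
$$\sum_{j=0}^k(-1)^{k-j}c_j(f)\geq\sum_{j=0}^k(-1)^{k-j}\beta_j(C^*(f)),$$
with equality at $k=n$.

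The second step is to identify $\beta_k(C^*(f))$ with $b_k(M)$. This is exactly the content of Proposition~\ref{p:isomorphism}, which asserts that the spectral projector $\mathbb{P}^{(k)}$ induces a linear isomorphism
$$\mathbb{H}^k(M)\stackrel{\sim}{\longrightarrow}\mathbb{H}^k(C^*(f),d),$$
so that in particular $b_k(M):=\dim\mathbb{H}^k(M)$ is finite and equals $\beta_k(C^*(f))$. Substituting this identification into the abstract inequalities above yields the Morse inequalities in the form stated. The $k=n$ equality specializes to $\sum_{j=0}^n(-1)^{n-j}c_j(f)=\chi(M)$, giving the formula for the Euler characteristic recorded in the footnote.

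I do not anticipate any real obstacle here: all the analytic content (existence of the spectral projector, independence on $m$, the dimension count $\dim C^k(f)=c_k(f)$, and the quasi-isomorphism between $\Omega^*(M)$ and $C^*(f)$) has already been established earlier in the paper, and what remains is purely combinatorial linear algebra on a finite-dimensional cochain complex. The only point requiring mild care is to make sure the abstract inequalities of paragraph~\ref{ss:morse} are applied to the correct complex with matching conventions on indices; this is a direct verification once one has set $C^*=C^*(f)$.
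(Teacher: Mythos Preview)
Your proposal is correct and follows exactly the paper's approach: apply the abstract inequalities~\eqref{e:Morse-k}--\eqref{e:Morse-n} to the complex $(C^*(f),d)$, use $\dim C^k(f)=c_k(f)$, and invoke Proposition~\ref{p:isomorphism} to identify $\beta_k(C^*(f))$ with $b_k(M)$.
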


\begin{proof}[Proof of Proposition~\ref{p:isomorphism}] Let us start with injectivity. Let $u$ be a cocycle in $\Omega^k(M)$ such that $\mathbb{P}^{(k)}(u)=0$. 
We use equality~\eqref{e:hodge-type}, and we find that
$$u=d\circ R_{\infty}^{(k)} (u),$$
which exactly says that $u$ is a coboundary for the complex $(\ml{D}^{\prime,*}(M),d)$. As $u$ is smooth, we know from de Rham Theorem~\ref{t:deRham} that $u$ is 
a coboundary in $\Omega^k(M)$.

Let us now consider the surjectivity. Fix $u$ a cocycle in $\text{Ker}(\ml{L}_{V_f}^{(k)}$. From Remark~\ref{r:derham}, we know that there exists 
$\omega\in\Omega^k(M)$ and $v$ in $\ml{H}_{k-1}^{m+k-1}(M)$ such that $u-\omega=dv$. Writing the cochain homotopy equation~\eqref{e:hodge-type} for 
$\omega$, we find that
$$\omega=\mathbb{P}^{(k)}(\omega)+d \circ R_{\infty}^{(k)}(\omega).$$
This implies that
$$u=\mathbb{P}^{(k)}(\omega)+d \left( R_{\infty}^{(k)}(\omega)+v\right).$$
By construction, $R_{\infty}^{(k)}(\omega)+v$ belongs to $\ml{H}_{k-1}^{m+k-1}(M)$. Hence, applying the spectral projector to the previous equality and as 
$d$ commutes with $\mathbb{P}^{(k)}$ (thanks to the integral expression of the spectral projector), we find that
$$u=\mathbb{P}^{(k)}(\omega)+d \circ\mathbb{P}^{(k)}\left( R_{\infty}^{(k)}(\omega)+v\right),$$
which proves the surjectivity.
\end{proof}



\subsection{Poincar\'e duality and $f\mapsto -f$}

By construction, one knows that the currents $(S_a)_{a\in\text{Crit}(f)}$ is the dual basis to $(U_a)_{a\in\text{Crit}(f)}$ for the duality 
bracket between $\ml{H}_k^m(M)$ and $\ml{H}_{n-k}^{-m}(M)$ which coincides (in the case of smooth forms) 
with the standard duality bracket between $\ml{D}^{\prime,*}(M)$ and $\Omega^{n-*}(M)$. Moreover, from paragraph~\ref{sss:correlation}, it 
is in fact a basis of the kernel of the operator $\ml{L}_{V_{-f}}^{(*)}$ acting on $\ml{H}^{-m}_{n-*}(M)$. We set
$$C^{n-k}(-f):=\text{Ker}(\ml{L}_{V_{-f}}^{(n-k)}).$$
We can then define the following complex associated with the coboundary operator $d$:
$$0\rightarrow C^0(-f)\rightarrow \ldots\rightarrow C^{n-1}(-f)\rightarrow C^n(-f)\rightarrow 0.$$
As was already explained, the two complexes $(C^*(f),d)$ and $(C^*(-f),d)$ are dual to each other via the duality between $\ml{H}_k^m(M)$ and $\ml{H}_{n-k}^{-m}(M)$, i.e.
$$\forall (u,v)\in C^k(f)\times C^{n-k}(-f),\ \la u,v\ra= \la u,v\ra_{\ml{H}_k^m(M),\ml{H}_{n-k}^{-m}(M)}=\int_Mu\wedge v.$$
Introduce now the following \textbf{Poincar\'e isomorphism} between $C^k(f)$ and the dual of $C^{n-k}(-f)$:
$$\ml{P}_0^{(k)}:u\in C^k(f)\mapsto \la u,.\ra\in C^{n-k}(-f)'.$$
We observe that $\la u,v\ra$ does not depend on the cohomology class of $u$ and $v$. Hence, $\ml{P}_0^{(k)}$ induces a linear map 
between $\mathbb{H}^k(C^*(f),d)$ and $\mathbb{H}^{n-k}(C^*(-f),d)'$. We now follow closely~\cite[Ch.~6]{Lau12} and verify that this 
is in fact an isomorphism betweeen the quotient spaces. Suppose that $\theta$ is a linear form on $\mathbb{H}^{n-k}(C^*(-f),d)$. This induces a linear form $\theta$ on 
$Z^{n-k}(C^*(-f),d)$ which vanishes on $B^{n-k}(C^*(-f),d)$. By the Hahn-Banach Theorem, we extend this linear form to $C^{n-k}(-f)$. From the 
duality between $C^{k}(f)$ and $C^{n-k}(-f)$, there exists a unique $u$ in $C^{k}(f)$ such that $\theta(v)=\la u,v\ra$ for every $v$ in $C^{n-k}(-f)$.
As $\theta$ vanishes on the image of $d$, we find that, for every $v$ in $C^{n-k-1}(-f)$, $\la u,dv\ra=0$ from which one can deduce that $du=0$. This shows 
surjectivity of the linear map induced by $\ml{P}_0^{(k)}$. If we intertwine the role of $f$ and $-f$, we find a linear surjection from $\mathbb{H}^{n-k}(C^*(-f),d)$ 
to $\mathbb{H}^{k}(C^*(f),d)'$. This implies that all the spaces have the same dimension. Hence, $\ml{P}_0^{(k)}$ induces an isomorphism 
between $\mathbb{H}^{k}(C^*(f),d)$ and $\mathbb{H}^{n-k}(C^*(-f),d)'$ for every $0\leq k\leq n$. Combined with Proposition~\ref{p:isomorphism} applied to both $f$ and $-f$, 
this implies the following well known result:
\begin{prop}\label{p:poincare} Let $M$ be a smooth, compact, oriented manifold without boundary. Then, for every $0\leq k\leq n$, $\ml{P}_0^{(k)}$ induces an isomorphism between 
$\mathbb{H}^k(C^*(f),d)$ and $\mathbb{H}^{n-k}(C^*(-f),d)'$. In particular, $b_k(M)=b_{n-k}(M)$ for every $0\leq k\leq n$.
 
\end{prop}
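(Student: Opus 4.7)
The plan is to exploit the nondegenerate pairing between $C^k(f)$ and $C^{n-k}(-f)$ defined by $(u,v)\mapsto\int_M u\wedge v$. Its nondegeneracy is guaranteed by paragraph~\ref{sss:correlation}, which produces dual bases $(U_a)_{\operatorname{ind}(a)=k}$ of $C^k(f)$ and $(S_a)_{\operatorname{ind}(a)=k}$ of $C^{n-k}(-f)$ satisfying $\int_M U_a\wedge S_b=\delta_{ab}$. Moreover, this pairing coincides with the duality bracket between $\ml{H}^m_k(M)$ and $\ml{H}^{-m}_{n-k}(M)$ identified in paragraph~\ref{l:hodegstar}, which will let us transfer integration by parts from smooth forms to our distributional setting.

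First I would check that $\ml{P}_0^{(k)}$ descends to cohomology. Stokes' formula $\int_M d\alpha\wedge v=\pm\int_M\alpha\wedge dv$ holds on smooth forms since $\partial M=\emptyset$, and extends by density of $\Omega^*(M)$ inside the anisotropic spaces together with the continuity of $d$ on them. Thus if $v\in Z^{n-k}(C^*(-f),d)$ and $u=d\alpha$ with $\alpha\in C^{k-1}(f)$, one has $\int_M u\wedge v=0$, and symmetrically in the other variable. Hence the bilinear form descends to $\mathbb{H}^k(C^*(f),d)\times\mathbb{H}^{n-k}(C^*(-f),d)$.

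For the surjectivity of the induced map $\overline{\ml{P}}_0^{(k)}:\mathbb{H}^k(C^*(f),d)\to\mathbb{H}^{n-k}(C^*(-f),d)'$, I would fix a linear form $\theta$ on the target, lift it to a linear form on $Z^{n-k}(C^*(-f),d)$ vanishing on $B^{n-k}(C^*(-f),d)$, and extend it (by Hahn-Banach, which is elementary here since everything is finite dimensional) to all of $C^{n-k}(-f)$. The nondegeneracy of the pairing then yields a unique $u\in C^k(f)$ with $\theta(v)=\int_M u\wedge v$. The vanishing of $\theta$ on coboundaries $dw$ with $w\in C^{n-k-1}(-f)$ forces $\int_M du\wedge w=0$ for every such $w$, whence $du=0$ by nondegeneracy of the pairing between $C^{k+1}(f)$ and $C^{n-k-1}(-f)$. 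Exchanging the roles of $f$ and $-f$ yields a surjection in the opposite direction; since both quotients are finite dimensional and map surjectively onto each other's dual, their dimensions coincide and $\overline{\ml{P}}_0^{(k)}$ is an isomorphism. The identity $b_k(M)=b_{n-k}(M)$ then follows at once by applying Proposition~\ref{p:isomorphism} to both $f$ and $-f$.

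The main subtlety I anticipate is ensuring that Stokes' formula persists in the distributional setting so that the pairing is well-defined on the quotient. This reduces to the density of $\Omega^*(M)$ in $\ml{H}^m_k(M)$ and to the continuity of $d$ on the anisotropic spaces; both are provided by the construction of Section~\ref{s:anisotropic}, so no additional work beyond bookkeeping of signs should be required.
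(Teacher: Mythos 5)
Your proof is correct and follows essentially the same route as the paper's: the same Poincar\'e pairing between $C^k(f)$ and $C^{n-k}(-f)$, the same Hahn--Banach/nondegeneracy argument for surjectivity of the induced map, the same symmetric dimension count, and the same appeal to Proposition~\ref{p:isomorphism} to conclude $b_k(M)=b_{n-k}(M)$. The only difference is that you spell out via density and Stokes' formula why the pairing descends to cohomology, a point the paper simply asserts.
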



\subsection{Koszul complex associated with $i_{V_f}$}

In some sense, the Cartan formula
$$\ml{L}_{V_f}=d\circ i_{V_f}+i_{V_f}\circ d,$$
replaces in our context the formula $\Delta=d\circ d^*+d^*\circ d$ in Hodge theory. Hence, what plays the role in the Morse context
of the complex $(\Omega^*(M),d^*)$ from Hodge theory is the Koszul complex induced by the contraction operator $i_{V_f}$. 

We emphasize that the Cartan formula combined with our spectral decomposition yields an analogue of the Hodge 
decomposition in our framework:
$$u=\mathbb{P}^{(k)}(u)+d\left(i_{V_f}\circ(\ml{L}_{V_f}^{(k)})^{-1}\circ (\text{Id}-\mathbb{P}^{(k)})(u)\right)+i_{V_f}\left(d\circ(\ml{L}_{V_f}^{(k)})^{-1}\circ (\text{Id}-\mathbb{P}^{(k)})(u)\right).$$
 In other words, any $u$ in $\Omega^{k}(M)$ can be decomposed as the sum of an invariant current, of a coboundary (for $d$) and of a boundary (for $i_{V_f}$).

We now consider the Morse-Koszul homological complex $(C^*(f),i_{V_f})$
$$0\rightarrow C^n(f)\rightarrow C^{n-1}(f)\rightarrow\ldots \rightarrow C^0(f)\rightarrow 0.$$
Again, this is a well defined complex as $i_{V_f}$ commutes with the Lie derivative $\ml{L}_{V_f}$. 
Recall that the Euler characteristic of a homological complex $(C^*,i)$ is given by
$$\chi(C^*,i)=\sum_{j=0 }^n(-1)^j\text{dim}\left(\mathbb{H}_k(C^*,i)\right),$$
where $Z_j(C^*,i):=\text{Ker}(i)\cap C^j$, $\ B_j(C^*,i):=\text{Im}(i)\cap C^j$, and $Z_j(C^*,i)/B_j(C^*,i)$. We have the following property
\begin{prop}\label{p:Euler} Let $(f,g)$ be a smooth Morse pair inducing a Morse-Smale gradient flow. Then, one has
$$\mathbb{H}_k(C^*(f),i_{V_f})=C^k(f).$$
In particular, $\chi(C^*(f),i_{V_f})=\chi(M),$ where $\chi(M)$ is the Euler characteristic of the manifold.
\end{prop}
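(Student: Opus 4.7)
The plan is to prove the stronger statement that $i_{V_f}$ vanishes identically on every $C^k(f)$, so that the Morse--Koszul complex has trivial differentials and $\mathbb{H}_k(C^*(f),i_{V_f})=C^k(f)$ for every $k$. The Euler characteristic of the complex is then
\[
\sum_{k=0}^n(-1)^k\dim C^k(f)=\sum_{k=0}^n(-1)^k c_k(f),
\]
which coincides with $\chi(M)$ by the equality case ($k=n$) of the Morse inequalities in Corollary~\ref{c:Morse}.

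The core of the argument is to establish $i_{V_f}(U_a)=0$ for each critical point $a$ of index $k$, where $\{U_a:\operatorname{ind}(a)=k\}$ is the basis of $C^k(f)$ provided by Proposition~\ref{p:basis} (which applies without the rational independence assumption when $\lambda=0$). I would proceed in two steps. First, since $i_{V_f}$ is a zero-order smooth bundle map and the weight $\mathbf{A}_m^{(k)}=A_m\,\mathbf{Id}$ constructed in Section~\ref{s:anisotropicsobo} is a scalar symbol times the identity on every $\Lambda^k(T^*M)$, the contraction extends to a continuous operator $\ml{H}_k^m(M)\to\ml{H}_{k-1}^m(M)$; because $[V_f,V_f]=0$ it commutes with $\ml{L}_{V_f}$ at every degree, hence with the resolvent $(z-\ml{L}_{V_f}^{(\bullet)})^{-1}$ outside the spectrum, and therefore with the spectral projector $\mathbb{P}^{(\bullet)}$ defined by a small contour integral around $0$, i.e.\ $i_{V_f}\circ\mathbb{P}^{(k)}=\mathbb{P}^{(k-1)}\circ i_{V_f}$. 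Second, Proposition~\ref{p:generalized-laudenbach} applied with $\alpha=0$ and $I=J=\emptyset$ (valid for $\lambda=0$ by Remark~\ref{r:kernel}) yields $U_a=\mathbb{P}^{(k)}(\theta\,[W^u(a)]_{0,\emptyset,\emptyset})$, where in the adapted chart near $a$ the germ is $\delta_0^{\mathbb{R}^k}(x)\,dx_1\wedge\cdots\wedge dx_k$ and $V_f=-\sum_{j=1}^k|\chi_j(a)|x_j\partial_{x_j}+\sum_{j=k+1}^n|\chi_j(a)|y_j\partial_{y_j}$. The $y_j\partial_{y_j}$ terms annihilate $dx_1\wedge\cdots\wedge dx_k$ for type reasons, while the $x_j\partial_{x_j}$ terms produce
\[
i_{V_f}\bigl([W^u(a)]_{0,\emptyset,\emptyset}\bigr)=-\sum_{j=1}^k(-1)^{j-1}|\chi_j(a)|\,x_j\,\delta_0^{\mathbb{R}^k}(x)\,dx_1\wedge\cdots\wedge\widehat{dx_j}\wedge\cdots\wedge dx_k=0,
\]
since $x_j\,\delta_0^{\mathbb{R}^k}(x)=0$ in $\ml{D}'(\mathbb{R}^k)$. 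Combining this with the commutation identity gives $i_{V_f}(U_a)=\mathbb{P}^{(k-1)}(\theta\cdot 0)=0$, as required.

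The main subtlety is the commutation of $i_{V_f}$ with $\mathbb{P}^{(k)}$: a direct attempt based only on the local vanishing near $a$ would force one to describe $U_a$ near the lower-dimensional strata of $\overline{W^u(a)}$ approaching other critical points, which is precisely the delicate boundary analysis that Laudenbach carries out by geometric measure theory. The spectral projector identity allows one instead to propagate the \emph{local} vanishing of $i_{V_f}([W^u(a)]_{\mathrm{loc}})$ to a \emph{global} vanishing on $M$ in a single step, provided the order function $m$ is chosen so that $\mathbb{P}^{(k)}$ and $\mathbb{P}^{(k-1)}$ are simultaneously defined on the corresponding anisotropic spaces.
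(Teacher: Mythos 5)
Your proof is correct, and it reaches the key identity $i_{V_f}(U_a)=0$ by a genuinely different argument from the paper's. The paper also begins with the same local computation showing $i_{V_f}$ annihilates the germ $\delta_0^{\mathbb{R}^k}(x)\,dx_1\wedge\cdots\wedge dx_k$ near $a$, but then proceeds by a \emph{propagation-and-support} argument: since $\ml{L}_{V_f}(i_{V_f}(U_a))=0$ and the germ vanishes near $a$, the Propagation Lemma forces $\operatorname{supp}(i_{V_f}(U_a))\subset\overline{W^u(a)}\setminus W^u(a)$, which (by Remark~\ref{r:weber}) lies in a union of unstable manifolds of index strictly greater than $k$; expanding $i_{V_f}(U_a)$ in the basis $\{U_{b'}:\operatorname{ind}(b')=k-1\}$ provided by Proposition~\ref{p:basis}, each coefficient must vanish because $U_{b'}$ is nonzero near $b'$ while $i_{V_f}(U_a)$ is not. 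You instead globalize the local vanishing through the commutation $i_{V_f}\circ\mathbb{P}^{(k)}=\mathbb{P}^{(k-1)}\circ i_{V_f}$, which follows from the resolvent identity once one knows $i_{V_f}$ is bounded $\ml{H}_k^m(M)\to\ml{H}_{k-1}^m(M)$ for the scalar weight $\mathbf{A}_m^{(k)}=A_m\mathbf{Id}$ (a fact the paper uses implicitly in paragraph~\ref{sss:spectral-decomp} when defining the homotopy $R_\infty^{(l)}=i_{V_f}\circ(\ml{L}_{V_f}^{(l)})^{-1}\circ(\mathrm{Id}-\mathbb{P}^{(l)})$), and from the explicit representation $U_a=\mathbb{P}^{(k)}\bigl(\theta\,[W^u(a)]_{0,\emptyset,\emptyset}\bigr)$ of Proposition~\ref{p:generalized-laudenbach}. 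Your route is more economical: it sidesteps the structure theorem for $\overline{W^u(a)}$ and the comparison with the basis near the other critical points, replacing them with a single operator-algebra observation that the contraction commutes with the spectral calculus. Both approaches rely on the same upstream inputs (no Jordan block at $\lambda=0$ and the local form of $U_a$ near $a$), so neither is more general; the paper's version has the minor advantage of not requiring one to verify the boundedness of $i_{V_f}$ on the anisotropic scale in the degree it is used.
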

\begin{proof} Recall that $C^k(f)$ is equal to the vector space generated by the Laudenbach currents $U_a$ associated with critical points $a$ 
of index $k$. According to Proposition~\ref{p:generalized-laudenbach}, near a critical point $a$ of index $k$, $U_a$ can be written in the adapted coordinates of paragraph~\ref{sss:adapted} as 
$$U_a(x,y,dx,dy)=\delta_0^{\IR^k}(x)dx^1\wedge dx^2\wedge\ldots dx^k.$$
On the other hand, the vector field $V_f$ can be written in this system of coordinates:
$$V_f(x,y,\partial_x,\partial_y)=\sum_{j=1}^r\chi_j(a)x_j\partial_{x_j}+\sum_{j=r+1}^n\chi_j(a)y_j\partial_{y_j}.$$
 Hence, locally near $a$, one has
 $$i_{V_f}(U_a)(x,y,dx,dy)=\sum_{j=1}^r\chi_j(a)x_j\delta_0^{\IR^k}(x)dx_1\wedge\ldots \widehat{dx_j}\ldots\wedge dx_r=0.$$
As $U_a$ is supported in $\overline{W^u(a)}$, we can deduce that $i_{V_f}(U_a)$ is also carried by $\overline{W^u(a)}$. As we have just 
shown that it is equal to $0$ near $a$ and as $\ml{L}_{V_f}(i_{V_f}(U_a))=0$, we can deduce that the support of $i_{V_f}(U_a)$ is contained in 
$\overline{W^u(a)}-W^u(a)$. According to Remark~\ref{r:weber}, we can then deduce that the support of $i_{V_f}(U_a)$ is contained in the union of 
unstable manifold $W^u(b)$ with $\text{ind}(b)>k$. We now use Proposition~\ref{p:basis} (with $\lambda=0$) to write 
$$i_{V_f}(U_a)=\sum_{b':\text{ind}(b')=k-1}\alpha_{b'}U_{b'}.$$
Using~Proposition~\ref{p:generalized-laudenbach} and the fact that $i_{V_f}(U_a)$ is carried on a union of unstable manifold of index $>k$, we can deduce that $\alpha_{b'}=0$ 
for every critical point $b'$ of index $k-1$. In other words, $Z_k(C^*(f),i_{V_f})=C^k(f)$ and $B_k(C^*(f),i_{V_f})=\{0\}$. In particular, one has
$$\chi(C^*(f),i_{V_f})=\sum_{j=0 }^n(-1)^jc_{n-j}(f),$$
from which the last result follows thanks to the case of equality in the Morse inequalities.
\end{proof}

\appendix

\section{Proof of Lemma~\ref{l:escape-function}}\label{a:escape}

In this appendix, we give the proof of Lemma~\ref{l:escape-function}, i.e. construct of the escape function $G_m(x,\xi)$.
Let $N_0, N_1>4\|f\|_{\ml{C}^0}$ be some large parameters. As was already explained, up to some minor differences due to the special form of the dynamics, 
our construction is the one given in section~$2$ of~\cite{FaSj11}. Using the conventions of paragraph~\ref{ss:Hamiltonian}, we recall the following result~\cite[Lemma~2.1]{FaSj11}:
\begin{lemm}\label{l:faure-sjostrand} Let $V^u$ and $V^s$ be small open neighborhoods of $\Sigma_u$ and $\Sigma_s$ respectively, and let $\eps>0$. Then, 
there exist $\ml{W}^u\subset V^u$ and $\ml{W}^s\subset V^s$, $\tilde{m}$ in $\ml{C}^{\infty}(S^*M,[0,1])$, $\eta>0$ such that $\tilde{X}_{H_f}.\tilde{m}\geq 0$ on $S^*M$, 
$\tilde{X}_{H_f}.\tilde{m}\geq\eta>0$ on $S^*M-(\ml{W}^u\cup\ml{W}^s)$, $\tilde{m}(x,\xi)>1-\epsilon$ for $(x,\xi)\in \ml{W}^s$, 
$\tilde{m}(x,\xi)<\eps$ for $(x,\xi)\in \ml{W}^u$ and $\tilde{m}(x,\xi)<(1+\eps)/2$ for $(x,\xi)\notin V^s$. 
\end{lemm}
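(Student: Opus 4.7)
The plan is to follow the construction of Faure and Sj\"ostrand and build $\tilde{m}$ as a time-average along the normalized Hamiltonian flow $\tilde{\Phi}_f^t$ of a smooth cutoff function interpolating between $0$ near $\Sigma_u$ and $1$ near $\Sigma_s$. The three essential dynamical inputs have been assembled in Section~\ref{s:dynamics}: compactness of $\Sigma_u$ and $\Sigma_s$ (Lemma~\ref{l:compact}), the existence of arbitrarily small forward- and backward-invariant neighborhoods (Lemma~\ref{l:openneighbor}), and the attractor/repeller property of the flow (Lemma~\ref{l:attract}).

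First I would pick nested open sets $V^u_0\Subset V^u_1\Subset V^u$ and $V^s_0\Subset V^s_1\Subset V^s$ with $V^u_j$ backward-invariant and $V^s_j$ forward-invariant, and a cutoff $m_0\in \ml{C}^\infty(S^*M,[0,1])$ with $m_0\equiv 0$ on $V^u_1$ and $m_0\equiv 1$ on $V^s_1$. Next, a standard compactness argument combining Lemma~\ref{l:attract}, continuity of $\tilde{\Phi}_f^t$ and the forward-invariance of $V^s_0$ produces a uniform hitting time $T_0>0$ with $\tilde{\Phi}_f^{T_0}(S^*M\setminus V^u_0)\subset V^s_0$. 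I would then set
$$\tilde{m}(x,\xi):=\frac{1}{T_0}\int_0^{T_0} m_0\bigl(\tilde{\Phi}_f^t(x,\xi)\bigr)\,dt,$$
so that
$$\tilde{X}_{H_f}.\tilde{m}(x,\xi)=\frac{1}{T_0}\bigl(m_0(\tilde{\Phi}_f^{T_0}(x,\xi))-m_0(x,\xi)\bigr).$$
Nonnegativity is then immediate: if $(x,\xi)\in V^u_1$ then $m_0(x,\xi)=0$, while if $(x,\xi)\notin V^u_0$ the hitting-time estimate gives $m_0(\tilde{\Phi}_f^{T_0}(x,\xi))=1$. Strict positivity on the complement of suitable neighborhoods $\ml{W}^u,\ml{W}^s$ follows by observing that on the compact set $S^*M\setminus V^s_1$ one has $\max m_0=:1-\delta<1$, provided $m_0$ has been built so that the closed set $\{m_0=1\}$ is strictly contained in $V^s_1$.

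The remaining task is to choose $\ml{W}^u,\ml{W}^s$ so that $\tilde{m}$ takes the prescribed values on them while the set $S^*M\setminus(\ml{W}^u\cup\ml{W}^s)$ where strict positivity must hold is still large enough to complete the estimate. I would take $\ml{W}^s:=V^s_1$, on which $\tilde{m}\equiv 1$ by forward-invariance of $V^s_1$, and $\ml{W}^u$ as a sufficiently small backward-invariant neighborhood of $\Sigma_u$ inside $V^u_1$. The condition $\tilde{m}<(1+\eps)/2$ outside $V^s$ is obtained by shrinking $V^s_1$ further inside $V^s$, since orbits starting outside $V^s$ need a time of order at least $((1-\eps)/2)T_0$ to reach $V^s_1$, which caps the $m_0$-integral.

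The main obstacle is the quantitative control of $\tilde{m}(x,\xi)$ near $\Sigma_u$ needed to enforce $\tilde{m}<\eps$ on $\ml{W}^u$. A point at distance $r$ from $\Sigma_u$ stays inside $V^u_1$ (where $m_0=0$) for a time of order $|\log r|$ by the hyperbolic expansion of the flow near the repeller, so by taking $\ml{W}^u$ small enough and using Lemma~\ref{l:openneighbor} in backward time we can force every orbit starting in $\ml{W}^u$ to remain inside $V^u_1$ for a fraction $>1-\eps$ of the window $[0,T_0]$, whence $\tilde{m}(x,\xi)<\eps$. The delicate point is arranging all these quantitative choices in a consistent order: $T_0$ depends on $V^s_0$ and $V^u_0$, which must themselves be chosen relative to $\eps$ and to the given $V^s,V^u$. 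Once this bookkeeping is carried out, the five listed properties of $\tilde m$ drop out simultaneously.
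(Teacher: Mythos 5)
Your overall strategy --- time-averaging a cutoff $m_0$ along the normalized Hamiltonian flow, with the dynamical inputs of Lemmas~\ref{l:compact}, \ref{l:openneighbor} and~\ref{l:attract} --- is indeed the one the paper follows. However, the paper takes the \emph{symmetric} average $m_T(x,\xi)=\frac{1}{2T}\int_{-T}^{T}m_0\circ\tilde\Phi_f^t(x,\xi)\,dt$ whereas you take a forward-only average over $[0,T_0]$, and this is precisely where your argument for the last property, $\tilde m<(1+\eps)/2$ outside $V^s$, has a gap. With the symmetric average that estimate is essentially free: if $(x,\xi)\notin V^s$, forward invariance of $V^s$ forces $\tilde\Phi_f^t(x,\xi)\notin V^s$ for every $t\le 0$, so on $[-T,0]$ the orbit lies in $V^u$ (where $m_0=0$) except during the transit interval, whose length is uniformly bounded by a constant $\tau$ (the central observation of the paper's proof being that $\ml I(x,\xi)=\{t:\tilde\Phi_f^t(x,\xi)\notin V^u\cup V^s\}$ is a closed interval of uniformly bounded length); hence $\int_{-T}^{0}m_0\le\tau$, the forward half contributes at most $T$, and $m_T\le\tfrac12+\tfrac{\tau}{2T}<\tfrac{1+\eps}{2}$. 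With your one-sided average there is no such factor of $1/2$: for $(x,\xi)$ just outside $\partial V^s$ the forward orbit can enter $V^s_1$ after a time that is short relative to $T_0$, pushing $\tilde m$ close to $1$. Your proposed remedy --- shrink $V^s_1$ so that the hitting time of $V^s_1$ from outside $V^s$ exceeds $\tfrac{1-\eps}{2}\,T_0$ --- is asserted without proof and is genuinely nontrivial: $T_0$ is the hitting time of the possibly much smaller set $V^s_0$ from the much larger region $S^*M\setminus V^u_0$, and it is dominated by the slow escape of orbits starting near $\Sigma_u$, a region that has nothing to do with how quickly orbits cross the shell $V^s\setminus V^s_1$. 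Making these two hitting times comparable requires coordinating all the nested neighborhoods in a way you do not spell out; the symmetric average bypasses the issue entirely.

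Two secondary points. Your construction of $m_0$ is internally inconsistent: requiring $m_0\equiv 1$ on the \emph{open} set $V^s_1$ forces $\{m_0=1\}\supset\overline{V^s_1}$, which is incompatible with the later requirement that the closed set $\{m_0=1\}$ be \emph{strictly contained} in $V^s_1$; this is fixable with one more nested set but as written both conditions cannot hold. Moreover, for strict positivity of $\tilde X_{H_f}.\tilde m$ on the complement of $\ml W^u\cup\ml W^s$ to hold up to $\partial\ml W^u$, the correct choice is not merely ``a small backward-invariant neighborhood of $\Sigma_u$'' but a flow preimage such as $\ml W^u=\tilde\Phi_f^{-T}(S^*M\setminus V^s)$, as in the paper; that choice simultaneously yields the upper bound $\tilde m<\eps$ on $\ml W^u$ and strict monotonicity outside, because for $(x,\xi)\notin\ml W^u$ one then knows exactly where $\tilde\Phi_f^{T}(x,\xi)$ lands, with no pointwise compactness argument needed.
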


\begin{proof} Let us recall the main lines of the proof of this Lemma which relies only on the compactness and on the attracting properties of $\Sigma_u$ and $\Sigma_s$. First, we have 
to verify that, up to shrinking $V_u$ and $V_s$ a little bit, $V^u\cap V^s=\emptyset,$
\begin{equation}\label{e:inclusion}\forall t\geq 0,\ \tilde{\Phi}_f^t(V^s)\subset V^s,\ \text{and}\ \tilde{\Phi}_f^{-t}(V^u)\subset V^u.\end{equation}
This follows from Lemmas~\ref{l:compact} and~\ref{l:openneighbor}. Once we have this property, we can follow the proof of~\cite{FaSj11}. More precisely, we know that
$$\ml{I}(x,\xi):=\{t\in\IR:\ \tilde{\Phi}_f^t(x,\xi)\in S^*M-(V^u\cup V^s)\},$$
is a closed, connected interval whose length is uniformly bounded by some constant $\tau>0$. We then set $T>0$ such that $\tau/(2T)<\eps$ satisfying
$$\ml{W}^u:=\tilde{\Phi}_f^{-T} (S^*M-V^s)\subset V^u,\ \ml{W}^s:=\tilde{\Phi}_f^{-T} (S^*M-V^u)\subset V^s.$$
Once these parameters are fixed, one just has to verify that, if $m_0\in\ml{C}^{\infty}(S^*M,[0,1])$ is equal to $1$ on $V^s$ and to $0$ on $V^u$,
then the function
$$m_T(x,\xi):=\frac{1}{2T}\int_{-T}^Tm_0\circ\tilde{\Phi}_f^t(x,\xi)dt$$
satisfies the assumption of the Lemma -- see~\cite{FaSj11} for details. 
\end{proof}

We now use this Lemma with $V^u$, $V^s$ and $\eps>0$ small enough (to be precised). Thus, we have a function $\tilde{m}(x,\xi)$ defined on $S^*M$. We introduce a smooth function $m_1$ defined on $T^*M$ which satisfies
$$m_1(x,\xi)=N_1\tilde{m}\left(x,\frac{\xi}{\|\xi\|_x}\right)-N_0\left(1-\tilde{m}\left(x,\frac{\xi}{\|\xi\|_x}\right)\right),\ \text{for}\ \|\xi\|_x\geq 1,$$
and
$$m_1(x,\xi)=0,\ \text{for}\ \|\xi\|_x\leq \frac{1}{2}.$$
We set the order function of our escape function to be
$$m(x,\xi)=-f(x)+m_{1}(x,\xi).$$
Set now
$$\tilde{\Gamma}_{\mp}:=\left\{(x,\xi)\in T^*M:\ \xi\neq 0\ \text{and}\ \frac{\xi}{\|\xi\|_x}\in\ml{W}^{s/u}\right\}.$$
From the definition of $m$, $\tilde{\Gamma}_-$ (resp. $\tilde{\Gamma}_+$) is a small conical neighborhood of $\Gamma_-$ (resp. of $\Gamma_+$). Moreover, for every $(x,\xi)$ 
in $\tilde{\Gamma}_-$ (resp. $\tilde{\Gamma}_+$) satisfying $\|\xi\|_x\geq 1$, one has
$$m(x,\xi)\leq -N_0(1-\eps)+N_1\eps+\|f\|_{\ml{C}^0}\ \ \ (\text{resp.} \geq N_1(1-\eps)-N_0\eps-\|f\|_{\ml{C}^0}).$$
If we choose $\eps$ small enough, then the first items of Lemma~\ref{l:escape-function} are proved. We now set the following escape function:
$$G_{m}(x,\xi)=m(x,\xi)\log(1+\|\xi\|_x^2),$$
and we have to compute the derivative $X_{H_f}.G_m$ of $G_m$ along the Hamiltonian vector field $X_{H_f}$ associated with $H_f$. Note that
\begin{equation}\label{e:derivative-escape-function}X_{H_f}.G_m(x,\xi)=\log(1+\|\xi\|_x^2)X_{H_f}.m(x,\xi)+ m(x,\xi)\frac{X_{H_f}.\|\xi\|_x^2}{1+\|\xi\|_x^2}.\end{equation}
Let $r>0$ be a small parameter. We shall estimate the derivative of $G_m$ 
along the Hamiltonian function in $T^*B(a,r)$ for every critical points and in the complementary of this set.

Let us start with the case where $(x,\xi)$ belongs to $T^*M_{\text{reg}}$ 
where $M_{\text{reg}}$ is the complementary set of $\cup_{a\in\text{Crit} f}B(a,r/2)$. In that case, we fix $R_0>1$. Then, there exists $C_g$ depending only on the Riemannian metric and on $f$ such that, 
for every $(x,\xi)$ in $T^*M$ satisfying $\|\xi\|_x\geq R_0$,
$$X_{H_f}.G_m(x,\xi)\leq -X_{H_f}.f(x)\log(1+R_0^2)+C_g\left(N_0+N_1+\|f\|_{\ml{C}^0}\right),$$
as $X_{H_f}.m_1\leq 0$ for $\|\xi\|>1$ according to Lemma~\ref{l:faure-sjostrand}. As $x$ is far from the critical points of $f$, one knows from~\eqref{e:gradient-lines} that 
there exists a constant $c(r)>0$ (depending only on $r>0$) such that $X_{H_f}.f(x)\geq c(r)$. In particular, one has
$$X_{H_f}.G_m(x,\xi)\leq -c(r)\log(1+R_0^2)+C_g\left(N_0+N_1+\|f\|_{\ml{C}^0}\right)\leq -\min\{N_0,N_1\},$$
where the last equality holds if we choose $R_0>1$ large enough (in a way that depends on $r$, $N_0$ and $N_1$).

It now remains to analyse the behaviour in a ``neighborhood'' of a critical point $a$ in $\text{Crit} f$. In that case, we can as in~\cite{FaSj11} make use of the (local) 
hyperbolic structure of the flow. We fix 
$(x,\xi)$ in $T^*M$ such that $\|\xi\|_x\geq 1$ and $x$ in $B(a,r)$, and we use~\eqref{e:gradient-lines} to write
$$X_{H_f}.G_m(x,\xi)\leq X_{H_f}.m_1(x,\xi)\log(1+\|\xi\|_x^2)+ m(x,\xi)\frac{X_{H_f}.\|\xi\|_x^2}{1+\|\xi\|_x^2}.$$
We now distinguish three cases:
\begin{itemize}
 \item Suppose that we could show that, if $(x,\xi)$ belongs to $\tilde{\Gamma}_-$ and $\|\xi\|> 1$, then, one can find a constant\footnote{We note that we may have to take $V^s$ small enough.} 
 $c_->0$ \emph{depending only on} $f$ and $g$ such that
 \begin{equation}\label{e:unstable-cone}X_{H_f}.(\|\xi\|_x^2) > c_-\|\xi\|_x^2.\end{equation}
 In particular, one could infer 
 $$X_{H_f}.G_m(x,\xi)\leq -\frac{N_0}{2} c_-,$$
 where we used Lemma~\ref{l:faure-sjostrand} to bound $X_{H_f}.m_1(x,\xi)$. 
 \item Suppose that we could show that, if $(x,\xi)$ belongs to $\tilde{\Gamma}_+$ and $\|\xi\|> 1$, then, one can find a constant $c_+>0$ \emph{depending only on} $f$ and $g$ such that
 \begin{equation}\label{e:stable-cone}X_{H_f}.(\|\xi\|_x^2)<-c_+\|\xi\|_x^2.\end{equation}
 In particular, one could infer
 $$X_{H_f}.G_m(x,\xi)\leq -\frac{N_1}{2} c_+,$$
 where we used again Lemma~\ref{l:faure-sjostrand} to bound $X_{H_f}.m_1(x,\xi)$.
 \item If $(x,\xi)$ does not belong to $\tilde{\Gamma}_-\cup\tilde{\Gamma}_+$, then $\frac{\xi}{\|\xi\|}$ belongs to $S^*M-(\ml{W}^u\cup\ml{W}^s)$, and, by Lemma~\ref{l:faure-sjostrand}, one finds
 $$X_{H_f}.G_m(x,\xi)\leq -\eta (N_0+N_1)\log(1+\|\xi\|^2)+C_g\left(N_0+N_1+\|f\|_{\ml{C}^0}\right).$$
 Thus, if we choose $R_0>1$ large enough (in a way that depends on $N_0,N_1$), then one can ensure that $X_{H_f}.G_m(x,\xi)\leq -\min\{N_0,N_1\}$ whenever $\|\xi\|\geq R_0$ on this set.
\end{itemize}
This concludes the second part of the Lemma except for~\eqref{e:unstable-cone} and~\eqref{e:stable-cone} that are still to be proved. The proof is similar in both cases and we will only 
treat the first case. We note that the compactness of $\Sigma_u$ and $\Sigma_s$ will one more time play a crucial role in the proof. 

We start with the case where $(x,\xi)=(a,\xi)$ belongs to $E_s^*(a)$. In that case, we can make use of the fact that
we have a smooth linearizing chart -- see paragraph~\ref{sss:adapted}. Recall also that the linearized vector field $L_g(a)$ is diagonalizable in a basis of eigenvectors which 
is orthogonal for the metric $g^*(a)$ -- see paragraph~\ref{ss:Lyapunov}. This implies that, for $(a,\xi)=(0,\zeta)$ in $E_s^*(a)$,
$$X_{H_f}.(\|\xi\|^2)=2\sum_{j=1}^r|\chi_j(a)|\zeta_j^2.$$
In particular, provided that we take some small enough constant $c_-=c_-(f,g)$ 
depending on $g$ and $f$, inequality~\eqref{e:unstable-cone} holds in the case where $(x,\xi)=(a,\xi)$ belongs $E_s^*(a)$. We then define
$$U_r:=\left\{(x,\xi)\in\cup_{y\in B(a,r)}T_y^*M:\|\xi\|_x>1,\ \text{and}~\eqref{e:unstable-cone}\ \text{holds with}\ c_-=2c_-(f,g) \right\},$$
which is an open set in $T^*M-(M\times\{0\})$. Then, we have to prove that we can choose $V_s$ small enough to 
ensure that the neighborhood 
$$\tilde{V}_s^{(r)}:=\{(x,\xi)\in\cup_{y\in B(a,r)}T_y^*M:\|\xi\|>1\ \text{and}\ (x,\xi/\|\xi\|_x)\in V_s\}$$ 
is contained in $U_r$. We proceed by contradiction, and we suppose that, for every $r>0$ small enough, there exists $m_0\geq 1$ 
such that, for any $m\geq m_0$ and for any neighborhood $V_{m}$ of $\Sigma_s$ of size $1/m$, one can find $(x_m^{(r)},\xi_m^{(r)})\notin U_r$ belonging to $\tilde{V}_{m}^{(r)}$. 
Without loss of generality, we can suppose that $\|\xi_m^{(r)}\|\leq 2$. By compactness, we can then extract a 
subsequence such that $\lim_{m\rightarrow+\infty}(x_m^{(r)},\xi_m^{(r)})=(x^{(r)},\xi^{(r)})$. Moreover, as $\Sigma_s$ is compact, $(x^{(r)},\xi^{(r)})$ belongs to $\cup_{y\in \overline{B(a,r)}}(\Gamma_-\cap T_y^*M)$, 
and, by construction of the sequence,~\eqref{e:unstable-cone} does not hold at this point with the constant $c_-=2c_-(f,g)$. This holds for any $r>0$ small enough. We now extract a 
converging subsequence as $r\rightarrow 0^+$, and we find a point $(x,\xi)$ in $E^*_s(a)$ where we know that~\eqref{e:unstable-cone} holds with the constant $c_-=c_-(f,g)$. This gives the expected contradiction 
as $\xi\neq 0$.

\section{Proof of Proposition~\ref{p:eigenvalues}}\label{a:discrete}

The proof of this Proposition was given in great details in~\cite[Th.~1.4]{FaSj11} for the case $k=0$. The adaptation to the case $0\leq k\leq n$ 
is almost identical except that we have to deal with pseudodifferential operators with values in $\Lambda^k(T^*M)$. The main point is that 
the (pseudodifferential) operators under consideration have a scalar symbol. In fact, given any local basis $(e_j)_{j=1,\ldots J_k}$ 
of $\Lambda^k(T^*M)$ and any family $(u_j)_{j=1,\ldots J_k}$ of smooth functions $\ml{C}^{\infty}(M)$, one has
$$\ml{L}_{V_f}^{(k)}\left(\sum_{j=1}^{J_k}u_je_j\right)=\sum_{j=1}^{J_k}\ml{L}_{V_f}(u_j)e_j+\sum_{j=1}^{J_k}\ml{L}_{V_f}^{(k)}(e_j)u_j,$$
where the second part of the sum in the right-hand side is a lower order term (of order $0$). This scalar form allows to adapt the 
proofs of~\cite{FaSj11} to this vector bundle framework. For completeness, we briefly recall the main lines of the proof and just point a (minor) 
simplification due to the particular form of our flow. To make the 
comparison with that reference simpler, we shall consider the operator $-i\ml{L}_{V_f}$ instead of $-\ml{L}_{V_f}.$

\begin{rema} The case of currents was treated by Dyatlov and Zworski in~\cite{DyZw13} via a 
slightly different approach. Their method could also probably be adapted to deal with the case of Morse-Smale gradient flows. 
\end{rema}

The strategy is to consider the equivalent operator
\begin{equation}\label{e:conjugation}\widehat{\ml{L}}_{f}^{(k)}:=\Op(\mathbf{A}_m^{(k)})\circ (-i\ml{L}_{V_f}^{(k)})\circ\Op(\mathbf{A}_m^{(k)})^{-1},\end{equation} 
and to begin with, we recall the following result~\cite[Lemma~3.2]{FaSj11}:
\begin{lemm}\label{l:pseudo} The operator 
$$\Op(\mathbf{A}_m^{(k)})\circ (-i\ml{L}_{V_f}^{(k)})\circ\Op(\mathbf{A}_m^{(k)})^{-1}+i\ml{L}_{V_f}^{(k)}$$
is a pseudodifferential operator in $\Psi^{+0}(M)$ whose symbol in any given system of coordinates is of the form
$$P(x,\xi)=i(X_{H_f}.G_m)(x,\xi)\mathbf{Id}+\ml{O}(S^0)+\ml{O}_m(S^{-1+0}).$$
\end{lemm}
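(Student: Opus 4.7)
The plan is to rewrite the difference as a commutator and then apply the symbolic calculus for variable-order pseudodifferential operators with values in $\Lambda^k(T^*M)$. Precisely, since $\Op(\mathbf{A}_m^{(k)})$ is invertible, we have
$$\Op(\mathbf{A}_m^{(k)})\circ (-i\ml{L}_{V_f}^{(k)})\circ\Op(\mathbf{A}_m^{(k)})^{-1}+i\ml{L}_{V_f}^{(k)}=\bigl[\Op(\mathbf{A}_m^{(k)}),\,-i\ml{L}_{V_f}^{(k)}\bigr]\circ\Op(\mathbf{A}_m^{(k)})^{-1}.$$
The key observation is that both $\mathbf{A}_m^{(k)}=A_m\,\mathbf{Id}$ and the principal symbol $H_f\,\mathbf{Id}$ of $-i\ml{L}_{V_f}^{(k)}$ are scalar multiples of the identity of the fiber. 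Hence the commutator calculation reduces essentially to the scalar case, the only contribution of the vector bundle structure being a subprincipal term coming from the derivatives of the local frame of $\Lambda^k(T^*M)$. This last contribution is a $0$-th order operator on sections, hence belongs to $\Psi^0$ after division by $\Op(\mathbf{A}_m^{(k)})^{-1}$.

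First, I would recall (from Appendix~A of~\cite{FaRoSj08}) that the variable order symbolic calculus applies to $m(x,\xi)\in S^0$, so $A_m=\exp G_m$ is a symbol of variable order $m(x,\xi)$ and $\Op(\mathbf{A}_m^{(k)})^{-1}$ has principal symbol $A_m^{-1}\mathbf{Id}$. Then I would apply the composition formula: the commutator $[\Op(\mathbf{A}_m^{(k)}),-i\ml{L}_{V_f}^{(k)}]$ belongs to $\Psi^{m}$ with principal part, in any local coordinates,
$$\tfrac{1}{i}\{A_m,H_f\}\,\mathbf{Id}+R,\qquad R\in\Psi^{m-1+0},$$
where the residual $R$ gathers both the standard subprincipal contribution of the calculus and the term coming from the action of $-i\ml{L}_{V_f}^{(k)}$ on the frame of $\Lambda^k(T^*M)$, which is of order $0$ in $\xi$, hence of total order $m-1+0$ after accounting for the extra $1/\la\xi\ra$ obtained by pairing with $\partial_\xi A_m$.

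Next I would compute the symbol. Writing $A_m=e^{G_m}$ and using the chain rule,
$$\{A_m,H_f\}=A_m\{G_m,H_f\}=-A_m\,X_{H_f}.G_m,$$
so that after composition with $\Op(\mathbf{A}_m^{(k)})^{-1}$ the principal symbol becomes
$$\tfrac{1}{i}\bigl(-X_{H_f}.G_m\bigr)\mathbf{Id}=i\,(X_{H_f}.G_m)\,\mathbf{Id}.$$
A direct check using $G_m=m(x,\xi)\log(1+\|\xi\|_x^2)$ with $m\in S^0$ shows that $X_{H_f}.G_m$ is of class $S^{+0}$, which gives the announced class $\Psi^{+0}$. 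Finally, the remaining $\ml{O}(S^0)$ term collects the subprincipal contribution of $-i\ml{L}_{V_f}^{(k)}$ acting on the local frame, and $\ml{O}_m(S^{-1+0})$ collects the next order terms in the symbolic expansion, which depend on $m$ through $A_m$.

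The main (technical but not conceptual) obstacle is to check that the passage to a vector-bundle-valued symbolic calculus preserves the above principal-symbol computation and to control the orders of the remainders in the variable order scale $S^{m(x,\xi)}$. This is essentially done by noting that changing the local frame of $\Lambda^k(T^*M)$ conjugates $-i\ml{L}_{V_f}^{(k)}$ by a matrix-valued function of $x$, which only alters the lower-order part of its symbol; since $\mathbf{A}_m^{(k)}$ is scalar, this conjugation commutes with $\Op(\mathbf{A}_m^{(k)})$ modulo $\Psi^{m-1+0}$, which is absorbed in the announced remainder.
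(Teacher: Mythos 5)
Your proposal is correct and follows essentially the same route as the paper's (brief) proof of Lemma~\ref{l:pseudo}, which defers to \cite[Lemma~3.2]{FaSj11}: both reduce the problem to the symbolic calculus of the conjugated commutator, whose principal symbol is governed by the Poisson bracket $\{G_m,H_f\}=-X_{H_f}.G_m$. You reach it by writing the difference as $[\Op(\mathbf{A}_m^{(k)}),-i\ml{L}_{V_f}^{(k)}]\circ\Op(\mathbf{A}_m^{(k)})^{-1}$ and applying the chain rule $\{A_m,H_f\}=A_m\{G_m,H_f\}$, while the paper sketches the same computation via the linearization $\Op(A_m)\simeq 1+\Op(G_m)+\ldots$; these routes are equivalent, and yours avoids the slightly informal linearization heuristic.

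One small slip in the remainder bookkeeping: you first (correctly) place the contribution of the zeroth-order, matrix-valued part of $-i\ml{L}_{V_f}^{(k)}$ (its action on the local frame of $\Lambda^k(T^*M)$) inside the residual $R\in\Psi^{m-1+0}$, but you later also attribute the $\ml{O}(S^0)$ term to this same contribution. The first placement is the right one: since $\mathbf{A}_m^{(k)}$ is a scalar multiple of the identity, it commutes with the matrix part at the symbol level, so the commutator with that part lies one order below, and after composition with $\Op(\mathbf{A}_m^{(k)})^{-1}$ it lands in the $m$-dependent $\ml{O}_m(S^{-1+0})$, not in the $m$-independent $\ml{O}(S^0)$.
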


 In this Lemma, the notation $\ml{O}(.)$ means that the remainder is independent of the order function $m$, while the 
notation $\ml{O}_m(.)$ means that it depends on $m$. In particular, this Lemma says that $\widehat{\ml{L}}_{f}^{(k)}$ is an 
element in $\Psi^1(M,\Lambda^k(T^*M))$. Then, combining this Remark to~\cite[Lemma~A.1]{FaSj11} which can be adapted directly to 
the case of operators with values in a vector bundle, one finds that $\widehat{\ml{L}}_{f}^{(k)}$ has a unique closed extension 
as an unbounded operator on $L^2(M,\Lambda^k(T^*M))$. This shows the first part of the Proposition in the case $k=0$.

\begin{rema} The proof of this Lemma was given for $k=0$ in~\cite{FaSj11} and the adaptation to the case $1\leq k\leq n$ follows from the diagonal 
structure of the operators involved. Let us recall that the key idea is to observe, by linearizing the exponential,
\begin{align*} \Op(\mathbf{A}_m^{(k)})\circ (-i\ml{L}_{V_f}^{(k)})\circ\Op(\mathbf{A}_m^{(k)})^{-1} & \simeq & (1+\Op(G_m)+\ldots)
\circ(-i\ml{L}_{V_f}^{(k)})\circ (1-\Op(G_m)+\ldots)\\
 & =& -i\ml{L}_{V_f}^{(k)}+[\Op(G_m\mathbf{Id}),-i\ml{L}_{V_f}^{(k)}]+\ldots,
\end{align*}
which implies via symbolic calculus
 $$\Op(\mathbf{A}_m^{(k)})\circ (-i\ml{L}_{V_f}^{(k)})\circ\Op(\mathbf{A}_m^{(k)})^{-1}
 \simeq -i\ml{L}_{V_f}^{(k)}+i\Op(X_{H_f}.G_m\mathbf{Id})+\ldots.$$
\end{rema}

Then, up to the fact that we have to deal with $L^2(M,\Lambda^k(T^*M))$, the second part of 
Proposition~\ref{p:eigenvalues} is exactly the content of Lemma~3.3 of~\cite{FaSj11} which only 
makes use of the properties of the escape function given in Lemma~\ref{l:escape-function}. We also note 
that they implicitely shows that, 
for every $z$ in $\mathbb{C}$ satisfying $\text{Im} z>C_0$, one has
\begin{equation}\label{e:norm-resolvent}
 \left\|\left(\widehat{\ml{L}}_{f}^{(k)}-z\right)^{-1}\right\|_{L^2(M,\Lambda^k(T^*M))\rightarrow L^2(M,\Lambda^k(T^*M))}\leq\frac{1}{\text{Im}(z)-C_0}.
\end{equation}
\begin{rema}
Combining Proposition~\ref{p:eigenvalues} to the Hille-Yosida 
Theorem~\cite[Cor.~3.6, p.~76]{EnNa00}, one knows that (by conjugation)
\begin{equation}\label{e:semigroup}(\varphi_f^{-t})^*:\ml{H}^m_k(M)\rightarrow \ml{H}_k^m(M),\end{equation}
generates a strongly continuous semigroup which is defined for every $t\geq 0$ and whose norm is bounded by $e^{tC_0}.$
\end{rema}

Finally, the last part of the Proposition is based on results from analytic Fredholm theory. It 
is in fact the only place where things differ with~\cite{FaSj11}. The situation is in fact 
slightly simpler here as we shall now briefly explain it. We write 
$$\widehat{\mathbf{V}}_f^{(k)}:=\frac{i}{2}\left(\left(\widehat{\ml{L}}_{f}^{(k)}\right)^*-\widehat{\ml{L}}_{f}^{(k)}\right).$$
We denote by $\mathbf{V}_f^{(k)}(x,\xi)$ the symbol of this operator. Note that, from~\cite[Lemma~A.1]{FaSj11}, $(\widehat{\ml{L}}_{V_f}^{(k)})^*$ 
also has a unique closed extension to $L^2(M,\Lambda^k(T^*M))$. 
Combining~Lemma~\ref{l:pseudo} to Lemma~\ref{l:escape-function}, one knows that, for every $(x,\xi)$ in $T^*M$,
$$\mathbf{V}_f^{(k)}(x,\xi)\leq (-C_N+C)\mathbf{Id}+\ml{O}_m(S^{-1+0}),$$
for some constant $C>0$ which is independent of $m$ and for the constant $C_N$ defined in Lemma~\ref{l:escape-function}. From the sharp G\aa{}rding inequality, one can deduce that, for every $0<\mu<1$, there exists a constant $C_{\mu,m}>0$ such that, for every $u$ in $\ml{C}^{\infty}(M)$
$$\la(\widehat{\mathbf{V}}_f^{(k)}+C_N-C)u,u\ra_{L^2(M,\Lambda^k(T^*M))}\leq C_{\mu,m}\|u\|_{H^{\frac{\mu-1}{2}}(M,\Lambda^k(T^*M))}^2,$$
where the remainder $\ml{O}_m(S^{-1+0})$ has been absorbed in the RHS thanks to the Calder\'on-Vaillancourt Theorem. 
From this inequality, one can deduce that
$$\la(\widehat{\mathbf{V}}_f^{k)}+C_N-C)u,u\ra_{L^2(M,\Lambda^k(T^*M))}\leq 
\left\la \tilde{C}_{\mu,m}\left(1-\Delta_g^{(k)}\right)^{\frac{\mu-1}{2}}u,u\right\ra_{L^2(M,\Lambda^k(T^*M))},$$
where $\Delta_g^{(k)}$ is the Laplace-Beltrami operator acting on $k$ differential forms. We define then
$$\widehat{\chi}_k:=\tilde{C}_{\mu,m}\left(1-\Delta_g^{(k)}\right)^{\frac{\mu-1}{2}}\in\Psi^{\mu-1}(M,\Lambda^k(T^*M)),$$
which is a compact operator as $\mu-1<0$. Hence, we can rewrite the last inequality as
$$\la(\widehat{\mathbf{V}}_f^{(k)}-\widehat{\chi}_k+C_N-C)u,u\ra_{L^2(M,\Lambda^k(T^*M))}\leq 0,$$
from which one can deduce\footnote{The proof of this fact is similar to the proof of Lemma~3.3 in~\cite{FaSj11}.} that the resolvent
$$\left(\widehat{\ml{L}}_{f}^{(k)}-i\widehat{\chi}_k-z\right)^{-1}$$
defines a bounded operator from $L^2(M,\Lambda^k(T^*M))$ to itself as soon as $\text{Im}(z)>-(C_N-C)$. From the compactness 
of $\widehat{\chi}_k$ we can deduce that 
$\widehat{\chi}_k\left(\widehat{\ml{L}}_{f}^{(k)}-i\widehat{\chi}_k-z\right)^{-1}$ is also a compact operator 
which is exactly the content of Lemma~3.4 in~\cite{FaSj11}. The conclusion then follows by a classical argument from analytic Fredholm theory given in~\cite[Lemma~3.5]{FaSj11}.

\section{Asymptotic expansions}\label{a:asymptotic} 
In this appendix, we review some classical facts on asymptotic expansions (see~\cite[Ch.~1]{Igusa} for a nice review).

\begin{def1}
Let $\ml{I}$ be a discrete\footnote{We mean that it has no accumulation point.} countable subset of $\mathbb{R}$ bounded from below. We call $\ml{I}$ the \textbf{index set}.
Then $h\in C^\infty((0,1],\mathbb{R})$ has polyhomogeneous asymptotic expansion indexed by $\ml{I}$ if
\begin{eqnarray*}
\exists (a_\lambda)_{\lambda\in \ml{I}} \text{ such that }\forall \Lambda\in\mathbb{R}\setminus \ml{I}, \exists C\geqslant 0, \forall s\in (0,1] \\
\left\vert h(s)-\sum_{\lambda\in \ml{I},\lambda\leqslant\Lambda} a_\lambda s^\lambda \right\vert\leqslant Cs^{\lambda_0} 
\end{eqnarray*}
where $\lambda_0=\inf\{\lambda\in \ml{I}\cap[\Lambda,+\infty)\}$.
\end{def1}
The key property we use in this article is that, if such an asymptotic expansion exists, then it is \textbf{unique}.
We also have the following as a consequence of the Taylor formula:
\begin{lemm}\label{l:compo-smooth-becomes-polyhomogeneous}
Let $(\lambda_i)_{i=1}^n$ be a collection of $n$ positive real numbers. Let $\ml{I}$ be the index set defined as 
$$\ml{I}:=\left\{\sum_{j=1}^n k_j\lambda_j:\ \forall 1\leq j\leq n,\ k_j\in\mathbb{N}\right\}\subset \mathbb{R}.$$
Then, for all $\psi\in C^\infty(\mathbb{R}^n)$, the function
\begin{eqnarray*}
s\in(0,1] \longmapsto \psi(s^{\lambda_1},\dots,s^{\lambda_n})
\end{eqnarray*}
has polyhomogeneous asymptotic expansion indexed by $\ml{I}$.
\end{lemm}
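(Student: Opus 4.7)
The approach is to apply Taylor's theorem to $\psi$ at the origin of $\mathbb{R}^n$ and then substitute $x_j = s^{\lambda_j}$. First, I would verify that the index set $\ml{I}$ is genuinely discrete and bounded below: setting $\lambda_{\min} := \min_j \lambda_j > 0$, the inequality $\sum_j k_j \lambda_j \leq M$ forces $\sum_j k_j \leq M/\lambda_{\min}$, so only finitely many multi-indices $k \in \mathbb{N}^n$ contribute to any bounded interval. This also shows that, for each $\mu \in \ml{I}$, the candidate coefficient
\[
a_\mu := \sum_{\alpha \in \mathbb{N}^n,\ \alpha \cdot \lambda = \mu} \frac{\partial^\alpha \psi(0)}{\alpha!}
\]
is a finite sum, hence well-defined (here $\alpha \cdot \lambda = \sum_j \alpha_j \lambda_j$).

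Next, I would fix some $\Lambda \in \mathbb{R} \setminus \ml{I}$ and set $\lambda_0 := \inf\{\mu \in \ml{I} : \mu \geq \Lambda\}$, which satisfies $\lambda_0 > \Lambda$ by discreteness. I then choose an integer $N$ large enough that two requirements are met simultaneously: (a) every multi-index $\alpha$ with $\alpha \cdot \lambda \leq \Lambda$ has $|\alpha| \leq N$, and (b) $(N+1)\lambda_{\min} \geq \lambda_0$. Both reduce to taking $N$ larger than $\Lambda/\lambda_{\min}$ up to an additive constant. Taylor's theorem with remainder then yields
\[
\psi(x) = \sum_{|\alpha| \leq N} \frac{\partial^\alpha \psi(0)}{\alpha!}\, x^\alpha + R_N(x), \qquad |R_N(x)| \leq C |x|^{N+1},
\]
uniformly for $x$ in some fixed compact neighborhood of $0$ (which contains $(s^{\lambda_1},\dots,s^{\lambda_n})$ for every $s \in (0,1]$ since $\lambda_j>0$).

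Substituting $x_j = s^{\lambda_j}$ and regrouping according to the value of $\alpha \cdot \lambda$, using (a) to ensure every $\alpha$ with $\alpha \cdot \lambda \leq \Lambda$ is captured in the Taylor sum, I obtain
\[
\psi(s^{\lambda_1}, \dots, s^{\lambda_n}) - \sum_{\mu \in \ml{I},\ \mu \leq \Lambda} a_\mu\, s^\mu
= \sum_{\substack{|\alpha| \leq N \\ \alpha \cdot \lambda > \Lambda}} \frac{\partial^\alpha \psi(0)}{\alpha!}\, s^{\alpha \cdot \lambda} + R_N(s^{\lambda_1}, \dots, s^{\lambda_n}).
\]
Every term in the finite sum on the right has $\alpha \cdot \lambda \in \ml{I} \cap (\Lambda,+\infty)$, hence $\alpha \cdot \lambda \geq \lambda_0$, giving $O(s^{\lambda_0})$ contributions; the remainder is bounded by $C'\, s^{(N+1)\lambda_{\min}} = O(s^{\lambda_0})$ by (b). Together these give the required estimate, which is exactly the definition of a polyhomogeneous expansion indexed by $\ml{I}$. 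There is no real obstacle here: the only care required is the coordinated choice of $N$ so that the Taylor remainder outruns $s^{\lambda_0}$ while also capturing every Taylor monomial that contributes to an exponent $\mu \leq \Lambda$.
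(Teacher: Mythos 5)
Your proof is correct and follows exactly the route the paper indicates: the paper states the lemma as "a consequence of the Taylor formula" without spelling out the details, and you have supplied precisely those details (Taylor expansion at the origin, substitution $x_j = s^{\lambda_j}$, and the coordinated choice of the order $N$ so that both the omitted Taylor terms and the remainder are $O(s^{\lambda_0})$).
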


A function $h\in C^\infty((0,1],\mathbb{R})$ is said to be \textbf{weakly homogeneous} 
if 
$$\exists C>0, \exists d\in \mathbb{R}, \forall s\in(0,1], \vert h(s) \vert \leqslant C s^d.$$
Recall that the Mellin transform of $h\mathbf{1}_{[0,1]}$ for
$f$ weakly homogeneous is then defined as
\begin{equation}
\ml{M}\left(h\mathbf{1}_{[0,1]}\right)(z)=\int_0^1 h(s)s^z\frac{ds}{s},
\end{equation}
and that it is holomorphic on the half--plane $\text{Re}(z)>-d$. Finally, we note that the following holds:

\begin{lemm}\label{l:mellin-transform-constraint}
Under the above conventions, one has:
\begin{enumerate}
\item For $w$ in $\mathbb{C}$, the Mellin transform
$\ml{M}(s^w\mathbf{1}_{[0,1]}(s))(z)$ equals $\frac{1}{w+z}$ and thus, it extends meromorphically with a simple pole at $z=-w$.
\item For every polyhomogeneous $h$ where $h\sim \sum_{\lambda\in \ml{I}}a_\lambda s^\lambda$, the Mellin transform
$\ml{M}\left(h\mathbf{1}_{[0,1]}\right)(z)$ extends meromorphically
to the complex plane with simple poles at $z\in -\ml{I}$.
\end{enumerate}
\end{lemm}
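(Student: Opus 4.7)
The plan is to treat part (1) by direct computation and then use it as a building block for part (2) via truncation of the asymptotic expansion.

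For (1), I would just compute: for $\operatorname{Re}(w+z)>0$, the integral $\int_0^1 s^{w+z-1}\,ds$ converges absolutely and equals $(w+z)^{-1}$. The right-hand side is a rational function of $z$ which provides the meromorphic extension, and it manifestly has a unique simple pole at $z=-w$. No obstacle here.

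For (2), the plan is to fix an arbitrary threshold $\Lambda \in \mathbb{R}\setminus \ml{I}$ and split the polyhomogeneous expansion. By definition of polyhomogeneity, there exists $C>0$ such that, for every $s\in(0,1]$,
\[ \left| h(s) - \sum_{\lambda\in\ml{I},\,\lambda\leqslant\Lambda} a_\lambda s^\lambda \right| \leqslant C s^{\lambda_0}, \]
where $\lambda_0 = \inf\{\lambda\in\ml{I}\cap(\Lambda,+\infty)\}$. Call the remainder $R_\Lambda(s)$. Then, for $\operatorname{Re}(z)$ large enough so that the full Mellin transform converges,
\[ \ml{M}(h\mathbf{1}_{[0,1]})(z) = \sum_{\lambda\in\ml{I},\,\lambda\leqslant\Lambda} a_\lambda \,\ml{M}(s^\lambda\mathbf{1}_{[0,1]})(z) + \ml{M}(R_\Lambda\mathbf{1}_{[0,1]})(z). \]
By part (1), the finite sum contributes the meromorphic function $\sum_{\lambda\leqslant\Lambda} a_\lambda/(\lambda+z)$, holomorphic on $\{\operatorname{Re}(z)>-\lambda_0\}$ apart from simple poles at $z=-\lambda$ for $\lambda\in\ml{I}\cap(-\infty,\Lambda]$. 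Using the pointwise bound $|R_\Lambda(s)|\leqslant Cs^{\lambda_0}$, the remainder Mellin transform is holomorphic on the half-plane $\{\operatorname{Re}(z)>-\lambda_0\}$ since $\int_0^1 s^{\operatorname{Re}(z)+\lambda_0-1}ds$ converges there. This yields a meromorphic extension of $\ml{M}(h\mathbf{1}_{[0,1]})$ to $\{\operatorname{Re}(z)>-\lambda_0\}$ with the required simple poles.

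Since $\ml{I}$ is discrete and bounded from below, one can let $\Lambda\to+\infty$ through values outside $\ml{I}$, and the corresponding $\lambda_0$ also tends to $+\infty$. The extensions produced for different $\Lambda$ agree on their common domain of definition (both equal $\ml{M}(h\mathbf{1}_{[0,1]})$ on the half-plane of absolute convergence, hence everywhere by analytic continuation), so they glue into a single meromorphic function on all of $\mathbb{C}$ whose poles are exactly the elements of $-\ml{I}$ and are simple. I do not anticipate any real obstacle; the only thing to be slightly careful about is verifying that the finite sum exhausts all poles in $\{\operatorname{Re}(z)>-\lambda_0\}$, which follows from the fact that the poles of $(\lambda+z)^{-1}$ lie at $z=-\lambda$ and $\lambda\leqslant\Lambda<\lambda_0$.
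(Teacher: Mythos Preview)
Your argument is correct and is the standard approach: compute part~(1) directly, then for part~(2) truncate the expansion at level $\Lambda$, use part~(1) on the finite sum, bound the remainder's Mellin transform holomorphically on $\{\operatorname{Re}(z)>-\lambda_0\}$, and glue by analytic continuation as $\Lambda\to+\infty$. The paper does not actually give a proof; it simply cites Igusa, \emph{Lectures on forms of higher degree}, Theorem~3.1, whose proof is essentially the one you wrote. One small overstatement: your final sentence says the poles are ``exactly the elements of $-\ml{I}$'', but if some coefficient $a_\lambda$ vanishes there is no pole at $z=-\lambda$; the correct conclusion (and what the lemma asserts) is that the poles are simple and contained in $-\ml{I}$.
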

\begin{proof}
This is a particular case of~\cite[Thm 3.1 p.~11]{Igusa}
\end{proof}

\end{document}